\documentclass[12pt]{amsart}

\usepackage[T1]{fontenc}
\usepackage[utf8]{inputenc}

\usepackage{amsmath,amssymb,amsthm,mathtools}
\usepackage{mathrsfs}

\usepackage{libertine}
\usepackage[libertine]{newtxmath}

\usepackage{microtype}

\usepackage{tikz}
\usetikzlibrary{positioning,fit,backgrounds,arrows.meta,calc}
\newcommand{\figthm}[1]{\hyperref[#1]{Thm.~\ref*{#1}}}
\newcommand{\figprop}[1]{\hyperref[#1]{Prop.~\ref*{#1}}}
\newcommand{\figlem}[1]{\hyperref[#1]{Lem.~\ref*{#1}}}
\newcommand{\figcor}[1]{\hyperref[#1]{Cor.~\ref*{#1}}}

\usepackage[
  a4paper,
  left=30mm,
  right=30mm,
  top=30mm,
  bottom=26mm
]{geometry}

\usepackage{booktabs}
\usepackage{enumitem}

\setlist[itemize]{
  leftmargin=*,
  topsep=4pt,
  itemsep=2pt,
  parsep=0pt
}

\setlist[enumerate]{
  leftmargin=*,
  topsep=4pt,
  itemsep=2pt,
  parsep=0pt
}

\usepackage{xcolor}

\definecolor{linkblue}{RGB}{38,68,105}
\definecolor{citegreen}{RGB}{45,88,68}

\usepackage[
  colorlinks=true,
  linkcolor=linkblue,
  citecolor=citegreen,
  urlcolor=linkblue,
  pdfborder={0 0 0}
]{hyperref}

\usepackage[nameinlink,noabbrev]{cleveref}

\numberwithin{equation}{section}

\theoremstyle{plain}

\newtheorem{theorem}{Theorem}[section]
\newtheorem{proposition}[theorem]{Proposition}
\newtheorem{lemma}[theorem]{Lemma}
\newtheorem{corollary}[theorem]{Corollary}

\theoremstyle{definition}

\newtheorem{definition}[theorem]{Definition}

\newtheorem{example}[theorem]{Example}

\theoremstyle{remark}

\newtheorem{remark}[theorem]{Remark}

\theoremstyle{definition}
\newtheorem{maintheorem}{Theorem}

\crefname{maintheorem}{theorem}{theorems}
\Crefname{maintheorem}{Theorem}{Theorems}

\allowdisplaybreaks

\DeclareMathOperator{\PL}{PL}

\DeclareMathOperator{\Res}{Res}

\DeclareMathOperator{\ord}{ord}

\DeclareMathOperator{\DRes}{DRes}
\DeclareMathOperator{\sgn}{sgn}

\newcommand{\C}{\mathbb C}
\newcommand{\Q}{\mathbb Q}
\newcommand{\R}{\mathbb R}
\newcommand{\Z}{\mathbb Z}

\newcommand{\D}{\mathscr D}

\newcommand{\Lcal}{\mathscr L}

\newcommand{\one}{\mathbf 1}

\newcommand{\Lef}{\mathbb L}
\newcommand{\Mmot}{\mathcal M_{\mathbb C}}
\newcommand{\motzeta}{Z^{\mathrm{mot}}_{F,0}}

\title[The Multivariable Strong Monodromy Conjecture for Plane Curves]{The Multivariable Strong Monodromy Conjecture for \\ Plane Curves}

\author{Sheng Tan}

\address{School of Mathematical Sciences, Capital Normal University, Beijing 100048, China}
\email{tansheng2018@outlook.com}

\date{\today. }

\begin{document}

\begin{abstract}
We prove the multivariable Strong Monodromy Conjecture for the local topological zeta function of arbitrary tuples of nonzero nonunit plane curve germs. The entries may be nonreduced and may have common irreducible factors. We also prove that the non-equivariant local motivic and topological zeta functions have the same polar locus and generic pole orders, using a multivariable extension of the Nicaise--Xu definition of motivic pole order. Our arguments combine twisted Poincar\'e residues with resolution formulas and specialization to one variable. In arbitrary dimension, we prove that a nonzero iterated residue class on a stratum of a log resolution implies the vanishing of the Bernstein--Sato ideal at the corresponding parameter. In particular, every topological polar hyperplane whose order equals the ambient dimension is contained in this zero locus. For algebraic germs, the topological pole order is at most the motivic pole order, and the two attain maximal order simultaneously. 
\end{abstract}

\maketitle

\enlargethispage{2pt}
\tableofcontents


\section{Introduction}\label{sec:introduction}

\makeatletter
\let\introduction@tocwrite\@tocwrite
\def\@tocwrite#1#2{}
\makeatother

\subsection{Bernstein--Sato polynomial}

For a nonzero holomorphic germ $f$, the Bernstein--Sato functional equation has the form
\[
b(s)f^s=P(s)f^{s+1},
\]
where $f^s$ is a formal symbol, $b(s)\in\C[s]\setminus\{0\}$, and $P(s)$ is a differential operator with holomorphic coefficients depending polynomially on $s$. The Bernstein--Sato polynomial $b_f(s)$ is the monic polynomial of least degree for which such an equation holds. Bernstein proved the existence of such an equation for polynomials, and Kashiwara proved that the roots of $b_f(s)$ are negative rational numbers for holomorphic germs \cite{Bernstein1972,Kashiwara1976}. Malgrange and Kashiwara related the exponentials of the roots of $b_f$ to eigenvalues of Milnor monodromy \cite{Malgrange1975,Kashiwara1976}.

The poles of local zeta functions can be studied through resolution formulas. Igusa's $p$-adic zeta functions and their resolution formulas initiated the arithmetic theory \cite{Igusa2000,Denef1991}, while Denef and Loeser introduced the topological zeta function and proved its independence of the chosen resolution \cite{DenefLoeser1992}. For a resolution divisor $E$ with numerical data $(N_E,\nu_E)$, the ratio $-\nu_E/N_E$ is a possible pole. The Strong Monodromy Conjecture predicts that every actual pole of the topological zeta function is a root of $b_f$:
\[
\PL\bigl(Z_f^{\mathrm{top}}\bigr)\subseteq Z(b_f).
\]
Here $\PL$ denotes the polar locus of the rational function after cancellation of common factors, and $Z(b_f)$ is the zero set of $b_f$. The converse inclusion is not expected. Even for plane curves, the roots of the Bernstein--Sato polynomial can vary within a fixed topological type, whereas the topological zeta function is determined by the resolution data \cite[Example~1]{Oaku2022}; see also the construction of topological roots from resolution data in \cite{Blanco2026}.

\subsection{The multivariable problem}

Let $(X,0)$ be a smooth complex surface germ, let $F=(f_1,\ldots,f_r)$ be a tuple of nonzero nonunit germs vanishing at the origin, and put $h=f_1\cdots f_r$. The multivariable Bernstein--Sato ideal is
\[
B_{F,0}=\left\{b(s)\in\C[s_1,\ldots,s_r]\ \middle|\ b(s)F^s=P(s)hF^s\text{ for some }P(s)\in\D_{X,0}[s]\right\}.
\]
Here $\D_{X,0}$ is the ring of holomorphic differential operators at the origin, $s=(s_1,\ldots,s_r)$, and $F^s=f_1^{s_1}\cdots f_r^{s_r}$ is a formal symbol. Functional equations for several analytic functions were developed by Sabbah and Gyoja, with later refinements using relative $\D$-modules by Maisonobe \cite{Sabbah1987,Gyoja1993,Maisonobe2023}. Local Bernstein--Sato ideals can also be computed effectively in examples \cite{UchaCastro2004,BahloulOaku2010}.

Budur introduced the multivariable topological zeta function and formulated the Topological Multivariable Strong Monodromy Conjecture \cite[Conjecture~1.17]{Budur2015}:
\[
\PL\bigl(Z^{\mathrm{top}}_{F,0}\bigr)\subseteq Z(B_{F,0}).
\]
We prove this conjecture for plane curves. The one-variable statement for $h$ gives information along the diagonal but does not determine the individual coefficients of the affine hyperplanes in $Z(B_{F,0})$.

Sabbah's specialization complex and Alexander modules describe multivariable monodromy characters \cite{Sabbah1990}. Budur--van der Veer--Wu--Zhou identify the exponential image of the Bernstein--Sato zero locus with the corresponding cohomology-support locus \cite{BvdVWZ2021}. Exponentiation identifies parameters that differ by integers in each coordinate, so monodromy characters do not distinguish all affine translates. The strong conjecture requires the inclusion of the actual affine hyperplane in the Bernstein--Sato zero locus, not only an inclusion of their exponential images. Budur--van der Veer--Van Werde obtain affine restrictions that imply the strict-transform inclusion used below \cite{BvdVVW2024}. Related approaches through rational powers, multivariate $V$-filtrations, and logarithmic nearby cycles appear in \cite{Saito2021RationalPowers,DavisYang2026,Wu2026}.

\subsection{Previous results}

The one-variable case for plane curves is well understood. Loeser proved that $-\nu_E/N_E$ is a root of the local Bernstein--Sato polynomial when $E$ is a rupture component, and obtained the corresponding result for strict transforms \cite[Th\'eor\`eme~III.3.1 and Remarque~III.3.3]{Loeser1988}. Veys characterized the actual poles of the topological zeta function on the minimal embedded resolution \cite{Veys1995}, and later related the cancellation of candidate poles to the log canonical model \cite{Veys1997}. Together, these results establish the one-variable Strong Monodromy Conjecture for plane curves. N\'emethi--Veys developed a broader framework in dimension two using zeta functions with differential forms \cite{NemethiVeys2012}.

The main one-variable results used in our proof are due to Blanco. His study of complex zeta functions relates the contributions of rupture components and their residues to roots of the Bernstein--Sato polynomial \cite{Blanco2019}. He also proved Yano's conjecture, which determines the generic $b$-exponents of irreducible plane curves \cite{Blanco2021}. Our proof uses two results from \cite{Blanco2026}: sharp bounds for the residue numbers at components adjacent to a rupture component, and a nonvanishing result for the corresponding twisted Poincar\'e residue class.

Several multivariable results are also known. Nicaise proved the weak monodromy conjecture in dimension two using multivariable zeta functions and Alexander modules \cite{Nicaise2004}. Budur obtained the weak plane curve case by specialization from one variable \cite[Theorem~1.15]{Budur2015}. The codimension-one faces of the polytopes of Cassou-Nogu\`es--Libgober determine affine hyperplanes contained in $Z(B_{F,0})$ \cite[Theorem~4.1]{CNL2011}; see also \cite{CNL2014}.

Hyperplane arrangements form another important class. Budur--Mustaţă--Teitler proved the weak monodromy conjecture and reduced the one-variable Strong Monodromy Conjecture for reduced arrangements to the $n/d$-conjecture \cite{BMT2011}. Walther subsequently proved the Strong Monodromy Conjecture for tame arrangements, including nonreduced arrangements and, in particular, all multi-arrangements in dimension three \cite[Corollary~5.15 and Remark~5.16]{Walther2017jacobian}. Wu proved Budur's multivariable conjecture for complete factorizations of central arrangements \cite{WuDiagonal2022}. Further results for broad classes of arrangements were obtained by Bath \cite{Bath2023} and, more recently, Davis--Yang \cite{DavisYang2026}.

Melle-Hern\'andez--Torrelli--Veys studied the relation between poles of maximal order, multiplicities of roots of the Bernstein--Sato polynomial, and monodromy Jordan blocks \cite{MelleTorrelliVeys2009}. Nicaise--Xu proved a theorem on poles of maximal order of motivic zeta functions \cite{NicaiseXu2016}. Multivariable motivic versions of the Strong Monodromy Conjecture have also been formulated using other definitions of the motivic polar locus \cite[Conjecture~2.3(ii)]{BvdVWZSurvey2024}. Our motivic result concerns the non-equivariant zeta function, with pole order defined in \Cref{def:motivic-NX-order}.

\subsection{Main results}

We now state the main results of this paper. The first two results concern the multivariable Strong Monodromy Conjecture and the comparison of motivic and topological pole orders for plane curves. The remaining results use residue classes to obtain points and hyperplanes in the Bernstein--Sato zero locus in arbitrary dimension.

\begin{maintheorem}[{=\,\Cref{thm:main-thm-top}}]\label{thm:main-intro}
Let $F=(f_1,\ldots,f_r)$ be a tuple of nonzero nonunit plane curve germs on a smooth complex surface germ. Then
\[
\PL\bigl(Z^{\mathrm{top}}_{F,0}\bigr)\subseteq Z(B_{F,0}).
\]
\end{maintheorem}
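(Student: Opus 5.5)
We fix an embedded log resolution $\pi\colon Y\to(X,0)$ of $h=f_1\cdots f_r$ and use the Denef--Loeser type formula for $Z^{\mathrm{top}}_{F,0}$. Each exceptional component or strict transform $E$ has numerical data $(N_{1,E},\ldots,N_{r,E};\nu_E)$. This presents the zeta function as a sum of terms with denominators $\prod_{E\in I}\bigl(\sum_i N_{i,E}s_i+\nu_E\bigr)$. Every polar hyperplane of $Z^{\mathrm{top}}_{F,0}$ is then of the form $H_E=\{\sum_i N_{i,E}s_i+\nu_E=0\}$ for some $E$. Because $B_{F,0}$ is an ideal with zero locus a finite union of affine hyperplanes (Sabbah, Gyoja, Maisonobe), it suffices to show that $H_E\subseteq Z(B_{F,0})$ whenever $H_E$ is an actual pole. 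Since $Z(B_{F,0})$ is closed, it is enough to show that a Zariski dense set of points of $H_E$ lies in $Z(B_{F,0})$.

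\textbf{Step 1 (strict transforms).} If $E$ is the strict transform of an irreducible branch occurring in some $f_i$, then $H_E$ is a coordinate-type hyperplane. The inclusion $H_E\subseteq Z(B_{F,0})$ follows from the affine restrictions of Budur--van der Veer--Van Werde \cite{BvdVVW2024}, as announced in the introduction.

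\textbf{Step 2 (which exceptional components can give poles).} We show that an exceptional $H_E$ can be a pole only if $E$ is a rupture component, i.e.\ it meets at least three other components. For this we run Veys' cancellation argument \cite{Veys1995} in several variables. For a non-rupture component with one or two neighbours, the relations $\sum_{E'\cdot E\neq0} N_{i,E'} = -E^2 N_{i,E}$ (componentwise in $i$) and $\sum(\nu_{E'}-1)=-E^2\nu_E+\ldots$ make the residue along $H_E$ vanish identically. The mechanism is the same as in one variable, but now linear in $s$. Specializing along lines $s=t\,a$ with $a\in\Z_{>0}^r$ generic reduces this to the one-variable statement for $f_1^{a_1}\cdots f_r^{a_r}$. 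There cancellation happens for all such $a$, and hence identically.

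\textbf{Step 3 (rupture components).} For a rupture component $E$ with $H_E$ an actual pole, we use the arbitrary-dimension result announced in the abstract. If the iterated (here single) twisted Poincar\'e residue class on the stratum $E^\circ$ is nonzero at a parameter $\sigma\in H_E$, then $\sigma\in Z(B_{F,0})$. Concretely, the residue of $\pi^*(F^s\,\omega)$ along $E$ defines a class in the cohomology of $E^\circ$ with coefficients in the rank-one local system of monodromies $\exp(2\pi i(\sum_i N_{i,E'}\sigma_i))$ around the neighbours $E'$. A functional equation $b(s)F^s=P(s)hF^s$ would force this class to be zero whenever $b(\sigma)\neq0$. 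It therefore remains to find a dense set of $\sigma\in H_E$ where the class is nonzero.

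\textbf{Main obstacle.} The hard step is this nonvanishing. Our plan is to apply Blanco's sharp bounds on residue numbers at components adjacent to a rupture component and his nonvanishing theorem for the twisted Poincar\'e residue class \cite{Blanco2026}. We apply them to the one-variable germs $f^a=\prod f_i^{a_i}$, for which $H_E\cap\{s=ta\}$ is the candidate pole $-\nu_E/(\sum a_iN_{i,E})$. The points $\sigma=t_a a$ with $a$ ranging over generic positive integer vectors are Zariski dense in $H_E$. So it suffices to check that the hypotheses of Blanco's nonvanishing statement hold for generic $a$. This requires that the residue numbers $\epsilon_{E'}=\nu_{E'}-\nu_E N_{E'}/N_E$ of the adjacent components are non-integers in the appropriate range, or more generally that the local system is nontrivial. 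We expect the delicate part to be nonreduced entries and common factors. In those cases some $\epsilon_{E'}$ may become integers for special $a$, and one must check that these special $a$ form a proper Zariski closed set. This is exactly where the assumption that $H_E$ is an actual pole, rather than merely a candidate, has to be used. The plan is to use Step 2 to show that non-cancellation of the pole forces the residue class to be nonzero along at least one generic direction $a$. Combining Steps 1--3 then gives $\PL(Z^{\mathrm{top}}_{F,0})\subseteq Z(B_{F,0})$.
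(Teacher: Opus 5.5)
Your architecture agrees with the paper's when a hyperplane is carried by a single component: strict transforms via \cite{BvdVVW2024}, and rupture components via a twisted residue obstruction at $s=-\alpha$, Blanco's input, and Zariski density. There are, however, two genuine gaps.

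\textbf{First gap: the order-two case.} You never group the components that define the same affine hyperplane. Distinct components can satisfy $V(L_A)=V(L_B)=H$, and when two of them meet over the origin the crossing term gives a double pole. Your Step~2 is then false. Take $F=(g)$ with $g=x^2y(y-x^2)$, and let $C_x$ be the strict transform of $(x=0)$. The minimal resolution has $C_x$, $E_1$, $E_2$ with data $(N,\nu)=(2,1),(4,2),(6,3)$, all with candidate $-1/2$. Here $C_x$ meets $E_1$ and $E_1$ meets $E_2$, so $-1/2$ is a pole of order two although $E_1$ has valency two. The cancellation you invoke breaks because $L_D|_H\equiv0$ for a neighbour $D$ in the same group. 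Step~3 also fails for the rupture component $E_2$ in this example: $\epsilon_{E_1}\equiv0$, so Blanco's nonintegrality hypotheses hold at no parameter. Your argument neither selects the strict transform instead nor treats a crossing group containing no strict transform. The paper handles this case by \Cref{thm:direct-double-residue-obstruction}. At a crossing $p$ of two members of $S_H$, one has $\pi^*(F^{-\alpha}\omega)=U\,(dx/x)\wedge(dy/y)$ with $U(0,0)\neq0$ for every rational $\alpha\in W_H^+$. Integrating the pulled-back Green formula over a small torus around $p$ then gives $F^{-\alpha}\notin\D_{X,0}(hF^{-\alpha})$, with no Blanco input.

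\textbf{Second gap: the nonvanishing step.} It rests on an incorrect expectation. The parameters at which some $\epsilon_D$ is integral need not form a proper closed subset of $H$. Restrictions of $\epsilon_D$ to $W_H$ are often constant (see \eqref{eq:pc004-residues}, where $\epsilon_4\equiv\tfrac12$), a constant value $\pm1$ persists on all of $H$, and for $r=1$ there is no genericity at all.

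Moreover, being a pole is a property of $C_{-1}(H)=\sum_{A\in S_H}R_{A,H}/\nu_A$, not of an arbitrary rupture component defining $H$. Consider a valency-three rupture component with no neighbour in $S_H$ and one adjacent residue identically $1$. By the balance identity the other two residues are opposite, so $R_{E,H}\equiv0$, and its residue class vanishes at generic $\alpha$.

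The missing argument is the paper's:
\begin{enumerate}
\item Choose $E\in S_H$ with $R_{E,H}\neq0$ in $K(H)$.
\item Restrict to a dense set of $\alpha$ where $R_{E,H}(-\alpha)=2-d_E+\sum_D\epsilon_D(\alpha)^{-1}\neq0$ and every nonconstant $\epsilon_D$ is nonintegral.
\item Use Blanco's bounds $-1\le\epsilon_D\le1$ together with $\sum_D\epsilon_D=d_E-2$ to exclude a residue equal to $-1$: it forces all other residues to equal $1$, whence $R_{E,H}(-\alpha)=0$.
\item By the same constraints, exclude having fewer than three nonintegral residues.
\item Fill in the residues equal to $1$, using the injectivity of \Cref{prop:residue-one-injection}.
\end{enumerate}
Only then do Blanco's hypotheses hold, and they hold on a Zariski-dense set of rational $\alpha\in W_H^+$. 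Nonvanishing along a single generic direction, as your last step proposes, would not suffice for the density argument.
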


This proves the Topological Multivariable Strong Monodromy Conjecture for plane curves, with no reducedness or independence assumption on the entries of $F$.

The second result concerns the non-equivariant local motivic zeta function considered in \Cref{sec:motivic-comparison}. Its resolution formula uses the Grothendieck classes of the strata, without a $\widehat\mu$-action or monodromic covers. We extend the definition of motivic pole order of Nicaise--Xu \cite[Remark~3.7]{NicaiseXu2016} to rational affine hyperplanes in several variables and compare this order with the generic pole order of the topological zeta function. Throughout the paper, the term motivic zeta function refers to this non-equivariant version.

\begin{maintheorem}[{=\,\Cref{thm:plane-motivic-comparison}\,+\,\Cref{cor:plane-motivic-smc}}]\label{thm:motivic-intro}
Let $F=(f_1,\ldots,f_r)$ be a tuple of nonzero nonunit plane curve germs on a smooth complex surface germ. For every rational affine hyperplane $H\subseteq\C^r$,
\[
\ord_H^{\mathrm{mot}}Z^{\mathrm{mot}}_{F,0}=\ord_HZ^{\mathrm{top}}_{F,0}.
\]
Consequently,
\[
\PL_{\mathrm{mot}}(Z^{\mathrm{mot}}_{F,0})=\PL(Z^{\mathrm{top}}_{F,0})\subseteq Z(B_{F,0}).
\]
\end{maintheorem}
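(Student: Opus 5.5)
We will prove the equality of pole orders by comparing the two resolution formulas on a common embedded log resolution $\pi\colon Y\to(X,0)$ of $h=f_1\cdots f_r$. We use the standard numerical data $N_{E}=(N_{1,E},\ldots,N_{r,E})$ and $\nu_E$, and write $L_E(s)=\sum_i N_{i,E}s_i+\nu_E$.

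\textbf{Step 1: reduction to strata.} The non-equivariant motivic zeta function is
\[
Z^{\mathrm{mot}}_{F,0}=\sum_{I}[E_I^\circ\cap\pi^{-1}(0)]\prod_{E\in I}\frac{(\Lef-1)\Lef^{-L_E(s)}}{1-\Lef^{-L_E(s)}},
\]
and the topological zeta function is its Euler characteristic specialization $\sum_I\chi(E_I^\circ\cap\pi^{-1}(0))\prod_{E\in I}L_E(s)^{-1}$. Fix a rational affine hyperplane $H$. Only strata with at least one divisor satisfying $L_E^{-1}(0)=H$ contribute. Since $\dim X=2$, the orders are at most $2$, and we treat orders $2$ and $1$ separately. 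Here we use the multivariable Nicaise--Xu order of \Cref{def:motivic-NX-order}: the order is measured by the leading coefficient of the Laurent expansion along $H$, with the relevant coefficient lying in the localized Grothendieck ring.

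\textbf{Step 2: inequality $\ord^{\mathrm{top}}_H\le\ord^{\mathrm{mot}}_H$.} Specializing via $\chi$ maps the leading motivic coefficient to the topological one whenever the latter is nonzero. So a nonvanishing topological coefficient forces the motivic coefficient to be nonzero. We expect this argument to be general, since it is the algebraic-germ statement announced in the abstract. Hence it suffices to prove the reverse inequality for plane curves.

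\textbf{Step 3: order $2$.} A pole of order $2$ along $H$ comes from points $E\cap E'$ with $L_E,L_{E'}$ both cutting out $H$. The motivic coefficient is then a sum of classes of points, so it is a positive integer multiple of a class, and its $\chi$ is the number of such points, which is nonzero. Thus the two orders are maximal simultaneously, and in that case they agree.

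\textbf{Step 4: order $1$, the main obstacle.} Here the coefficient is
\[
\sum_{E:\,L_E^{-1}(0)=H}\Bigl([E^\circ]+\sum_{E'\cap E\neq\emptyset}\text{residue terms}\Bigr),
\]
and in the plane curve case every $E^\circ$ is $\mathbb P^1$ minus points. Its class is therefore $\Lef+1-k$, with $\chi=2-k$. The danger is a motivic coefficient that is nonzero while the topological coefficient vanishes. We plan to show that in the plane case each contribution is a polynomial in $\Lef$ and in expressions $(\Lef-1)/(\Lef^{a}-1)$ restricted to $H$. After restricting to a generic line through $H$, specialization to one variable should reduce the vanishing criterion to Veys' cancellation analysis, using the twisted Poincar\'e residues and Blanco's adjacency bounds. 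Concretely, we will show that the order-one coefficient vanishes motivically if and only if the twisted residue class on the corresponding rupture components vanishes. By the topological-side analysis underlying \Cref{thm:main-thm-top}, that same condition is equivalent to topological vanishing. Finally, the inclusion $\PL_{\mathrm{mot}}=\PL\subseteq Z(B_{F,0})$ follows from the equality of orders together with \Cref{thm:main-intro}.
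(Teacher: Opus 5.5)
\textbf{Verdict.} There is a genuine gap in Step~4: the proposal does not establish the upper bound $\ord_H^{\mathrm{mot}}\motzeta\le\ord_HZ^{\mathrm{top}}_{F,0}$. The hard case is not order one. If $\ord_HZ^{\mathrm{top}}_{F,0}=1$, then no two components of $S_H$ meet in the fibre, so every term of \eqref{eq:motivic-resolution} already has at most one $H$-denominator. The hard case is $S_H\neq\varnothing$ while $H$ is \emph{not} a topological pole. Under \Cref{def:motivic-NX-order}, proving $\ord_H^{\mathrm{mot}}=0$ means exhibiting $\motzeta\in\mathscr P_{H,\le0}$, that is, writing down a presentation with no denominator defining $H$. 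There is no ``Laurent expansion along $H$'' with a leading coefficient in $\Mmot$ that you could test for vanishing: $\mathbf T$ cannot be restricted to $H$, and the paper avoids any such structure because the Grothendieck ring is not a domain. So ``the order-one coefficient vanishes motivically'' has no meaning here.

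Your proposed tools also point the wrong way.
\begin{itemize}
\item Specializing to a line or ray transports \emph{upper} bounds on motivic order only from several variables to one, since a multivariable presentation specializes. It never goes back. It can serve the lower bound (this is how the paper uses it), but not this bound.
\item Twisted Poincar\'e residues and Blanco's adjacency bounds feed the $\D$-module obstruction $u_\alpha\notin\D_{X,0}(hu_\alpha)$ at individual points $\alpha$. They say nothing about cancelling a factor $1-\Lef^{-\nu_E}\mathbf T^{N_E}$.
\item The equivalence you attribute to \Cref{thm:main-thm-top} is not proved there. That proof gives only ``pole $\Rightarrow$ nonzero residue class'' at dense points. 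Step~4 also ignores strict transforms.
\end{itemize}

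\textbf{What is missing.} The paper handles this case on the \emph{minimal} resolution. On an arbitrary common resolution, blowing up a point of a valency-two curve raises its valency to three, so any valency-based classification breaks. The argument has three steps.
\begin{enumerate}
\item[(i)] $H$ is a topological pole if and only if it is defined by a strict transform or a rupture component. One direction comes from \Cref{thm:grouped-wall-dichotomy}. The other uses a positive integral ray separating all resolution hyperplanes, together with Veys's scalar classification.
\item[(ii)] \Cref{lem:motivic-hyperplane-propagation} follows from \eqref{eq:affine-neighbour-identity} restricted to $H$ and the tree structure. So when $H$ is not a pole, every member of $S_H$ is an exceptional curve of valency one or two, the members are pairwise non-adjacent, and none has a neighbour in $S_H$.
\item[(iii)] \Cref{thm:motivic-nonrupture-cancellation} gives explicit identities. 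Using $[E^\circ]=\Lef+1-d_E$ and $\prod_D q_D=\Lef^{2-d_E}q_E^{a}$ with $a=-E^2$ from \Cref{lem:exceptional-neighbour-identities}, the numerator of the star $Z_E^\star$ contains $1-q_E^a=(1-q_E)(1+\cdots+q_E^{a-1})$. These are polynomial identities, so they hold in $\Mmot$ whatever its zero divisors. The stars are disjoint, so substituting all of them gives the required element of $\mathscr P_{H,\le0}$ (\Cref{prop:motivic-denominator-deletion}).
\end{enumerate}

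\textbf{Step 2 needs rephrasing.} It must apply to arbitrary presentations, not to a ``leading coefficient.'' Apply $E_\Delta$, put $u=e^t$ and $T_i=e^{-2ts_i}$, and use $1-e^{2t\delta}=-2t\delta\,U_\delta(t)$ with $U_\delta$ a unit. Then any presentation with at most $q$ $H$-denominators per product has $t$-constant term $Z^{\mathrm{top}}_{F,0}$ of order at most $q$ along $H$, as in \Cref{thm:higher-motivic-order-bounds}. With Step~2 in this form, your Step~3 goes through.
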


The equality concerns generic pole orders along affine hyperplanes. It does not compare these orders with scheme-theoretic multiplicities of components of the Bernstein--Sato locus.

The residue criterion also applies in arbitrary dimension. Let $(X,0)$ be a smooth complex germ of dimension $n$, let $F=(f_1,\ldots,f_r)$ be a tuple of nonzero nonunit holomorphic germs, and put $h=\prod_i f_i$. Let $\pi:Y\to X$ be a log resolution of the reduced support of $(h=0)$, and let $I=(a_1,\ldots,a_k)$ be a nonempty ordered tuple of distinct indices of components of the reduced total transform. For $\alpha\in\Q_{>0}^r$ satisfying $N_{a_j}\cdot\alpha=\nu_{a_j}$ for every $j$, the normal monodromies along these components are trivial. The iterated residue then has coefficients in a rank-one local system $\Lcal_{I,\alpha}$ on $D_I^\circ$. Let $\omega$ be a nowhere vanishing holomorphic top form near $0\in X$.

\begin{maintheorem}[{=\,\Cref{thm:iterated-residue-obstruction}}]\label{thm:iterated-intro}
If
\[
[\Res_I\pi^*(F^{-\alpha}\omega)]\ne0
\quad\text{in}\quad
H^{n-k}_{\mathrm{dR}}(D_I^\circ,\Lcal_{I,\alpha}),
\]
then
\[
F^{-\alpha}\notin\D_{X,0}(hF^{-\alpha})
\quad\text{and}\quad
-\alpha\in Z(B_{F,0}).
\]
\end{maintheorem}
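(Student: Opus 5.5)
We argue by contradiction. Suppose $F^{-\alpha}\in\D_{X,0}(hF^{-\alpha})$, so there is a holomorphic differential operator $P$ near $0$ with $F^{-\alpha}=P\cdot(hF^{-\alpha})$ as multivalued functions on the complement of $(h=0)$. The strategy is to turn this identity into an equality of twisted de Rham classes on $X\setminus(h=0)$, pull it back to $Y$, and then apply $\Res_I$. The left side will produce the given nonzero class, while the right side will produce zero. The second conclusion is then formal. If $-\alpha\notin Z(B_{F,0})$, pick $b\in B_{F,0}$ with $b(-\alpha)\neq0$ and a functional equation $b(s)F^s=P(s)hF^s$. Specializing at $s=-\alpha$ gives $F^{-\alpha}\in\D_{X,0}(hF^{-\alpha})$, because $F^s\mapsto F^{-\alpha}$ defines a $\D$-linear specialization map $\D[s]F^s\to\D F^{-\alpha}$ to multivalued functions. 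So it is enough to prove the first conclusion.

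\textbf{Step 1: integration by parts.} Write $P=\sum_\beta a_\beta\partial^\beta$ in local coordinates. Let $\nabla=d-\sum_i\alpha_i\,df_i/f_i$ be the twisted connection on $U=X\setminus(h=0)$, whose flat sections are multiples of $F^{-\alpha}$. For a section $g$ of $\mathcal O_X(*h)$ one has
\[
\partial_j(gF^{-\alpha})\,\omega=\bigl(\text{$\nabla$-exact $n$-form}\bigr)\cdot F^{-\alpha}.
\]
More precisely, $\partial_j(\phi)\omega=d(\phi\,\iota_{\partial_j}\omega)$ for $\phi=gF^{-\alpha}$. Iterating this, the identity $F^{-\alpha}=P(hF^{-\alpha})$ yields
\[
F^{-\alpha}\omega=d\eta \quad\text{with}\quad \eta=F^{-\alpha}\theta,
\]
where $\theta$ is an $(n-1)$-form on $U$ whose coefficients lie in the $\mathcal O$-span of $h\cdot(\text{derivatives of }F\text{-logarithmic terms})$. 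So $\theta$ is meromorphic along $h$. The key point is to track its pole order: each derivative applied to $hF^{-\alpha}$ lowers the $f$-exponents by at most one. Hence $\theta$ has at worst the poles of $h\cdot h^{-m}$, where $m$ is the order of $P$, multiplied by holomorphic data.

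\textbf{Step 2: pullback and residue.} Pull back along $\pi$. The form $\pi^*(F^{-\alpha}\omega)$ has, along each $E_{a_j}$, the exponent $\nu_{a_j}-1-N_{a_j}\cdot\alpha=-1$. This makes it log-type along the $E_{a_j}$ with trivial monodromy, which is why $\Res_I$ is defined. The residue map $\Res_I$ is defined on the twisted logarithmic de Rham complex and commutes with the differential up to sign. Therefore, if we can show that $\pi^*\eta$ lies in the twisted logarithmic complex along $D_I$ near $D_I^\circ$, then $\Res_I\pi^*(F^{-\alpha}\omega)=\pm\nabla\Res_I(\pi^*\eta)$. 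That is an exact class in $H^{n-k}_{\mathrm{dR}}(D_I^\circ,\Lcal_{I,\alpha})$, which contradicts the hypothesis.

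\textbf{Main obstacle.} The hard step is that $\pi^*\eta$ need not be logarithmic, since $P$ can introduce high-order poles. My first attempt would avoid a pole-order bound on $\eta$ altogether. Instead, I would work cohomologically with the twisted meromorphic de Rham complex $(\Omega^\bullet_Y(*D),\nabla)$, which is quasi-isomorphic to the logarithmic complex on the relevant stratum. This is a Deligne-type comparison, valid when no exponent along a component of $D_I$ is a positive integer obstruction. In our setting the residues of the connection along $E_{a_j}$ are $0$, so the comparison holds locally near $D_I^\circ$ after restricting to a neighborhood where only the $E_{a_j}$ meet. Through this comparison, iterated residue becomes a well-defined map on meromorphic twisted cohomology of a punctured tubular neighborhood. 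Exact meromorphic forms then map to exact classes.

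There is a remaining subtlety at the components not in $I$ along which $\eta$ has poles. I would handle this by working on $D_I^\circ$, where those components are absent. I would also use that $\Res_I$ factors through cohomology of $\pi^{-1}(B)\setminus\pi^{-1}(h=0)$ for a small ball $B$, with the germ identity holding on $B$.
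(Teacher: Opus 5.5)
\textbf{The gap is in Step 1.} Integration by parts does not make $P(hF^{-\alpha})\omega$ $\nabla$-exact. The identity $\partial_j(\phi)\omega=d(\phi\,\iota_{\partial_j}\omega)$ only absorbs terms that still carry a derivative. Commuting the coefficients $a_\beta$ past the derivatives produces lower-order terms, so a zeroth-order term always survives.

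The correct statement is the Green formula $P(g)\omega=g\,P^\dagger\omega+\nabla\eta_P(g,\omega)$, with $g=hF^{-\alpha}$ and $P^\dagger\omega$ the holomorphic top form given by the right $\D_X$-action. The remainder $hF^{-\alpha}P^\dagger\omega$ is not exact in general. For example, with $P=1$, $F=(x)$ on $(\C,0)$ and $\alpha=2$, it is $dx/x$. So your identity $F^{-\alpha}\omega=d\eta$ is unjustified, and so is $\Res_I\pi^*(F^{-\alpha}\omega)=\pm\nabla\Res_I(\pi^*\eta)$.

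What is missing is the argument that this remainder has zero residue. Under $N_{a_j}\cdot\alpha=\nu_{a_j}$, for every holomorphic top form $\theta$ the pullback $\pi^*(hF^{-\alpha}\theta)$ is, in a flat frame near $D_I^\circ$, equal to $\prod_j x_j^{n_{a_j}-1}$ times a holomorphic form. Since $n_{a_j}=N_{a_j}\cdot\one\ge1$, it has no pole along the $D_{a_j}$, and $\Res_I$ kills it. This is exactly where the factor $h$ is used. In your write-up $h$ is only carried along inside $\theta$ and plays no role, which is the symptom of the dropped term. After the correction, $\nabla\pi^*\eta=\pi^*(F^{-\alpha}\omega)-\pi^*(hF^{-\alpha}P^\dagger\omega)$ is still logarithmic along $D_I$, which is all Step 2 needs.

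\textbf{Step 2 is a valid alternative route, once Step 1 is repaired.} The paper never compares complexes. It integrates $\nabla\pi^*\eta$ over iterated tube cycles $q_I^!\gamma$, which lie in the complement where $\eta$ is defined, so twisted Stokes gives zero. The iterated tube formula $\int_{q_I^!\gamma}\Theta=(2\pi i)^k\int_\gamma\Res_I\Theta$ and the perfect pairing on the finite-CW stratum then force $[\Res_I\Theta]=0$.

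Your route works differently:
\begin{itemize}
\item In a flat frame the connection has residue $0$ along the $D_{a_j}$.
\item Hence Deligne's comparison $\Omega^\bullet(\log D_I)\otimes\Lcal\to\Omega^\bullet(*D_I)\otimes\Lcal$ is a quasi-isomorphism on a neighbourhood of $D_I^\circ$ avoiding the other components.
\item $\Res_I$ is the projection onto $\mathrm{Gr}^W_k$, which is a morphism of complexes.
\item A closed logarithmic form that is $\nabla$ of a meromorphic section therefore has zero residue class in hypercohomology.
\end{itemize}
This avoids the orientation and sign bookkeeping for tubes. The cost is importing Deligne's theorem and identifying the hypercohomology of the holomorphic twisted de Rham complex of $D_I^\circ$ with $H^{n-k}_{\mathrm{dR}}(D_I^\circ,\Lcal_{I,\alpha})$.

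Both routes need only that $\eta$ be meromorphic of finite order and that $\nabla\pi^*\eta$ be logarithmic. Your deduction of $-\alpha\in Z(B_{F,0})$ from the first conclusion is the same specialization argument as in the paper.
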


For a pole of order $n$, the nonzero local residue at an intersection of $n$ components gives the following inclusion.

\begin{maintheorem}[{=\,\Cref{thm:maximal-order-hyperplanes}}]\label{thm:maximal-intro}
Let $\dim X=n$, and let $H$ be an irreducible component of the polar locus of $Z^{\mathrm{top}}_{F,0}$. If the generic pole order along $H$ is $n$, then
\[
H\subseteq Z(B_{F,0}).
\]
\end{maintheorem}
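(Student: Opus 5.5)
We will deduce the statement from \Cref{thm:iterated-intro}, applied with $k=n$, together with the fact that $Z(B_{F,0})$ is Zariski closed. The plan is to show that a Zariski-dense set of rational points $-\alpha$ on $H$ carries a nonzero top-dimensional iterated residue. The inclusion $H\subseteq Z(B_{F,0})$ then follows. Here $Z(B_{F,0})$ is an algebraic subset of $\C^r$, and it is in fact a finite union of rational affine subspaces by Sabbah/Gyoja.

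\textbf{Step 1: where a pole of order $n$ comes from.} We fix a log resolution $\pi$ and write the Denef--Loeser resolution formula
\[
Z^{\mathrm{top}}_{F,0}(s)=\sum_{I}\chi(D_I^\circ\cap\pi^{-1}(0))\prod_{a\in I}\frac{1}{N_a\cdot s+\nu_a},
\]
where each summand has at most $|I|\le n$ linear factors. Suppose the generic order along $H=\{N\cdot s+\nu=0\}$ is $n$. Then, after cancellation, the coefficient of the $n$-fold pole comes only from those $I$ with $|I|=n$ for which every $a\in I$ satisfies $(N_a,\nu_a)\in\Q_{>0}(N,\nu)$. These are the $0$-dimensional strata, i.e. points $p$ with $D_I^\circ=\{p\}$. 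Their contributions are
\[
\frac{\#\{\text{such points}\}}{\prod_{a\in I}\lambda_a}\,(N\cdot s+\nu)^{-n},
\]
with $\lambda_a>0$. These all have the same sign, so no cancellation is possible. Hence at least one such point stratum exists with all $a_j\in I$ proportional to $H$.

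\textbf{Step 2: compute the residue.} Take a point $-\alpha\in H$ with $\alpha\in\Q^r_{>0}$, or more generally any point on $H$ and by continuation. Then $N_{a_j}\cdot\alpha=\nu_{a_j}$ for all $j$, because the data are proportional. At $p$ we have local coordinates $y_1,\dots,y_n$ with
\[
\pi^*(F^{-\alpha}\omega)=u\cdot\prod_j y_j^{-N_{a_j}\cdot\alpha+\nu_{a_j}-1}\,dy_1\wedge\cdots\wedge dy_n=u\,\frac{dy_1\cdots dy_n}{y_1\cdots y_n},
\]
where $u$ is a unit multivalued only through the other components, which do not pass through $p$. The iterated residue is therefore $u(p)\ne0$ in $H^0(\{p\},\Lcal_{I,\alpha})$. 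By \Cref{thm:iterated-intro}, $-\alpha\in Z(B_{F,0})$.

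\textbf{Step 3: density.} The points of $H\cap(-\Q_{>0}^r)$ must be Zariski dense in $H$. Positivity of the numerical data gives that $H$ meets the open negative orthant in a nonempty open subset, since $N\in\Q_{\ge0}^r\setminus0$ and $\nu>0$. The rational points of this open subset are dense in $H$, so closedness gives $H\subseteq Z(B_{F,0})$.

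\textbf{Main obstacle.} The main obstacle is Step 1. The notion of generic pole order is taken after cancellation in the multivariable rational function. We must make sure that other strata cannot contribute order-$n$ terms along $H$ with opposite sign. The sign argument above handles this, but it needs care when several distinct $(N_a,\nu_a)$ are proportional to $(N,\nu)$ with different scalars.

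A secondary issue in Step 3 is the hypothesis $\alpha\in\Q_{>0}^r$ in \Cref{thm:iterated-intro}. If some coordinate of $N$ vanishes, we must check that positive rational points on $H$ still exist. They do, since $\nu>0$ lets us scale.
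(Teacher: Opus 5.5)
Your proposal is correct and follows the paper's proof essentially step for step. The paper likewise identifies the $L_H^{-n}$ coefficient as a sum over zero-dimensional strata $I\subseteq S_H$ with nonnegative summands $\#(D_I^\circ\cap\pi^{-1}(0))/\prod_{a\in I}\nu_a$, so your worry about different proportionality scalars is moot, since they are all $\nu_a/\nu>0$. It then concludes using rational points of the positive part of the dual hyperplane and Zariski closedness, as you do. The only cosmetic difference is that the paper does not cite the iterated-residue theorem with $k=n$. Instead it reruns that argument directly at $p$: it computes $\frac{1}{(2\pi i)^n}\int_{T_\varepsilon}\pi^*(u_\alpha\omega)=U(0)\ne0$ by Cauchy's formula and shows that both terms of the pulled-back Green formula integrate to zero over the torus.
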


In the algebraic setting, let $c_H(\pi)$ denote the largest number of components defining $H$ that meet at a point of $\pi^{-1}(0)$, with value zero when no such component meets the fibre. Then
\begin{equation}\label{eq:higher-motivic-intro}
0\leq\ord_HZ^{\mathrm{top}}_{F,0}\leq\ord_H^{\mathrm{mot}}Z^{\mathrm{mot}}_{F,0}\leq c_H(\pi)\leq n.
\end{equation}
Moreover, $\ord_H^{\mathrm{mot}}Z^{\mathrm{mot}}_{F,0}=n$ if and only if $\ord_HZ^{\mathrm{top}}_{F,0}=n$. Below maximal order, the two orders need not agree. The examples in \Cref{sec:higher-obstructions} show that the two orders can differ in dimension three and that the two polar loci can differ in dimension five.

\subsection{Outline of the proofs}

\begin{figure}[t]
\centering

\definecolor{flowviolet}{RGB}{112,48,116}
\definecolor{flowvioletfill}{RGB}{250,246,250}
\definecolor{flowblue}{RGB}{63,137,165}
\definecolor{flowbluefill}{RGB}{245,250,252}
\definecolor{flowgold}{RGB}{205,121,22}
\definecolor{flowgoldfill}{RGB}{255,252,225}
\definecolor{flowpanel}{RGB}{247,247,247}

\begin{tikzpicture}[
    scale=0.92,
    transform shape,
    >=Latex,
    line/.style={->, draw=gray!70, line width=0.55pt},
    box/.style={
        rounded corners=2pt,
        draw=flowviolet,
        fill=flowvioletfill,
        line width=0.7pt,
        align=center,
        inner xsep=4pt,
        inner ysep=4pt,
        font=\scriptsize,
        text width=3.15cm
    },
    inputbox/.style={
        rounded corners=2pt,
        draw=flowblue,
        fill=flowbluefill,
        line width=0.7pt,
        align=center,
        inner xsep=4pt,
        inner ysep=4pt,
        font=\scriptsize,
        text width=3.0cm
    },
    mainbox/.style={
        rounded corners=3pt,
        draw=flowgold,
        fill=flowgoldfill,
        line width=1pt,
        align=center,
        inner xsep=6pt,
        inner ysep=6pt,
        font=\small,
        text width=6.2cm
    },
    paneltitle/.style={
        font=\small\bfseries,
        text=black!65,
        align=center
    }
]

\node[box] (laurent) at (0,0)
{\mbox{Laurent expansion}\\
(\figthm{thm:grouped-laurent})};

\node[box, below=5mm of laurent] (dichotomy)
{Pole dichotomy\\
(\figthm{thm:grouped-wall-dichotomy})};

\node[box, below left=8mm and 10mm of dichotomy] (crossing)
{Crossing case:\\
\mbox{two components} of $S_H$ meet\\
(\figthm{thm:same-wall-positivity})};

\node[box, below=8mm of dichotomy] (selected)
{No crossing:\\
choose $A\in S_H$ with $R_{A,H}\neq 0$\\
and classify $A$\\
(\figthm{thm:selected-component-classification})};

\node[box, below right=5mm and 10mm of dichotomy] (ruptureprep)
{Rupture case:\\
rational density, scalarization\\
(\figlem{lem:selected-rupture-density},
\figlem{lem:blanco-balance},
\figprop{prop:standard-residue-bounds})};

\node[box, below=6mm of crossing] (double)
{Double residue obstruction\\
(\figthm{thm:direct-double-residue-obstruction})};

\node[box, below = 6mm of selected] (strict)
{Strict-transform case\\
(\figthm{thm:strict-transform-wall})};

\node[box, below=6mm of ruptureprep] (blanco)
{Nonvanishing of residue class\\
(\mbox{\figprop{prop:selected-numerical-exhaustion},
\figthm{thm:blanco-residue-class},}
\figprop{prop:residue-one-injection})};

\node[box, below=6mm of blanco] (residue)
{Residue obstruction\\
(\figthm{thm:direct-residue-obstruction})};

\node[box, below=6mm of residue] (direct)
{Rational point detection\\
(\figprop{prop:affine-point-lifting})};

\node[box, below=6mm of double] (lift1) 
{Rational density\\
(\figlem{lem:bounded-polytope-density},
\figthm{thm:dense-wall-lifting})};

\node[box, below=6mm of direct] (lift3)
{Rational density\\
(\figthm{thm:dense-wall-lifting})};

\node[mainbox, below  = 26mm of strict, xshift=-21mm] (main)
{\textbf{\hyperref[thm:main-intro]{THEOREM A}}
$(=\hyperref[thm:main-thm-top]{\text{Thm.~\ref*{thm:main-thm-top}}})$\\[2pt]
$\displaystyle \PL\bigl(Z^{\mathrm{top}}_{F,0}\bigr)\subseteq Z(B_{F,0})$\\[2pt]
for arbitrary plane curve tuples};

\draw[line] (laurent) -- (dichotomy);
\draw[line] (dichotomy) -- (selected);
\draw[line] (crossing) -- (double);
\draw[line] (double) -- (lift1);
\draw[line] (selected) -- (strict);
\draw[line] (selected.east) -- (ruptureprep.west |- selected.east);
\draw[line] (ruptureprep) -- (blanco);
\draw[line] (blanco) -- (residue);
\draw[line] (residue) -- (direct);
\draw[line] (direct) -- (lift3);
\draw[line] (strict.south) -- (strict |- main.north);
\draw[line] (dichotomy.west) -| (crossing.north);
\draw[line] (lift1.south) -- (main.north -| lift1.south);
\draw[line] (lift3.west) -| (main.south);

\begin{scope}[on background layer]
\node[
    fill=flowpanel,
    rounded corners=6pt,
    fit=(laurent)(dichotomy)(crossing)(selected)(ruptureprep),
    inner sep=5mm
] (panelA) {};

\node[
    fill=flowpanel,
    rounded corners=6pt,
    fit=(double)(strict)(blanco)(residue)(direct),
    inner sep=5mm
] (panelB) {};

\node[
    fill=flowpanel,
    rounded corners=6pt,
    fit=(lift1)(lift3),
    inner sep=5mm
] (panelC) {};
\end{scope}

\end{tikzpicture}

\caption{Logical structure of the proof of \Cref{thm:main-intro}.}
\label{fig:plane-smc-proof}
\end{figure}

The logical structure of the proof of \Cref{thm:main-intro} is illustrated in \Cref{fig:plane-smc-proof}. We first group the irreducible components of the total transform according to the affine hyperplane they define. If two such components meet over the origin, their crossing gives a pole of order two and a nonzero double residue. Otherwise, an actual pole is simple and has a nonzero contribution from a strict transform or a rupture component. Strict transforms are treated by the known Bernstein--Sato inclusion. For rupture components, scalarization allows us to apply Blanco's residue bounds and nonvanishing theorem. In the crossing and rupture cases, the Green formula and tube integration prove $F^{-\alpha}\notin\D_{X,0}(hF^{-\alpha})$ at suitable rational parameters. Evaluation of the Bernstein--Sato equation at $s=-\alpha$, followed by rational density, gives the required hyperplane inclusion.

The motivic comparison uses cancellation identities for exceptional components of valency one or two, together with the analysis of chains of valency-two components. For each remaining hyperplane, we first fix a finite expression with a bound on the number of denominators defining it in each term. We then choose a positive integral specialization avoiding coincidences with denominators defining other hyperplanes. A motivic order smaller than the topological order would then contradict the one-variable plane curve comparison, obtained from Veys's pole classification and Hodge--Deligne realization.

In arbitrary dimension, the Green formula and iterated tube formula give the same implication from a nonzero residue class. At pole order $n$, the highest Laurent coefficient is a positive sum over zero-dimensional intersections of $n$ components. A nonzero local torus residue at one of these points gives the corresponding hyperplane inclusion in $Z(B_{F,0})$. Hodge--Deligne realization gives the general inequality between topological and motivic orders.

\subsection*{Organization of the paper}

\Cref{sec:setup} introduces the notation and studies the Laurent expansion along an affine hyperplane arising from the resolution data. \Cref{sec:cyclic-quotient} relates rational powers to the Bernstein--Sato ideal and proves the Green formula used later. \Cref{sec:residue-obstructions} establishes the residue obstructions on surfaces. \Cref{sec:main-theorem} combines these results with Blanco's plane curve input to prove \Cref{thm:main-intro}. \Cref{sec:motivic-comparison} defines the non-equivariant motivic order and proves \Cref{thm:motivic-intro}. \Cref{sec:higher-obstructions} develops the iterated-residue obstruction in arbitrary dimension, proves \Cref{thm:iterated-intro,thm:maximal-intro}, and gives the examples showing failure of the reverse comparison below maximal order. Finally, \Cref{sec:further-questions} summarizes the remaining higher-dimensional problems.

\subsubsection*{Acknowledgment}

The author is supported by NSFC (Grant No.~12601116), BJNSF (Grant No.~1264052) and a Beijing municipal research program for returned overseas scholars. He thanks Uli Walther for many helpful conversations on $D$-modules and related topics during his PhD studies at Purdue University. LLM tools were used for literature organization, language polishing, and auxiliary proof checking.

\makeatletter
\let\@tocwrite\introduction@tocwrite
\makeatother


\section{Preliminaries and polar hyperplanes}\label{sec:setup}

Let $(X,0)$ be a smooth complex surface germ, let $\mathcal O_{X,0}$ denote the local ring of holomorphic function germs at $0$, and let $\D_{X,0}$ denote the ring of holomorphic differential operators on $(X,0)$. Throughout,
\[
 F=(f_1,\ldots,f_r)
\]
is a tuple of arbitrary nonzero nonunit germs with $f_i(0)=0$. Put
\[
 h=f_1\cdots f_r.
\]
Let $s=(s_1,\ldots,s_r)$, and write $F^s=f_1^{s_1}\cdots f_r^{s_r}$ for the corresponding formal symbol.

\subsection{The Bernstein--Sato ideal} 

 On the free rank-one $\mathcal O_{X,0}[h^{-1},s_1,\ldots,s_r]$-module generated by the symbol $F^s$, vector fields act by
\[
\xi(gF^s)=\left(\xi(g)+g\sum_{i=1}^r s_i\frac{\xi(f_i)}{f_i}\right)F^s.
\]
The multivariable Bernstein--Sato ideal is the ideal
\begin{equation}\label{eq:bernstein-ideal}
B_{F,0}=\left\{b(s)\in\C[s_1,\ldots,s_r]\;\middle|\; b(s)F^s=P(s)hF^s \text{ for some }P(s)\in\D_{X,0}[s_1,\ldots,s_r]\right\}.
\end{equation}
When $r=1$, $B_{F,0}\subseteq\C[s]$ is principal and its monic generator is the usual Bernstein--Sato polynomial (aka $b$-function). Functional equations for several analytic functions were developed by Sabbah and Gyoja \cite{Sabbah1987,Gyoja1993}; for later structural results on multivariable Bernstein--Sato ideals and their zero loci, see also \cite{Maisonobe2023,Budur2015,BvdVWZ2021}.

\subsection{Resolution data and topological zeta function}

Let
\[
 \pi:Y\to  X
\]
be the minimal embedded log resolution of the reduced support of $(h=0)$. Minimality will be used later only when applying the sharp results for plane curves; all formulas in this section hold on any embedded log resolution. Write
\[
    D = \bigl(\pi^{-1}(h^{-1}(0))\bigr)_{\mathrm{red}} = \bigcup_{A\in J}A,
\]
where $J$ contains both exceptional components and strict transforms. For $A\in J$, set
\begin{gather}\label{eq:resolution-data}
    \begin{aligned}
        N_{A,i}=\ord_A(f_i\circ\pi), \quad N_A=(N_{A,1},\ldots,N_{A,r}),\\
        n_A=N_A\cdot\one = \sum_{i=1}^r N_{A,i}, \quad \nu_A=1+\ord_A(K_{Y/X}).
    \end{aligned}
\end{gather}
and write
\begin{equation}\label{eq:candidate-form}
 L_A(s)=N_A\cdot s+\nu_A.
\end{equation}
Here, $K_{Y/X}=K_Y-\pi^*K_X$ is the relative canonical divisor. For a strict transform $C$, one has $\nu_C=1$. If $A$ is exceptional over $0$, then $N_{A,i}>0$ for every $i$, because its divisorial valuation is centred at the maximal ideal. If $C$ is the strict transform of a reduced branch $g$, then
\[
 N_C=\bigl(\ord_g(f_1),\ldots,\ord_g(f_r)\bigr).
\]
Thus there is one strict-transform component for each branch of the reduced support, while arbitrary multiplicities and occurrences in several entries are recorded in its full order vector.

For $I\subseteq J$, define the normal crossing stratum
\[
    D_{I}^\circ = \left(\bigcap_{A\in I}A\right) \setminus \left(\bigcup_{B\notin I}B\right).
\]
The local multivariable topological zeta function is the fibre-localized version of the resolution expression in \cite[\S1.3]{Budur2015}, namely
\begin{equation}\label{eq:top-zeta}
    Z^{\mathrm{top}}_{F,0}(s) = \sum_{I\subseteq J} \chi\bigl( D_{I}^\circ\cap\pi^{-1}(0)\bigr) \prod_{A\in I}\frac{1}{L_A(s)},
\end{equation}
where $\chi$ denotes the topological Euler characteristic. Since $Y$ is a surface, a nonempty stratum contains at most two components. For $A,B\in {J}$, where $A$ and $B$ denote distinct irreducible components of the reduced total transform $D$, define
\begin{equation}\label{eq:local-fibre-coefficients}
    m_A = \chi\bigl(A^\circ\cap\pi^{-1}(0)\bigr), \quad m_{AB} = \chi\bigl(A\cap B\cap\pi^{-1}(0)\bigr) = \#\bigl(A\cap B\cap\pi^{-1}(0)\bigr).
\end{equation}
With unordered pairs in the second sum, \eqref{eq:top-zeta} becomes exactly
\begin{equation}\label{eq:surface-zeta}
 Z^{\mathrm{top}}_{F,0}(s)
 =
 \sum_{A\in J}\frac{m_A}{L_A(s)}
 +
 \sum_{\{A,B\}\subseteq J}
 \frac{m_{AB}}{L_A(s)L_B(s)}.
\end{equation}

The intersection with $\pi^{-1}(0)$ in \eqref{eq:local-fibre-coefficients} is essential. A crossing away from the local fibre does not contribute to the local zeta function. For the usual local dual graph, each drawn edge represents one crossing in the fibre and has $m_{AB}=1$. If $E$ is exceptional and $d_E$ denotes its valency in the total transform graph, then $E\simeq\mathbb P^1$ and 
\begin{equation}\label{eq:exceptional-euler}
 m_E=\chi(E^\circ)=2-d_E.
\end{equation}

If a strict transform $C$ meets an exceptional component $E$ over $0$, their intersection point is removed from $C^\circ$. Hence $m_C=0$, while $m_{CE}=1$. If the identity map is already a resolution, a single smooth branch through $0$ has singleton coefficient one. If two smooth branches meet transversely at $0$, both singleton coefficients vanish and the crossing coefficient is one. Thus \eqref{eq:surface-zeta} also applies when no blow-up is required.

We write the polar locus
\[
  \PL(Z^{\mathrm{top}}_{F,0})
  \subseteq \mathbb C^r
\]
for the reduced support of the polar divisor of the rational function \eqref{eq:surface-zeta}. Its irreducible components are the affine hyperplanes on which the reduced rational function $Z^{\mathrm{top}}_{F,0}$ actually has a pole. These need not coincide with all candidate hyperplanes $V(L_A)$ appearing in the resolution formula, since cancellations may occur. For any rational affine hyperplane $H$, let $v_H$ denote the corresponding divisorial valuation on $\C(s_1,\ldots,s_r)$ and write
\[
 \ord_H(R)=\max\{0,-v_H(R)\}
\]
for the nonnegative generic pole order of a rational function $R$ along $H$. Thus $\ord_H(R)=0$ when $R$ is regular at the generic point of $H$.

\subsection{Rational powers and residues}

For $\alpha\in\Q_{>0}^r$, put
\[
 u_\alpha=F^{-\alpha}.
\]
Viewed as a multivalued scalar function on $X\setminus(h=0)$, a local branch of $u_\alpha$ is horizontal for the branch connection
\[
 \nabla_\alpha^{\mathrm{act}}
 =d+\sum_{i=1}^r\alpha_i\frac{df_i}{f_i}.
\]
Let $\Lcal_\alpha^{\mathrm{act}}$ denote the local system of these actual analytic branches. Its monodromy has finite order, and its normal monodromy around a resolution component $A$ is
\[
 \exp\bigl(-2\pi iN_A\cdot\alpha\bigr).
\]

Fix a component $E$ and suppose
\[
 N_E\cdot\alpha=\nu_E.
\]
For every component $D$ adjacent to $E$, define
\begin{equation}\label{eq:epsilon}
 \epsilon_D(\alpha)=\nu_D-N_D\cdot\alpha.
\end{equation}
At $p=E\cap D$, take SNC coordinates $E=(x=0)$ and $D=(y=0)$ and a nowhere vanishing holomorphic two-form $\omega$ on $X$. The valuation of $\pi^*\omega$ along a component $A$ is $\nu_A-1$; hence, on a chosen analytic branch,
\begin{equation}\label{eq:local-coefficient-form}
 \pi^*(u_\alpha\omega) = U(x,y)x^{-1}y^{\epsilon_D(\alpha)-1}\,dx\wedge dy,
 \quad U(0,0)\ne0.
\end{equation}

Our Poincar\'e residue convention is
\begin{equation}\label{eq:residue-convention}
 \Res_{x=0}\left(\frac{dx}{x}\wedge\beta\right)=\beta.
\end{equation}
Here $\beta$ is a holomorphic one-form tangential to the divisor $(x=0)$. This is the usual logarithmic residue convention; see, for example, \cite[Chapitre~II]{Deligne1970}.

The scalar residue in \eqref{eq:local-coefficient-form} has actual analytic branches whose continuation around $E\cap D$ is multiplication by $\exp(2\pi i\epsilon_D)$. Let $\Lcal_{E,\alpha}^{\mathrm{act}}$ denote this actual-branch local system. The coefficient local system used below for differential forms and twisted de Rham cohomology is
\[
 \Lcal_{E,\alpha}
 :=\bigl(\Lcal_{E,\alpha}^{\mathrm{act}}\bigr)^\vee.
\]
Thus its parallel-transport monodromy around $E\cap D$ is $\exp(-2\pi i\epsilon_D)$. In a flat frame of $\Lcal_{E,\alpha}$, the displayed scalar residue branches are the local coordinates of an $\Lcal_{E,\alpha}$-valued form. Integration cycles carry coefficients in $\Lcal_{E,\alpha}^\vee=\Lcal_{E,\alpha}^{\mathrm{act}}$.

Finally, choose $q>0$ such that $b=q\alpha\in\Z_{>0}^r$ and define
\[
g_b=\prod_{i=1}^r f_i^{b_i},\quad M_A=\ord_A(g_b)=\sum_{i=1}^r b_iN_{A,i}=qN_A\cdot\alpha.
\]
Then $u_\alpha=F^{-\alpha}=g_b^{-1/q}$. Since $N_E\cdot\alpha=\nu_E$, we have $M_E=q\nu_E$, and hence the candidate exponent associated with $E$ and the chosen nowhere vanishing holomorphic two-form $\omega$ is
\[
-\frac{\nu_E}{M_E}=-\frac1q.
\]
For a component $D$ adjacent to $E$, the corresponding residue number is therefore
\[
M_D\left(-\frac1q\right)+\nu_D=-N_D\cdot\alpha+\nu_D=\epsilon_D(\alpha).
\]
Thus the residue numbers obtained from the scalarization $u_\alpha=g_b^{-1/q}$ agree exactly with the quantities $\epsilon_D(\alpha)$ defined above. The associated multivalued residue form agrees with the residue of \eqref{eq:local-coefficient-form}, up to the fixed nonzero normalization coming from the cyclic cover.


\subsection{Normalization}

Let $H\subseteq\C^r$ be an affine hyperplane of the form $H=V(L_{A_0})$ for some $A_0\in J$, and set
\begin{equation}\label{eq:same-wall-set}
    S_H=\{A\in J:V(L_A)=H\}.
\end{equation}

\begin{lemma}\label{lem:positive-wall-normalization}
There is a unique equation
\begin{equation}\label{eq:normalized-wall}
 L_H(s)=1+\lambda_H\cdot s
\end{equation}
for $H$. For every $A\in S_H$ one has the exact equality of affine polynomials
\begin{equation}\label{eq:positive-wall-multiple}
 L_A=\nu_A L_H.
\end{equation}
In particular,
\[
 \lambda_H=\frac{N_A}{\nu_A}
\]
is independent of $A\in S_H$, and every proportionality coefficient $c_A:=\nu_A$ is positive.
\end{lemma}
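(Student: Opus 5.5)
The plan is to work directly from the definition $L_A(s)=N_A\cdot s+\nu_A$ and use only one fact: $\nu_A>0$ for every $A\in J$. This holds because $\nu_A=1+\ord_A(K_{Y/X})\geq 1$: the relative canonical divisor of a log resolution is effective, and strict transforms have $\nu_C=1$.

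\emph{Existence.} Fix $A_0\in S_H$. Since $\nu_{A_0}\neq 0$, the polynomial $L_{A_0}/\nu_{A_0}=1+(N_{A_0}/\nu_{A_0})\cdot s$ is an equation of $H$ with constant term $1$. Set $\lambda_H=N_{A_0}/\nu_{A_0}$. The hyperplane $H$ is a genuine affine hyperplane, because $N_{A_0}\neq 0$: every entry is positive for exceptional $A_0$, and some entry is positive for a strict transform. Consequently $0\notin H$, since $L_{A_0}(0)=\nu_{A_0}\neq 0$.

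\emph{Uniqueness.} Any two affine equations of the same hyperplane differ by a nonzero scalar factor. Since $0\notin H$, both equations have nonzero constant term, so requiring the constant term to be $1$ fixes the scalar. Hence $L_H$ is unique.

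\emph{The identity $L_A=\nu_AL_H$.} For $A\in S_H$ we have $V(L_A)=H=V(L_H)$, so $L_A=cL_H$ for some nonzero $c$. Comparing constant terms gives $c=\nu_A$, which is positive. Comparing linear parts gives $N_A=\nu_A\lambda_H$, so $\lambda_H=N_A/\nu_A$ does not depend on $A$.

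There is no real obstacle here. The only point needing care is the positivity $\nu_A\geq1$, which rests on effectivity of $K_{Y/X}$ for a composition of point blow-ups and gives $\ord_A(K_{Y/X})\geq 0$. This single fact ensures both that $0\notin H$, so the normalization makes sense, and that the proportionality constants are positive.
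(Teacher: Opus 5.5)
Your proof is correct and matches the paper's argument: in both, two affine equations of the same hyperplane differ by a nonzero scalar, and comparing constant terms pins that scalar to $\nu_A>0$. You also make explicit the facts $\nu_A\geq 1$ and $N_{A_0}\neq 0$, which the paper uses tacitly.
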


\begin{proof}
Two nonconstant affine linear polynomials defining the same affine hyperplane differ by a nonzero scalar. If $A,B\in S_H$ and $L_A=qL_B$, comparison of constant terms gives $q=\nu_A/\nu_B>0$. Consequently all the polynomials $L_A/\nu_A$ agree and have constant term one. Their common value is \eqref{eq:normalized-wall}, which proves existence and \eqref{eq:positive-wall-multiple}. An affine equation for $H$ with constant term one is unique.
\end{proof}

The normalization in \Cref{lem:positive-wall-normalization} is essential: positivity would be meaningless for an equation of $H$ that were allowed to be multiplied by $-1$.

\subsection{Laurent coefficients}

The denominator list in \eqref{eq:surface-zeta} may contain several components with the same affine zero set. Those terms must be grouped before one decides whether the hyperplane is an actual pole. We carry out the grouping at the generic point of the hyperplane. Since every denominator in \eqref{eq:surface-zeta} is a product of the $L_A$, every actual polar hyperplane is of the form $V(L_A)$ for some $A\in J$.

Let $K(H)$ be the function field of $H$. The localization
\[
 \C[s_1,\ldots,s_r]_{(L_H)}
\]
is a discrete valuation ring with uniformizer $L_H$ and residue field $K(H)$. If $D\notin S_H$, then $L_D$ is a unit in this ring and its residue
\[
 L_D|_H\in K(H)^*
\]
is nonzero.

\begin{theorem}
\label{thm:grouped-laurent}
At the generic point of $H$, the polar part of the local topological zeta function is
\begin{equation}\label{eq:grouped-laurent}
 Z^{\mathrm{top}}_{F,0}
 =
 \frac{C_{-2}(H)}{L_H^2}
 +
 \frac{C_{-1}(H)}{L_H}
 +C_0,
\end{equation}
where $C_0$ is regular at the generic point, the first coefficient is
\begin{equation}\label{eq:grouped-double-coefficient}
 C_{-2}(H)
 =
 \sum_{\{A,B\}\subseteq S_H}
 \frac{m_{AB}}{\nu_A\nu_B},
\end{equation}
and the residue-field coefficient is
\begin{equation}\label{eq:grouped-simple-coefficient}
 C_{-1}(H)
 =
 \sum_{A\in S_H}\frac{1}{\nu_A}R_{A,H},
\end{equation}
with
\begin{equation}\label{eq:component-wall-residue}
 R_{A,H}
 =
 m_A
 +
 \sum_{D\notin S_H}
 \frac{m_{AD}}{L_D|_H}
 \in K(H).
\end{equation}
Here $C_{-1}(H)\in K(H)$ is the coefficient of $L_H^{-1}$ in the principal part at the generic point of $H$, so \eqref{eq:grouped-laurent} is understood modulo functions regular along $H$.
\end{theorem}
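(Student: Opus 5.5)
The plan is a direct term-by-term expansion of \eqref{eq:surface-zeta} in the discrete valuation ring $\mathcal O_H:=\C[s_1,\ldots,s_r]_{(L_H)}$, sorting terms by how many of their denominators lie in $S_H$. By \Cref{lem:positive-wall-normalization}, every $A\in S_H$ satisfies $L_A=\nu_AL_H$ exactly. Every $D\notin S_H$ has $L_D$ a unit in $\mathcal O_H$; it defines a different hyperplane, so it is not an associate of $L_H$. The singleton terms then split into two kinds. For $A\notin S_H$, the term $m_A/L_A$ is regular and goes into $C_0$. For $A\in S_H$, the term is exactly $m_A/(\nu_AL_H)$, which contributes $m_A/\nu_A$ to the $L_H^{-1}$ coefficient.

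For the pair terms $m_{AB}/(L_AL_B)$ there are three cases.
\begin{itemize}
\item If neither $A$ nor $B$ lies in $S_H$, the term is regular.
\item If both lie in $S_H$, the term equals $m_{AB}/(\nu_A\nu_BL_H^2)$ exactly, with no lower-order remainder. Summing over unordered pairs gives \eqref{eq:grouped-double-coefficient}.
\item If exactly one lies in $S_H$, say $A\in S_H$ and $D\notin S_H$, the term is $\frac{m_{AD}}{\nu_AL_H}\cdot L_D^{-1}$ with $L_D^{-1}\in\mathcal O_H^\times$.
\end{itemize}
In the mixed case, write $L_D^{-1}=(L_D|_H)^{-1}+L_H\cdot g$ with $g\in\mathcal O_H$. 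Such a decomposition exists because $L_D^{-1}$ minus any lift of its residue class lies in the maximal ideal $(L_H)$. The piece $L_H g$ times $m_{AD}/(\nu_AL_H)$ is regular, so the mixed term contributes $\frac{1}{\nu_A}\,\frac{m_{AD}}{L_D|_H}$ to the $L_H^{-1}$ coefficient. Each unordered mixed pair is counted exactly once, attached to its unique member in $S_H$. Collecting all contributions gives $\frac{1}{\nu_A}\bigl(m_A+\sum_{D\notin S_H}m_{AD}/L_D|_H\bigr)$ for each $A\in S_H$, which is \eqref{eq:grouped-simple-coefficient}--\eqref{eq:component-wall-residue}.

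The one point needing care is what ``coefficient in $K(H)$'' means, since $L_H^{-1}$ coefficients are only well defined modulo $(L_H)$. I would state it as follows: the principal part lives in $L_H^{-2}\mathcal O_H/\mathcal O_H$, and the coefficient of $L_H^{-1}$ is well defined in $K(H)$ once the $L_H^{-2}$ coefficient is a constant. Here $C_{-2}(H)$ is a constant in $\Q$, so reducing it modulo $L_H$ introduces no ambiguity in the $L_H^{-1}$ coefficient. I expect no real obstacle. The only mild subtlety is this bookkeeping, namely that no $L_H^{-2}$ term spills a hidden $L_H^{-1}$ correction, and it is settled by the exact equalities $L_A=\nu_AL_H$.
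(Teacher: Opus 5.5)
Your proposal is correct and follows essentially the same argument as the paper: both sort the terms of \eqref{eq:surface-zeta} by how many of their denominators lie in $S_H$. Both use the exact equality $L_A=\nu_A L_H$, so that same-hyperplane crossings contribute only to the $L_H^{-2}$ term, and both read off the mixed terms through $L_D|_H\in K(H)^*$. Your extra remark, that the $L_H^{-1}$ coefficient is well defined because $C_{-2}(H)$ is a rational constant, is a harmless clarification of a point the paper states informally.
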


\begin{proof}
Split the terms of \eqref{eq:surface-zeta} by the number of their denominators belonging to $S_H$. For $A\in S_H$, the singleton term is
\[
 \frac{m_A}{L_A}
 =
 \frac{m_A/\nu_A}{L_H}.
\]
If $A,B\in S_H$, the crossing term is exactly
\[
 \frac{m_{AB}}{L_AL_B}
 =
 \frac{m_{AB}/(\nu_A\nu_B)}{L_H^2}.
\]
Because \eqref{eq:positive-wall-multiple} is an exact equality of affine forms, this term has no hidden contribution of order $L_H^{-1}$. If $A\in S_H$ and $D\notin S_H$, then
\[
 \frac{m_{AD}}{L_AL_D}
 =
 \frac{m_{AD}}{\nu_A L_HL_D};
\]
its coefficient in the residue field at order $L_H^{-1}$ is
\[
 \frac{m_{AD}}{\nu_A(L_D|_H)}.
\]
All terms whose indices are disjoint from $S_H$ are regular. Adding the three kinds of contributions gives \eqref{eq:grouped-double-coefficient}--\eqref{eq:component-wall-residue}. Since an SNC stratum on a surface contains at most two components, there is no pole of order greater than two.
\end{proof}

In particular, $H$ has order two precisely when $C_{-2}(H)\ne0$, and, if $C_{-2}(H)=0$, it is an actual pole precisely when $C_{-1}(H)\ne0$ in $K(H)$. Poles of one of the coefficient functions along a proper subvariety of $H$ do not make $H$ a component of the polar locus.

\subsection{The dichotomy}

We now apply the Laurent coefficients of \Cref{thm:grouped-laurent} to an actual polar hyperplane $H$. If two components in $S_H$ meet in the local fiber, their crossing gives a positive second-order coefficient. Otherwise $H$ is a simple pole and its first-order coefficient selects a single component, which must be either a strict transform or a rupture component. These two cases are summarized in the following.

\begin{theorem}
\label{thm:same-wall-positivity}
Suppose that $A,B\in S_H$ meet at a point of $\pi^{-1}(0)$. Then
\begin{equation}\label{eq:positive-double-coefficient}
    C_{-2}(H) = \sum_{\substack{\{A',B'\}\subseteq S_H}}\frac{m_{A'B'}}{\nu_{A'}\nu_{B'}}>0.
\end{equation}
Consequently $H$ is an actual pole of order two. Moreover, at every positive dual point
\begin{equation}\label{eq:positive-dual-wall}
    \alpha\in W_H^+ := \{\alpha\in\R_{>0}^r:\lambda_H\cdot\alpha=1\},
\end{equation}
every $A\in S_H$ satisfies 
\begin{equation} \label{eq:same-wall-zero-index}
N_A\cdot\alpha=\nu_A.
\end{equation}

\end{theorem}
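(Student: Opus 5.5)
The argument reads everything off the grouped Laurent expansion of \Cref{thm:grouped-laurent} together with the normalization of \Cref{lem:positive-wall-normalization}. I handle the three assertions in order: positivity of $C_{-2}(H)$, the order-two conclusion, and the identity on $W_H^+$.

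\emph{Positivity of $C_{-2}(H)$.} Formula \eqref{eq:grouped-double-coefficient} is a sum of terms $m_{A'B'}/(\nu_{A'}\nu_{B'})$. Each $m_{A'B'}=\#\bigl(A'\cap B'\cap\pi^{-1}(0)\bigr)$ is a nonnegative integer. Each $\nu_{A'}$ is a positive integer: it equals $1$ for strict transforms and $1+\ord_{A'}K_{Y/X}\ge 2$ for exceptional components, which is the positivity already recorded as $c_A=\nu_A>0$ in \Cref{lem:positive-wall-normalization}. Hence every summand is $\ge0$. The hypothesis that $A,B\in S_H$ meet at a point of $\pi^{-1}(0)$ gives $m_{AB}\ge1$, so the $\{A,B\}$ summand is strictly positive and $C_{-2}(H)>0$. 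There is no cancellation, because the exact equality $L_{A'}=\nu_{A'}L_H$ places all these crossing terms at order $L_H^{-2}$ with these positive constant coefficients.

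\emph{Order two.} By \eqref{eq:grouped-laurent}, the principal part of $Z^{\mathrm{top}}_{F,0}$ at the generic point of $H$ has leading term $C_{-2}(H)/L_H^2$ with $C_{-2}(H)$ a nonzero constant. Since $L_H$ is a uniformizer of the DVR $\C[s]_{(L_H)}$ and $C_{-1}(H)/L_H + C_0$ has valuation at least $-1$, we get $v_H(Z^{\mathrm{top}}_{F,0})=-2$. Thus $\ord_H=2$, and $H$ is an irreducible component of the polar locus. This holds regardless of $C_{-1}(H)$. Order at most two is automatic, since strata on a surface involve at most two components.

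\emph{The identity on $W_H^+$.} For $\alpha\in W_H^+$ we have $L_H(-\alpha)=1-\lambda_H\cdot\alpha=0$. By \eqref{eq:positive-wall-multiple}, $L_A(-\alpha)=\nu_AL_H(-\alpha)=0$, that is, $-N_A\cdot\alpha+\nu_A=0$, which is \eqref{eq:same-wall-zero-index}. Equivalently, $N_A=\nu_A\lambda_H$, so $N_A\cdot\alpha=\nu_A\,\lambda_H\cdot\alpha=\nu_A$.

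The only step needing care is the positivity of every $\nu_{A'}$, since it guarantees that distinct crossings cannot cancel in $C_{-2}(H)$. This is exactly what the normalization with constant term one provides, so I expect no real obstacle. I would also remark that $W_H^+$ may be empty unless $\lambda_H$ has a positive entry, but the statement about it holds vacuously in that case.
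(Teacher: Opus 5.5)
Your proposal is correct and matches the paper's proof: each term $m_{A'B'}/(\nu_{A'}\nu_{B'})$ is nonnegative and the $\{A,B\}$ term is strictly positive, the order-two conclusion is read off from \Cref{thm:grouped-laurent}, and the identity on $W_H^+$ follows from $N_A=\nu_A\lambda_H$, which is \eqref{eq:positive-wall-multiple}. Your closing caveat is unnecessary, because $\lambda_H=N_A/\nu_A$ with $N_A\in\Z_{\ge0}^r$ nonzero (since $n_A\ge1$), so $W_H^+$ is never empty.
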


\begin{proof}
Every $m_{A'B'}$ is a nonnegative integer, and every $\nu_{A'}$ is positive. If $A,B\in S_H$ meet in the local fibre, then $m_{AB}>0$, so the corresponding term in \eqref{eq:positive-double-coefficient} is strictly positive. It follows that $C_{-2}(H)>0$. The assertion about the order follows from \Cref{thm:grouped-laurent}. Finally, \eqref{eq:positive-wall-multiple} gives
\[
 N_A\cdot\alpha
 =
 \nu_A\lambda_H\cdot\alpha
 =
 \nu_A,
\]
which is \eqref{eq:same-wall-zero-index}.
\end{proof}

\Cref{thm:same-wall-positivity} applies to exceptional--exceptional, exceptional--strict, and strict--strict crossings. The local-fibre coefficient in \eqref{eq:positive-double-coefficient} is what excludes an irrelevant crossing away from the origin.

Assume now that
\begin{equation}\label{eq:no-same-wall-edge}
 m_{AB}=0
 \quad\text{for all }A,B\in S_H.
\end{equation}
Then $C_{-2}(H)=0$. If $H$ is an actual pole,
\eqref{eq:grouped-simple-coefficient} is nonzero, so at least one component
$A\in S_H$ satisfies
\begin{equation}\label{eq:selected-component}
 R_{A,H}\ne0.
\end{equation}

\begin{lemma}\label{lem:exceptional-neighbour-identities}
Let $E$ be an exceptional component, and let $d_E$ denote its valency in the total transform graph, counting arrowheads of strict transforms. Then
\begin{align}
 \sum_{D\neq E,\, D\cap E\neq \emptyset}N_D&=-E^2N_E,
 \label{eq:principal-neighbour-identity}\\
 \sum_{D\neq E,\, D\cap E\neq \emptyset}\nu_D&=-E^2\nu_E+d_E-2,
 \label{eq:adjunction-neighbour-identity}
\end{align}
and consequently
\begin{equation}\label{eq:affine-neighbour-identity}
 \sum_{D\neq E,\, D\cap E\neq \emptyset}L_D=-E^2L_E+d_E-2.
\end{equation}
\end{lemma}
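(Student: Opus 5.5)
We prove the two identities separately by intersecting suitable divisors on $Y$ with the exceptional curve $E$, and then combine them linearly. Throughout, $E\simeq\mathbb P^1$ is compact. Every component $D\neq E$ meeting $E$ does so transversally at exactly one point. This holds because the resolution is SNC and the dual graph of the exceptional locus of a sequence of point blow-ups of a smooth surface is a tree, so no two components meet twice. Strict transforms also meet $E$ at most once each, and the valency $d_E$ counts them. Hence $D\cdot E=1$ for every such $D$, and $D\cdot E=0$ for components disjoint from $E$.

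For \eqref{eq:principal-neighbour-identity}, fix $i$. The divisor of $f_i\circ\pi$ on $Y$ is
\[
\mathrm{div}(f_i\circ\pi)=\sum_{A\in J}N_{A,i}A.
\]
It is principal, so its intersection number with the compact curve $E$ vanishes. Expanding gives
\[
0=N_{E,i}E^2+\sum_{D\neq E,\,D\cap E\neq\emptyset}N_{D,i}.
\]
Components of $\mathrm{div}(f_i\circ\pi)$ not in $J$ cannot occur, since $J$ contains all components of the reduced total transform of $h$, and in particular of each $f_i$. Collecting the $r$ coordinates gives the vector identity. The same count works for strict transforms $C$ with $N_{C,i}=0$: such terms simply contribute zero.

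For \eqref{eq:adjunction-neighbour-identity}, use the relative canonical divisor $K_{Y/X}=\sum_A(\nu_A-1)A$. This sum is supported on the exceptional divisors, and strict transforms have $\nu_C-1=0$ consistently. Since $X$ is a germ with trivial canonical bundle ($\omega$ is nowhere vanishing), $K_Y\cdot E=K_{Y/X}\cdot E$. Adjunction on $E\simeq\mathbb P^1$ gives $K_Y\cdot E=-2-E^2$. Expanding $K_{Y/X}\cdot E$ gives
\[
-2-E^2=(\nu_E-1)E^2+\sum_{D\cap E\neq\emptyset}(\nu_D-1).
\]
There are exactly $d_E$ neighbours, each contributing $-1$, so
\[
\sum\nu_D=-2-E^2-(\nu_E-1)E^2+d_E=-E^2\nu_E+d_E-2.
\]

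Finally, \eqref{eq:affine-neighbour-identity} follows by dotting \eqref{eq:principal-neighbour-identity} with $s$ and adding \eqref{eq:adjunction-neighbour-identity}, using $L_D=N_D\cdot s+\nu_D$. The only delicate point is the bookkeeping that each neighbour contributes exactly one transversal intersection point. Strict-transform arrowheads must be counted in $d_E$ and included among the neighbours on both sides, which is consistent because $\nu_C=1$. We also need that intersection theory with the compact curve $E$ makes sense on the germ $Y$; we handle this by working on a representative of $X$ small enough that all divisors are defined near $\pi^{-1}(0)$.
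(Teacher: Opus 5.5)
Your proposal is correct and follows the paper's proof essentially step by step. You intersect the principal divisors $\operatorname{div}(f_i\circ\pi)$ with $E$, then apply adjunction on $E\simeq\mathbb P^1$ together with $K_Y\cdot E=K_{Y/X}\cdot E$ (you derive this from triviality of $K_X$ on the germ, the paper from $\pi^*K_X\cdot E=0$), and combine the two identities linearly. Your appeal to the tree structure of the dual graph, to ensure each neighbour meets $E$ in exactly one transverse point, is a slightly more complete justification than the paper's, which cites only the SNC condition.
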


\begin{proof}
For each $i$, the principal divisor $\operatorname{div}(f_i\circ\pi)$ has zero intersection with $E$. Since the total transform has simple normal crossings, every component $D$ meeting $E$ intersects it transversely at one point. Hence
\begin{equation*}
0=\operatorname{div}(f_i\circ\pi)\cdot E=N_{E,i}E^2+\sum_{D\neq E,\, D\cap E\neq \emptyset}N_{D,i}.
\end{equation*}
This proves \eqref{eq:principal-neighbour-identity} for each coordinate and hence as an identity of vectors. For the second identity, adjunction on $E\simeq\mathbb P^1$ gives
\begin{equation*}
(K_Y+E)\cdot E=-2.
\end{equation*}
Since $K_Y=K_{Y/X}+\pi^*K_X$ and $\pi^*K_X\cdot E=0$, we obtain $K_{Y/X}\cdot E=-2-E^2$. Expanding $K_{Y/X}$ along the exceptional components gives
\begin{equation*}
-2-E^2=(\nu_E-1)E^2+\sum_{D\ne E,\,D\cap E\ne\emptyset}(\nu_D-1),
\end{equation*}
where a strict transform may be included in the sum since $\nu_D=1$. As the number of components meeting $E$ is $d_E$, rearranging gives \eqref{eq:adjunction-neighbour-identity}. Finally, using $L_D=N_D\cdot s+\nu_D$ and combining the first two identities gives \eqref{eq:affine-neighbour-identity}.
\end{proof}

These are the standard numerical-data relations on a plane resolution graph; compare the adjunction formulation in \cite[\S2.3.1]{NemethiVeys2012} and the numerical-data arguments used in the pole classification of \cite{Veys1995}.

\begin{theorem}\label{thm:selected-component-classification}
Assume $m_{AB}=0$ for all $A, B\in S_H$. If $A\in S_H$ satisfies $R_{A,H}\neq 0$, then $A$ is either a strict transform or an exceptional component of valency at least three. 
\end{theorem}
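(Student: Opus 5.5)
We argue by contraposition. Suppose $A\in S_H$ is exceptional with valency $d_A\le 2$, and show $R_{A,H}=0$ in $K(H)$. Valency zero cannot occur: the total transform is connected over $0$ and contains at least one strict transform. The hypothesis $m_{AB}=0$ for $A,B\in S_H$ means that every neighbour $D$ of $A$ lies outside $S_H$. Since $A$ is exceptional, $A\cap\pi^{-1}(0)=A$, so every neighbour meets $A$ in the fibre and $m_{AD}=1$. Hence
\[
R_{A,H}=(2-d_A)+\sum_{D\cap A\ne\emptyset}\frac{1}{L_D|_H}.
\]
The key input is \eqref{eq:affine-neighbour-identity} restricted to $H$. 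By \Cref{lem:positive-wall-normalization}, $L_A=\nu_AL_H$ vanishes on $H$, so
\[
\sum_{D\cap A\ne\emptyset}L_D|_H=d_A-2.
\]

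Case $d_A=1$. There is a single neighbour $D$, and the identity gives $L_D|_H=-1$. Therefore $R_{A,H}=1+1/(-1)=0$.

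Case $d_A=2$. There are two neighbours $D_1,D_2$, and the identity gives $L_{D_1}|_H+L_{D_2}|_H=0$. Therefore
\[
R_{A,H}=0+\frac{1}{L_{D_1}|_H}+\frac{1}{L_{D_2}|_H}=\frac{L_{D_1}|_H+L_{D_2}|_H}{L_{D_1}|_HL_{D_2}|_H}=0.
\]
Both denominators are nonzero in $K(H)$ because $D_i\notin S_H$. We also need $D_1\neq D_2$, which holds because $A$ meets each component transversally in at most one point, as in the proof of \Cref{lem:exceptional-neighbour-identities}.

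These two cases cover every exceptional $A$ with $d_A\le2$. So if $R_{A,H}\ne0$, then $A$ is either a strict transform or exceptional with $d_A\ge3$.

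The only delicate point is bookkeeping. Valency must count strict-transform arrowheads, and each intersection with a neighbour must be a single point in the fibre, so that $m_{AD}=1$ and $m_A=2-d_A$ by \eqref{eq:exceptional-euler}. Both facts follow from $A\simeq\mathbb P^1$ lying over $0$ together with the SNC condition.
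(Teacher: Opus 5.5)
Your proposal is correct and follows the paper's proof essentially step by step. You express $R_{A,H}$ using $m_A=2-d_A$ and $m_{AD}=1$, restrict the affine neighbour identity \eqref{eq:affine-neighbour-identity} to $H$, and check that $d_A=1$ and $d_A=2$ force $R_{A,H}=0$. Your separate exclusion of $d_A=0$, via connectedness of the total transform and the existence of a strict transform, fills a case the paper passes over silently.
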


\begin{proof}
It remains to exclude exceptional components of valency one and two. Let $A=E$ be exceptional. Every neighbour of $E$ meets it in the local fibre, and distinct neighbours meet it once, so
\begin{equation}\label{eq:exceptional-component-residue}
    R_{E,H} = 2-d_E+\sum_{D\neq E,\, D\cap E\neq \emptyset}\frac{1}{L_D|_H}.
\end{equation}
Since $m_{AB}=0$ for all $A, B\in S_H$, no neighbour $D$ belongs to $S_H$; therefore every denominator in \eqref{eq:exceptional-component-residue} is nonzero in $K(H)$. Restricting \eqref{eq:affine-neighbour-identity} to $H=V(L_E)$ gives
\begin{equation}\label{eq:restricted-neighbour-sum}
    \sum_{D\neq E,\, D\cap E\neq \emptyset}L_D|_H=d_E-2.
\end{equation}
If $d_E=1$, its unique neighbour satisfies $L_D|_H=-1$, and
\[
    R_{E,H}=1+\frac1{-1}=0.
\]
If $d_E=2$, writing $a=L_{D_1}|_H\ne0$, one has
$L_{D_2}|_H=-a$ and
\[
    R_{E,H}=\frac1a+\frac1{-a}=0.
\]
Both conclusions contradict $R_{E,H}\neq 0$. Hence $d_E\ge3$.
\end{proof}

An exceptional component of valency at least three in the total transform graph is called a rupture component. Thus, under the assumptions of \Cref{thm:selected-component-classification}, any exceptional component $A\in S_H$ with $R_{A,H}\neq0$ is a rupture component.

For later use, suppose $E\in S_H$ satisfies $R_{E,H}\neq0$. If $\alpha\in W_H^+$ satisfies $\epsilon_D(\alpha)\neq0$ for every neighbour $D$ of $E$, then $-\alpha\in H$ because $L_H(-\alpha)=1-\lambda_H\cdot\alpha=0$. Moreover, for every neighbour $D$ of $E$, $L_D(-\alpha)=\nu_D-N_D\cdot\alpha=\epsilon_D(\alpha)$. Hence evaluating \eqref{eq:exceptional-component-residue} at $s=-\alpha$ gives
\begin{equation}\label{eq:selected-residue-at-alpha}
    R_{E,H}(-\alpha) = 2-d_E + \sum_{D\neq E,\, D\cap E\neq \emptyset}\frac{1}{\epsilon_D(\alpha)}.
\end{equation}

\begin{theorem}\label{thm:grouped-wall-dichotomy} 
Let $H$ be an irreducible component of the polar locus $\PL(Z^{\mathrm{top}}_{F,0})$. Exactly one of the following holds.
\begin{enumerate}[label=\textup{(\roman*)}]
\item Two components $A,B\in S_H$ meet at a point of $\pi^{-1}(0)$. Then $H$ is a pole of order two and the leading coefficient $C_{-2}(H)>0$. Moreover, for every $\alpha\in W_H^+$ and every $A\in S_H$, one has $N_A\cdot\alpha=\nu_A$.
\item Any two components in $S_H$ are disjoint over $\pi^{-1}(0)$. Then $H$ is a simple pole and there exists $A\in S_H$ such that $R_{A,H}\neq0$. Every such $A$ is either a strict transform or an exceptional component of valency at least three.
\end{enumerate}
\end{theorem}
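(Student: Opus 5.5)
The statement is a repackaging of \Cref{thm:grouped-laurent}, \Cref{thm:same-wall-positivity} and \Cref{thm:selected-component-classification}, so the plan is to split according to whether condition \eqref{eq:no-same-wall-edge} fails or holds. These two alternatives are logically complementary: either some pair $A,B\in S_H$ has $m_{AB}>0$, or all such $m_{AB}$ vanish. We must also check that $m_{AB}>0$ is exactly the statement that $A$ and $B$ meet at a point of $\pi^{-1}(0)$, which is the definition \eqref{eq:local-fibre-coefficients}. This gives the ``exactly one'' part once each case is shown to have its stated properties.

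In case (i), \Cref{thm:same-wall-positivity} applies directly. It gives $C_{-2}(H)>0$. Since $C_{-2}(H)$ is a positive constant, it is nonzero in $K(H)$, so the expansion \eqref{eq:grouped-laurent} shows the generic pole order along $H$ is exactly two; no higher order can occur. The identity $N_A\cdot\alpha=\nu_A$ for $\alpha\in W_H^+$ is \eqref{eq:same-wall-zero-index}.

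In case (ii), the vanishing of all $m_{AB}$ with $A,B\in S_H$ makes \eqref{eq:grouped-double-coefficient} zero, so the principal part at the generic point of $H$ is $C_{-1}(H)/L_H$. Since $H$ is assumed to be an irreducible component of the polar locus, the pole order along $H$ is positive. Hence $C_{-1}(H)\neq0$ in $K(H)$ and the pole is simple. By \eqref{eq:grouped-simple-coefficient}, $C_{-1}(H)$ is a sum of $R_{A,H}/\nu_A$, so some $R_{A,H}\neq0$. For every such $A$, \Cref{thm:selected-component-classification} shows that $A$ is a strict transform or an exceptional component of valency at least three.

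The one point needing care, though it is not a serious obstacle, is that ``actual pole'' refers to the reduced rational function. One should note that the grouping in \Cref{thm:grouped-laurent} already accounts for all denominators vanishing on $H$, since $L_D$ is a unit in the local ring for $D\notin S_H$. So the computed coefficients are the genuine Laurent coefficients in the DVR $\C[s]_{(L_H)}$, and their nonvanishing is equivalent to the pole order.
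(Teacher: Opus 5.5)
Your proposal is correct and follows essentially the same route as the paper. You split on whether some $m_{AB}>0$ for $A,B\in S_H$, invoke \Cref{thm:same-wall-positivity} in the crossing case, and in the disjoint case use $C_{-2}(H)=0$ from \Cref{thm:grouped-laurent} to force $C_{-1}(H)\neq0$, hence some $R_{A,H}\neq0$, before applying \Cref{thm:selected-component-classification}.
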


\begin{proof}
The two cases are mutually exclusive and exhaustive. The first follows from \Cref{thm:same-wall-positivity}. In the second case, \Cref{thm:grouped-laurent} gives $C_{-2}(H)=0$. Since $H$ is an irreducible component of the polar locus, one has $C_{-1}(H)\neq0$. It then follows from \eqref{eq:grouped-simple-coefficient} that $R_{A,H}\neq0$ for some $A\in S_H$, and the conclusion follows from \Cref{thm:selected-component-classification}.
\end{proof}

In case~\textup{(ii)}, any component $A\in S_H$ satisfying $R_{A,H}\neq0$ is either a rupture component or a strict transform. Rupture components will be treated using the residue obstructions developed below, while strict transforms are handled directly by the Bernstein factor in \Cref{thm:strict-transform-wall}.


\section{Rational powers and the Bernstein--Sato ideal}\label{sec:cyclic-quotient}

In this section, we connect rational powers of $F$ with the Bernstein--Sato locus. We first show that the nonvanishing of the class of $F^{-\alpha}$ in
\begin{equation*}
    \frac{\D_{X,0}F^{-\alpha}}{\D_{X,0}(hF^{-\alpha})}
\end{equation*}
forces $-\alpha\in Z(B_{F,0})$, and then use Zariski density to extend this pointwise criterion to an affine hyperplane. Strict transforms are handled separately. We also establish the Green formula and the pullback estimates used in \Cref{sec:residue-obstructions}.

\subsection{Evaluation at rational parameters}

We first relate rational powers to the multivariable Bernstein--Sato ideal by evaluating the functional equation at $s=-\alpha$. 

Recall that $h=f_1\cdots f_r$. Put $\Delta=(h=0)$ and for $\alpha\in\Q_{>0}^r$, put $u_\alpha=F^{-\alpha}$. Let  
\begin{equation*}
\mathcal M_\alpha=\mathcal O_X(*\Delta)u_\alpha.
\end{equation*}
Its stalk at $0$ is
\begin{equation*}
\mathcal M_{\alpha,0}=\mathcal O_{X,0}[h^{-1}]u_\alpha.
\end{equation*}
The action of a holomorphic vector field $\xi$ on $\mathcal M_{\alpha,0}$ is given by
\begin{equation*}
\xi(gu_\alpha)=\left(\xi(g)-g\sum_{i=1}^r\alpha_i\frac{\xi(f_i)}{f_i}\right)u_\alpha.
\end{equation*}
This is precisely the specialization at $s=-\alpha$ of the $\D_{X,0}[s]$-action on $\mathcal O_{X,0}[h^{-1},s]F^s$. Modules generated by rational powers and the comparison between successive powers are studied systematically by Saito \cite{Saito2021RationalPowers}; the implication below uses only direct specialization of the functional equation.

\begin{proposition}\label{prop:affine-point-lifting}
Let $\alpha\in\Q_{>0}^r$. If
\begin{equation}\label{eq:cyclic-noncontainment}
 u_\alpha\notin\D_{X,0}(hu_\alpha),
\end{equation}
then
\begin{equation}\label{eq:point-in-bernstein-zero-set}
 -\alpha\in Z(B_{F,0}).
\end{equation}
\end{proposition}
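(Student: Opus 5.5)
We argue by contraposition: if $-\alpha\notin Z(B_{F,0})$, then $u_\alpha\in\D_{X,0}(hu_\alpha)$. The idea is to take a Bernstein--Sato functional equation whose $b$-polynomial does not vanish at $-\alpha$ and specialize it at $s=-\alpha$.

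Assume $-\alpha\notin Z(B_{F,0})$. Then there is some $b(s)\in B_{F,0}$ with $b(-\alpha)\neq0$. By \eqref{eq:bernstein-ideal}, there is an operator $P(s)\in\D_{X,0}[s_1,\ldots,s_r]$ with
\[
b(s)F^s=P(s)hF^s
\quad\text{in}\quad \mathcal O_{X,0}[h^{-1},s]F^s .
\]
Consider the $\C$-linear map
\[
\phi_\alpha:\mathcal O_{X,0}[h^{-1},s]F^s\to\mathcal M_{\alpha,0},\qquad g(s)F^s\mapsto g(-\alpha)u_\alpha .
\]
This is the quotient by the ideal generated by the $s_i+\alpha_i$. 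The displayed formulas for the vector-field actions show that
\[
\phi_\alpha(\xi\cdot m)=\xi\cdot\phi_\alpha(m)
\]
for every holomorphic vector field $\xi$. The same compatibility holds trivially for multiplication by functions. It also holds for multiplication by the $s_i$, which acts on the target as multiplication by $-\alpha_i$.

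It follows that for every $Q(s)\in\D_{X,0}[s]$ we have $\phi_\alpha(Q(s)m)=Q(-\alpha)\phi_\alpha(m)$. Here $Q(-\alpha)\in\D_{X,0}$ is obtained by substituting $s=-\alpha$ into the coefficients of $Q$. To justify this, write $Q$ as a sum of products of functions, vector fields and the central variables $s_i$, and use that the $s_i$ commute with everything. Applying $\phi_\alpha$ to the functional equation then gives
\[
b(-\alpha)u_\alpha=P(-\alpha)(hu_\alpha).
\]
Since $b(-\alpha)\neq0$, we can divide and obtain
\[
u_\alpha=b(-\alpha)^{-1}P(-\alpha)(hu_\alpha)\in\D_{X,0}(hu_\alpha),
\]
which contradicts \eqref{eq:cyclic-noncontainment}.

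The only point requiring care is that $\phi_\alpha$ is well defined and intertwines the two actions. Well-definedness holds because $\mathcal O_{X,0}[h^{-1},s]F^s$ is free of rank one with generator $F^s$. The intertwining is exactly the remark preceding the proposition: the action on $\mathcal M_{\alpha,0}$ is the specialization at $s=-\alpha$ of the action on $\mathcal O_{X,0}[h^{-1},s]F^s$. Since $\alpha$ is rational, $u_\alpha$ is a well-defined generator of $\mathcal M_{\alpha,0}$ once a branch is fixed, so no multivaluedness issue arises.
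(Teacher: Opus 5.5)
Your proof is correct and is essentially the paper's argument: specialize a functional equation $b(s)F^s=P(s)hF^s$ at $s=-\alpha$, then divide by $b(-\alpha)\neq0$ to get $u_\alpha\in\D_{X,0}(hu_\alpha)$. You phrase this as a contraposition, while the paper shows directly that every $b\in B_{F,0}$ vanishes at $-\alpha$. Your explicit check that $\phi_\alpha$ is well defined and intertwines the two actions spells out what the paper leaves to the remark preceding the proposition. The closing comment about fixing a branch is unnecessary, since $u_\alpha$ is a formal generator of $\mathcal M_{\alpha,0}$ and the argument does not use the rationality of $\alpha$.
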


\begin{proof}
Let $b(s)\in B_{F,0}$. By definition, there exists $P(s)\in\D_{X,0}[s]$ such that
\begin{equation*}
b(s)F^s=P(s)hF^s.
\end{equation*}
Evaluating at $s=-\alpha$ gives an identity in $\mathcal M_{\alpha,0}$
\begin{equation*}
b(-\alpha)u_\alpha=P(-\alpha)hu_\alpha.
\end{equation*}
If $b(-\alpha)\neq0$, then
\begin{equation*}
u_\alpha=\frac{1}{b(-\alpha)}P(-\alpha)(hu_\alpha) \in \D_{X,0}(hu_\alpha),
\end{equation*}
contrary to \eqref{eq:cyclic-noncontainment}. Hence $b(-\alpha)=0$ for every $b(s)\in B_{F,0}$, and therefore $-\alpha\in Z(B_{F,0})$.
\end{proof}

\begin{remark}
For $r=1$, the implication in \Cref{prop:affine-point-lifting} is the elementary consequence of evaluating the Bernstein functional equation at $s=-\alpha$. The converse problem is substantially more delicate: Saito studied conditions under which a root $-\alpha$ of the Bernstein--Sato polynomial yields a strict inclusion between the $\D$-modules generated by $f^{-\alpha}$ and $f^{1-\alpha}$, and showed that the converse does not hold in general \cite{Saito2021RationalPowers}. \Cref{prop:affine-point-lifting} uses only the direct implication, in the multivariable setting.
\end{remark}

\subsection{Rational density on affine hyperplanes}

The following density argument applies equally to hyperplanes defined by exceptional components and by strict transforms. Let
\begin{equation}\label{eq:general-positive-dual-wall}
 H=V(1+\lambda\cdot s), \quad W_H=\{\alpha\in\C^r:\lambda\cdot\alpha=1\}, \quad W_H^+=W_H\cap\R_{>0}^r,
\end{equation}
where $0\neq\lambda\in\Q_{\ge0}^r$. The set $W_H^+$ contains rational points. Indeed, choose $j$ with $\lambda_j>0$ and choose positive rational numbers $\alpha_i$ for $i\neq j$ sufficiently small that $\sum_{i\neq j}\lambda_i\alpha_i<1$. Then $\alpha_j=(1-\sum_{i\neq j}\lambda_i\alpha_i)/\lambda_j
$ is also positive and rational. For $r\ge2$, $W_H^+$ is a relatively open subset of the real affine hyperplane $W_H(\R)$, which has dimension $r-1$; for $r=1$, it consists of the single point $1/\lambda_1$. If all coordinates of $\lambda$ are positive, $W_H^+$ is bounded, whereas it may be unbounded when some $\lambda_i=0$. We will therefore work inside a bounded relatively open rational polytope contained in $W_H^+$ when needed.

\begin{lemma}\label{lem:bounded-polytope-density}
Let $H$ and $W_H^+$ be as in \eqref{eq:general-positive-dual-wall}. Let $q_1,\ldots,q_a$ be nonzero rational functions on $W_H$ defined over $\Q$, and let $\eta_1,\ldots,\eta_b$ be affine functions on $W_H$ with rational coefficients. Suppose $r\ge2$. Then there exists a bounded rational relatively open polytope $\mathcal B\subset W_H^+$ on which every $q_i$ is defined and nonzero. For each nonconstant $\eta_j$, remove from $\mathcal B$ all sets of the form
\begin{equation*}
\{\alpha\in\mathcal B:\eta_j(\alpha)=m\},\quad m\in\Z,
\end{equation*}
that meet $\mathcal B$. The rational points in the remaining set are Zariski dense in $W_H$.
If $r=1$, then $W_H$ consists of a single rational point, and the same conclusion is immediate.
\end{lemma}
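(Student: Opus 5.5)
We argue in two stages: first we build the polytope $\mathcal B$, and then we show that removing the integer level sets still leaves a Zariski dense set of rational points.

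Assume $r\ge2$ and identify $W_H$ with $\C^{r-1}$ by an affine parametrization defined over $\Q$. For instance, solve $\lambda\cdot\alpha=1$ for a coordinate $\alpha_j$ with $\lambda_j>0$. Under this identification $W_H(\R)\cong\R^{r-1}$, and $W_H^+$ is a nonempty relatively open subset. It is nonempty by the rational point constructed just before the lemma. Each $q_i$ is a nonzero rational function over $\Q$, so the product of its numerator and denominator is a nonzero polynomial $Q_i$. The zero set of $\prod_i Q_i$ is a proper real algebraic subset of $\R^{r-1}$. A nonzero polynomial cannot vanish on a nonempty open set, so its complement is open and dense. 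Hence its intersection with $W_H^+$ is a nonempty open set. Choose a rational point in this open set, together with a small rational cube around it, or equivalently a simplex with rational vertices, whose closure lies in the open set. The interior of this cube is a bounded rational relatively open polytope $\mathcal B$ on which every $q_i$ is defined and nonzero.

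For the second stage, fix a nonconstant $\eta_j$. Since $\mathcal B$ is bounded, $\eta_j(\mathcal B)$ is a bounded interval. Only finitely many integers $m$ lie in it, so only finitely many level sets $\{\eta_j=m\}$ meet $\mathcal B$. Each such level set is a real affine hyperplane in $\R^{r-1}$. The constant $\eta_j$ contribute nothing to remove. After the removal we are left with $\mathcal B$ minus finitely many hyperplanes, which is still a nonempty open subset $\mathcal U\subseteq\R^{r-1}$.

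It remains to check that the rational points of $\mathcal U$ are Zariski dense in $W_H\cong\C^{r-1}$. Rational points are dense in $\mathcal U$ for the Euclidean topology, because $\mathcal U$ is open in $\R^{r-1}$. Suppose a polynomial $P\in\C[t_1,\ldots,t_{r-1}]$ vanishes on $\mathcal U\cap\Q^{r-1}$. By continuity $P$ vanishes on all of $\mathcal U$. Applying this to the real and imaginary parts of $P$, each is a real polynomial vanishing on a nonempty open subset of $\R^{r-1}$, so each is zero and $P=0$. For the case $r=1$, $W_H=\{1/\lambda_1\}$ is a single rational point. There the density statement is trivial, since the Zariski closure of a point is the point itself.

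The only step needing care is that the finite removal leaves a nonempty open set rather than something empty or lower-dimensional. Boundedness of $\mathcal B$ is exactly what guarantees that only finitely many integer levels of each $\eta_j$ meet it, and this is why the lemma passes to a bounded polytope instead of working with the possibly unbounded set $W_H^+$. Without boundedness, the union of the level sets could be infinite and dense, for example when $W_H^+$ is unbounded in a direction along which some $\eta_j$ is nonconstant.
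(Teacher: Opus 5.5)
Your proof is correct and follows essentially the same route as the paper's. You choose a small rational polytope inside $W_H^+$ avoiding the zero and pole loci of the $q_i$, and use boundedness so that only finitely many integer level hyperplanes of each nonconstant $\eta_j$ are removed. You then conclude by Euclidean density of rational points in the remaining nonempty real open set, on which no nonzero polynomial can vanish.
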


\begin{proof}
Assume $r\ge2$. Let $Z\subset W_H$ be the union of the zero and pole loci of the functions $q_1,\ldots,q_a$. Since each $q_i$ is nonzero, $Z$ is a proper algebraic subset of $W_H$. The set $W_H^+$ is a nonempty relatively open subset of $W_H(\R)$, so it contains a point outside $Z$. Since all the data are defined over $\Q$ and rational points are dense in $W_H(\R)$, we may choose such a point with rational coordinates. We may then choose a bounded rational relatively open polytope $\mathcal B\subset W_H^+$ whose closure is disjoint from $Z$. Thus every $q_i$ is defined and nonzero on $\mathcal B$.

For each nonconstant $\eta_j$, the image $\eta_j(\overline{\mathcal B})$ is bounded, so only finitely many integers $m$ satisfy
\begin{equation*}
\{\alpha\in\mathcal B:\eta_j(\alpha)=m\}\neq\varnothing.
\end{equation*}
Hence only finitely many proper rational affine subspaces are removed from $\mathcal B$. Their complement contains a nonempty relatively open subset of $W_H(\R)$, and its rational points are Euclidean dense. If a polynomial on $W_H$ vanishes at all of these rational points, then it vanishes on a nonempty real open subset of $W_H(\R)$ and therefore vanishes identically on $W_H$. Thus the remaining rational points are Zariski dense in $W_H$.

If $r=1$, then $W_H$ consists of a single rational point, so the assertion is immediate.
\end{proof}



\begin{lemma}\label{lem:selected-rupture-density}
Assume $m_{AB}=0$ for all $A,B\in S_H$, and let $E\in S_H$ satisfy $R_{E,H}\neq0$. There exists a Zariski-dense set
\begin{equation*}
\mathcal U_E(\Q)\subset W_H^+\cap\Q^r
\end{equation*}
such that, for every $\alpha\in\mathcal U_E(\Q)$,
\begin{enumerate}[label=\textup{(\roman*)}]
\item $\epsilon_D(\alpha)\neq0$ for every neighbour $D$ of $E$;
\item $R_{E,H}(-\alpha)\neq0$;
\item if the restriction of $\epsilon_D$ to $W_H$ is nonconstant, then $\epsilon_D(\alpha)\notin\Z$.
\end{enumerate}
\end{lemma}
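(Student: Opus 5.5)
The plan is to apply \Cref{lem:bounded-polytope-density} to $H$ itself. Here $W_H=\{\alpha:\lambda_H\cdot\alpha=1\}$ with $\lambda_H=N_E/\nu_E\in\Q_{\ge0}^r\setminus\{0\}$, and $N_E\neq0$ because $E$ is an irreducible component of the reduced total transform. So $H$ and $W_H$ have the form \eqref{eq:general-positive-dual-wall}. It then suffices to produce finitely many nonzero rational functions $q_i$ and affine functions $\eta_j$ on $W_H$, all defined over $\Q$, whose nonvanishing and non-integrality encode conditions (i)--(iii).

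\textbf{Choice of functions.} For each neighbour $D$ of $E$, consider the affine function $\epsilon_D(\alpha)=\nu_D-N_D\cdot\alpha$ restricted to $W_H$. Under the identification $s=-\alpha$, this is exactly $L_D|_H$ composed with $\alpha\mapsto-\alpha$. By the hypothesis $m_{AB}=0$ on $S_H$, no neighbour $D$ lies in $S_H$, since every neighbour of the exceptional $E$ meets it in $\pi^{-1}(0)$. If $E$ is instead a strict transform, its neighbours meet it in the fibre as well. Hence $L_D|_H\neq0$ in $K(H)$, and $\epsilon_D|_{W_H}$ is a nonzero affine function over $\Q$. Take these as the $q_i$; this handles (i).

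Next take $q_0(\alpha)=R_{E,H}(-\alpha)$. This is the image of $R_{E,H}\in K(H)^*$ under the rational isomorphism $W_H\to H$, $\alpha\mapsto-\alpha$. It is a nonzero rational function over $\Q$ because all $m$'s, $N$'s and $\nu$'s are integers. At points where all $\epsilon_D\neq0$, its value is given by \eqref{eq:component-wall-residue}, i.e.\ \eqref{eq:selected-residue-at-alpha} in the exceptional case. Finally, take the $\eta_j$ to be those $\epsilon_D|_{W_H}$ that are nonconstant, which handles (iii).

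\textbf{Conclusion.} For $r\ge2$, \Cref{lem:bounded-polytope-density} gives a bounded rational polytope $\mathcal B\subset W_H^+$ on which every $q_i$ is defined and nonzero. It also shows that after removing the finitely many integral level sets of the nonconstant $\eta_j$, the remaining rational points are Zariski dense in $W_H$. Define $\mathcal U_E(\Q)$ to be this set of rational points; it satisfies (i)--(iii) by construction.

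For $r=1$, $W_H$ is the single rational point $\alpha=1/\lambda_H$, so the set $\{\alpha\}$ is trivially Zariski dense. Every $\epsilon_D$ is constant, so (iii) is vacuous. A nonzero element of $K(H)=\C$ is a nonzero number, so (i) and (ii) hold at that point.

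\textbf{Main obstacle.} The only delicate point is the bookkeeping that makes $R_{E,H}$, a priori an element of the function field $K(H)$, equal to a genuine rational function that is regular and nonzero at the chosen points. One must check that evaluating after clearing the denominators $L_D|_H$ agrees with the field element. This holds because off the zero loci of the $\epsilon_D$ (already excluded via (i)), $R_{E,H}$ is a regular function given by the finite sum formula.
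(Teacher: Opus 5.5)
Your proposal is correct and matches the paper's proof. You observe that $m_{AB}=0$ on $S_H$ forces every neighbour $D$ of $E$ out of $S_H$, so $L_D|_H\neq0$ in $K(H)$. You then apply Lemma~\ref{lem:bounded-polytope-density} to the pullbacks under $\alpha\mapsto-\alpha$ of the $L_D|_H$ and of $R_{E,H}$, together with the affine functions $\epsilon_D$, and treat $r=1$ separately, exactly as the paper does. Your added check that $R_{E,H}(-\alpha)$ agrees with the finite sum formula wherever all $\epsilon_D(\alpha)\neq0$ is a harmless clarification that the paper leaves implicit.
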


\begin{proof}
Let $D$ be a neighbour of $E$. Since $E\in S_H$, the assumption $m_{AB}=0$ for all $A,B\in S_H$ implies that $D\notin S_H$; otherwise $E$ and $D$ would be two components in $S_H$ meeting in the local fibre. Hence $L_D|_H$ is a nonzero element of $K(H)$. By assumption, $R_{E,H}$ is also nonzero in $K(H)$. The map $W_H\to H$, $\alpha\mapsto-\alpha$, is an affine isomorphism, and for every neighbour $D$ of $E$ one has
\begin{equation*}
L_D(-\alpha)=\nu_D-N_D\cdot\alpha=\epsilon_D(\alpha).
\end{equation*}
Apply \Cref{lem:bounded-polytope-density} to the rational functions $L_D|_H$ and $R_{E,H}$, pulled back to $W_H$ by $\alpha\mapsto-\alpha$, and to the affine functions $\epsilon_D$. We obtain a Zariski-dense set of rational points in $W_H^+$ at which every $L_D(-\alpha)$ and $R_{E,H}(-\alpha)$ is defined and nonzero, and at which every nonconstant $\epsilon_D$ is nonintegral. These are precisely \textup{(i)}--\textup{(iii)}. For $r=1$, the same conclusion follows from the last statement of \Cref{lem:bounded-polytope-density}.
\end{proof}

\begin{theorem}\label{thm:dense-wall-lifting}
Let $H=V(1+\lambda\cdot s)$ be a rational affine hyperplane with $0\ne\lambda\in\Q_{\ge0}^r$. Suppose that
\[
 u_\alpha\notin\D_{X,0}(hu_\alpha)
\]
for a Zariski-dense set of rational points $\alpha\in W_H^+$. Then
\begin{equation}\label{eq:whole-bernstein-wall}
 H\subseteq Z(B_{F,0}).
\end{equation}
\end{theorem}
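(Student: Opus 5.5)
The argument combines the pointwise criterion of \Cref{prop:affine-point-lifting} with the fact that $Z(B_{F,0})$ is Zariski closed.

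\emph{Step 1: pointwise membership.} Let $\Sigma\subseteq W_H^+$ be the given Zariski-dense set of rational points with $u_\alpha\notin\D_{X,0}(hu_\alpha)$. By \Cref{prop:affine-point-lifting}, every $\alpha\in\Sigma$ satisfies $-\alpha\in Z(B_{F,0})$. Equivalently, every $b\in B_{F,0}$ satisfies $b(-\alpha)=0$ for all $\alpha\in\Sigma$.

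\emph{Step 2: transport to $H$.} The map $\iota:\C^r\to\C^r$, $\alpha\mapsto-\alpha$, is a linear automorphism. It sends $W_H=\{\lambda\cdot\alpha=1\}$ isomorphically onto $H=\{1+\lambda\cdot s=0\}$, because $1+\lambda\cdot(-\alpha)=1-\lambda\cdot\alpha$. An affine isomorphism preserves Zariski density, so $-\Sigma$ is Zariski dense in $H$.

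\emph{Step 3: density.} Fix $b\in B_{F,0}$. The restriction $b|_H$ is a polynomial function on the affine space $H\cong\C^{r-1}$, and by Step 1 it vanishes on the dense set $-\Sigma$. Hence $b|_H\equiv0$, that is, $H\subseteq V(b)$. Intersecting over all $b\in B_{F,0}$ gives $H\subseteq Z(B_{F,0})$.

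When $r=1$, $W_H$ is a single rational point. Density then just means that this point lies in $\Sigma$, and Step 1 alone gives the claim.

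There is no real obstacle here; the substance lies in producing $\Sigma$, which is done in the later sections. The only point to state carefully is that density is meant in $W_H$, as in \Cref{lem:bounded-polytope-density}, and not in $\C^r$, and that this density transfers to $H$ under $\alpha\mapsto-\alpha$.
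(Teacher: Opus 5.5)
Your proposal is correct and follows the paper's proof essentially verbatim. Both arguments apply \Cref{prop:affine-point-lifting} pointwise and transport Zariski density along the affine isomorphism $\alpha\mapsto-\alpha$ from $W_H$ onto $H$, so that each $b|_H$ vanishes identically; they also treat the case $r=1$ in the same way.
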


\begin{proof}
By \Cref{prop:affine-point-lifting}, every $b\in B_{F,0}$ vanishes at $-\alpha$ for every point $\alpha$ in the given Zariski-dense set. The map $\alpha\mapsto-\alpha$ is an affine isomorphism from $W_H=\{\alpha\in\C^r:\lambda\cdot\alpha=1\}$ onto $H$, so the corresponding points $-\alpha$ are Zariski dense in $H$. Hence the restriction of $b$ to $H$ vanishes identically. Since this holds for every $b\in B_{F,0}$, we obtain \eqref{eq:whole-bernstein-wall}. The case $r=1$ is included, since both $W_H$ and $H$ consist of a single point.
\end{proof}

Exponentiated Bernstein supports and their relation with local-system support loci are studied in \cite{BvdVWZ2021}; logarithmic nearby-cycle methods give a related perspective in \cite{Wu2026}. The argument above is deliberately affine: it uses the value $s=-\alpha$ itself rather than only its exponential.

We will use \Cref{thm:dense-wall-lifting} in two situations. If two components $A,B\in S_H$ meet over $\pi^{-1}(0)$, then $N_A\cdot\alpha=\nu_A$ and $N_B\cdot\alpha=\nu_B$ for every $\alpha\in W_H^+$, and the double-residue obstruction developed below will give the required noncontainment at rational points. If $E\in S_H$ satisfies $R_{E,H}\neq0$ and no two components in $S_H$ meet over $\pi^{-1}(0)$, then \Cref{lem:selected-rupture-density} provides the Zariski-dense set of rational points used in the residue argument.

\subsection{Strict transforms}

For a strict transform $C$, the corresponding hyperplane is contained in the Bernstein--Sato locus by the following result.

\begin{theorem}\label{thm:strict-transform-wall}
Let $C$ be an irreducible branch of the reduced divisor $\Delta=(h=0)$, and let
\[
 N_C=(\ord_C(f_1),\ldots,\ord_C(f_r)).
\]
Then
\begin{equation}\label{eq:strict-transform-wall}
 V(N_C\cdot s+1)
 \subseteq
 Z(B_{F,0}).
\end{equation}
This remains true with arbitrary positive multiplicities of $C$ in the entries of $F$.
\end{theorem}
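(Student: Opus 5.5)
The plan is to prove the inclusion pointwise on $W_H^+$, where $H=V(N_C\cdot s+1)$, and then apply \Cref{thm:dense-wall-lifting}. The noncontainment will not be tested at the origin. It will be tested at a nearby smooth point $p$ of the branch $C$, where the local picture is a monomial in one coordinate. Note that $\lambda=N_C\in\Z_{\ge0}^r$ is nonzero, since $C$ is a branch of some $f_i$. So \Cref{thm:dense-wall-lifting} and \Cref{lem:bounded-polytope-density} apply.

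First, I compare local ideals. Let $b(s)\in B_{F,0}$ with $b(s)F^s=P(s)hF^s$. The operator $P(s)$ has holomorphic coefficients on some neighbourhood $U$ of $0$, and the identity holds on $U$. Hence $b\in B_{F,p}$ for every $p\in U$, so $B_{F,0}\subseteq B_{F,p}$. The argument of \Cref{prop:affine-point-lifting} then holds verbatim at $p$: if $u_\alpha\notin\D_{X,p}(hu_\alpha)$, then every $b\in B_{F,0}$ vanishes at $-\alpha$.

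Next, choose $p\in C\cap U$, $p\neq0$, that lies on no other branch of $\Delta$. Since $C$ is smooth at $p$, choose coordinates $(x,y)$ at $p$ with $C=(x=0)$. Then $f_i=v_i\,x^{N_{C,i}}$ with units $v_i$, and $h=v\,x^{n_C}$ with $n_C=\sum_iN_{C,i}\geq1$ and $v=\prod_iv_i$. Fix $\alpha\in W_H^+\cap\Q^r$, so that $N_C\cdot\alpha=1$. The stalk $\mathcal M_{\alpha,p}=\mathcal O_{X,p}[x^{-1}]u_\alpha$ is free of rank one. Choosing single-valued holomorphic branches $w=\prod_iv_i^{-\alpha_i}$ near $p$, I identify $g\,u_\alpha$ with the meromorphic function $g\,w\,x^{-1}$. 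This identification is compatible with the vector-field action displayed before \Cref{prop:affine-point-lifting}. That action is just the Leibniz rule applied to the function $g\,w\,x^{-N_C\cdot\alpha}$, and the exponent is the integer $-1$, so no multivaluedness remains near $p$. Under this embedding, $hu_\alpha$ corresponds to $v\,w\,x^{n_C-1}$, which is holomorphic at $p$. Every holomorphic differential operator preserves $\mathcal O_{X,p}$, so $\D_{X,p}(hu_\alpha)$ lands in $\mathcal O_{X,p}$. But $u_\alpha$ corresponds to $w\,x^{-1}$, which has a genuine pole along $x=0$ because $w(p)\neq0$. Hence $u_\alpha\notin\D_{X,p}(hu_\alpha)$.

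Combining these steps, $-\alpha\in Z(B_{F,0})$ for every rational $\alpha\in W_H^+$. These points are Zariski dense by \Cref{lem:bounded-polytope-density}, so \Cref{thm:dense-wall-lifting} yields $H\subseteq Z(B_{F,0})$. Arbitrary multiplicities and common factors cause no trouble, because only the total order vector $N_C$ enters. The main point needing care is the identification of the abstract module $\mathcal M_{\alpha,p}$ with a module of meromorphic functions. I would check it by noting that the embedding $g\,u_\alpha\mapsto g\,w\,x^{-1}$ is injective and intertwines the action of each vector field, hence of all of $\D_{X,p}$.
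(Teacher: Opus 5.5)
Your proof is correct, but it handles the key local step differently from the paper. Both arguments pass to a smooth point $p\in C$ off the other branches, where $f_i=v_ix^{N_{C,i}}$ with units $v_i$.

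The paper then quotes the local computation \cite[Proposition~1.3]{BvdVVW2024}, namely $B_{F,p}=\bigl(\prod_{c=1}^{n_C}(N_C\cdot s+c)\bigr)$. This places the whole hyperplane $V(N_C\cdot s+1)$ in $Z(B_{F,p})$ at once, so no density step is needed.

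You instead prove only the half that is actually used, by the paper's own mechanism. Your $\D_{X,p}$-linear embedding $g\,u_\alpha\mapsto g\,w\,x^{-1}$ sends $\D_{X,p}(hu_\alpha)$ into $\mathcal O_{X,p}$, while $u_\alpha$ goes to $w\,x^{-1}$, which has a pole. The evaluation argument of \Cref{prop:affine-point-lifting} at $p$ then finishes. This makes the strict-transform case self-contained and parallel to the crossing and rupture cases. It also avoids having to check that the cited computation carries over to the analytic setting. What you give up is the full local ideal, although the same computation with $N_C\cdot\alpha=c\le n_C$ recovers the other factors.

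Three points of bookkeeping:
\begin{enumerate}
\item The neighbourhood $U$ on which $P(s)$ is defined depends on $b$. So ``$B_{F,0}\subseteq B_{F,p}$'' for one fixed $p$ needs a common neighbourhood for finitely many generators of $B_{F,0}$, as in the paper. Your argument does not actually need this: the noncontainment holds at every admissible $p$, so you can pick $p$ inside the neighbourhood attached to each $b$.
\item \Cref{thm:dense-wall-lifting} is stated with noncontainment at $0$. This follows from your result. An identity $u_\alpha=P(hu_\alpha)$ at $0$ persists on a connected neighbourhood, so it would also hold at nearby $p$.
\item Nothing at $p$ uses rationality or positivity of $\alpha$. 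With $w=\exp(-\sum_i\alpha_i\log v_i)$, the computation works for every $\alpha\in\C^r$ with $N_C\cdot\alpha=1$, so you could drop the density step entirely.
\end{enumerate}
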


\begin{proof}
Choose generators $b_1,\ldots,b_t$ of $B_{F,0}\subseteq\C[s]$, together with functional equations representing them in a neighbourhood of the origin. After shrinking this neighbourhood if necessary, all these equations are defined on a common neighbourhood $U$ of $0$. Choose a smooth point $p\in C\cap U$ at which no other irreducible component of the reduced divisor $(h=0)$ is present. Restricting the functional equations to $p$ gives $b_j\in B_{F,p}$ for every $j$.

We now use the local calculation in the proof of \cite[Proposition~1.3]{BvdVVW2024}, which also applies verbatim in the analytic setting. More generally, for $a\in\Z_{\ge0}^r$ they consider
\begin{equation*}
B_{F,p}^a=\{b(s):b(s)F^s\in\D_{X,p}[s]F^{s+a}\}.
\end{equation*}
At the chosen point $p$, one may write $f_i=u_i z^{N_{C,i}}$, where $z$ is a local equation of $C$ and each $u_i$ is a unit. If $m=N_C\cdot a\neq0$, their local computation gives
\begin{equation*}
B_{F,p}^a=\left(\prod_{c=1}^m(N_C\cdot s+c)\right).
\end{equation*}
Taking $a=\one$, we have $B_{F,p}^{\one}=B_{F,p}$ and $m=N_C\cdot\one\ge1$. Hence
\begin{equation*}
V(N_C\cdot s+1)\subseteq Z(B_{F,p}).
\end{equation*}
Since each $b_j$ belongs to $B_{F,p}$, every generator $b_j$ vanishes on $V(N_C\cdot s+1)$. It follows that every element of $B_{F,0}$ vanishes on this hyperplane, and therefore
\begin{equation*}
V(N_C\cdot s+1)\subseteq Z(B_{F,0}).
\end{equation*}
The local calculation uses the full vector $N_C=(\ord_C(f_1),\ldots,\ord_C(f_r))$, so the argument allows arbitrary multiplicities of $C$ in the entries of $F$.
\end{proof}


\subsection{Green formula}


We first establish the Green formula in arbitrary dimension. Let $X$ be a smooth complex manifold of dimension $n$, and let $\omega_X$ denote its canonical bundle. The standard right $\D_X$-module structure on $\omega_X$ is defined on local holomorphic sections $\omega$ by
\begin{equation}\label{eq:right-D-action}
\omega\mathbin{\cdot}a=a\omega,\quad \omega\mathbin{\cdot}\xi=-L_\xi\omega,
\end{equation}
for $a\in\mathcal O_X$ and a holomorphic vector field $\xi$, where $L_\xi$ denotes the Lie derivative along $\xi$. The sign in \eqref{eq:right-D-action} is compatible with the commutation relation
\begin{equation*}
\omega\mathbin{\cdot}(\xi a-a\xi)=\xi(a)\omega.
\end{equation*}
For $P\in\D_X$, define
\begin{equation}\label{eq:dagger-definition}
P^\dagger\omega:=\omega\mathbin{\cdot}P.
\end{equation}
We use the convention
\begin{equation*}
(PQ)(g)=P(Q(g)).
\end{equation*}
By associativity of the right $\D_X$-action,
\begin{equation}\label{eq:dagger-composition}
(PQ)^\dagger\omega=Q^\dagger(P^\dagger\omega).
\end{equation}
If $P$ has holomorphic coefficients and $\omega$ is holomorphic, then $P^\dagger\omega$ is holomorphic. For example, in local coordinates $(x_1,\ldots,x_n)$ with $\omega=dx_1\wedge\cdots\wedge dx_n$,
\begin{equation*}
(a\partial_{x_j})^\dagger\omega=-\partial_{x_j}(a)\omega,\quad (\partial_{x_j}a)^\dagger\omega=0.
\end{equation*}


The right $\D_X$-module structure on the canonical bundle is standard; see, for example, \cite{HottaTakeuchiTanisaki2008}. The following identity is the usual Green formula for a differential operator and its formal adjoint; see, for example, \cite[Chapter~2]{LionsMagenes1972}. We record the coefficient-valued form needed below.

\begin{proposition}\label{prop:green-formula}
Let $X$ be a smooth complex manifold, let $\Lcal$ be a local system on $X$ with flat connection $\nabla$, let $g$ be a meromorphic section with coefficients in $\Lcal$, and let $\omega$ be a holomorphic top form. For every $P\in\D_X$, there exists a meromorphic form $\eta_P(g,\omega)$ of degree $\dim X-1$ with coefficients in $\Lcal$ such that
\begin{equation}\label{eq:green-formula}
P(g)\omega=gP^\dagger\omega+\nabla\eta_P(g,\omega).
\end{equation}
\end{proposition}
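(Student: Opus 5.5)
The approach is induction on the order of $P$, using the composition rule \eqref{eq:dagger-composition}. Since $\D_X$ is generated by $\mathcal O_X$ and holomorphic vector fields, and the statement is local, it suffices to handle multiplication operators and vector fields. The remaining work is to check that the class of $P$ satisfying \eqref{eq:green-formula} is closed under sums and products. The correction forms $\eta_P$ are constructed locally. If a global $\eta_P$ is wanted, one can instead give a canonical formula in local coordinates and verify that it is intrinsic; the plan below only needs the local version, since all later uses take place in a neighbourhood of the origin.

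For $P=a\in\mathcal O_X$, we have $a^\dagger\omega=a\omega$ by \eqref{eq:right-D-action}. Hence $P(g)\omega=g\,a^\dagger\omega$, and we take $\eta_a=0$. For a holomorphic vector field $\xi$, the basic identity is Cartan's formula applied to the $\Lcal$-valued top form $g\omega$:
\[
\xi(g)\omega+gL_\xi\omega=L_\xi(g\omega)=\nabla\iota_\xi(g\omega)+\iota_\xi\nabla(g\omega).
\]
Here $\xi(g)$ means $\iota_\xi\nabla g$, i.e.\ the action through the flat connection. The last term vanishes because $\nabla(g\omega)$ would have degree $\dim X+1$. Since $\xi^\dagger\omega=-L_\xi\omega$, this gives
\[
\xi(g)\omega=g\,\xi^\dagger\omega+\nabla\bigl(\iota_\xi(g\omega)\bigr),
\]
so $\eta_\xi(g,\omega)=\iota_\xi(g\omega)$. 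This is meromorphic with coefficients in $\Lcal$ because $g$ is. The Leibniz rule for $L_\xi$ on $\Lcal$-valued forms holds because $\nabla$ is flat, which makes the twisted Cartan calculus valid.

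For closure under sums, take $\eta_{P+Q}=\eta_P+\eta_Q$; this is immediate by linearity of $P\mapsto P^\dagger$. For closure under composition, suppose \eqref{eq:green-formula} holds for $P$ and $Q$, for all meromorphic $g$ and all holomorphic $\omega$. Applying the formula for $P$ to the section $Q(g)$, which is again meromorphic with coefficients in $\Lcal$, gives
\[
P(Q(g))\omega=Q(g)\,P^\dagger\omega+\nabla\eta_P(Q(g),\omega).
\]
The form $P^\dagger\omega$ is holomorphic, since $P$ has holomorphic coefficients. Applying the formula for $Q$ with this form in place of $\omega$ gives
\[
Q(g)\,P^\dagger\omega=g\,Q^\dagger(P^\dagger\omega)+\nabla\eta_Q(g,P^\dagger\omega).
\]
By \eqref{eq:dagger-composition}, $Q^\dagger(P^\dagger\omega)=(PQ)^\dagger\omega$. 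We may therefore take
\[
\eta_{PQ}(g,\omega)=\eta_P(Q(g),\omega)+\eta_Q(g,P^\dagger\omega).
\]
Every local operator is a finite sum of products of functions and vector fields, so this completes the local statement.

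The only delicate point is the sign bookkeeping in the vector-field step. One must check that the right-action convention $\omega\cdot\xi=-L_\xi\omega$ produces exactly $+g\,\xi^\dagger\omega$, and that the twisted Cartan formula holds with $\nabla$ in place of $d$. I expect this to be routine once $L_\xi$ on $\Lcal$-valued forms is defined as $\nabla\iota_\xi+\iota_\xi\nabla$.
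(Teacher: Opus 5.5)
Your proposal is correct and is essentially the paper's proof: $\eta_a=0$, $\eta_\xi(g,\omega)=g\,\iota_\xi\omega$ from Cartan's formula with the sign $\xi^\dagger\omega=-L_\xi\omega$, the recursion $\eta_{PQ}(g,\omega)=\eta_P(Q(g),\omega)+\eta_Q(g,P^\dagger\omega)$ via \eqref{eq:dagger-composition}, and linearity. The only cosmetic difference is that you run the Cartan calculus directly with $\nabla$. The paper instead computes in local flat frames and then notes that monodromy acts compatibly on $g$ and its derivatives, so the local identities glue with coefficients in $\Lcal$.
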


\begin{proof}
It is enough to prove the formula for multiplication by holomorphic functions and for holomorphic vector fields, and then to check that it is preserved under composition and addition. For multiplication by $a\in\mathcal O_X$, set $\eta_a(g,\omega)=0$. For a holomorphic vector field $\xi$, define
\begin{equation}\label{eq:first-order-concomitant}
\eta_\xi(g,\omega)=g\iota_\xi\omega,
\end{equation}
where $\iota_\xi$ denotes contraction with $\xi$. In a local flat frame of the local system, Cartan's formula gives
\begin{equation*}
d(g\iota_\xi\omega)=\xi(g)\omega+gL_\xi\omega.
\end{equation*}
Together with \eqref{eq:right-D-action}, this proves \eqref{eq:green-formula} for a vector field.

Suppose that \eqref{eq:green-formula} holds for $P$ and $Q$, and define
\begin{equation}\label{eq:concomitant-recursion}
\eta_{PQ}(g,\omega)=\eta_P(Qg,\omega)+\eta_Q(g,P^\dagger\omega).
\end{equation}
Applying the formula for $P$ to $Qg$ gives
\begin{equation*}
P(Qg)\omega=(Qg)P^\dagger\omega+\nabla\eta_P(Qg,\omega).
\end{equation*}
Applying the formula for $Q$ with the holomorphic top form $P^\dagger\omega$ gives
\begin{equation*}
(Qg)P^\dagger\omega=gQ^\dagger(P^\dagger\omega)+\nabla\eta_Q(g,P^\dagger\omega).
\end{equation*}
Combining these identities and using \eqref{eq:dagger-composition}, we obtain
\begin{equation*}
P(Qg)\omega=g(PQ)^\dagger\omega+\nabla\eta_{PQ}(g,\omega).
\end{equation*}
Linearity proves the formula for every $P\in\D_X$.

Finally, analytic continuation acts on $g$ and its derivatives by the same monodromy transformation. Hence the local identities are compatible on overlaps and define forms with coefficients in the given local system.
\end{proof}

We now return to the case $\dim X=2$. Fix $\alpha\in\Q_{>0}^r$ and put $\Delta=(h=0)$. Recall that
\begin{equation*}
u_\alpha=F^{-\alpha}=\prod_{i=1}^r f_i^{-\alpha_i},\quad \mathcal M_{\alpha,0}=\mathcal O_{X,0}[h^{-1}]u_\alpha.
\end{equation*}
For every holomorphic vector field $\xi$,
\begin{equation}\label{eq:coefficient-connection-action}
\xi(u_\alpha)=-\sum_{i=1}^r\alpha_i\frac{\xi(f_i)}{f_i}u_\alpha.
\end{equation}
Let $\Lcal_\alpha^{\mathrm{act}}$ be the actual-branch local system introduced above. In the formal frame $u_\alpha$, the left $\D_X$-module action in \eqref{eq:coefficient-connection-action} corresponds to the connection
\[
 d-\sum_{i=1}^r\alpha_i\frac{df_i}{f_i}.
\]
Its horizontal local system is
\[
 \Lcal_\alpha:=\bigl(\Lcal_\alpha^{\mathrm{act}}\bigr)^\vee.
\]
Indeed, on a simply connected open set choose a branch $\varphi=F^{-\alpha}$ and put $e=\varphi^{-1}u_\alpha$. Then $e$ is a horizontal frame and $u_\alpha=\varphi e$. A positively oriented meridian $\gamma_A$ around a component $A$ acts on the actual scalar branch by
\begin{equation}\label{eq:actual-branch-character}
\chi_\alpha^{\mathrm{act}}(\gamma_A)=\exp\bigl(-2\pi iN_A\cdot\alpha\bigr),
\end{equation}
whereas the parallel-transport monodromy of $\Lcal_\alpha$ is the inverse character. Thus scalar representatives of $\Lcal_\alpha$-valued forms on the universal cover transform by $\chi_\alpha^{\mathrm{act}}$. We write $\nabla_{\Lcal_\alpha}$ for the flat connection on these forms; in a horizontal frame it is the ordinary exterior derivative. Integration cycles carry coefficients in $\Lcal_\alpha^\vee=\Lcal_\alpha^{\mathrm{act}}$. We next apply the Green formula after pulling back to a log resolution.

\subsection{Pullbacks}

Let $\pi:Y\to X$ be an embedded log resolution, and let
\begin{equation*}
D=\bigl(\pi^{-1}(\Delta)\bigr)_{\mathrm{red}}
\end{equation*}
be the reduced total transform, where $\Delta=(h=0)$. We keep the notation $N_A$, $n_A$, and $\nu_A$ introduced earlier for the irreducible components $A$ of $D$. Recall that
\begin{equation*}
n_A=N_A\cdot\one=\ord_A(h)\ge1. 
\end{equation*}

Fix a nowhere vanishing holomorphic two-form $\omega$ on the surface germ. At a crossing $p\in A\cap B$, choose SNC coordinates with $A=(x=0)$ and $B=(y=0)$. The Jacobian divisor and the total transform equations give
\begin{align}
 \pi^*\omega
 &=J(x,y)x^{\nu_A-1}y^{\nu_B-1}\,dx\wedge dy,
 &J(0,0)&\neq0,
 \label{eq:simultaneous-jacobian}\\
 \pi^*f_i
 &=u_i(x,y)x^{N_{A,i}}y^{N_{B,i}},
 &u_i(0,0)&\neq0.
 \label{eq:simultaneous-total-transform}
\end{align}
These formulas include exceptional--exceptional, exceptional--strict, and strict--strict crossings. In particular, for every holomorphic top form $\theta=a\omega$, the coefficient of $\pi^*\theta$ is simultaneously divisible by
\begin{equation}\label{eq:simultaneous-divisibility}
 x^{\nu_A-1}y^{\nu_B-1}.
\end{equation}

Choose compatible rational-power branches of the units in \eqref{eq:simultaneous-total-transform}. If
\[
 \epsilon_C(\alpha)=\nu_C-N_C\cdot\alpha,
\]
then
\begin{equation}\label{eq:coefficient-form-crossing}
 \pi^*(u_\alpha\omega)
 =U(x,y)x^{\epsilon_A(\alpha)-1} y^{\epsilon_B(\alpha)-1}\,dx\wedge dy,
 \quad U(0,0)\neq0.
\end{equation}

We first record how the Green formula behaves under pullback. After pulling $\Lcal_\alpha$ back to $Y\setminus D$, the ordinary pullback of differential forms commutes with the flat connections:
\begin{equation*}
\pi^*(\nabla_{\Lcal_\alpha}\eta)=\nabla_{\pi^{-1}\Lcal_\alpha}(\pi^*\eta).
\end{equation*}
Here we pull back the differential forms and the coefficient system; there is no pullback of the differential operator $P$.

Assume that $u_\alpha\in\D_{X,0}(hu_\alpha)$, and choose $P\in\D_{X,0}$ such that
\begin{equation}\label{eq:cyclic-containment-operator}
u_\alpha=P(hu_\alpha).
\end{equation}
Let $\omega$ be a nowhere vanishing holomorphic two-form near $0$. By \Cref{prop:green-formula},
\begin{equation}\label{eq:green-cyclic-identity}
u_\alpha\omega=(hu_\alpha)P^\dagger\omega+\nabla_{\Lcal_\alpha}\eta_P(hu_\alpha,\omega).
\end{equation}
We pull back this identity to $Y$. If $N_E\cdot\alpha=\nu_E$, then the first term on the right has order along $E$ at least
\begin{equation}\label{eq:one-component-green-order}
n_E-N_E\cdot\alpha+\nu_E-1=n_E-1\ge0.
\end{equation}
If two components $A$ and $B$ meet and satisfy $N_A\cdot\alpha=\nu_A$ and $N_B\cdot\alpha=\nu_B$, then \eqref{eq:simultaneous-divisibility} shows that the pullback of the first term on the right is a holomorphic multiple of
\begin{equation}\label{eq:two-component-green-order}
x^{n_A-1}y^{n_B-1}dx\wedge dy.
\end{equation}
These estimates follow from \eqref{eq:simultaneous-jacobian}--\eqref{eq:simultaneous-divisibility}.


\section{Residue obstructions on surfaces}\label{sec:residue-obstructions}

We now use the Green formula to show that $u_\alpha\notin\D_{X,0}(hu_\alpha)$ under suitable residue conditions. For a component $E$ satisfying $N_E\cdot\alpha=\nu_E$, integration over tubes around cycles in $E^\circ$ shows that the Poincar\'e residue of an exact form is trivial in twisted cohomology; the residue extends across those intersection points $E\cap D$ for which $\epsilon_D(\alpha)=1$. If two components $A$ and $B$ meet and satisfy $N_A\cdot\alpha=\nu_A$ and $N_B\cdot\alpha=\nu_B$, integration over a small torus around the crossing gives an obstruction from the double residue. The argument applies equally to all types of crossings.

\subsection{Local systems on a punctured tubular neighborhood}

Let $Y$ be a smooth complex surface, let $E\subset Y$ be a smooth connected curve, and let $E^\circ=E\setminus \Sigma$, where $\Sigma$ is finite and $E^\circ$ has finite topological type. Choose a Hermitian metric on the normal bundle $N_{E/Y}|_{E^\circ}$. A sufficiently small positive radius function determines an oriented circle bundle
\begin{equation*}
q:S(N_{E/Y}|_{E^\circ})\to  E^\circ
\end{equation*}
embedded in a punctured tubular neighborhood $T^\times$ of $E^\circ$. The circle bundle is a deformation retract of $T^\times$.

Let $\Lcal$ be a local system of rank one on $T^\times$ whose monodromy has finite order, and assume that the monodromy around a positively oriented normal circle is trivial. Since $T^\times$ deformation retracts onto the circle bundle, the homotopy exact sequence gives a surjection $\pi_1(T^\times)\to\pi_1(E^\circ)$ whose kernel is normally generated by the class of a normal circle. It follows that the monodromy character of $\Lcal$ factors through $\pi_1(E^\circ)$. Hence there is a local system $\Lcal_E$ of rank one on $E^\circ$, also with monodromy of finite order, such that
\begin{equation}\label{eq:tube-system-pullback}
\Lcal|_{S(N_{E/Y}|_{E^\circ})}\simeq q^*\Lcal_E.
\end{equation}
Differential forms take values in $\Lcal$, while integration cycles carry coefficients in the dual local system $\Lcal^\vee$.

Let $\eta$ be a meromorphic one-form on $T^\times$ with coefficients in $\Lcal$ and with finite pole order along the missing zero section. Suppose that
\begin{equation*}
\Theta=\nabla_{\Lcal}\eta
\end{equation*}
extends meromorphically across $E^\circ$ and has at most a simple pole in the normal direction. In local holomorphic coordinates with $E=(x=0)$, it can be written as
\begin{equation}\label{eq:simple-normal-pole}
\Theta=\frac{dx}{x}\wedge\beta(x,y)+\gamma(x,y),
\end{equation}
where $\beta$ is a holomorphic one-form tangent to $E$ and depends holomorphically on $x$, while $\gamma$ is regular in the normal variable $x$. Since the monodromy around a normal circle is trivial, the residue
\begin{equation}\label{eq:residue-of-theta}
\Res_E(\Theta)=\beta(0,y)
\end{equation}
is a globally defined one-form on $E^\circ$ with coefficients in $\Lcal_E$. Since the connection is flat, $\nabla_{\Lcal}\Theta=\nabla_{\Lcal}^2\eta=0$. The residue is compatible with the induced flat connection on $E^\circ$, and therefore $\Res_E(\Theta)$ is closed.

\subsection{The Gysin map and tube classes}

Let $D(N_{E/Y}|_{E^\circ})$ and $S(N_{E/Y}|_{E^\circ})$ denote the normal disk and circle bundles over $E^\circ$, respectively, and write
\begin{equation*}
p:D(N_{E/Y}|_{E^\circ})\to  E^\circ,\quad q:S(N_{E/Y}|_{E^\circ})\to  E^\circ
\end{equation*}
for their projections. The complex structure on the normal line bundle determines a canonical orientation of its underlying real two-dimensional fibres, and hence an orientation of the circle fibres.

\begin{proposition}\label{prop:degree-one-thom-gysin}
    There is a natural homology map
    \begin{equation}\label{eq:gysin-homology-transfer}
    q_E^!:H_k\bigl(E^\circ,\Lcal_E^\vee\bigr)\to H_{k+1}\bigl(S(N_{E/Y}|_{E^\circ}),q^*\Lcal_E^\vee\bigr)
    \end{equation}
    defined by
    \begin{equation*}
    q_E^!:=\partial\circ\operatorname{Th},
    \end{equation*}
    where
    \begin{equation*}
    \operatorname{Th}:H_k\bigl(E^\circ,\Lcal_E^\vee\bigr)\xrightarrow{\sim}H_{k+2}\bigl(D(N_{E/Y}|_{E^\circ}),S(N_{E/Y}|_{E^\circ});p^*\Lcal_E^\vee\bigr)
    \end{equation*}
    is the homology Thom isomorphism and $\partial$ is the boundary map of the disk and circle bundle pair. Over an open set on which the normal bundle is trivial, the map is given by
    \begin{equation*}
    [\gamma]\mapsto[S^1]\times[\gamma],
    \end{equation*}
    where $S^1$ is the positively oriented normal circle. The map is independent of the choice of Hermitian metric and of the positive radius used to realize the circle bundle inside the punctured tubular neighborhood.
\end{proposition}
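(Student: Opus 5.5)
The plan is to build $q_E^!$ in three stages: the homology Thom isomorphism for the oriented real rank-two bundle $N_{E/Y}|_{E^\circ}$ with twisted coefficients, then the connecting map of the pair, then a local identification. Local triviality is what makes the first stage work.

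Because $\Lcal_E^\vee$ lives on $E^\circ$, the coefficient system $p^*\Lcal_E^\vee$ on $D=D(N_{E/Y}|_{E^\circ})$ is pulled back from the base. The pair $(D,S)$ is locally over $E^\circ$ a product $U\times(D^2,S^1)$. The complex structure orients the fibres, so there is a Thom class $\tau\in H^2(D,S;\Z)$ that restricts on each fibre to the positive generator of $H^2(D^2,S^1)\cong\Z$.

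\begin{enumerate}
\item Define $\operatorname{Th}$ by the homology Thom isomorphism, inverse to $\tau\cap-$ followed by $p_*$. To show it is an isomorphism with local-system coefficients, I would run the standard Mayer--Vietoris argument over a good cover of $E^\circ$; finite topological type gives a finite good cover. On each trivializing open set $U$, the local system is constant and the Künneth formula gives $H_{k+2}(U\times(D^2,S^1);\Lcal)\cong H_k(U;\Lcal)\otimes H_2(D^2,S^1)$. The five lemma then glues these local isomorphisms.
\item Set $q_E^!=\partial\circ\operatorname{Th}$, where $\partial:H_{k+2}(D,S)\to H_{k+1}(S)$ is the connecting map.
\item Over a trivializing $U$, a cycle $\gamma$ goes to $[D^2]\times[\gamma]$ (with the product taken in this order, and the sign convention of the cross product fixed once). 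Its boundary is $[S^1]\times[\gamma]$, because $\gamma$ is a cycle and $\partial(a\times b)=\partial a\times b\pm a\times\partial b$. This is the local formula.
\end{enumerate}

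Naturality follows because each ingredient is natural under restriction to open subsets of $E^\circ$ and under bundle isomorphisms preserving orientation.

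\textbf{Independence of metric and radius.} Any two Hermitian metrics $h_0,h_1$ and any two positive radius functions $\rho_0,\rho_1$ are connected by a linear path $(h_t,\rho_t)$. Shrinking if necessary, the path stays small enough that every resulting circle bundle $S_t$ lies in $T^\times$. The family $\{S_t\}$ forms an isotopy of sub-bundles inside $T^\times$, equivalently a fibrewise radial rescaling. This rescaling is a fibre-preserving diffeomorphism of pairs $(D_0,S_0)\cong(D_1,S_1)$ over the identity of $E^\circ$, isotopic to the identity inside $T^\times\cup E^\circ$, and it preserves the fibre orientation, hence the Thom class. The coefficient systems are pulled back from $E^\circ$, so they are canonically identified along the isotopy. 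Therefore the compositions $\partial\circ\operatorname{Th}$ agree after composing with the inclusions $S_t\hookrightarrow T^\times$, which are homotopy equivalences by the deformation retraction.

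The expected main obstacle is keeping track of signs and orientation conventions. The product orientation of $S^1\times\gamma$ has to agree with the boundary-orientation convention and with the positive normal orientation, since the tube integral used later must produce exactly $2\pi i\int_\gamma\Res_E$. I would settle this with a local computation: pick coordinates $x$ (normal) and $y$, identify $\partial(D^2\times\gamma)$ with $S^1\times\gamma$, and check that $\int_{S^1\times\gamma}\frac{dx}{x}\wedge\beta=2\pi i\int_\gamma\beta$ under the residue convention \eqref{eq:residue-convention}.
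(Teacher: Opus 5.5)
Your proposal is correct and follows essentially the same route as the paper: a Thom isomorphism from the complex orientation, the local description via the oriented normal disk whose boundary is the positive normal circle, naturality to glue the local descriptions, and fibrewise radial rescaling to remove the dependence on metric and radius. The only difference is that you prove the twisted Thom isomorphism yourself (Mayer--Vietoris over a finite good cover, K\"unneth, five lemma) and check the local formula via $\partial(a\times b)=\partial a\times b\pm a\times\partial b$, whereas the paper cites Bredon for these standard facts.
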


\begin{proof}
    The complex structure gives the normal line bundle a canonical orientation as a real vector bundle, so the Thom isomorphism is defined with coefficients in $p^*\Lcal_E^\vee$. When the normal bundle is trivial, the Thom class is represented by the oriented normal disk, and the boundary map sends it to the positively oriented normal circle. This gives the stated local description of $q_E^!$. Naturality of the Thom isomorphism and of the boundary map shows that these local descriptions agree on overlaps. Finally, homotopies of the Hermitian metric or of the positive radius identify the corresponding disk and circle bundle pairs and preserve the Thom class, so the resulting homology map is independent of these choices. Standard background on the Thom isomorphism and Gysin maps can be found in \cite[Chapter~VI, Section~11]{Bredon1993}.
\end{proof}

For $[\gamma]\in H_1(E^\circ,\Lcal_E^\vee)$, we call $q_E^![\gamma]$ the tube class associated with $[\gamma]$. A singular homology class has a compact representative, so the normal radius may be chosen sufficiently small over its support that the corresponding tube avoids the deleted points $\Sigma$ and, in the applications below, the other components of the total transform.

\begin{lemma}\label{lem:twisted-tube-formula}
For every $[\gamma]\in H_1(E^\circ,\Lcal_E^\vee)$,
\begin{equation}\label{eq:twisted-tube-formula}
\left\langle[\Theta],q_E^![\gamma]\right\rangle=2\pi i\left\langle[\Res_E(\Theta)],[\gamma]\right\rangle.
\end{equation}
\end{lemma}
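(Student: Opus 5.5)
The plan is to reduce \eqref{eq:twisted-tube-formula} to a local computation by working at the chain level and using the local description of $q_E^!$ from \Cref{prop:degree-one-thom-gysin}. First, represent $[\gamma]$ by a finite sum of twisted singular simplices, say a cycle $\sum_j c_j\otimes\sigma_j$ with $c_j$ flat sections of $\Lcal_E^\vee$ over the image of $\sigma_j$. Subdivide so that each $\sigma_j$ lies in a coordinate chart $U_j\subset E^\circ$ over which the normal bundle is trivial and coordinates $(x,y)$ with $E=(x=0)$ are available. Over $U_j$ the tube class is realized by the chains $S^1_\rho\times\sigma_j$, where $S^1_\rho=\{|x|=\rho\}$ is positively oriented. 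I then pair these with the closed form $\Theta$. Because $\Theta$ is $\nabla_\Lcal$-closed, the pairing depends only on homology classes. It is therefore also independent of the radius $\rho$ and of the local trivializations, by the independence statement in \Cref{prop:degree-one-thom-gysin} and Stokes' theorem.

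Next, I compute the local integral using \eqref{eq:simple-normal-pole}. Write
\[
\beta(x,y)=\beta_0(y)+x\beta_1(x,y),
\]
where $\beta_0=\beta(0,y)=\Res_E(\Theta)$. On $S^1_\rho\times\sigma_j$, integrating first over the circle, the term $\tfrac{dx}{x}\wedge\beta_0$ contributes $2\pi i\int_{\sigma_j}\beta_0$. The term $dx\wedge\beta_1$ and the regular term $\gamma$ both have bounded coefficients, and the circle has length $2\pi\rho$, so their contributions are $O(\rho)$. Since the pairing is independent of $\rho$, letting $\rho\to0$ gives
\[
\int_{S^1_\rho\times\sigma_j}\Theta\;\to\;2\pi i\int_{\sigma_j}\Res_E(\Theta)
\]
for each simplex, after inserting the coefficients $c_j$. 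One must check that the flat frame of $\Lcal$ restricted to the tube agrees with $q^*$ of the frame of $\Lcal_E$ via \eqref{eq:tube-system-pullback}. Triviality of the normal monodromy guarantees that the coefficient is constant around each circle, so $\beta_0$ is read in exactly the frame used to define $\Res_E$ in \eqref{eq:residue-of-theta}.

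The main obstacle is the gluing step. The chains $S^1\times\sigma_j$ from different charts do not literally assemble into a cycle, because the trivializations and radii differ on overlaps. I would handle this by choosing one global Hermitian metric and a constant small radius, so that the circle bundle $S(N_{E/Y}|_{\operatorname{supp}\gamma})$ is a global submanifold and $q^{-1}(\sigma_j)$ are genuine chains whose sum is a cycle representing $q_E^![\gamma]$. Integrating over the fibres of $q$ then yields a global fibre-integration identity
\[
q_*\bigl(\Theta|_{S}\bigr)=2\pi i\,\Res_E(\Theta)+O(\rho)
\]
as $\Lcal_E$-valued one-forms on $\operatorname{supp}\gamma$. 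The $O(\rho)$ error is controlled uniformly by compactness of $\operatorname{supp}\gamma$. Combining this with independence of $\rho$ and then letting $\rho\to0$ gives \eqref{eq:twisted-tube-formula}. Finally, since $\Theta=\nabla_\Lcal\eta$ is exact on $T^\times$, the left-hand side is well defined on classes.
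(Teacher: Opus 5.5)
Your argument is correct, but it is organized differently from the paper's proof of this lemma.

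The paper computes exactly in a chart. On the coordinate circle $|x|=\varepsilon$, the term $\frac{dx}{x}\wedge\beta(x,y)$ fibre-integrates to exactly $2\pi i\,\beta(0,y)$ by Cauchy's formula. Regular terms containing $dx$ integrate to zero by Cauchy's theorem, and terms without a normal differential vanish on the fibres. The paper then checks invariance under a change of defining equation $x'=a(x,y)x$, where the extra $da/a$ is regular, and notes that the coefficient transition functions cancel on overlaps.

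You instead fix one global metric and one radius, so the tube is a single embedded circle bundle whose fibre chains genuinely sum to a cycle representing $q_E^![\gamma]$. On it you prove only the asymptotic identity $q_*(\Theta|_S)=2\pi i\,\Res_E(\Theta)+O(\rho)$, uniformly on the compact support. You then use $\nabla_{\Lcal}\Theta=0$ to make the tube integral independent of $\rho$, and let $\rho\to0$.

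The paper's route is shorter and needs no limit, but it is somewhat implicit about how the chart-wise coordinate circles assemble into one tube cycle. Your route handles that gluing explicitly, at the cost of a limiting argument. It is the same method the paper itself uses later for the iterated tube formula in Theorem~\ref{thm:iterated-tube-formula}.

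One point you should make explicit. Your chart computation uses the coordinate circles $\{|x|=\rho\}$, but the global tube consists of metric circles. On those, $\frac{dx}{x}$ restricts to $i\,d\varphi+q^*(d\log\lambda)+O(\rho)$, where $\lambda$ is the local radius function. The middle term has no component along the fibre, so it drops out of fibre integration. This is what actually justifies your $O(\rho)$ identity on the global tube. Note also that on metric circles the error terms no longer vanish exactly, which is why the limit $\rho\to0$ is genuinely needed in your version.
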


\begin{proof}
Locally, the Gysin map $q_E^!$ sends a cycle $\gamma$ to the product of $\gamma$ with the positively oriented normal circle. If $x$ is a local holomorphic coordinate with $E=(x=0)$, then
\begin{equation*}
\int_{|x|=\varepsilon}\frac{dx}{x}=2\pi i.
\end{equation*}
Applying integration along the circle fibres to \eqref{eq:simple-normal-pole} therefore gives $2\pi i\beta(0,y)$. Indeed, terms that are regular in the normal variable and contain $dx$ have zero circle integral by Cauchy's theorem, while terms without a differential in the normal direction vanish upon restriction to the circle fibres. 

If $x'=a(x,y)x$ is another local equation of $E$, where $a$ is nowhere vanishing, then
\begin{equation*}
\frac{dx'}{x'}=\frac{dx}{x}+\frac{da}{a}.
\end{equation*}
The second term is regular in the normal direction and does not change the residue. On overlaps, the transition functions of the local system cancel those of the dual coefficients on the cycles, so the local fibre integrals agree. The orientation of the normal circle gives the positive sign in \eqref{eq:twisted-tube-formula}. Background on integration along fibres can be found in \cite[Chapter~I, Section~6]{BottTu1982}.
\end{proof}

\subsection{Twisted Stokes and the integration pairing}

Let $C$ be a singular two-chain with coefficients in $\Lcal^\vee$, and let $\zeta$ be an $\Lcal$-valued one-form defined in a neighbourhood of the support of $C$. Stokes' theorem in local flat frames, together with the pairing between $\Lcal$ and $\Lcal^\vee$, gives
\begin{equation}\label{eq:twisted-stokes}
\int_C\nabla_{\Lcal}\zeta=\int_{\partial C}\zeta.
\end{equation}
In particular, $\zeta$ need not extend across the missing divisor.

The integration pairing
\begin{equation}\label{eq:twisted-perfect-pairing}
H^1_{\mathrm{dR}}(E^\circ,\Lcal_E)\times H_1(E^\circ,\Lcal_E^\vee)\to\C
\end{equation}
is perfect. Indeed, $E^\circ$ has finite CW type, and the twisted de Rham theorem identifies $H^1_{\mathrm{dR}}(E^\circ,\Lcal_E)$ with the cohomology of the cellular cochain complex with coefficients in $\Lcal_E$. This cochain complex is naturally dual to the cellular chain complex with coefficients in $\Lcal_E^\vee$. Since these complexes are finite dimensional over $\C$, dualization is exact, and the pairing in \eqref{eq:twisted-perfect-pairing} is nondegenerate. For the de Rham description of flat bundles and local systems, see \cite{Deligne1970}; standard background on homology with local coefficients can also be found in \cite[Chapter~VI]{Bredon1993}.

\begin{theorem}\label{thm:tube-residue-exactness}
Under the hypotheses above,
\begin{equation}\label{eq:tube-residue-zero}
\left[\Res_E(\nabla_{\Lcal}\eta)\right]=0\quad\text{in}\quad H^1_{\mathrm{dR}}(E^\circ,\Lcal_E).
\end{equation}
The form $\eta$ may have an arbitrary finite meromorphic pole in the normal direction; only $\nabla_{\Lcal}\eta$ is required to have at most a simple normal pole.
\end{theorem}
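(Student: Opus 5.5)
By the perfect pairing \eqref{eq:twisted-perfect-pairing}, it suffices to show
\[
\left\langle[\Res_E(\nabla_{\Lcal}\eta)],[\gamma]\right\rangle=0
\quad\text{for every }[\gamma]\in H_1(E^\circ,\Lcal_E^\vee).
\]
The plan is to move this pairing up to the circle bundle with \Cref{lem:twisted-tube-formula} and then kill it with twisted Stokes \eqref{eq:twisted-stokes}.

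Fix $[\gamma]$ and represent it by a compact cycle $\gamma$ with coefficients in $\Lcal_E^\vee$. Choose the normal radius small enough over the support of $\gamma$ that the tube lies in $T^\times$, avoids $\Sigma$, and lies in the region where $\eta$ is defined. This is allowed by \Cref{prop:degree-one-thom-gysin}, since $q_E^!$ does not depend on the radius. Represent the tube class $q_E^![\gamma]$ by a singular $2$-cycle $C$ in $S(N_{E/Y}|_{E^\circ})$ with coefficients in $q^*\Lcal_E^\vee\simeq\Lcal^\vee$, using \eqref{eq:tube-system-pullback}. Then \Cref{lem:twisted-tube-formula}, applied to $\Theta=\nabla_{\Lcal}\eta$, gives
\[
2\pi i\left\langle[\Res_E(\Theta)],[\gamma]\right\rangle=\int_C\nabla_{\Lcal}\eta .
\]

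The support of $C$ is compact and disjoint from $E$, so $\eta$ is a smooth $\Lcal$-valued one-form near it. Its pole along the zero section plays no role, because the cycle never meets the zero section. Twisted Stokes \eqref{eq:twisted-stokes} then gives
\[
\int_C\nabla_{\Lcal}\eta=\int_{\partial C}\eta=0,
\]
since $\partial C=0$. Hence the pairing vanishes for every $[\gamma]$, and perfectness gives \eqref{eq:tube-residue-zero}. This also explains the final sentence of the statement: the simple-pole hypothesis is used only on $\Theta$, to define the residue and to apply the tube formula. Stokes' theorem needs $\eta$ only on the tube, so $\eta$ may have an arbitrary finite pole in the normal direction.

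The main obstacle is making sure that the cycle to which \Cref{lem:twisted-tube-formula} applies is a genuine closed cycle in the circle bundle, carrying coefficients in the pullback of $\Lcal^\vee$, and not merely a locally defined product. This is provided by the construction $q_E^!=\partial\circ\operatorname{Th}$: the image of a boundary map is represented by a cycle, and \eqref{eq:tube-system-pullback} identifies its coefficient system with $\Lcal^\vee$ restricted to the tube. A secondary point is that the pairing in \Cref{lem:twisted-tube-formula} is computed by integrating the actual form $\Theta$ over this cycle, and not a cohomology class defined only away from $E$. Both sides are integrals over the same chain, so they agree.
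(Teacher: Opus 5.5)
Your proposal is correct and follows the paper's proof: represent $q_E^![\gamma]$ at positive normal radius inside $T^\times$, where $\eta$ is defined, so that twisted Stokes gives $\int_{q_E^![\gamma]}\nabla_{\Lcal}\eta=0$. Then \Cref{lem:twisted-tube-formula} turns this into $2\pi i\langle[\Res_E(\nabla_{\Lcal}\eta)],[\gamma]\rangle=0$, and the perfect pairing \eqref{eq:twisted-perfect-pairing} finishes the argument; the order in which you apply the tube formula and Stokes makes no difference.
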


\begin{proof}
Let $[\gamma]\in H_1(E^\circ,\Lcal_E^\vee)$. The tube class $q_E^![\gamma]$ is represented at positive normal radius, where $\eta$ is defined. Since it is a cycle, twisted Stokes gives
\begin{equation*}
\left\langle[\nabla_{\Lcal}\eta],q_E^![\gamma]\right\rangle=\int_{q_E^![\gamma]}\nabla_{\Lcal}\eta=0.
\end{equation*}
Applying \Cref{lem:twisted-tube-formula} with $\Theta=\nabla_{\Lcal}\eta$, we obtain
\begin{equation*}
0=2\pi i\left\langle\left[\Res_E(\nabla_{\Lcal}\eta)\right],[\gamma]\right\rangle.
\end{equation*}
This holds for every $[\gamma]\in H_1(E^\circ,\Lcal_E^\vee)$. The perfect pairing \eqref{eq:twisted-perfect-pairing} therefore gives \eqref{eq:tube-residue-zero}.
\end{proof}

\begin{remark}
The assumption that $\eta$ is meromorphic with finite pole order is essential. For example, the multivalued form $(\log x)dy/y$ satisfies
\begin{equation*}
d\left((\log x)\frac{dy}{y}\right)=\frac{dx}{x}\wedge\frac{dy}{y},
\end{equation*}
but $\log x$ has additive monodromy and does not define a meromorphic section of a local system of the type considered above.
\end{remark}

\subsection{The residue obstruction}

Let us return to the embedded resolution of the reduced support of $(h=0)$. Let $E$ be a component of the total transform and suppose
\begin{equation}\label{eq:E-wall-equation}
N_E\cdot\alpha=\nu_E.
\end{equation}
By \eqref{eq:actual-branch-character}, the normal monodromy of $\Lcal_\alpha^{\mathrm{act}}$ around $E$ is
\begin{equation*}
\exp(-2\pi iN_E\cdot\alpha)=\exp(-2\pi i\nu_E)=1.
\end{equation*}
Consequently the dual coefficient system $\Lcal_\alpha$ also has trivial normal monodromy. Both systems descend in the tangential direction. Let $E^\circ$ denote $E$ with its intersections with the other components of the total transform removed. The descended coefficient system is $\Lcal_{E,\alpha}$ as defined above, and the local expression \eqref{eq:coefficient-form-crossing} defines the Poincar\'e residue
\begin{equation}\label{eq:coefficient-poincare-residue}
R_{E,\alpha}:=\Res_E\pi^*(u_\alpha\omega)
\in\Omega^1(E^\circ;\Lcal_{E,\alpha}).
\end{equation}
The corresponding integration pairing uses homology with coefficients in $\Lcal_{E,\alpha}^\vee$.

\begin{theorem}\label{thm:direct-residue-obstruction}
Let $E$ be a component of the total transform and let $\alpha\in\Q_{>0}^r$ satisfy $N_E\cdot\alpha=\nu_E$. If
\begin{equation}\label{eq:nonzero-residue-class}
[R_{E,\alpha}]\neq0\quad\text{in}\quad H^1_{\mathrm{dR}}(E^\circ,\Lcal_{E,\alpha}),
\end{equation}
then
\begin{equation*}
u_\alpha\notin\D_{X,0}(hu_\alpha).
\end{equation*}
\end{theorem}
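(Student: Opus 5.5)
We argue by contraposition. Suppose $u_\alpha\in\D_{X,0}(hu_\alpha)$ and choose $P$ as in \eqref{eq:cyclic-containment-operator}, defined on a small neighbourhood of $0$. The Green formula, \Cref{prop:green-formula}, then gives \eqref{eq:green-cyclic-identity}:
\[
u_\alpha\omega=(hu_\alpha)P^\dagger\omega+\nabla_{\Lcal_\alpha}\eta,
\qquad \eta:=\eta_P(hu_\alpha,\omega).
\]
We pull this identity back to a punctured tubular neighbourhood $T^\times$ of $E^\circ$ in $Y$, restricted to the part lying over the chosen neighbourhood of $0$. Pullback commutes with the flat connections, so $\pi^*(u_\alpha\omega)=\pi^*\bigl((hu_\alpha)P^\dagger\omega\bigr)+\nabla\pi^*\eta$.

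The aim is to apply \Cref{thm:tube-residue-exactness} to $\Theta:=\nabla\pi^*\eta$, taking $\Lcal=\pi^{-1}\Lcal_\alpha$. This requires three checks.
\begin{enumerate}
\item The normal monodromy is trivial. This follows from $N_E\cdot\alpha=\nu_E$, as computed just before \eqref{eq:coefficient-poincare-residue}, and the descended system is $\Lcal_{E,\alpha}$.
\item The form $\pi^*\eta$ is meromorphic with finite pole order along $E$. It is built recursively by \eqref{eq:concomitant-recursion} from contractions $g\iota_\xi\theta$, where $g$ runs over derivatives of $hu_\alpha$ (which lie in $\mathcal O[h^{-1}]u_\alpha$) and $\theta$ over holomorphic forms. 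Its pullback is therefore a multivalued form of the shape (unit branch)$\cdot$meromorphic, with fixed rational exponents, and has finite pole order after factoring out the flat frame.
\item The form $\Theta$ has at most a simple normal pole. Write $\Theta=\pi^*(u_\alpha\omega)-\pi^*\bigl((hu_\alpha)P^\dagger\omega\bigr)$. By \eqref{eq:local-coefficient-form}, the first term has a simple pole $x^{-1}$ along $E$. By \eqref{eq:one-component-green-order}, the second term vanishes to order $n_E-1\ge0$ along $E$, so it is holomorphic in the normal direction and contributes no residue.
\end{enumerate}

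Taking residues along $E$ over $E^\circ$ then gives
\[
\Res_E\Theta=R_{E,\alpha}-\Res_E\pi^*\bigl((hu_\alpha)P^\dagger\omega\bigr)=R_{E,\alpha}.
\]
\Cref{thm:tube-residue-exactness} shows that $[\Res_E\Theta]=0$ in $H^1_{\mathrm{dR}}(E^\circ,\Lcal_{E,\alpha})$. Hence $[R_{E,\alpha}]=0$, which contradicts \eqref{eq:nonzero-residue-class}.

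The main obstacle is the localization. The Green identity holds only on a neighbourhood $U$ of $0$, whereas $E^\circ$ may be noncompact: a strict transform leaves $\pi^{-1}(U)$. Exceptional $E$ lies over $0$, so this issue does not arise there. For a strict transform, the residue class must be understood on $E^\circ\cap\pi^{-1}(U)$, and one must check that the tube classes and the perfect pairing still apply. To handle this, we shrink $U$ to a Milnor-type ball so that $E^\circ\cap\pi^{-1}(U)$ has finite type and is a deformation retract of the germ of $E^\circ$. Every compact cycle $\gamma$ is then realized inside $\pi^{-1}(U)$ with a tube of small radius, and the Stokes argument goes through cycle by cycle.

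A secondary point to verify is that $\eta_P$ has finite pole order. This reduces to the recursion \eqref{eq:concomitant-recursion} together with the fact that $P^\dagger\omega$ stays holomorphic.
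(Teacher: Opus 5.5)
Your proposal is correct and follows essentially the same route as the paper: assume $u_\alpha=P(hu_\alpha)$, pull back the Green formula, and use the order bound $n_E-1\ge0$ to kill the residue of the shifted term. It then follows that the exact term $\nabla\pi^*\eta_P$ has at most a simple normal pole with finite-order concomitant, so \Cref{thm:tube-residue-exactness} forces $[R_{E,\alpha}]=0$. Your extra step of shrinking to a Milnor-type representative for a strict transform $E$ makes explicit a localization that the paper leaves implicit in its germ conventions.
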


\begin{proof}
Suppose, to the contrary, that $u_\alpha=P(hu_\alpha)$ for some $P\in\D_{X,0}$. By the Green formula,
\begin{equation*}
u_\alpha\omega=(hu_\alpha)P^\dagger\omega+\nabla_{\Lcal_\alpha}\eta_P(hu_\alpha,\omega).
\end{equation*}
Pulling this identity back to $Y$ gives
\begin{equation}\label{eq:pulled-back-green-residue}
\pi^*(u_\alpha\omega)=\pi^*\bigl((hu_\alpha)P^\dagger\omega\bigr)+\nabla_{\pi^{-1}\Lcal_\alpha}\pi^*\eta_P(hu_\alpha,\omega).
\end{equation}
By \eqref{eq:one-component-green-order}, the first term on the right has order at least $n_E-1\geq0$ along $E$, and therefore has zero Poincar\'e residue. On the other hand, the equality $N_E\cdot\alpha=\nu_E$ implies that $\pi^*(u_\alpha\omega)$ has a simple pole in the normal direction to $E$. It follows from \eqref{eq:pulled-back-green-residue} that the second term $\nabla_{\pi^{-1}\Lcal_\alpha}\pi^*\eta_P(hu_\alpha,\omega)$ also has at most a simple normal pole along $E$. The form $\pi^*\eta_P(hu_\alpha,\omega)$ is meromorphic with finite pole order, so \Cref{thm:tube-residue-exactness} applies on a punctured tubular neighborhood of $E^\circ$. Hence
\begin{equation*}
\left[\Res_E\left(\nabla_{\pi^{-1}\Lcal_\alpha}\pi^*\eta_P(hu_\alpha,\omega)\right)\right]=0\quad\text{in}\quad H^1_{\mathrm{dR}}(E^\circ,\Lcal_{E,\alpha}).
\end{equation*}
Taking the Poincar\'e residue of \eqref{eq:pulled-back-green-residue}, the first term on the right contributes zero, while the left-hand side gives $R_{E,\alpha}$. Therefore
\begin{equation*}
[R_{E,\alpha}]=\left[\Res_E\left(\nabla_{\pi^{-1}\Lcal_\alpha}\pi^*\eta_P(hu_\alpha,\omega)\right)\right]=0,
\end{equation*}
contradicting \eqref{eq:nonzero-residue-class}.
\end{proof}

Thus a nonzero residue class shows that the inclusion
\begin{equation*}
\D_{X,0}(hu_\alpha)\subset\D_{X,0}u_\alpha
\end{equation*}
is strict. Equivalently, \Cref{thm:direct-residue-obstruction} shows that the class of $u_\alpha$ in
\begin{equation*}
\frac{\D_{X,0}u_\alpha}{\D_{X,0}(hu_\alpha)}
\end{equation*}
is nonzero whenever $[R_{E,\alpha}]$ is nonzero. Combining this with \Cref{prop:affine-point-lifting} gives the following consequence.

\begin{corollary}\label{cor:residue-gives-Bernstein-point}
Let $E$ be a component of the total transform and let $\alpha\in\Q_{>0}^r$ satisfy $N_E\cdot\alpha=\nu_E$. If
\begin{equation*}
[R_{E,\alpha}]\neq0\quad\text{in}\quad H^1_{\mathrm{dR}}(E^\circ,\Lcal_{E,\alpha}),
\end{equation*}
then
\begin{equation*}
-\alpha\in Z(B_{F,0}).
\end{equation*}
\end{corollary}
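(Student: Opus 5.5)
The corollary is a direct composition of two results already established. The plan is to chain \Cref{thm:direct-residue-obstruction} with \Cref{prop:affine-point-lifting}, and the main task is only to check that the hypotheses line up exactly.

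First, fix a component $E$ and $\alpha\in\Q_{>0}^r$ with $N_E\cdot\alpha=\nu_E$. This is precisely the situation in which the normal monodromy of $\Lcal_\alpha$ around $E$ is trivial. Hence the descended system $\Lcal_{E,\alpha}$ and the residue $R_{E,\alpha}=\Res_E\pi^*(u_\alpha\omega)$ are defined, and the hypothesis $[R_{E,\alpha}]\neq0$ in $H^1_{\mathrm{dR}}(E^\circ,\Lcal_{E,\alpha})$ is meaningful.

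Second, apply \Cref{thm:direct-residue-obstruction} with these same $E$ and $\alpha$. It yields $u_\alpha\notin\D_{X,0}(hu_\alpha)$, where $u_\alpha=F^{-\alpha}$. Recall how that theorem works: a relation $u_\alpha=P(hu_\alpha)$ would, via the Green formula (\Cref{prop:green-formula}) and the order estimate \eqref{eq:one-component-green-order}, express $\pi^*(u_\alpha\omega)$ as a term with no residue along $E$ plus a twisted-exact form. By \Cref{thm:tube-residue-exactness}, the exact form has zero residue class, which contradicts $[R_{E,\alpha}]\neq0$.

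Third, since $\alpha\in\Q_{>0}^r$, \Cref{prop:affine-point-lifting} applies directly. Any $b\in B_{F,0}$ satisfies $b(-\alpha)u_\alpha=P(-\alpha)hu_\alpha$ after specializing the functional equation at $s=-\alpha$. If $b(-\alpha)\neq0$, this would place $u_\alpha$ in $\D_{X,0}(hu_\alpha)$, so every $b$ vanishes at $-\alpha$. Therefore $-\alpha\in Z(B_{F,0})$.

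There is no genuine obstacle here. The only point to verify is that both cited results use the same module $\mathcal M_{\alpha,0}=\mathcal O_{X,0}[h^{-1}]u_\alpha$ and the same specialization $s\mapsto-\alpha$ of the $\D_{X,0}[s]$-action, so that the noncontainment statements are literally identical. This was arranged when $\mathcal M_{\alpha,0}$ was defined.
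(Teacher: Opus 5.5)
Your proposal is correct and matches the paper's proof exactly: apply \Cref{thm:direct-residue-obstruction} to obtain $u_\alpha\notin\D_{X,0}(hu_\alpha)$, then conclude $-\alpha\in Z(B_{F,0})$ by \Cref{prop:affine-point-lifting}. Your recap of why each cited result holds is accurate but not needed for the corollary itself.
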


\begin{proof}
By \Cref{thm:direct-residue-obstruction}, $u_\alpha\notin\D_{X,0}(hu_\alpha)$. The conclusion follows from \Cref{prop:affine-point-lifting}.
\end{proof}

The local system determined by $F^{-\alpha}$ depends only on the monodromy character and therefore does not distinguish $\alpha$ from its integral translates. The argument above, however, evaluates the Bernstein--Sato equation directly at $s=-\alpha$. Thus a nonzero residue class detects the specified point $-\alpha\in Z(B_{F,0})$. If such nonvanishing holds for a Zariski-dense set of rational points in $W_H^+$, then \Cref{thm:dense-wall-lifting} gives $H\subseteq Z(B_{F,0})$. 

\subsection{Extension across intersections}\label{subsec:residue-one}

Let $E$ and $A$ be distinct irreducible components of the reduced total transform $D$, and let $p\in E\cap A$. In local coordinates $E=(x=0)$ and $A=(y=0)$, \eqref{eq:coefficient-form-crossing} becomes
\begin{equation}\label{eq:residue-one-local-form}
\pi^*(u_\alpha\omega)=U(x,y)x^{-1}y^{\epsilon_A(\alpha)-1}dx\wedge dy,
\end{equation}
where $\epsilon_A(\alpha)=\nu_A-N_A\cdot\alpha$. Hence
\begin{equation}\label{eq:residue-branch-local-form}
R_{E,\alpha}=U(0,y)y^{\epsilon_A(\alpha)-1}dy.
\end{equation}
The displayed scalar residue branch has analytic-continuation factor
\begin{equation}\label{eq:residue-branch-monodromy}
\exp(-2\pi iN_A\cdot\alpha)=\exp(2\pi i\epsilon_A(\alpha)),
\end{equation}
since $\nu_A\in\Z$. The parallel-transport monodromy of its coefficient local system $\Lcal_{E,\alpha}$ is the inverse factor $\exp(-2\pi i\epsilon_A(\alpha))$. If $\epsilon_A(\alpha)=1$, both characters are trivial and the residue form in \eqref{eq:residue-branch-local-form} is regular at $p$; hence the coefficient local system and the residue extend across $p$. We will use this to fill such intersection points without losing a nonzero residue class. If $\epsilon_A(\alpha)=0$, then
\begin{equation*}
R_{E,\alpha}=U(0,y)\frac{dy}{y},
\end{equation*}
which has a logarithmic pole at $p$, although its local monodromy is again trivial.

\begin{proposition}\label{prop:residue-one-injection}
Let $S_1$ be a finite set of intersection points $p\in E\cap A$, with $A\neq E$ an irreducible component of the reduced total transform satisfying $\epsilon_A(\alpha)=1$. Set
\begin{equation*}
E^{\mathrm{eff}}=E^\circ\cup S_1,\quad j:E^\circ\hookrightarrow E^{\mathrm{eff}}.
\end{equation*}
Then $\Lcal_{E,\alpha}$ extends across the points of $S_1$, and restriction induces an injection
\begin{equation}\label{eq:residue-one-injection}
j^*:H^1_{\mathrm{dR}}(E^{\mathrm{eff}},\Lcal_{E,\alpha})\hookrightarrow H^1_{\mathrm{dR}}(E^\circ,\Lcal_{E,\alpha}).
\end{equation}
In particular, a nonzero residue class on $E^{\mathrm{eff}}$ remains nonzero after restriction to $E^\circ$.
\end{proposition}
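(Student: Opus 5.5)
The plan has three steps: extend the local system, compare cohomology through a long exact sequence, and show that the connecting map vanishes.

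\textbf{Extension.} First I would show that $\Lcal_{E,\alpha}$ extends across each point $p\in S_1$. By \eqref{eq:residue-branch-monodromy}, the parallel-transport monodromy of $\Lcal_{E,\alpha}$ around $p$ is $\exp(-2\pi i\epsilon_A(\alpha))=1$ when $\epsilon_A(\alpha)=1$. A rank-one local system on a punctured disc with trivial monodromy is trivial, so it extends uniquely to the full disc. Gluing these extensions gives a local system on $E^{\mathrm{eff}}$, still denoted $\Lcal_{E,\alpha}$, whose restriction to $E^\circ$ is the original one.

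\textbf{Long exact sequence.} Next, by the twisted de Rham theorem it suffices to prove injectivity of restriction in sheaf cohomology $H^1(E^{\mathrm{eff}},\Lcal)\to H^1(E^\circ,\Lcal)$. Put $\Sigma=S_1$, a finite set, and consider the exact sequence of the pair $(E^{\mathrm{eff}},E^\circ)$:
\[
H^1_{\Sigma}(E^{\mathrm{eff}},\Lcal)\to H^1(E^{\mathrm{eff}},\Lcal)\to H^1(E^\circ,\Lcal).
\]
By excision, the left term is $\bigoplus_{p\in S_1}H^1_{\{p\}}(\Delta_p,\Lcal)$ for small discs $\Delta_p$, on which $\Lcal$ is trivial. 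Thus the left term is $\bigoplus_{p}H^1_{\{p\}}(\Delta,\C)$. From the sequence
\[
H^0(\Delta,\C)\to H^0(\Delta^\times,\C)\to H^1_{\{p\}}(\Delta,\C)\to H^1(\Delta,\C)=0,
\]
and since $H^0(\Delta,\C)\to H^0(\Delta^\times,\C)$ is an isomorphism $\C\to\C$, we get $H^1_{\{p\}}=0$. Local cohomology at a point of a real surface is concentrated in degree $2$. Hence the kernel of restriction vanishes and \eqref{eq:residue-one-injection} is injective.

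\textbf{Comparison with de Rham classes.} Finally, I would check that the twisted de Rham isomorphisms on $E^{\mathrm{eff}}$ and $E^\circ$ are compatible with restriction. This holds because both are induced by the holomorphic (or smooth) de Rham resolution of $\Lcal$, which restricts to the corresponding resolution on $E^\circ$.

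The main obstacle, though mild, is this compatibility of the de Rham identifications with restriction. A dual route avoids it: show surjectivity of $H_1(E^\circ,\Lcal^\vee)\to H_1(E^{\mathrm{eff}},\Lcal^\vee)$, since any cycle can be pushed off finitely many points in a surface, and then use the perfect pairing \eqref{eq:twisted-perfect-pairing}. I would mention this second argument as the backup. For the "in particular" clause, note that the residue form is regular at $p$ by \eqref{eq:residue-branch-local-form}, so its class on $E^{\mathrm{eff}}$ is defined and restricts to $[R_{E,\alpha}]$.
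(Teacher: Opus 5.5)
Your proposal is correct and follows the paper's proof essentially step for step. You reduce via the twisted de Rham theorem to cohomology with local coefficients, use the long exact sequence of the pair $(E^{\mathrm{eff}},E^\circ)$ (your $H^1_{\Sigma}$ is the paper's relative group $H^1(E^{\mathrm{eff}},E^\circ;\Lcal_{E,\alpha})$), excise to small discs on which the extended system is trivial, and show $H^1(B,B^*)=0$ from the disc/punctured-disc sequence. Your remarks on the compatibility of the de Rham identifications with restriction, and the backup homological surjectivity argument, are sound additions that the paper leaves implicit.
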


\begin{proof}
By the twisted de Rham theorem, it is enough to prove the corresponding injectivity for cohomology with local coefficients. Since $\Lcal_{E,\alpha}$ extends across every point of $S_1$ as $\epsilon_A(\alpha)=1$, excision gives
\begin{equation*}
H^k(E^{\mathrm{eff}},E^\circ;\Lcal_{E,\alpha})\simeq\bigoplus_{p\in S_1}H^k(B_p,B_p^*;\C)\otimes(\Lcal_{E,\alpha})_p,
\end{equation*}
where $B_p$ is a small disk centred at $p$ and $B_p^*=B_p\setminus\{p\}$. For a disk and a punctured disk, the relative long exact sequence contains
\begin{equation*}
H^0(B;\C)\xrightarrow{\sim}H^0(B^*;\C)\to H^1(B,B^*;\C)\to H^1(B;\C)=0,
\end{equation*}
so $H^1(B,B^*;\C)=0$. Hence
\begin{equation*}
H^1(E^{\mathrm{eff}},E^\circ;\Lcal_{E,\alpha})=0.
\end{equation*}
The long exact sequence of the pair $(E^{\mathrm{eff}},E^\circ)$ contains
\begin{equation*}
H^1(E^{\mathrm{eff}},E^\circ;\Lcal_{E,\alpha})\to H^1(E^{\mathrm{eff}},\Lcal_{E,\alpha})\xrightarrow{j^*}H^1(E^\circ,\Lcal_{E,\alpha}).
\end{equation*}
Since the first group vanishes, exactness implies that $\ker j^*=0$. Thus $j^*$ is injective.
\end{proof}

If two components $A$ and $B$ meet and satisfy
\begin{equation*}
N_A\cdot\alpha=\nu_A,\quad N_B\cdot\alpha=\nu_B,
\end{equation*}
then the residue along either component has a logarithmic pole at the crossing, so the preceding extension argument does not apply. In this case the Green formula can instead be tested on a small torus around the crossing, leading to the double-residue obstruction below.

\subsection{The double-residue obstruction}

Let $A$ and $B$ be smooth components meeting transversely at $p$. Orient a small real torus $T_{A,B}$ by taking first the positively oriented normal meridian of $A$ and then that of $B$. Let $\Lcal$ be the coefficient local system dual to the actual branch system under consideration, and suppose that its monodromy around both meridians is trivial. Then $\Lcal$ is trivial on the punctured bidisk. After choosing a nonzero flat section of $\Lcal^\vee$, define
\begin{equation}\label{eq:double-residue-torus}
\DRes_{A,B}(\Theta)=\frac{1}{(2\pi i)^2}\int_{T_{A,B}}\Theta,
\end{equation}
for an $\Lcal$-valued meromorphic two-form $\Theta$.
Changing the chosen dual flat section rescales this value by a nonzero constant, so its vanishing or nonvanishing is intrinsic. Equivalently, before choosing such a section, the integral may be viewed as taking values in the one-dimensional coefficient fibre.

In SNC coordinates $A=(x=0)$ and $B=(y=0)$, choose the corresponding flat frame of $\Lcal$. Then
\begin{equation}\label{eq:double-residue-coordinate}
\DRes_{A,B}\left(U(x,y)\frac{dx}{x}\wedge\frac{dy}{y}\right)=U(0,0).
\end{equation}
The torus definition is independent of the choice of local coordinates. Indeed, replacing a local defining equation by a unit multiple changes its logarithmic differential by a regular form and deforms the torus within the complement. Interchanging $A$ and $B$ reverses the chosen ordering and changes the sign, but does not affect whether the double residue vanishes.

\begin{lemma}\label{lem:exact-double-residue}
Let $\eta$ be an $\Lcal$-valued meromorphic one-form with finite pole order along $A+B$. If the monodromy of $\Lcal$ around both normal meridians is trivial, then
\begin{equation}\label{eq:exact-double-residue-zero}
\DRes_{A,B}(\nabla_{\Lcal}\eta)=0.
\end{equation}
\end{lemma}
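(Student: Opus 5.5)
The plan is to apply twisted Stokes on the torus $T_{A,B}$ itself, exactly as in the proof of \Cref{thm:tube-residue-exactness}, but with no residue map in between. Since the monodromy of $\Lcal$ around both normal meridians is trivial and the punctured bidisk $\Delta^*\times\Delta^*$ has fundamental group generated by these two meridians, $\Lcal$ is trivial there. Fix a flat frame $e$ of $\Lcal$ on the punctured bidisk and a dual flat section of $\Lcal^\vee$. Then $\eta=\zeta\, e$, where $\zeta$ is an ordinary single-valued meromorphic one-form on the bidisk. Its poles are contained in $xy=0$ and have finite order, because $\eta$ has finite pole order along $A+B$. In this frame $\nabla_{\Lcal}\eta=(d\zeta)\,e$, so the claim reduces to showing $\int_{T_{A,B}}d\zeta=0$.

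The key step is then ordinary Stokes. The torus $T_{A,B}=\{|x|=\varepsilon_1,\ |y|=\varepsilon_2\}$ is a closed oriented $2$-manifold contained in the complement of $A\cup B$, where $\zeta$ is smooth. Hence $\int_{T_{A,B}}d\zeta=\int_{\partial T_{A,B}}\zeta=0$. Equivalently, one can invoke \eqref{eq:twisted-stokes} with $C$ the torus regarded as a cycle with coefficients in $\Lcal^\vee$ via the chosen flat dual section. Dividing by $(2\pi i)^2$ gives \eqref{eq:exact-double-residue-zero}.

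The only point requiring care, and the one I would spell out, is that the torus integral used to define $\DRes_{A,B}$ in \eqref{eq:double-residue-torus} is computed on the same torus and in the same frame that make $\zeta$ single-valued. This is a consistency check on conventions. The independence of coordinates noted after \eqref{eq:double-residue-coordinate} ensures the torus may be taken small and coordinate-aligned.

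It is also worth recording a direct Laurent-series check, matching \eqref{eq:double-residue-coordinate}. Write $\zeta=a\,dx+b\,dy$ with $a,b\in\C[[x,y]][x^{-1},y^{-1}]$. Then $d\zeta=(\partial_x b-\partial_y a)\,dx\wedge dy$. The coefficient of $x^{-1}y^{-1}$ in $\partial_x b$ and in $\partial_y a$ is zero, since derivatives of Laurent monomials never produce the exponent $-1$ in the differentiated variable. Hence the $(x^{-1}y^{-1})$-coefficient of $d\zeta$, which equals the double residue, vanishes.

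As in the preceding Remark, the finite-pole-order assumption is what rules out logarithmic primitives such as $\log x\,dy/y$ and makes this argument valid.
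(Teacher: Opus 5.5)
Your proof is correct and is essentially the paper's argument. The paper applies twisted Stokes \eqref{eq:twisted-stokes} to the closed torus $T_{A,B}$, using the twisted fundamental class determined by the dual flat section; this is exactly your step of trivializing $\Lcal$ and then applying ordinary Stokes, and your Laurent-coefficient computation is a valid independent check of the same vanishing. One small correction to your closing remark: what excludes primitives like $(\log x)\,dy/y$ is that $\eta$ is a single-valued section of $\Lcal$ defined near the torus, not its finite pole order, and neither your argument nor the paper's actually needs the finite pole order.
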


\begin{proof}
The chosen dual flat section determines a twisted fundamental class of the closed torus $T_{A,B}$. Since $\eta$ is defined in a neighbourhood of this torus, \eqref{eq:twisted-stokes} gives
\begin{equation*}
\int_{T_{A,B}}\nabla_{\Lcal}\eta=\int_{\partial T_{A,B}}\eta=0.
\end{equation*}
The conclusion follows from \eqref{eq:double-residue-torus}.
\end{proof}

\begin{theorem}
\label{thm:direct-double-residue-obstruction}
Let $A$ and $B$ be any two components of the SNC total transform meeting at $p$. If
\begin{equation}\label{eq:two-wall-equalities}
 N_A\cdot\alpha=\nu_A,
 \quad
 N_B\cdot\alpha=\nu_B,
\end{equation}
then
\[
 u_\alpha\notin\D_{X,0}(hu_\alpha).
\]
\end{theorem}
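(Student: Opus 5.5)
We argue by contradiction, in parallel with the proof of \Cref{thm:direct-residue-obstruction}, replacing tube classes by the small torus $T_{A,B}$ around the crossing $p$. Suppose $u_\alpha=P(hu_\alpha)$ for some $P\in\D_{X,0}$. The Green formula (\Cref{prop:green-formula}) gives \eqref{eq:green-cyclic-identity}, and we pull it back to $Y$ as in \eqref{eq:pulled-back-green-residue}. By \eqref{eq:two-wall-equalities} and \eqref{eq:actual-branch-character}, the monodromy of $\Lcal_\alpha^{\mathrm{act}}$, and hence of $\Lcal_\alpha$, around both meridians of $A$ and of $B$ is trivial. Therefore $\pi^{-1}\Lcal_\alpha$ is trivial on a punctured bidisk around $p$, and $\DRes_{A,B}$ is defined for $\pi^{-1}\Lcal_\alpha$-valued two-forms there.

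We next evaluate the double residue of each of the three terms. For the left-hand side, \eqref{eq:coefficient-form-crossing} with $\epsilon_A(\alpha)=\epsilon_B(\alpha)=0$ gives
\[
\pi^*(u_\alpha\omega)=U(x,y)\frac{dx}{x}\wedge\frac{dy}{y},\quad U(0,0)\neq0,
\]
so \eqref{eq:double-residue-coordinate} yields $\DRes_{A,B}\pi^*(u_\alpha\omega)=U(0,0)\ne0$, after fixing a flat frame. For the first term on the right, $P^\dagger\omega$ is holomorphic, so by \eqref{eq:two-component-green-order} the pullback of $(hu_\alpha)P^\dagger\omega$ is, in a flat frame, a holomorphic multiple of $x^{n_A-1}y^{n_B-1}dx\wedge dy$ with $n_A,n_B\ge1$. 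This form is holomorphic on the bidisk, so its torus integral vanishes by Cauchy's theorem. Concretely, writing it as $xyG\,\frac{dx}{x}\wedge\frac{dy}{y}$ and applying \eqref{eq:double-residue-coordinate} gives value $0$. For the exact term, $\pi^*\eta_P(hu_\alpha,\omega)$ is meromorphic with finite pole order along $A+B$, because $\eta_P$ is built from $hu_\alpha$, its derivatives, and holomorphic data through the recursion \eqref{eq:concomitant-recursion}, and $\pi$ is holomorphic. Hence \Cref{lem:exact-double-residue} gives $\DRes_{A,B}$ equal to zero. Taking $\DRes_{A,B}$ of the pulled-back identity then gives $U(0,0)=0$, a contradiction.

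The main point requiring care is that the finite-pole-order claim for $\pi^*\eta_P$ and the flat-frame bookkeeping are compatible. We must check that the torus $T_{A,B}$ lies in $Y\setminus D$ near $p$, which holds once it is small enough to avoid the other components. We must also check that the same flat frame is used for all three terms, so that the twisted Stokes formula \eqref{eq:twisted-stokes} applies to the closed torus. Because the local system is trivial on the punctured bidisk, this reduces to the ordinary Stokes theorem with single-valued scalar representatives, and $\partial T_{A,B}=\varnothing$. No restriction on the type of crossing (exceptional--exceptional, exceptional--strict, or strict--strict) enters, since \eqref{eq:simultaneous-jacobian}--\eqref{eq:simultaneous-total-transform} hold for all of them.
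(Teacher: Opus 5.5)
Your proposal is correct and follows the paper's proof essentially step for step. Both arguments use trivial normal monodromy from $\nu_A,\nu_B\in\Z$, show that $\pi^*(u_\alpha\omega)$ has nonzero double residue $U(0,0)$, and show that the shifted term has vanishing double residue by \eqref{eq:two-component-green-order} with $n_A,n_B\ge1$, while the exact term has vanishing double residue by \Cref{lem:exact-double-residue}. Your extra remarks make explicit what the paper leaves implicit: the finite pole order of $\pi^*\eta_P$, which is single-valued in a flat frame since $N_A\cdot\alpha,N_B\cdot\alpha\in\Z$, and the choice of a torus small enough to avoid the other components.
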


\begin{proof}
By \eqref{eq:actual-branch-character} and \eqref{eq:two-wall-equalities}, the actual-branch monodromies around the normal meridians of $A$ and $B$ are
\begin{equation*}
\exp(-2\pi iN_A\cdot\alpha)=\exp(-2\pi i\nu_A)=1,\quad \exp(-2\pi iN_B\cdot\alpha)=\exp(-2\pi i\nu_B)=1,
\end{equation*}
since $\nu_A,\nu_B\in\Z$. Their inverse characters, which are the monodromies of the coefficient system, are therefore also equal to $1$. In the crossing coordinates of \eqref{eq:coefficient-form-crossing}, the equalities \eqref{eq:two-wall-equalities} give
\begin{equation*}
\pi^*(u_\alpha\omega)=U(x,y)\frac{dx}{x}\wedge\frac{dy}{y},\quad U(0,0)\neq0.
\end{equation*}
Hence, by \eqref{eq:double-residue-coordinate},
\begin{equation}\label{eq:double-log-coefficient-form}
\DRes_{A,B}\bigl(\pi^*(u_\alpha\omega)\bigr)=U(0,0)\neq0.
\end{equation}

Suppose, to the contrary, that $u_\alpha=P(hu_\alpha)$ for some $P\in\D_{X,0}$. The Green formula gives
\begin{equation*}
u_\alpha\omega=(hu_\alpha)P^\dagger\omega+\nabla_{\Lcal_\alpha}\eta_P(hu_\alpha,\omega).
\end{equation*}
Pulling this identity back to $Y$ gives
\begin{equation*}
\pi^*(u_\alpha\omega)=\pi^*\bigl((hu_\alpha)P^\dagger\omega\bigr)+\nabla_{\pi^{-1}\Lcal_\alpha}\pi^*\eta_P(hu_\alpha,\omega).
\end{equation*}
By \eqref{eq:two-component-green-order}, the first term on the right is a holomorphic multiple of
\begin{equation*}
x^{n_A-1}y^{n_B-1}dx\wedge dy.
\end{equation*}
Since $n_A,n_B\geq1$, this term has zero double residue. The second term also has zero double residue by \Cref{lem:exact-double-residue}, since the normal monodromy is trivial around both $A$ and $B$. Applying $\DRes_{A,B}$ to the pulled-back Green formula therefore gives
\begin{equation*}
\DRes_{A,B}\bigl(\pi^*(u_\alpha\omega)\bigr)=0,
\end{equation*}
contradicting \eqref{eq:double-log-coefficient-form}. Thus $u_\alpha\notin\D_{X,0}(hu_\alpha)$.
\end{proof}

\subsection{Crossing types}

The preceding theorem depends only on the SNC structure, the inequalities $n_A,n_B\geq1$, and the common Jacobian factorization \eqref{eq:simultaneous-jacobian}. It therefore applies to all three types of crossings.

\begin{enumerate}[label=\textup{(\roman*)}]
\item At an exceptional--exceptional crossing, the exponents $\nu_A-1$ and $\nu_B-1$ occur simultaneously in the local Jacobian through the relative canonical divisor.
\item At an exceptional--strict crossing, the strict transform has $\nu=1$, so its Jacobian exponent is zero, while its $h$-multiplicity remains positive. If $B$ is the strict transform of a branch $C$, then $n_B=\sum_i\ord_C(f_i)\geq1$.
\item At a strict--strict crossing, the resolution may be locally the identity. Both values of $\nu$ are one, and the pullback of a nowhere vanishing holomorphic two-form has a unit coefficient, so the same argument applies.
\end{enumerate}

The three cases are illustrated by the following examples.

\begin{example}[Exceptional--exceptional]\label{ex:double-exceptional-exceptional}
In a chart obtained after two point blowups, take
\begin{equation*}
X=uv,\quad Y=uv^2,\quad dX\wedge dY=uv^2du\wedge dv.
\end{equation*}
The components $A=(u=0)$ and $B=(v=0)$ have
\begin{equation*}
(N_A,\nu_A)=((1,1),2),\quad (N_B,\nu_B)=((1,2),3).
\end{equation*}
For $F=(X,Y)$ and $\alpha=(1,1)$, both equalities in \eqref{eq:two-wall-equalities} hold, and
\begin{equation*}
\pi^*(u_\alpha dX\wedge dY)=\frac{du}{u}\wedge\frac{dv}{v}.
\end{equation*}
Thus the double residue is one and $hu_\alpha=1$.
\end{example}

\begin{example}[Exceptional--strict]\label{ex:double-exceptional-strict}
Blow up the origin and use the chart
\begin{equation*}
X=x,\quad Y=xy,\quad dX\wedge dY=xdx\wedge dy.
\end{equation*}
For $F=(X,Y)$ and $\alpha=(1,1)$, the exceptional component $A=(x=0)$ has $(N_A,\nu_A)=((1,1),2)$ and the strict transform $B=(y=0)$ has $(N_B,\nu_B)=((0,1),1)$. Both equalities in \eqref{eq:two-wall-equalities} hold, and
\begin{equation*}
\pi^*(u_\alpha dX\wedge dY)=\frac{dx}{x}\wedge\frac{dy}{y}.
\end{equation*}
The double residue is again one and $hu_\alpha=1$.
\end{example}

\begin{example}[Strict--strict]\label{ex:double-strict-strict}
On $X=\C^2$, take $F=(x,y)$ and $\alpha=(1,1)$. Then
\begin{equation*}
u_\alpha=x^{-1}y^{-1},\quad hu_\alpha=1,\quad u_\alpha dx\wedge dy=\frac{dx}{x}\wedge\frac{dy}{y}.
\end{equation*}
Both strict components have $\nu=1$, and the double residue is one. In this case the noncontainment is also immediate, since a holomorphic differential operator applied to $1$ cannot produce $x^{-1}y^{-1}$.
\end{example}

The cases $\epsilon_A(\alpha)=0$ and $\epsilon_A(\alpha)=1$ play different roles. Suppose that $A$ meets $E$ and that $N_E\cdot\alpha=\nu_E$. If $\epsilon_A(\alpha)=0$, then
\begin{equation*}
R_{E,\alpha}=U(0,y)\frac{dy}{y},
\end{equation*}
and $N_A\cdot\alpha=\nu_A$, so \Cref{thm:direct-double-residue-obstruction} applies at the crossing $E\cap A$. If $\epsilon_A(\alpha)=1$, the residue extends regularly across the intersection, and \Cref{prop:residue-one-injection} applies instead. In both cases the local monodromy of the residue system around the intersection is trivial, but the residue form has different local behaviour.


\section{Topological strong monodromy for plane curves} \label{sec:main-theorem}

In this section we prove the topological multivariable Strong Monodromy Conjecture for plane curves. For a rational point $\alpha$ on a fixed affine hyperplane, we use Blanco's results to obtain a nonzero twisted residue class at a rupture component. Intersections at which $\epsilon_A(\alpha)=1$ are handled by the extension result of \Cref{sec:residue-obstructions}, while the remaining possibilities are excluded by a numerical argument. Crossings are treated by the double-residue obstruction and strict transforms by \Cref{thm:strict-transform-wall}; Zariski density then gives the whole affine hyperplane.

\subsection{Arbitrary branch multiplicities}

Let $g_1,\ldots,g_m$ be the distinct irreducible branches of the reduced divisor $(h=0)$ and write
\begin{equation*}
f_i=u_i\prod_{j=1}^m g_j^{a_{ij}},
 \quad a_{ij}\in\Z_{\geq0},
\end{equation*}
with $u_i$ a unit. For a total-transform component $A$, put $n_{A,j}=\ord_A(g_j\circ\pi)$. Then
\begin{equation}\label{eq:multiplicity-transport}
 N_{A,i}=\sum_{j=1}^m a_{ij}n_{A,j}.
\end{equation}

\begin{proposition}\label{prop:multiplicity-transport}
The resolution formula, the grouped Laurent dichotomy, and the residue obstructions used in the proof of \Cref{thm:main-intro} (=\Cref{thm:main-thm-top}) remain valid for arbitrary nonnegative exponents $a_{ij}$. For the strict transform $C_j$ of $g_j$ one has
\begin{equation*}
N_{C_j}=(a_{1j},\ldots,a_{rj}),
\end{equation*}
and \Cref{thm:strict-transform-wall} applies with this full vector. For every positive integral scalarization $g_b=\prod_i f_i^{b_i}$, the reduced zero divisor and minimal embedded resolution are unchanged and
\begin{equation*}
\ord_A(g_b)=b\cdot N_A.
\end{equation*}
Consequently no entrywise reducedness hypothesis is required in \Cref{thm:main-intro} (=\Cref{thm:main-thm-top}).
\end{proposition}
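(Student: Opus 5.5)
The plan is to check that every ingredient used so far depends on $F$ only through two pieces of data. The first is the reduced divisor $\Delta=(h=0)_{\mathrm{red}}$ with its embedded resolution $\pi$. The second is the order vectors $N_A$. Once this is verified, the multiplicity exponents $a_{ij}$ enter only linearly through \eqref{eq:multiplicity-transport}. That formula I would prove first. Since $\ord_A$ is a valuation and $\ord_A(u_i\circ\pi)=0$ for a unit $u_i$, we get $N_{A,i}=\ord_A(f_i\circ\pi)=\sum_j a_{ij}\ord_A(g_j\circ\pi)$.

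Specializing to $A=C_j$ gives $n_{C_j,k}=\delta_{jk}$. This holds because $g_k\circ\pi$ vanishes to order one along the strict transform of $g_k$, and does not vanish along the strict transform of a different branch. Hence $N_{C_j}=(a_{1j},\ldots,a_{rj})$. This is exactly the vector $(\ord_{g_j}(f_1),\ldots,\ord_{g_j}(f_r))$ used in \Cref{thm:strict-transform-wall}, whose proof was already written for arbitrary multiplicities. For the scalarization $g_b=\prod_i f_i^{b_i}$ with $b\in\Z_{>0}^r$, the branch $g_j$ appears in $g_b$ with exponent $\sum_i b_ia_{ij}$. This exponent is positive because each $g_j$ divides some $f_i$ (it is a branch of $h$), and every $b_i>0$. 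So $(g_b)_{\mathrm{red}}=\Delta_{\mathrm{red}}$, and the minimal embedded resolution, which depends only on the reduced curve, is unchanged. Additivity of $\ord_A$ gives $\ord_A(g_b)=b\cdot N_A$.

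Next, I would go through the earlier sections and confirm the remaining claims. \eqref{eq:top-zeta} and \eqref{eq:surface-zeta} use only strata of the reduced total transform and the affine forms $L_A$. \Cref{lem:positive-wall-normalization} and \Cref{thm:grouped-laurent,thm:grouped-wall-dichotomy} use only $N_A$, $\nu_A$ and intersection numbers. \Cref{lem:exceptional-neighbour-identities} holds because $\operatorname{div}(f_i\circ\pi)\cdot E=0$ for any germ $f_i$, reduced or not. The Green formula, the pullback expressions \eqref{eq:simultaneous-total-transform} and \eqref{eq:coefficient-form-crossing}, and \Cref{thm:direct-residue-obstruction,thm:direct-double-residue-obstruction} only require the SNC form of $\pi^*f_i$ with exponents $N_{A,i}$. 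They also need $n_A\ge1$, which holds for every component since each is contained in $\pi^{-1}(\Delta)$.

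The only step requiring care is the input from Blanco's results, which are stated for a single, typically reduced, germ. Here I would use the scalarization identity. Blanco's statements concern the residue numbers $\epsilon_D=M_D(-1/q)+\nu_D$ and the resolution of $g_b$, where $M_D=\ord_D(g_b)$. These results are formulated for possibly nonreduced $g_b=\prod_j g_j^{e_j}$ with the same reduced resolution graph. As computed in Section~2, $M_D=qN_D\cdot\alpha$ and the residue forms agree up to normalization. If a reducedness hypothesis in \cite{Blanco2026} needed to be removed, I would argue that those results depend only on the resolution graph and the multiplicities $M_D$. I expect this to be the main obstacle.
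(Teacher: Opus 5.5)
Your proposal is correct and follows essentially the same route as the paper: additivity of $\ord_A$ gives \eqref{eq:multiplicity-transport} and $N_{C_j}=(a_{1j},\ldots,a_{rj})$, and positivity of $\sum_i a_{ij}b_i$ shows that $g_b$ has the same reduced support and hence the same minimal resolution. The remaining ingredients then depend only on the reduced total transform, the vectors $N_A$, and $n_A\ge1$. The point you flag as the main obstacle is handled no more fully in the paper, which simply asserts that Blanco's results apply to reducible and nonreduced scalar germs with residue numbers $\nu_D-N_D\cdot\alpha$; the paper also notes explicitly, as you use implicitly, that the final evaluation of the functional equation at $s=-\alpha$ concerns the original tuple and needs no reducedness.
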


\begin{proof}
The resolution and its strata depend only on the reduced support. Equation \eqref{eq:multiplicity-transport} gives all resolution vectors, while principal-divisor intersection remains valid with arbitrary coefficients and adjunction is independent of the multiplicities. Hence the resolution identities and grouped Laurent calculation are unchanged. The Green estimates use only $n_A=N_A\cdot\one\geq1$, so powers can only increase the vanishing of the shifted term. The strict-transform assertion is the arbitrary-order-vector case already proved in \Cref{thm:strict-transform-wall}.

Finally,
\begin{equation*}
g_b=\left(\prod_i u_i^{b_i}\right)\prod_jg_j^{\sum_i a_{ij}b_i}.
\end{equation*}
For each branch $g_j$ of the reduced support, some $a_{ij}$ is positive, and every $b_i$ is positive. Hence $\sum_i a_{ij}b_i>0$ for every $j$, so $g_b$ has the same reduced support as $h$. Blanco's results apply to reducible and nonreduced scalar germs, and the residue numbers of the scalarization are still $\nu_D-N_D\cdot\alpha$. The final Bernstein--Sato step evaluates functional equations for the original tuple, so it requires neither squarefreeness nor independence of the entries. 
\end{proof}

\subsection{Scalarization}

We first translate the one-variable resolution results of Blanco \cite{Blanco2026} to the coefficient powers used here. The scalar rupture-divisor mechanism goes back to Loeser \cite[Th\'eor\`eme~III.3.1 and Remarque~III.3.3]{Loeser1988}; the sharper numerical bounds and residue nonvanishing used below come from Blanco's work. We keep the minimal embedded resolution of the reduced support of $h$ fixed throughout, since Blanco's graph-theoretic bounds are stated on that resolution. By \Cref{prop:multiplicity-transport}, every scalarization has the same reduced zero divisor and hence the same minimal embedded resolution.

Fix a rupture component $E$ and a rational point $\alpha\in\Q_{>0}^r$ on the positive part of the dual hyperplane
\begin{equation*}
N_E\cdot\alpha=\nu_E.
\end{equation*}
Choose $q>0$ such that
\begin{equation*}
b=q\alpha\in\Z_{>0}^r
\end{equation*}
and put
\begin{equation*}
g_b=\prod_{i=1}^r f_i^{b_i}.
\end{equation*}
Then $F^{-\alpha}=g_b^{-1/q}$. If $A$ is a component of the total transform, write
\begin{equation*}
M_A=\ord_A(g_b)=qN_A\cdot\alpha.
\end{equation*}
At $E$ one has $M_E=q\nu_E$, and the candidate of a nowhere vanishing top form $\omega_0$ is therefore
\begin{equation*}
\sigma_E(\omega_0)
 =
 -\frac{\nu_E}{M_E}
 =
 -\frac1q.
\end{equation*}
In Blanco's notation, the relative canonical divisor is
\begin{equation*}
K_{Y/X}=\sum_A(k_A-1)A;
\end{equation*}
thus his $k_A$ is our $\nu_A$. His residue number at a component $D$ adjacent to $E$ becomes
\begin{equation}\label{eq:blanco-dictionary}
 M_D\sigma_E(\omega_0)+\nu_D
 =
 \nu_D-N_D\cdot\alpha
 =
 \epsilon_D(\alpha).
\end{equation}
Finally, Blanco's exponent $\mu_D$ is
\begin{equation*}
\mu_D=1-\epsilon_D.
\end{equation*}
Equation~\eqref{eq:blanco-dictionary} identifies the residue numbers themselves, including their affine values.

\begin{lemma}\label{lem:blanco-balance}
For the scalarization $g_b$ and the chosen nowhere vanishing holomorphic two-form $\omega_0$,
\begin{equation*}
\sum_{D\neq E,\, D\cap E\neq \emptyset}\epsilon_D=d_E-2,
\end{equation*}
where the sum runs over the actual total transform components meeting $E$ and $d_E$ is their number. Moreover, $\operatorname{Div}(\pi^*\omega_0)$ has no irreducible component outside the scalar total transform support that meets $E$. Thus Blanco's additional numerator-zero terms are absent, and the leading residue has no extra zero on $E^\circ$.
\end{lemma}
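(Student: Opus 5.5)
The balance identity should follow by evaluating the affine identity \eqref{eq:affine-neighbour-identity} of \Cref{lem:exceptional-neighbour-identities} at $s=-\alpha$. Since $L_D(-\alpha)=\nu_D-N_D\cdot\alpha=\epsilon_D(\alpha)$ for every component $D$, and $L_E(-\alpha)=\nu_E-N_E\cdot\alpha=0$ by the choice of $\alpha$ on the dual hyperplane of $E$, the identity
\[
\sum_{D\neq E,\,D\cap E\neq\emptyset}L_D=-E^2L_E+d_E-2
\]
specializes directly to $\sum_D\epsilon_D=d_E-2$. Alternatively, and to match Blanco's formulation, one can rerun the proof of \Cref{lem:exceptional-neighbour-identities} for the scalar germ $g_b$. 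Intersecting $\operatorname{div}(g_b\circ\pi)$ with $E$ gives $\sum_D M_D=-E^2M_E$. Adjunction gives $\sum_D\nu_D=-E^2\nu_E+d_E-2$. Multiplying the first relation by $-1/q$, adding it to the second, and using $M_E/q=\nu_E$ yields the same conclusion via the dictionary \eqref{eq:blanco-dictionary}.

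A point to check is that the neighbours of $E$ in Blanco's scalar setting coincide with ours. By \Cref{prop:multiplicity-transport}, $g_b$ has the same reduced support as $h$. Hence its minimal embedded resolution, its total transform components, and the valency $d_E$ (counting arrowheads) coincide with those of $F$. The sum therefore runs over the same set of components, with the same $\nu_D$ and with $M_D=qN_D\cdot\alpha$.

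For the second assertion, recall that $\omega_0$ is nowhere vanishing on $X$. Hence $\operatorname{Div}(\pi^*\omega_0)=K_{Y/X}=\sum_A(\nu_A-1)A$, which is supported on the exceptional divisor. Every exceptional component lies in the total transform of $g_b$, since its valuation is centred at the maximal ideal and $g_b(0)=0$. So no component of $\operatorname{Div}(\pi^*\omega_0)$ lies outside the scalar total transform support. Blanco's general formula allows a form $\omega$ whose divisor has extra components meeting $E$; these produce additional zeros of the leading residue, the numerator-zero terms. Here that set is empty.

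To conclude, I would read off the local form \eqref{eq:local-coefficient-form}, where $U(0,y)\neq0$ at every point of $E$ away from the neighbours. Thus the residue $R_{E,\alpha}$ has no zeros on $E^\circ$. The only step needing care is the bookkeeping that matches Blanco's hypotheses (his $k_A$, and his counting of adjacent divisors including strict transforms) with ours. This is routine given the dictionary already established.
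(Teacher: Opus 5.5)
Your proposal is correct, but for the balance identity it takes a different route from the paper.

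The paper quotes Blanco's balance formula \cite[Lemma~4.1]{Blanco2026}, $\sum_{j=1}^{m_E}\epsilon_{E,j}+\sum_\ell\delta_{E,\ell}=m_E-2$. It then kills the $\delta$-terms, because $\operatorname{Div}(\pi^*\omega_0)=K_{Y/X}$ is exceptional, and deletes the $m_E-d_E$ auxiliary predecessor terms of residue $1$ that Blanco's convention includes.

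You instead evaluate \eqref{eq:affine-neighbour-identity} at $s=-\alpha$, using $L_E(-\alpha)=0$ and $L_D(-\alpha)=\epsilon_D(\alpha)$. This is just the restriction \eqref{eq:restricted-neighbour-sum}, already used in \Cref{thm:selected-component-classification}, evaluated at the point $-\alpha\in H$.

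Your route is self-contained and needs no external input. It also shows that the identity holds for every exceptional $E$ and every $\alpha$ with $N_E\cdot\alpha=\nu_E$. The paper's route instead gives an explicit reconciliation of the actual components with Blanco's indexing, which is the form in which the data are later fed into his nonvanishing result.

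On that point, your claim that the neighbour sets ``coincide'' and that matching Blanco's counting is routine is slightly too quick. The total-transform components do coincide. However, according to the paper, Blanco's count $m_E$ also includes $m_E-d_E$ auxiliary directions of residue $1$ that are not actual components. This does not affect your identity, but it is the one piece of bookkeeping your route leaves unaddressed.

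Your argument for the second assertion is the same as the paper's. Your direct check that $R_{E,\alpha}$ has no zeros on $E^\circ$ is more explicit than the paper's inference. One citation needs adjusting: \eqref{eq:local-coefficient-form} is written at a crossing point. At a point of $E^\circ$ you should use the analogous form $U(x,y)x^{-1}\,dx\wedge dy$ with $U$ a unit. This holds because $K_{Y/X}$ and every $\operatorname{div}(f_i\circ\pi)$ are supported on $D$.
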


\begin{proof}
Blanco's balance formula~\cite[Lemma~4.1]{Blanco2026} gives
\begin{equation*}
\sum_{j=1}^{m_E}\epsilon_{E,j}(\omega)
 +
 \sum_{\ell}\delta_{E,\ell}(\omega)
 =
 m_E-2.
\end{equation*}
The terms $\delta_{E,\ell}$ record components of $\operatorname{Div}(\pi^*\omega)$ outside the scalar total transform support which meet $E$. Since $\omega_0$ is nowhere vanishing,
\begin{equation*}
\operatorname{Div}(\pi^*\omega_0)=K_{Y/X}.
\end{equation*}
This divisor is exceptional. Every exceptional component maps to $0$, and each $f_i$ vanishes at $0$; hence every exceptional component already lies in the support of $\operatorname{Div}(g_b\circ\pi)$. There are consequently no $\delta$-terms. Blanco's convention also includes auxiliary zero divisors in certain predecessor directions, each with residue $1$. Removing these $m_E-d_E$ auxiliary terms gives the stated identity for the components of the total transform that actually meet $E$.
\end{proof}

\begin{proposition}\label{prop:standard-residue-bounds}
For every $\alpha\in\Q_{>0}^r$ satisfying $N_E\cdot\alpha=\nu_E$, the coefficient residues of $\omega_0$ satisfy
\begin{equation*}
-1\leq\epsilon_D(\alpha)\leq1
 \quad(D\neq E,\ D\cap E\neq\varnothing).
\end{equation*}
\end{proposition}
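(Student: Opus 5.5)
The plan is to reduce \Cref{prop:standard-residue-bounds} to Blanco's sharp bounds for residue numbers at components adjacent to a rupture divisor \cite{Blanco2026}, applied to the scalarization $g_b=\prod_i f_i^{b_i}$ with $b=q\alpha$. By \Cref{prop:multiplicity-transport}, $g_b$ has the same reduced zero divisor as $h$, so the minimal embedded resolution $\pi$ and the dual graph, with $E$ a rupture component, are unchanged. By \eqref{eq:blanco-dictionary}, Blanco's residue number at a neighbour $D$ for the candidate $\sigma_E(\omega_0)=-1/q$ is exactly $\epsilon_D(\alpha)$, and his exponent is $\mu_D=1-\epsilon_D$. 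The claimed inequality is therefore equivalent to $0\le\mu_D\le2$ for every neighbour, the form in which Blanco's bounds are stated. \Cref{lem:blanco-balance} shows that $\omega_0$ has no extra numerator divisors meeting $E$, so no $\delta$-terms enter and the hypotheses of Blanco's bound are met by the honest total-transform neighbours alone.

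The key steps run in this order. First, fix the scalar germ $g_b$ and check that $-1/q=-\nu_E/M_E$ is the candidate pole attached to $E$ and $\omega_0$. Second, identify the neighbours of $E$ in the minimal resolution graph: the predecessor directions in the Enriques/Eggers sense, and the successor branches, which are either exceptional chains or strict transforms. Third, invoke Blanco's bound separately for each type of neighbour.

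Should an independent check be needed, I would argue numerically as follows. For a strict transform $C$ adjacent to $E$, $\epsilon_C=1-N_C\cdot\alpha<1$ trivially, and $N_C\cdot\alpha\le2$ should follow from comparing with $N_E\cdot\alpha=\nu_E$ using the standard ratios $N_C\le N_E/\text{mult}$-type estimates along the chain. For an exceptional neighbour lying on a chain ending at a valency-one component or at an arrow, \Cref{lem:exceptional-neighbour-identities} applied along the chain shows that $\epsilon$ is an arithmetic-progression-type quantity. Its extreme value at the end of the chain is $1$ for a dead end, since $L|_H=-1$ in the $d=1$ computation of \Cref{thm:selected-component-classification}, and it is monotone along the chain, which gives $|\epsilon_D|\le1$. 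For the neighbours in directions toward other rupture components, the balance identity $\sum_D\epsilon_D=d_E-2$ of \Cref{lem:blanco-balance} bounds the remaining neighbour once the others are known to lie in $[-1,1]$.

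The main obstacle is the neighbour pointing toward the root of the resolution, or toward other rupture vertices. Its $\epsilon$ is not controlled by a single chain, and it is there that Blanco's graph-theoretic inequality, relating $\nu_E/N_E$ ratios along geodesics using the monotonicity of log-canonical ratios, is genuinely needed. I would cite that bound directly, making sure the multivariable $\alpha$ enters only through the scalar data $M_A=qN_A\cdot\alpha$. This reduction requires $\alpha\in\Q_{>0}^r$, so that all $b_i>0$ and $g_b$ has full reduced support.
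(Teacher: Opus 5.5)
You have the right frame: reduce to Blanco's bounds through the scalarization $g_b$ and the dictionary \eqref{eq:blanco-dictionary}. The central step is missing, however. You treat Blanco's bound as a statement about the given form $\omega_0$ (``the form in which Blanco's bounds are stated''). In fact \cite[Theorem~A]{Blanco2026} is a form-modification result. For a given candidate pole it produces \emph{some} holomorphic top form $\omega'=a\omega_0$, $a\in\mathcal O_{X,0}$, with the same candidate and with all adjacent residue numbers in $[-1,1]$.

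Such bounds do not hold for an arbitrary form realizing a candidate. The residue numbers of $a\omega_0$ are $M_D\sigma_E+\nu_D+v_D(a)$. Once $v_E(a)>0$, both the values $v_D(a)$ at the neighbours and the extra $\delta$-terms coming from the divisor of $a$ depend on the choice of $a$. So ``invoke Blanco's bound for each neighbour'' does not yet say anything about $\omega_0$. Checking via \Cref{lem:blanco-balance} that $\omega_0$ has no $\delta$-terms does not address this.

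The missing idea is that the nowhere vanishing form is extremal. Comparing candidates gives $-(\nu_E+v_E(a))/M_E=-\nu_E/M_E=-1/q$, hence $v_E(a)=0$. Since $E$ is exceptional over $0$, its divisorial valuation is centred at the maximal ideal, so every nonunit of $\mathcal O_{X,0}$ has positive $v_E$-value. Thus $a$ is a unit, and $v_D(a)=0$ for every component $D$, strict transforms included. The residue numbers of $\omega'$ therefore coincide with those of $\omega_0$, and Blanco's bounds transfer. This short argument is the whole proof in the paper, and it makes your neighbour-by-neighbour case analysis unnecessary.

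Your proposed independent check would not replace it. The balance identity $\sum_D\epsilon_D=d_E-2$, with the other $d_E-1$ residues in $[-1,1]$, only gives $\epsilon_{D_0}\geq-1$ for the remaining neighbour; the resulting upper estimate is $2d_E-3$, not $1$. Also, the $d=1$ computation in \Cref{thm:selected-component-classification} restricts to the hyperplane of the valency-one curve, not to the hyperplane of $E$. It therefore does not give the claimed value $1$ at the end of a dead-end chain. For the cusp, the dead-end neighbour of the rupture component has $\epsilon=1/3$.
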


\begin{proof}
Blanco's form-modification theorem~\cite[Theorem~A]{Blanco2026} produces a holomorphic top form
\[
\omega'=a\omega_0,\quad a\in\mathcal O_{X,0},
\]
such that the candidate pole associated with $E$ is unchanged and all adjacent residue numbers lie in $[-1,1]$. Hence
\[
    -\frac{\nu_E+v_E(a)}{M_E} = -\frac{\nu_E}{M_E} = -\frac1q,
\]
so $v_E(a)=0$. The exceptional valuation $v_E$ is centered at the maximal ideal of $\mathcal O_{X,0}$, and every nonunit has positive $v_E$-value. Thus $a$ is a unit. Its pullback has order zero along every component, so the residue numbers of $\omega'$ are exactly those of $\omega_0$. This proves the bounds for $\omega_0$.
\end{proof}

\subsection{The twisted residue class}

We use the coefficient local system $\Lcal_{E,\alpha}$ introduced in \Cref{sec:residue-obstructions}. A local scalar representative of the residue has the form
\begin{equation*}
y^{\epsilon_D(\alpha)-1}\,dy,
\end{equation*}
and has analytic-continuation factor
\begin{equation}\label{eq:blanco-actual-character}
\chi_{E,\alpha}^{\mathrm{act}}(\gamma_D)=e^{2\pi i\epsilon_D(\alpha)}.
\end{equation}
Accordingly, the parallel-transport monodromy of the coefficient local system $\Lcal_{E,\alpha}$ is $e^{-2\pi i\epsilon_D(\alpha)}$. This is exactly Blanco's convention: in \cite[Section~7.3]{Blanco2026}, his local system $L$ has monodromy $e^{-2\pi i\epsilon_D(\alpha)}$, the multivalued residue form is $L$-valued, and the integration cycles have coefficients in $L^\vee$. Thus Blanco's $L$ identifies directly with $\Lcal_{E,\alpha}$.

Let $E^\circ$ be the complement in $E$ of its intersections with all other irreducible components of the reduced total transform. Recall that the Poincar\'e residue is defined by
\begin{equation*}
R_{E,\alpha}:=\Res_E\pi^*(u_\alpha\omega)
\in\Omega^1(E^\circ;\Lcal_{E,\alpha}).
\end{equation*}
Let $S_1$ be the set of intersection points $p_D\in E\cap D$ for which $\epsilon_D(\alpha)=1$, and put
\begin{equation*}
E^{\mathrm{eff}}=E^\circ\cup S_1=E\setminus\{p_D:\epsilon_D(\alpha)\ne1\}.
\end{equation*}
At the points of $S_1$, both $\Lcal_{E,\alpha}$ and the residue form extend regularly by \Cref{subsec:residue-one}. In the application below every integral adjacent residue is $1$. The remaining punctures are therefore exactly
\begin{equation}\label{eq:blanco-effective-punctures}
S_{\mathrm{eff}}(\alpha):=\{p_D:\epsilon_D(\alpha)\notin\Z\}=\{p_1,\ldots,p_k\},\quad E^{\mathrm{eff}}=E\setminus S_{\mathrm{eff}}(\alpha).
\end{equation}
At each $p_j$, the scalar branch factor is $e^{2\pi i\epsilon_j}\ne1$, equivalently the monodromy of the coefficient local system is $e^{-2\pi i\epsilon_j}\ne1$.

\begin{theorem}\label{thm:blanco-residue-class}
Suppose that every integral adjacent residue equals $1$. Let $S_{\mathrm{eff}}(\alpha)=\{p_1,\ldots,p_k\}$ be the effective puncture set in \eqref{eq:blanco-effective-punctures}, and suppose that
\begin{equation*}
k\geq3,
 \quad
 \sum_{j=1}^k(1-\epsilon_j)=2.
\end{equation*}
Then the coefficient Poincar\'e residue defines a nonzero class
\begin{equation*}
[R_{E,\alpha}]
 \neq0
 \quad\text{in}\quad
 H^1_{\mathrm{dR}}
 \bigl(E^{\mathrm{eff}},\Lcal_{E,\alpha}\bigr).
\end{equation*}
Its restriction to the fully punctured curve is also nonzero:
\begin{equation*}
[R_{E,\alpha}]
 \neq0
 \quad\text{in}\quad
 H^1_{\mathrm{dR}}
 \bigl(E^\circ,\Lcal_{E,\alpha}\bigr).
\end{equation*}
\end{theorem}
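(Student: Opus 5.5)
The plan is to write the residue form explicitly on $E\simeq\mathbb P^1$ as a multivalued Pochhammer-type form and pair it against a nonzero twisted cycle. Choose an affine coordinate $t$ on $E$ such that all effective punctures $p_1,\ldots,p_k$ lie in $\C$. The points of $S_1$ are not punctures of $E^{\mathrm{eff}}$, and because $\sum_j(1-\epsilon_j)=2$, the point $t=\infty$ is not a puncture either. Globally on $E^{\mathrm{eff}}$, the residue $R_{E,\alpha}$ is an $\Lcal_{E,\alpha}$-valued holomorphic one-form. Near $p_j$ its scalar branch is $(t-p_j)^{\epsilon_j-1}$ times a unit. Near a point of $S_1$ it is regular, by \Cref{subsec:residue-one}. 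By \Cref{lem:blanco-balance}, it has no further zeros on $E^\circ$.

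The key step is to identify the residue in closed form. Consider the multivalued form
\[
\Phi=\prod_{j=1}^k(t-p_j)^{\epsilon_j-1}\,dt .
\]
The ratio $R_{E,\alpha}/\Phi$ is a single-valued meromorphic function on $\mathbb P^1$: the monodromies agree by \eqref{eq:blanco-actual-character}, and the twist at the points of $S_1$ is trivial. This ratio has no zeros or poles at finite points. At $\infty$ we have $\ord_\infty\Phi=-2+\sum_j(1-\epsilon_j)=0$, and the total degree of the divisor of a one-form on $\mathbb P^1$ is $-2$. Combining this with the local orders of $R_{E,\alpha}$ at the points of $E\setminus E^{\mathrm{eff}}$, which are given by the balance identity and \Cref{prop:standard-residue-bounds}, the ratio is a constant $c\ne0$. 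So $R_{E,\alpha}=c\,\Phi$, up to the fixed normalization.

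Next, show that $[\Phi]\ne0$ in $H^1_{\mathrm{dR}}(E^{\mathrm{eff}},\Lcal_{E,\alpha})$. Here $E^{\mathrm{eff}}\simeq\mathbb P^1\setminus\{p_1,\ldots,p_k\}$ and the local system has all local monodromies nontrivial. The Euler characteristic computation gives $\dim H^1=k-2\ge1$, with $H^0=H^2=0$. I would pair $\Phi$ against a Pochhammer twisted cycle encircling $p_1$ and $p_2$. Since $0<1-\epsilon_j$ and the bounds $-1\le\epsilon_j\le1$ from \Cref{prop:standard-residue-bounds} hold with $\epsilon_j\notin\Z$, we have $\epsilon_j\in(0,1)$ in the relevant cases. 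The pairing then reduces to a nonzero multiple, namely $(1-e^{2\pi i\epsilon_1})(1-e^{2\pi i\epsilon_2})$, of the convergent integral $\int_{p_1}^{p_2}\Phi$.

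Nonvanishing of $\int_{p_1}^{p_2}\Phi$ is the main obstacle, since for arbitrary positions of the $p_j$ the integrand's argument varies along the path. As a first attempt, I would appeal directly to Blanco's nonvanishing theorem \cite[Section~7.3]{Blanco2026}, which is stated for exactly this situation: a residue with $k\ge3$ effective punctures and $\sum(1-\epsilon_j)=2$, in the same local-system convention as $\Lcal_{E,\alpha}$, as explained just before the statement. Failing that, there is a self-contained route. Equivalently $[\Phi]\neq0$, which can be checked via Deligne's comparison with logarithmic de Rham cohomology on $\mathbb P^1$: $\Phi$ is a logarithmic-type form with exponents in $(0,1)$ and is the generator of $H^0$ of the appropriate Deligne extension. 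Alternatively, one can verify directly that $\Phi=\nabla g$ is impossible: locally near $p_j$ such a $g$ must be $(t-p_j)^{\epsilon_j}\cdot(\text{holomorphic})$, so $g/\prod_j(t-p_j)^{\epsilon_j-1}$ is a polynomial of degree forced by the order at $\infty$ to be negative, a contradiction.

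Finally, the restriction to $E^\circ$ is nonzero by \Cref{prop:residue-one-injection}, applied with the set $S_1$.
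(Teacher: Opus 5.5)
Your proposal is correct once the Pochhammer detour is set aside. The argument that actually closes it, your final direct non-exactness argument, is a genuinely different route from the paper's.

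\textbf{The paper's route.} The paper proves no nonvanishing itself. It checks the hypotheses of Blanco's Proposition~7.1 (the Deligne--Mostow criterion) and uses \Cref{lem:blanco-balance} to rule out extra zeros. It spends the rest of the proof on the dictionary: Blanco's $L$ is $\Lcal_{E,\alpha}$, and his leading form equals $M_E^{-1}R_{E,\alpha}$. The passage to $E^\circ$ is then \Cref{prop:residue-one-injection}, exactly as in your last step.

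\textbf{Your route.} You replace the citation by a Liouville-type argument.
\begin{enumerate}
\item If $R_{E,\alpha}=\nabla g$ on $E^{\mathrm{eff}}$, then $g$ is holomorphic, because the form has type $(1,0)$.
\item Near $p_j$ the scalar branch of $g$ must be the termwise primitive $(t-p_j)^{\epsilon_j}\cdot(\text{holomorphic})$. Here $\epsilon_j\notin\Z$ excludes a logarithmic term, and $e^{2\pi i\epsilon_j}\ne1$ excludes adding a constant. You should state this explicitly, since it is where nonintegrality enters.
\item Hence $g/\prod_j(t-p_j)^{\epsilon_j-1}$ is single-valued and entire, and it vanishes at every $p_j$.
\item It is $O(t^2)$ at $\infty$, because $g$ is regular at $\infty\in E^{\mathrm{eff}}$ and $\sum_j(1-\epsilon_j)=2$.
\item So it is a polynomial of degree at most $2$ with $k\ge3$ roots, hence zero. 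This forces $R_{E,\alpha}=0$, a contradiction. This is the precise content of your ``negative degree'' remark.
\end{enumerate}

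\textbf{What each route buys.} Your argument uses only \eqref{eq:residue-branch-local-form}, nonintegrality, $k\ge3$ and the balance. It therefore makes \Cref{prop:standard-residue-bounds} and the convention-matching with Blanco unnecessary. It can also be run on $R_{E,\alpha}$ directly, so even the closed form $R_{E,\alpha}=c\,\Phi$ is optional. The paper's route is shorter and keeps the statement inside Blanco's one-variable framework.

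\textbf{What to drop.} Two parts of your write-up should be removed rather than repaired.
\begin{enumerate}
\item In the Pochhammer paragraph, $\epsilon_j\in(0,1)$ does not hold for all $j$. Positivity of the $1-\epsilon_j$ together with sum $2$ only forces all but at most one $\epsilon_j$ into $(0,1)$. One may lie in $(-1,0)$, as $\epsilon_3=a-\tfrac12$ does in the outer rupture example. Moreover, nonvanishing of a single period $\int_{p_1}^{p_2}\Phi$ would not follow in general.
\item For the same reason the Deligne-extension remark is imprecise, since one exponent $1-\epsilon_j$ may lie in $(1,2)$.
\end{enumerate}
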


\begin{proof}
By \eqref{eq:blanco-effective-punctures}, the effective curve is the punctured projective line $E^{\mathrm{eff}}=E\setminus\{p_1,\ldots,p_k\}$ with $k\geq3$. Each $\epsilon_j$ is nonintegral, so the corresponding coefficient monodromy $e^{-2\pi i\epsilon_j}$ is nontrivial. The displayed equality is precisely the required degree-two balance. Finally, the standard form has no additional zero on $E^\circ$ by \Cref{lem:blanco-balance}; in particular, the leading multivalued residue section to which Blanco applies the Deligne--Mostow criterion \cite[Proposition~2.14]{DeligneMostow1986} is nonzero. Blanco's proposition therefore gives a nonzero twisted de Rham class on $E^{\mathrm{eff}}$ \cite[Proposition~7.1 and Section~7.3]{Blanco2026}. Related residue calculations at rupture components also occur in Blanco's earlier study of complex zeta functions \cite{Blanco2019}.

It remains to identify Blanco's form with the coefficient residue used here. At a generic point of $E$, absorb the local unit in $g_b\circ\pi$ into a choice of normal coordinate $x$ and write
\begin{equation*}
g_b\circ\pi=x^{M_E},
 \quad
 \pi^*\omega_0
 =
 U(x,y)x^{\nu_E-1}\,dx\wedge dy.
\end{equation*}
Since $M_E=q\nu_E$,
\begin{equation*}
g_b^{-1/q}\pi^*\omega_0
 =
 U(x,y)\frac{dx}{x}\wedge dy,
\end{equation*}
and hence
\begin{equation*}
R_{E,\alpha}=U(0,y)\,dy.
\end{equation*}
On the other hand,
\begin{equation*}
\frac{\pi^*\omega_0}{d(g_b\circ\pi)}
 =
 \frac{U(x,y)}{M_E}x^{\nu_E-M_E}\,dy.
\end{equation*}
Consequently Blanco's leading form and the coefficient residue satisfy, on each compatible branch,
\begin{equation}\label{eq:blanco-residue-normalization}
 R_E(\omega_0)=\frac1{M_E}R_{E,\alpha}.
\end{equation}
Coordinate units and sheet choices act on both scalar expressions by the same nonzero transition factor. Since Blanco's $L$ is precisely $\Lcal_{E,\alpha}$, \eqref{eq:blanco-residue-normalization} identifies Blanco's leading form with the coefficient residue as sections of the same local system. Blanco's nonvanishing therefore gives the first assertion. The residue extends regularly across every point of $S_1$; hence \Cref{prop:residue-one-injection} applies to the restriction
\begin{equation*}
H^1_{\mathrm{dR}}
 \bigl(E^{\mathrm{eff}},\Lcal_{E,\alpha}\bigr)
 \to 
 H^1_{\mathrm{dR}}
 \bigl(E^\circ,\Lcal_{E,\alpha}\bigr)
\end{equation*}
and proves that the restricted class is still nonzero.
\end{proof}

The scalarized Blanco input is used only for rupture components. Strict-transform hyperplanes are handled independently by \Cref{thm:strict-transform-wall}, which follows from Proposition~1.3 of~\cite{BvdVVW2024} with shift $\one$ and $c=1$. That
result uses the full order vector
\begin{equation*}
N_C=(\ord_C(f_1),\ldots,\ord_C(f_r)),
\end{equation*}
so it includes arbitrary multiplicities of a branch in any number of tuple entries.

\subsection{Numerical analysis at a rupture component}

We now isolate the numerical argument used at a rupture component $E$. Assume that no component defining the same affine hyperplane $H$ as $E$ meets $E$, and that
\begin{equation*}
R_{E,H}\neq0
\end{equation*}
in $K(H)$. At a point $\alpha\in W_H^+$ for which the adjacent denominators are nonzero,
\begin{equation}\label{eq:selected-component-residue}
 R_{E,H}(-\alpha)
 =
 2-d_E+\sum_{D\neq E,\, D\cap E\neq \emptyset}\frac1{\epsilon_D(\alpha)}.
\end{equation}

Let $\mathcal U_E(\Q)$ be the dense set constructed in \Cref{lem:selected-rupture-density}. At every $\alpha\in\mathcal U_E(\Q)$:
\begin{enumerate}[label=\textup{(\roman*)}]
 \item every $\epsilon_D(\alpha)$ is nonzero;
 \item $R_{E,H}(-\alpha)\neq0$;
 \item every nonconstant residue function is nonintegral.
\end{enumerate}
By \Cref{prop:standard-residue-bounds}, a constant integral residue is one of $-1$, $0$, and $1$. The bounded polytope in \Cref{lem:selected-rupture-density} ensures that only finitely many integral level sets have to be removed.

\begin{remark}\label{rem:pointwise-zero}
For a neighbour $D$ of $E$, the function $\epsilon_D$ may vanish on a proper affine subset of $W_H$ without $D$ defining the same hyperplane as $E$. The latter occurs precisely when
\begin{equation*}
\epsilon_D|_{W_H}\equiv0.
\end{equation*}
The set $\mathcal U_E(\Q)$ avoids the proper zero sets of the nonconstant functions $\epsilon_D$. An identically vanishing restriction would instead place both $D$ and $E$ in $S_H$, contrary to the assumption that no two such components meet in the local fibre.
\end{remark}

\begin{remark}
For $r=1$, $W_H^+$ consists of the single rational point $\alpha=\nu_E/N_E$. All residue functions are constant, and the assumptions already give nonvanishing of the adjacent denominators and of $R_{E,H}(-\alpha)$. The argument below is therefore pointwise.
\end{remark}

\begin{proposition}\label{prop:selected-numerical-exhaustion}
At every point $\alpha\in\mathcal U_E(\Q)$, after filling the intersections for which $\epsilon_D(\alpha)=1$, at least three nonintegral residues remain. They satisfy
\begin{equation*}
\sum_j(1-\epsilon_j)=2.
\end{equation*}
Consequently the hypotheses of \Cref{thm:blanco-residue-class} hold.
\end{proposition}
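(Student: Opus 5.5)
We work at a fixed $\alpha\in\mathcal U_E(\Q)$ and partition the neighbours of $E$ according to $\epsilon_D(\alpha)$. By \Cref{lem:selected-rupture-density}, every nonconstant $\epsilon_D$ is nonintegral at $\alpha$, and every constant one is nonzero. By \Cref{prop:standard-residue-bounds}, a constant integral value therefore lies in $\{-1,1\}$. Write $a$ for the number of neighbours with $\epsilon_D(\alpha)=1$, $c$ for the number with $\epsilon_D(\alpha)=-1$, and $k$ for the number of nonintegral values $\epsilon_1,\ldots,\epsilon_k$, so that $d_E=a+c+k$.

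The first step is to exclude $c>0$ using the balance identity. \Cref{lem:blanco-balance} gives $\sum_D\epsilon_D=d_E-2$. Since every $\epsilon_D\le1$, we have $\sum_D(1-\epsilon_D)=2$. Each neighbour with $\epsilon_D=-1$ contributes $2$ to this sum, and every other term is nonnegative. Hence $c\ge2$ is impossible. If $c=1$, all other terms must vanish, so $k=0$ and every remaining residue equals $1$. Then \eqref{eq:selected-component-residue} gives
\[
R_{E,H}(-\alpha)=2-d_E+(d_E-1)-1=0,
\]
which contradicts condition (ii) of \Cref{lem:selected-rupture-density}. Therefore $c=0$, every integral adjacent residue equals $1$, and summing over the nonintegral ones gives $\sum_{j=1}^k(1-\epsilon_j)=2$. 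This is the degree-two balance. These are exactly the integral residues filled in via \Cref{subsec:residue-one}.

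The second step is to show $k\ge3$. Each nonintegral $\epsilon_j$ lies in $(-1,1)$, so $1-\epsilon_j\in(0,2)$. This rules out $k=0$ (the sum would be $0$) and $k=1$ (the single term would equal $2$). The case $k=2$ is the main obstacle, since it is numerically allowed, with $\epsilon_1+\epsilon_2=0$. Here I would again use the nonvanishing of $R_{E,H}(-\alpha)$. With $d_E=a+2$,
\[
R_{E,H}(-\alpha)=2-(a+2)+a+\frac1{\epsilon_1}+\frac1{\epsilon_2}=\frac{1}{\epsilon_1}-\frac{1}{\epsilon_1}=0,
\]
which is again a contradiction. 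So $k\ge3$.

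This holds pointwise at every $\alpha\in\mathcal U_E(\Q)$, and for $r=1$ the same argument applies directly at the single point. Together the two steps give the hypotheses of \Cref{thm:blanco-residue-class}.
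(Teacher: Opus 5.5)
Your proof is correct and follows the paper's argument essentially step by step. The balance identity $\sum_D\epsilon_D=d_E-2$ together with the bounds $-1\le\epsilon_D(\alpha)\le1$ forces any residue equal to $-1$ to be accompanied only by residues equal to $1$, which gives $R_{E,H}(-\alpha)=0$. Then $k=0,1$ are excluded numerically, and $k=2$ again forces $R_{E,H}(-\alpha)=0$; your rewriting as $\sum_D(1-\epsilon_D)=2$ with nonnegative terms is just a cleaner packaging of the paper's termwise-equality step.
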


\begin{proof}
By \Cref{lem:blanco-balance},
\begin{equation}\label{eq:epsilon-balance-selected}
 \sum_{D\neq E,\, D\cap E\neq \emptyset}\epsilon_D=d_E-2.
\end{equation}
There is no zero residue on $\mathcal U_E(\Q)$. Suppose that $\epsilon_{D_0}(\alpha)=-1$ for some component $D_0$ meeting $E$. By \Cref{prop:standard-residue-bounds}, every other adjacent residue satisfies $\epsilon_D(\alpha)\leq1$. Hence \Cref{lem:blanco-balance} gives
\begin{equation*}
d_E-2
   =\sum_{D\neq E,\, D\cap E\neq \emptyset}\epsilon_D(\alpha)
   \leq -1+(d_E-1)
   =d_E-2.
\end{equation*}
Equality therefore holds termwise, so
\begin{equation*}
\epsilon_D(\alpha)=1
  \quad\text{for every }D\neq D_0.
\end{equation*}
Consequently,
\begin{equation*}
R_{E,H}(-\alpha)
   =2-d_E+\sum_{D\neq E,\, D\cap E\neq \emptyset}\frac{1}{\epsilon_D(\alpha)}=2-d_E-1+(d_E-1)=0,
\end{equation*}
contrary to the condition $R_{E,H}(-\alpha)\neq0$. 

All remaining integral residues are therefore $1$. Fill their intersections, let $\ell$ be their number, and denote the nonintegral residues by $x_1,\ldots,x_k$. Equation~\eqref{eq:epsilon-balance-selected} gives
\begin{equation*}
\ell+\sum_{i=1}^k x_i=\ell+k-2,
\end{equation*}
and hence
\begin{equation}\label{eq:effective-balance}
 \sum_{i=1}^k x_i=k-2,
 \quad
 \sum_{i=1}^k(1-x_i)=2.
\end{equation}
The cases $k=0$ and $k=1$ are impossible: they give respectively $0=-2$ and $x_1=-1$. If $k=2$, then $x_1+x_2=0$ and, using $d_E=\ell+2$,
\begin{equation*}
R_{E,H}(-\alpha)
 =
 2-(\ell+2)+\ell+\frac1{x_1}+\frac1{x_2}=0.
\end{equation*}
This again contradicts $R_{E,H}(-\alpha)\neq0$. Therefore $k\geq3$, and Equation~\eqref{eq:effective-balance} is precisely Blanco's degree-two balance.
\end{proof}

\begin{corollary}\label{cor:selected-blanco-class}
At every point of $\mathcal U_E(\Q)$, the coefficient Poincar\'e residue is nonzero on the fully punctured curve:
\begin{equation*}
[R_{E,\alpha}]
 \neq0
 \quad\text{in}\quad
 H^1_{\mathrm{dR}}
 \bigl(E^\circ,\Lcal_{E,\alpha}\bigr).
\end{equation*}
\end{corollary}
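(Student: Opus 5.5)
The corollary follows by chaining \Cref{prop:selected-numerical-exhaustion} into \Cref{thm:blanco-residue-class}, so the plan is to check that the hypotheses of the latter hold at every $\alpha\in\mathcal U_E(\Q)$.

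Fix $\alpha\in\mathcal U_E(\Q)$. Since $\mathcal U_E(\Q)\subset W_H^+$ and $E\in S_H$, \Cref{thm:same-wall-positivity} (equation \eqref{eq:same-wall-zero-index}) gives $N_E\cdot\alpha=\nu_E$ with $\alpha\in\Q_{>0}^r$. Hence the scalarization $g_b=\prod_i f_i^{b_i}$, $b=q\alpha$, is available. The residue numbers of Blanco's setting coincide with $\epsilon_D(\alpha)$ by \eqref{eq:blanco-dictionary}, and \Cref{lem:blanco-balance} and \Cref{prop:standard-residue-bounds} apply. The component $E$ is a rupture component by \Cref{thm:selected-component-classification}, since it is exceptional with $R_{E,H}\neq0$ under the no-crossing hypothesis.

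Next I verify the two hypotheses of \Cref{thm:blanco-residue-class}. The first is that every integral adjacent residue equals $1$. Property (iii) of \Cref{lem:selected-rupture-density} makes every nonconstant $\epsilon_D$ nonintegral at $\alpha$, so any integral value comes from a constant residue. By \Cref{prop:standard-residue-bounds} such a value lies in $\{-1,0,1\}$. The value $0$ is excluded by property (i), and the value $-1$ is excluded by the termwise-equality argument in the proof of \Cref{prop:selected-numerical-exhaustion}, which would force $R_{E,H}(-\alpha)=0$. The second hypothesis is that the effective puncture set has $k\ge3$ points with $\sum_j(1-\epsilon_j)=2$; this is exactly the conclusion of \Cref{prop:selected-numerical-exhaustion}.

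With both hypotheses in place, \Cref{thm:blanco-residue-class} gives $[R_{E,\alpha}]\neq0$ in $H^1_{\mathrm{dR}}(E^{\mathrm{eff}},\Lcal_{E,\alpha})$. Its second assertion, which rests on the injectivity in \Cref{prop:residue-one-injection} after filling the points of $S_1$, gives nonvanishing in $H^1_{\mathrm{dR}}(E^\circ,\Lcal_{E,\alpha})$. There is no real obstacle here; the only point needing care is that the local system and residue form used in \Cref{thm:blanco-residue-class} are the ones attached to this particular $\alpha$. That is guaranteed because the identification of Blanco's $L$ with $\Lcal_{E,\alpha}$ in \eqref{eq:blanco-residue-normalization} holds pointwise for each scalarization.
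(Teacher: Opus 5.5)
Your proposal is correct and follows the paper's own argument: \Cref{prop:selected-numerical-exhaustion} supplies the hypotheses of \Cref{thm:blanco-residue-class}, and that theorem's second conclusion, via the injectivity of \Cref{prop:residue-one-injection}, is exactly the claimed nonvanishing on $E^\circ$. One small citation fix: take the identity $N_E\cdot\alpha=\nu_E$ for $\alpha\in W_H^+$ directly from \Cref{lem:positive-wall-normalization}, because \Cref{thm:same-wall-positivity} assumes two components of $S_H$ meet in the local fibre, and that hypothesis fails in the present no-crossing case.
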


\begin{proof}
The proposition shows that all the hypotheses of \Cref{thm:blanco-residue-class} are satisfied. Its second conclusion, together with the injectivity in \Cref{prop:residue-one-injection}, is exactly the asserted nonvanishing on $E^\circ$.
\end{proof}

\begin{remark}
The argument above concerns the value of $R_{E,H}$ at the chosen point $-\alpha$. The vanishing computations used in the excluded cases do not imply that $R_{E,H}$ vanishes identically in $K(H)$.
\end{remark}

\subsection{Completion of the proof}

We now prove the main theorem by considering separately the two cases in the grouped Laurent expansion. A crossing between two components defining the same affine hyperplane is handled by the double-residue obstruction. If no such crossing occurs, the grouped simple-pole coefficient gives either a strict transform or a rupture component, and the latter is treated by the twisted Poincar\'e residue. 

For $r=1$, the conclusion below recovers the classical plane-curve strong statement obtained by combining Loeser's Bernstein roots for rupture and strict-transform divisors with Veys's pole classification \cite{Loeser1988,Veys1995}. Weak multivariable monodromy statements for plane curves were obtained earlier by Nicaise and Budur \cite{Nicaise2004,Budur2015}; the argument here keeps the full affine hyperplane rather than only its exponentiated character.

\begin{theorem}\label{thm:main-thm-top}
For every tuple $F=(f_1,\ldots,f_r)$ of arbitrary nonzero nonunit plane curve germs on a smooth complex surface germ,
\begin{equation*}
\PL\bigl(Z^{\mathrm{top}}_{F,0}\bigr) \subseteq Z(B_{F,0}).
\end{equation*}
\end{theorem}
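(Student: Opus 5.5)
The plan is to fix an irreducible component $H$ of $\PL(Z^{\mathrm{top}}_{F,0})$ and show $H\subseteq Z(B_{F,0})$. Every actual polar hyperplane is of the form $V(L_{A_0})$, so \Cref{lem:positive-wall-normalization} gives $H=V(1+\lambda_H\cdot s)$ with $\lambda_H=N_{A_0}/\nu_{A_0}\in\Q_{\ge0}^r\setminus\{0\}$. Hence \Cref{thm:dense-wall-lifting} is available. It will then suffice to produce a Zariski-dense set of rational $\alpha\in W_H^+$ with $u_\alpha\notin\D_{X,0}(hu_\alpha)$, except in the strict-transform case, which is handled directly. By \Cref{prop:multiplicity-transport}, no reducedness hypothesis on the entries enters, and we may work on the minimal embedded resolution throughout. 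We then split according to \Cref{thm:grouped-wall-dichotomy}.

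In case (i), two components $A,B\in S_H$ meet at a point of $\pi^{-1}(0)$. By \Cref{thm:same-wall-positivity}, every $\alpha\in W_H^+$ satisfies both $N_A\cdot\alpha=\nu_A$ and $N_B\cdot\alpha=\nu_B$. \Cref{thm:direct-double-residue-obstruction} then gives $u_\alpha\notin\D_{X,0}(hu_\alpha)$ for every rational $\alpha\in W_H^+$. These points are Zariski dense in $W_H$ by \Cref{lem:bounded-polytope-density}, applied with no functions to avoid (or trivially when $r=1$). \Cref{thm:dense-wall-lifting} concludes this case.

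In case (ii), choose $A\in S_H$ with $R_{A,H}\neq0$. If $A$ is a strict transform $C$, then $L_C=N_C\cdot s+1$ defines $H$, and \Cref{thm:strict-transform-wall} gives $H=V(N_C\cdot s+1)\subseteq Z(B_{F,0})$ directly. Otherwise $A=E$ is a rupture component. Take the dense set $\mathcal U_E(\Q)$ of \Cref{lem:selected-rupture-density}. At each $\alpha$ in this set, \Cref{prop:selected-numerical-exhaustion} verifies the hypotheses of \Cref{thm:blanco-residue-class}, and \Cref{cor:selected-blanco-class} gives $[R_{E,\alpha}]\neq0$ in $H^1_{\mathrm{dR}}(E^\circ,\Lcal_{E,\alpha})$. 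Since $N_E\cdot\alpha=\nu_E$ on $W_H^+$, \Cref{thm:direct-residue-obstruction} yields $u_\alpha\notin\D_{X,0}(hu_\alpha)$. Applying \Cref{thm:dense-wall-lifting} finishes this case.

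The main obstacle is the rupture case: we must ensure the selected points genuinely satisfy Blanco's hypotheses, namely that every integral adjacent residue equals $1$. That $\epsilon_D\neq0$ comes from \Cref{lem:selected-rupture-density}, that $\epsilon_D\neq-1$ comes from the numerical exhaustion, and constant integral values are handled by \Cref{prop:standard-residue-bounds}. We also need the residue normalization to match Blanco's local system. All of this is packaged in the preceding results, so the proof reduces to assembling these cases carefully. A remaining check is $r=1$, where $W_H^+$ is a single point and density is trivial.
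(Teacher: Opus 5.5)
Your proposal is correct and is essentially the paper's own proof. It uses the same normalization $H=V(1+\lambda_H\cdot s)$ and the same split via \Cref{thm:grouped-wall-dichotomy}: the crossing case goes through the double-residue obstruction at every rational point of $W_H^+$, and the simple-pole case through either \Cref{thm:strict-transform-wall} or, for a rupture component, $\mathcal U_E(\Q)$ with \Cref{prop:selected-numerical-exhaustion}, \Cref{cor:selected-blanco-class} and \Cref{thm:direct-residue-obstruction}, all closed off by \Cref{thm:dense-wall-lifting}.
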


\begin{proof}
Let $H$ be an irreducible component of the polar locus
\begin{equation*}
\PL(Z^{\mathrm{top}}_{F,0}).
\end{equation*}
By the resolution formula \eqref{eq:surface-zeta}, $H=V(L_A)$ for at least one component $A$ of the total transform. Give it the normalization
\begin{equation*}
H=V(L_H),
 \quad
 L_H(s)=1+\lambda_H\cdot s,
\end{equation*}
of \Cref{lem:positive-wall-normalization}, and let $S_H$ be the set of all total transform components whose affine form defines $H$. We apply the two alternatives of \Cref{thm:grouped-wall-dichotomy}.

Suppose first that two components $A,B\in S_H$ meet at a point of $\pi^{-1}(0)$. At every positive rational dual point
\begin{equation*}
\alpha\in W_H^+\cap\Q^r,
 \quad
 \lambda_H\cdot\alpha=1,
\end{equation*}
\Cref{thm:same-wall-positivity} gives
\begin{equation*}
N_A\cdot\alpha=\nu_A,
 \quad
 N_B\cdot\alpha=\nu_B.
\end{equation*}
The double-residue obstruction \Cref{thm:direct-double-residue-obstruction} therefore yields
\begin{equation*}
u_\alpha\notin\D_{X,0}(hu_\alpha).
\end{equation*}
The rational points of $W_H^+$ are Zariski dense in $W_H$ when $r\geq2$ by \Cref{lem:bounded-polytope-density}; for $r=1$ the hyperplane is the corresponding single rational point. Hence \Cref{thm:dense-wall-lifting} gives
\begin{equation*}
H\subseteq Z(B_{F,0}).
\end{equation*}

Suppose next that any two components in $S_H$ are disjoint over $\pi^{-1}(0)$. By \Cref{thm:grouped-wall-dichotomy}, there is a component $A\in S_H$ with $R_{A,H}\neq0$. By \Cref{thm:selected-component-classification}, $A$ is either a strict transform or a rupture component. In the strict-transform case, $H=V(N_A\cdot s+1)$, and the desired inclusion follows directly from \Cref{thm:strict-transform-wall}.

It remains to treat the case $A=E$ with $E$ rupture. Let
\begin{equation*}
\mathcal U_E(\Q)\subset W_H^+\cap\Q^r
\end{equation*}
be the Zariski-dense set from \Cref{lem:selected-rupture-density}. For every $\alpha\in\mathcal U_E(\Q)$, \Cref{prop:selected-numerical-exhaustion}, \Cref{thm:blanco-residue-class}, and \Cref{prop:residue-one-injection} give
\begin{equation*}
[R_{E,\alpha}]\neq0
 \quad\text{in}\quad
 H^1_{\mathrm{dR}}(E^\circ,\Lcal_{E,\alpha});
\end{equation*}
equivalently, this is \Cref{cor:selected-blanco-class}. The residue obstruction \Cref{thm:direct-residue-obstruction} now gives
\begin{equation*}
u_\alpha\notin\D_{X,0}(hu_\alpha)
 \quad(\alpha\in\mathcal U_E(\Q)).
\end{equation*}
\Cref{prop:affine-point-lifting} places every $-\alpha$ in $Z(B_{F,0})$, and \Cref{thm:dense-wall-lifting} yields
\begin{equation*}
H\subseteq Z(B_{F,0}).
\end{equation*}

We have therefore shown that every irreducible component $H$ of $\PL(Z^{\mathrm{top}}_{F,0})$ is contained in $Z(B_{F,0})$. Taking the union over all irreducible components gives
\begin{equation*}
\PL\bigl(Z^{\mathrm{top}}_{F,0}\bigr)
  \subseteq Z(B_{F,0}).
\end{equation*}
\Cref{prop:multiplicity-transport} shows that the same argument covers powers, repeated branches, common factors, and dependent tuple entries. The point case $r=1$ was included in both density steps above.
\end{proof}

\begin{remark}
The proof establishes a set-theoretic inclusion for the reduced polar locus of the topological zeta function. It neither compares pole orders with scheme-theoretic multiplicities of Bernstein components nor identifies the remaining components of $Z(B_{F,0})$. The non-equivariant motivic strengthening is proved in \Cref{sec:motivic-comparison}.
\end{remark}

\subsection{An outer rupture example}

We illustrate the rupture-component case of the proof on an example for which all adjacent residues are nonintegral, so no extension across intersection points is needed.

Consider
\begin{equation*}
F=(f_1,y), \quad f_1 = y^4-2x^3y^2-4x^5y+x^6-x^7.
\end{equation*}
Five point blow-ups resolve the reduced union. With $C_1$ the strict transform of $(f_1=0)$ and $C_2$ that of $(y=0)$, the dual graph is of the form 
\begin{equation*}
    \begin{tikzpicture}[
            every node/.style={circle, draw, minimum size=5mm, inner sep=0pt},
            node distance=5mm and 7mm
        ]
        \node (E3) {\small $E_3$};
        \node[left=of E3] (E2) {\small $E_2$};
        \node[left=of E2] (C2) {\small $C_2$};
        \node[right=of E3] (E5) {\small $E_5$};
        \node[right=of E5] (C1) {\small $C_1$};
        \node[above=of E3] (E1) {\small $E_1$};
        \node[above=of E5] (E4) {\small $E_4$};
        \draw (C2) -- (E2) -- (E3) -- (E5) -- (C1);
        \draw (E3) -- (E1);
        \draw (E5) -- (E4);
    \end{tikzpicture}
    \end{equation*}
The exceptional numerical data are
\begin{equation*}
\begin{array}{c@{\quad}c@{\quad}c}
\toprule
 A&N_A&\nu_A\\
\midrule
 E_1&(4,1)&2\\
 E_2&(6,2)&3\\
 E_3&(12,3)&5\\
 E_4&(13,3)&6\\
 E_5&(26,6)&11\\
\bottomrule
\end{array}
\end{equation*}
Indeed, the first two blow-ups give the vectors $(4,1)$ and $(6,2)$. Blowing up their intersection, where the strict transform of $f_1$ has multiplicity two, gives $(12,3)$; resolving the ensuing smooth tangency gives $(13,3)$; and blowing up the resulting triple point gives $(26,6)$. Thus the outer rupture component $E=E_5$ has
\begin{equation*}
N_E=(26,6),
 \quad
 \nu_E=11.
\end{equation*}
The three neighbours relevant to this component have data
\begin{equation*}
\begin{array}{c@{\quad}c@{\quad}c}
\toprule
 D&N_D&\nu_D\\
\midrule
 E_3&(12,3)&5\\
 E_4&(13,3)&6\\
 C_1&(1,0)&1\\
\bottomrule
\end{array}
\end{equation*}
where $C_1$ is the indicated strict-transform neighbour. In particular, $d_E=3$. The two strict-transform vectors are $N_{C_1}=(1,0)$ and $N_{C_2}=(0,1)$. Together with the exceptional table, they show that no other resolution component defines the same normalized affine hyperplane as $E_5$. Its positive dual hyperplane is
\begin{equation}\label{eq:pc004-positive-wall}
 26\alpha_1+6\alpha_2=11.
\end{equation}
Write $a=\alpha_1$. Then
\begin{equation*}
\alpha_2=\frac{11-26a}{6},
 \quad
 0<a<\frac{11}{26}.
\end{equation*}
The three adjacent coefficient residues are
\begin{equation}\label{eq:pc004-residues}
 \epsilon_3=a-\frac12,
 \quad
 \epsilon_4=\frac12,
 \quad
 \epsilon_1=1-a.
\end{equation}
Every residue in Equation~\eqref{eq:pc004-residues} is nonintegral on the open positive segment, and
\begin{equation*}
(1-\epsilon_3)+(1-\epsilon_4)+(1-\epsilon_1)=2.
\end{equation*}
The component residue is also nonzero there:
\begin{equation*}
\begin{aligned}
 R_{E,H}(-\alpha)
 &=
 2-3+\frac1{a-\frac12}+\frac1{\frac12}+\frac1{1-a}\\
 &=
 \frac{a(\frac32-a)}
 {(a-\frac12)(1-a)}
 \neq0.
 \end{aligned}
\end{equation*}
Because $E_5$ is the only component defining this normalized affine hyperplane, the normalization $L_H=L_E/\nu_E$ gives
\begin{equation*}
C_{-1}(H)=\frac1{11}R_{E,H}\neq0.
\end{equation*}
Hence $26s_1+6s_2+11=0$ is an actual topological polar hyperplane.

The hypotheses of \Cref{thm:blanco-residue-class} therefore hold without filling any intersection point. For every rational point of the positive segment defined by Equation~\eqref{eq:pc004-positive-wall}, the twisted coefficient residue class is nonzero. \Cref{thm:direct-residue-obstruction} gives
\begin{equation*}
F^{-\alpha}\notin\D_{X,0}(hF^{-\alpha}),
\end{equation*}
and \Cref{prop:affine-point-lifting} places $-\alpha$ in $Z(B_{F,0})$. Rational points of the open segment are Zariski dense in the affine line, so \Cref{thm:dense-wall-lifting} yields
\begin{equation*}
V(26s_1+6s_2+11)
 \ \subseteq\
 Z(B_{F,0}).
\end{equation*}
Thus the one-variable scalarization supplies the nonzero twisted residue class, while the direct cyclic obstruction keeps the exact affine hyperplane.


\section{Motivic zeta functions for plane curves}\label{sec:motivic-comparison}

We now strengthen the main theorem from the topological zeta function to a non-equivariant motivic zeta function. The coefficient ring is
\begin{equation*}
\Mmot:=K_0(\operatorname{Var}_{\C})[\Lef^{-1}],
\end{equation*}
without passing to its dimensional completion. We use only the ordinary Grothendieck classes of the resolution strata, with neither a $\widehat\mu$-action nor a finite monodromic cover included in the invariant considered here. This distinction is essential for the cancellation identities below. The Grothendieck ring itself is not a domain \cite{Poonen2002}; our arguments do not require either it or its localization to be a domain. Throughout the paper, the term motivic zeta function refers to this non-equivariant version.

\subsection{The resolution expression}

With the resolution notation of \Cref{sec:setup}, put $\mathbf T^N=\prod_iT_i^{N_i}$ for $N=(N_1,\cdots,N_r)$ and define
\begin{equation}\label{eq:motivic-resolution}
 \motzeta(\mathbf T)=\Lef^{-2}\sum_{\varnothing\ne I\subseteq J}[D_I^\circ\cap\pi^{-1}(0)]\prod_{A\in I}\frac{(\Lef-1)\Lef^{-\nu_A}\mathbf T^{N_A}}{1-\Lef^{-\nu_A}\mathbf T^{N_A}}.
\end{equation}
Each inverse denotes its geometric series, so \eqref{eq:motivic-resolution} lies in $\Mmot[\![T_1,\ldots,T_r]\!]$. For algebraic germs this is the local-fibre, non-equivariant image of the usual motivic construction and resolution formula \cite[Theorem~2.2.1]{DenefLoeser1998}. We normalize motivic measure so that a stable cylinder represented at jet level $k$ on a surface has class $[C]\Lef^{-2(k+1)}$; this accounts for the global factor $\Lef^{-2}$, and the corresponding factor in dimension $n$ is $\Lef^{-n}$. Guibert's multivariable formula is stated in the relative Grothendieck ring of varieties over $\mathbb G_m^r\times X_0$. After base change to the fibre over $0\in X_0$ and then forgetting the remaining structural morphism, the torus bundle satisfies $[U_I|_0]=(\Lef-1)^{|I|}[D_I^\circ\cap\pi^{-1}(0)]$, giving the non-equivariant formula above up to this displayed global normalization \cite[D\'efinition~4.1.1 and Th\'eor\`eme~4.1.2]{Guibert2002}. For arbitrary holomorphic plane germs, \eqref{eq:motivic-resolution} is the definition; its independence of the embedded resolution is proved below. Multiplication by the unit $\Lef^{-2}$ has no effect on the order that we define.

For $a\in\Z$ and $b\in\Z_{\geq0}^r\setminus\{0\}$, set
\begin{equation*}
d_{a,b}=1-\Lef^a\mathbf T^b,
 \quad H(a,b)=\{s\in\C^r:b\cdot s-a=0\}.
\end{equation*}
Let $S_r$ be the multiplicative monoid generated by all $d_{a,b}$.

\begin{lemma}\label{lem:motivic-formal-embedding}
The canonical map
\begin{equation*}
S_r^{-1}\Mmot[\mathbf T]\to \Mmot[\![\mathbf T]\!]
\end{equation*}
is injective.
\end{lemma}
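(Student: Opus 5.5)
The map sends a fraction $P/s$ with $P\in\Mmot[\mathbf T]$ and $s\in S_r$ to $P\cdot s^{-1}$, where $s^{-1}$ is computed in $\Mmot[\![\mathbf T]\!]$. Injectivity therefore means: if $P/s$ maps to zero, then $P/s=0$ already in the localization, i.e.\ $tP=0$ in $\Mmot[\mathbf T]$ for some $t\in S_r$. The plan has two steps. First, every element of $S_r$ is a unit in the power series ring. Second, a polynomial that vanishes as a power series already vanishes as a polynomial. Because we never divide by coefficients in $\Mmot$, the ring $\Mmot$ need not be a domain.

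\emph{Step 1: invertibility.} Since $b\in\Z_{\ge0}^r\setminus\{0\}$, the monomial $\Lef^a\mathbf T^b$ has positive total degree. Hence $d_{a,b}=1-\Lef^a\mathbf T^b$ has constant term $1$, and it is invertible in $\Mmot[\![\mathbf T]\!]$ with inverse the geometric series $\sum_{k\ge0}\Lef^{ka}\mathbf T^{kb}$. This series converges $\mathbf T$-adically, and it involves no completion in $\Lef$, since $\Lef^{ka}$ is already a unit of $\Mmot$. Products of units are units, so every element of $S_r$ maps to a unit. By the universal property of localization, the inclusion $\Mmot[\mathbf T]\to\Mmot[\![\mathbf T]\!]$ therefore factors through $S_r^{-1}\Mmot[\mathbf T]$, and this factorization is the canonical map.

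\emph{Step 2: kernel.} Suppose $P/s\mapsto0$. Then $P\cdot\iota(s)^{-1}=0$ in $\Mmot[\![\mathbf T]\!]$. Multiplying by the unit $\iota(s)$ gives $\iota(P)=0$. The inclusion $\Mmot[\mathbf T]\hookrightarrow\Mmot[\![\mathbf T]\!]$ is injective, because a polynomial and its image have the same coefficient family. Hence $P=0$, and so $P/s=0$.

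There is no real obstacle here. The only point to check is that Step 2 does not need zero-divisor considerations: we multiply by the unit $\iota(s)$ rather than cancel in $\Mmot$. This is exactly why the non-domain nature of $K_0(\operatorname{Var}_{\C})$ \cite{Poonen2002} does not enter. I would also remark that the hypothesis $b\ne0$ is essential. If $b=0$, the element $1-\Lef^a$ would have to be inverted in $\Mmot$ itself, which is neither available nor needed.
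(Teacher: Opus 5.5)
Your proof is correct and follows essentially the same route as the paper. Both arguments note that each $d_{a,b}$ has constant term one, so it is invertible in $\Mmot[\![\mathbf T]\!]$ via the geometric series, and both then use the coefficientwise embedding $\Mmot[\mathbf T]\hookrightarrow\Mmot[\![\mathbf T]\!]$. The paper states the first step as each element of $S_r$ acting injectively, whereas your use of invertibility spells out the kernel computation $P/s\mapsto0\Rightarrow P=0$ a little more explicitly.
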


\begin{proof}
The constant coefficient of $d_{a,b}$ is one, and its inverse in the formal-series ring is $\sum_{k\geq0}\Lef^{ak}\mathbf T^{kb}$. If $d_{a,b}G=0$, comparison in increasing total $\mathbf T$-degree starts with coefficient one and recursively gives $G=0$. Every element of $S_r$ therefore acts injectively. Since the polynomial ring embeds coefficientwise, so does the localization. 
\end{proof}

Write $\mathscr R_r=S_r^{-1}\Mmot[\mathbf T]\subset\Mmot[\![\mathbf T]\!]$. Fix a rational affine hyperplane $H$ and $q\geq0$. Let $\mathcal G_{H,q}$ consist of all finite reciprocal products
\begin{equation*}
\prod_{(a,b)\in S}d_{a,b}^{-1}
\end{equation*}
for which at most $q$ members of the finite multiset $S$, counted with multiplicity, satisfy $H(a,b)=H$. Define the fixed submodule
\begin{equation}\label{eq:motivic-NX-module}
 \mathscr P_{H,\le q}=\sum_{G\in\mathcal G_{H,q}}\Mmot[\mathbf T]G\subset\Mmot[\![\mathbf T]\!].
\end{equation}

\begin{definition}[Multivariable Nicaise--Xu order]\label{def:motivic-NX-order}
For $Z\in\mathscr R_r$, set
\begin{equation*}
\ord_H^{\mathrm{mot}}(Z)=\min\{q\geq0:Z\in\mathscr P_{H,\le q}\}.
\end{equation*}
The non-equivariant motivic polar locus associated with this order is
\begin{equation*}
\PL_{\mathrm{mot}}(Z)=\bigcup_{\ord_H^{\mathrm{mot}}(Z)>0}H.
\end{equation*}
\end{definition}

The minimum exists because every rational presentation gives a finite upper bound. It is independent of the presentation because membership is tested in the fixed submodule \eqref{eq:motivic-NX-module} of the fixed formal-series ring. Equality $H(a,b)=H(a',b')$ means equality of affine zero sets, equivalently positive rational proportionality after normalization; equality of the raw pairs is not required. For $r=1$, this is the definition of Nicaise--Xu \cite[Remark~3.7]{NicaiseXu2016}. The replacement of a scalar candidate value by a rational affine hyperplane is the several-variable extension introduced here. In particular, the resolution factor $1-\Lef^{-\nu_A}\mathbf T^{N_A}$ corresponds exactly to the affine hyperplane $V(L_A)$.

\subsection{Resolution independence}

For every total-transform component put
\begin{equation*}
q_A=\Lef^{-\nu_A}\mathbf T^{N_A},
 \quad \Phi_A=\frac{(\Lef-1)q_A}{1-q_A}.
\end{equation*}

\begin{proposition}\label{prop:motivic-blowup-identities}
The local expression \eqref{eq:motivic-resolution} is unchanged by blowing up either a smooth point of one total-transform component or a crossing of two components.
\end{proposition}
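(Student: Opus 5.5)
The plan is to compare the two resolution expressions stratum by stratum and reduce to two rational identities in $\Mmot[\![\mathbf T]\!]$. A point blow-up $\sigma:Y'\to Y$ at $p$ is an isomorphism away from $p$. All strata $D_I^\circ\cap\pi^{-1}(0)$ not containing $p$ keep their classes, except that one stratum loses the point $p$. By the scissor relations in $K_0(\operatorname{Var}_\C)$, it therefore suffices to compare two things: the contribution of the stratum through $p$ in the old expression, and the contributions of the new strata lying over $p$ in the new expression. The global factor $\Lef^{-2}$ is common to both, and the product structure in \eqref{eq:motivic-resolution} allows the factor $\Phi_A$ to be treated as a formal symbol. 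If $p\notin\pi^{-1}(0)$, nothing over $0$ changes, so we assume throughout that $p$ lies in the local fibre. The new exceptional curve $E\simeq\mathbb P^1$ then lies over $0$.

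\emph{Smooth point of one component.} Let $p\in A^\circ$. The standard computation gives $N_E=N_A$ and $\nu_E=\nu_A+1$, so $q_E=\Lef^{-1}q_A$.
\begin{itemize}
\item Old side: $p$ is removed from $A^\circ$, contributing the class $[p]\Phi_A=\Phi_A$.
\item New side: $E^\circ=E\setminus(E\cap A')$ has class $\Lef$, and the crossing $E\cap A'$ is one point. The contribution is $\Lef\Phi_E+\Phi_A\Phi_E$.
\end{itemize}
We must therefore verify $\Phi_A=\Phi_E(\Lef+\Phi_A)$. We have
\[
\Lef+\Phi_A=\frac{\Lef-q_A}{1-q_A}
\quad\text{and}\quad
\Phi_E=\frac{(\Lef-1)q_A}{\Lef-q_A}.
\]
Their product is $(\Lef-1)q_A/(1-q_A)=\Phi_A$.

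\emph{Crossing of two components.} Let $p\in A\cap B$. Then $N_E=N_A+N_B$ and $\nu_E=\nu_A+\nu_B$, the latter because $K_{Y'/Y}=E$ and the multiplicity of $K_{Y/X}$ at $p$ is $(\nu_A-1)+(\nu_B-1)$. Hence $q_E=q_Aq_B$.
\begin{itemize}
\item Old side: the point stratum $\{p\}$ contributes $\Phi_A\Phi_B$.
\item New side: $E^\circ$ is $\mathbb P^1$ minus two points, with class $\Lef-1$, and there are two new crossings. The contribution is $(\Lef-1)\Phi_E+\Phi_A\Phi_E+\Phi_B\Phi_E$.
\end{itemize}
We must verify $\Phi_A\Phi_B=\Phi_E(\Lef-1+\Phi_A+\Phi_B)$. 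We have
\[
\Lef-1+\Phi_A+\Phi_B=(\Lef-1)\frac{1-q_Aq_B}{(1-q_A)(1-q_B)}.
\]
Multiplying by $\Phi_E=(\Lef-1)q_Aq_B/(1-q_Aq_B)$ gives $(\Lef-1)^2q_Aq_B/((1-q_A)(1-q_B))=\Phi_A\Phi_B$.

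Both identities are identities in $\mathscr R_r$. They therefore hold in $\Mmot[\![\mathbf T]\!]$ by \Cref{lem:motivic-formal-embedding}, which justifies the formal manipulation of the geometric series.

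The main obstacle is not the algebra but the bookkeeping of numerical data and strata in edge cases. One must confirm that the formulas $N_E=N_A$, $\nu_E=\nu_A+1$ (resp. $N_E=N_A+N_B$, $\nu_E=\nu_A+\nu_B$) hold for all three crossing types and for strict transforms with $\nu=1$. One must also confirm that the new strata over $p$ are exactly those listed, lying entirely in $\pi'^{-1}(0)$ with the stated classes. This is where localizing to the fibre matters. For example, blowing up a point of a strict transform $C$ that lies over $0$ but where $C$ meets no other component occurs only when the identity is the resolution; the formula $m_C$ convention of \Cref{sec:setup} covers this, and the same computation applies.

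Resolution independence then follows from this proposition. Any two embedded log resolutions of the reduced support are dominated by a third, and in dimension two every proper birational morphism between smooth surfaces factors as a sequence of point blow-ups. Blow-ups that preserve the SNC condition are centred either at smooth points of the total transform or at crossings, so each step is covered by one of the two cases above.
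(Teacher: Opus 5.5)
Your proposal is correct and is essentially the paper's proof. The paper likewise reduces to the two identities $\Lef\Phi_E+\Phi_A\Phi_E=\Phi_A$ (with $q_E=\Lef^{-1}q_A$) and $(\Lef-1)\Phi_E+\Phi_A\Phi_E+\Phi_B\Phi_E=\Phi_A\Phi_B$ (with $q_E=q_Aq_B$), verified by direct substitution, the second after clearing the denominators $(1-q_A)(1-q_B)(1-q_Aq_B)$; your scissor-relation bookkeeping of the strata and your explicit factorization of $\Lef-1+\Phi_A+\Phi_B$ just spell out what the paper states briefly.
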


\begin{proof}
If a smooth point of $A$ is blown up and $B$ is the new exceptional curve, then $N_B=N_A$, $\nu_B=\nu_A+1$, and $q_B=\Lef^{-1}q_A$. The old point contribution $\Phi_A$ is replaced by the open exceptional stratum $\mathbb C^1$ and its crossing with the strict transform of $A$. Direct substitution gives
\begin{equation}\label{eq:motivic-smooth-blowup}
 \Lef\Phi_B+\Phi_A\Phi_B=\Phi_A,
\end{equation}
because $\Lef+\Phi_A=(\Lef-q_A)/(1-q_A)$ and $\Phi_B=(\Lef-1)q_A/(\Lef-q_A)$.

If a crossing $A\cap C$ is blown up, then $N_B=N_A+N_C$, $\nu_B=\nu_A+\nu_C$, and $q_B=q_Aq_C$. The old crossing contribution is replaced by the open exceptional stratum $\mathbb G_m$ and its two crossings, and
\begin{equation}\label{eq:motivic-crossing-blowup}
 (\Lef-1)\Phi_B+\Phi_A\Phi_B+\Phi_C\Phi_B=\Phi_A\Phi_C.
\end{equation}
After multiplication by $(1-q_A)(1-q_C)(1-q_Aq_C)$, this is an identity in $\Z[\Lef,q_A,q_C]$. Both calculations are explicit polynomial identities and include crossings involving strict transforms.
\end{proof}

\begin{corollary}\label{cor:motivic-resolution-independent}
For holomorphic plane germs, \eqref{eq:motivic-resolution} is independent of the embedded resolution. Consequently its multivariable Nicaise--Xu orders are intrinsic.
\end{corollary}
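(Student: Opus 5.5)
The plan is to deduce the corollary from \Cref{prop:motivic-blowup-identities} by the weak factorization theorem for surfaces. Any two embedded log resolutions of the reduced support of $(h=0)$ on a smooth surface germ are connected by point blow-ups. More precisely, given two embedded resolutions $\pi_1,\pi_2$ of the germ, there is a third resolution $\pi_3$ dominating both. Moreover, any birational morphism between smooth surfaces over $(X,0)$ that is an isomorphism outside $\pi^{-1}(0)$ factors as a finite sequence of point blow-ups (Zariski's factorization). So it suffices to compare $\pi$ with $\pi\circ\sigma$, where $\sigma$ is the blow-up of a single point $p\in\pi^{-1}(0)$.

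The key step is a local case analysis of $p$. For $\pi\circ\sigma$ to remain an embedded log resolution, the total transform must stay SNC. Its components are those of $\pi$ together with the new exceptional curve $B$, which is automatically SNC after blowing up a point of an SNC divisor. The point $p$ lies on zero, one or two components of $D$.

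\emph{Zero components.} This case cannot occur in the local fibre unless $p\in\pi^{-1}(0)\subseteq D$. Since $\pi^{-1}(0)$ is contained in $D$ (it is exceptional, or the identity case where $\pi^{-1}(0)=\{0\}$ lies on the branches), $p$ lies on at least one component.

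\emph{One or two components.} If $p$ lies on exactly one component $A$, the blow-up removes the point $p$ from $A^\circ$. It adds $B^\circ\cong\C^1$ and the crossing $A\cap B$. If $p$ lies on two components $A$ and $C$, it removes the crossing point and adds $B^\circ\cong\mathbb G_m$ and the two crossings. All other strata are unchanged, as classes in $K_0$ of their intersections with the fibre.

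Next I would compare the numerical data, using $N_B=N_A$, $\nu_B=\nu_A+1$, respectively $N_B=N_A+N_C$, $\nu_B=\nu_A+\nu_C$. The contribution of the modified strata to \eqref{eq:motivic-resolution} then changes by $\Lef\Phi_B+\Phi_A\Phi_B-\Phi_A$, respectively by $(\Lef-1)\Phi_B+\Phi_A\Phi_B+\Phi_C\Phi_B-\Phi_A\Phi_C$. Both vanish by \eqref{eq:motivic-smooth-blowup} and \eqref{eq:motivic-crossing-blowup}. Here I use scissor relations $[A^\circ\cap\pi^{-1}(0)]=[\text{new }A^\circ\cap\ldots]+[\mathrm{pt}]$ and the multiplicativity of the summand over strata. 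This shows the formal series is unchanged.

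The order statement then follows because $\ord_H^{\mathrm{mot}}$ depends only on the element of $\Mmot[\![\mathbf T]\!]$. Indeed, membership in $\mathscr P_{H,\le q}$ is tested in that fixed ring, and \Cref{lem:motivic-formal-embedding} guarantees that the rational function is determined by the series.

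I expect the main obstacle to be the factorization step in the analytic (non-algebraic) setting, together with restricting to blow-ups centred in $\pi^{-1}(0)$. One must check that blow-ups at points outside the fibre do not affect the local expression. They don't: only strata meeting $\pi^{-1}(0)$ contribute. One must also check that the minimal resolution exists, so that every resolution dominates it, which gives a direct chain of blow-ups. I would first reduce to comparing each resolution with the minimal one, which avoids needing a common dominating resolution.
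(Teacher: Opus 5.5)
Your proposal is correct and follows essentially the same route as the paper. You reduce, via a common refinement (or, as you suggest, domination of the minimal resolution), to single point blow-ups, observe that centres off $\pi^{-1}(0)$ leave the local strata unchanged, and absorb the two possible centre types by the identities \eqref{eq:motivic-smooth-blowup} and \eqref{eq:motivic-crossing-blowup} of \Cref{prop:motivic-blowup-identities}, with your scissor bookkeeping making explicit what the paper leaves implicit; the intrinsic nature of the orders then follows, exactly as in the paper, because membership in $\mathscr P_{H,\le q}$ is tested in the fixed ring $\Mmot[\![\mathbf T]\!]$.
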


\begin{proof}
Any two embedded resolutions of a plane divisor admit a common refinement by point blowups at SNC centers. Centers on the relevant local total transform are of the two types in \Cref{prop:motivic-blowup-identities}; centers away from the local fibre have no local contribution.
\end{proof}

\subsection{Cancellation for exceptional components of valency one or two}

Let $E\simeq\mathbb P^1$ be exceptional, let $d=d_E$ be its valency in the graph of the reduced total transform, including strict-transform neighbours, and put $a=-E^2>0$. Every neighbour meets $E$ at a distinct transverse point, so $[E^\circ]=\Lef+1-d$. The identities of \Cref{lem:exceptional-neighbour-identities} imply
\begin{equation}\label{eq:motivic-neighbor-product}
 \prod_{D\neq E,\, D\cap E\neq \emptyset}q_D=\Lef^{2-d}q_E^a.
\end{equation}
The contribution of $E$ together with the crossing strata adjacent to it is
\begin{equation}\label{eq:motivic-complete-star}
 Z_E^\star=\Lef^{-2}\Phi_E\left(\Lef+1-d+\sum_{D\neq E,\, D\cap E\neq \emptyset}\Phi_D\right).
\end{equation}

\begin{theorem}\label{thm:motivic-nonrupture-cancellation}
If $d=1$ or $d=2$, the factor $1-q_E$ cancels from this contribution. More precisely, if $d=1$ and $D$ is the unique neighbour, then $q_D=\Lef q_E^a$ and
\begin{equation}\label{eq:motivic-valency-one-factorization}
 Z_E^\star=\Lef^{-1}(\Lef-1)q_E\frac{1+q_E+\cdots+q_E^{a-1}}{1-q_D}.
\end{equation}
If $d=2$ and the neighbours are $D_1,D_2$, then $q_{D_1}q_{D_2}=q_E^a$ and
\begin{equation}\label{eq:motivic-valency-two-factorization}
 Z_E^\star=\Lef^{-2}(\Lef-1)^2q_E\frac{1+q_E+\cdots+q_E^{a-1}}{(1-q_{D_1})(1-q_{D_2})}.
\end{equation}
These identities include every configuration in which one or both neighbours are strict transforms and allow arbitrary branch multiplicities.
\end{theorem}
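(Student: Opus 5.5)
The plan is to verify both factorizations by direct algebra in the localization $\mathscr R_r$. The only geometric input is the neighbour relation \eqref{eq:motivic-neighbor-product}, so I would first make that relation precise.

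Since $E$ is exceptional, it lies in $\pi^{-1}(0)$. Hence every stratum meeting $E$ lies in the local fibre. The open stratum $E^\circ\simeq\mathbb P^1$ minus $d$ points has class $\Lef+1-d$, and each crossing $E\cap D$ is one point, of class $1$. This justifies the shape of $Z_E^\star$ in \eqref{eq:motivic-complete-star}.

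For \eqref{eq:motivic-neighbor-product}, apply \Cref{lem:exceptional-neighbour-identities} with $-E^2=a$. This gives
\[
\sum_D N_D=aN_E,\qquad \sum_D\nu_D=a\nu_E+d-2.
\]
Multiplying the monomials $q_D=\Lef^{-\nu_D}\mathbf T^{N_D}$ then gives
\[
\prod_D q_D=\Lef^{-(a\nu_E+d-2)}\mathbf T^{aN_E}=\Lef^{2-d}q_E^a.
\]
The lemma already counts strict-transform neighbours with $\nu=1$ and allows arbitrary order vectors by \Cref{prop:multiplicity-transport}. So strict transforms and arbitrary multiplicities need no separate treatment.

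For $d=1$, the bracket is $\Lef+\Phi_D$, and $\Lef+\Phi_D=(\Lef-q_D)/(1-q_D)$, as in the proof of \Cref{prop:motivic-blowup-identities}. Substituting $q_D=\Lef q_E^a$ gives $\Lef-q_D=\Lef(1-q_E^a)$. Therefore
\[
Z_E^\star=\Lef^{-2}\,\frac{(\Lef-1)q_E}{1-q_E}\cdot\frac{\Lef(1-q_E^a)}{1-q_D}.
\]
Cancelling $1-q_E$ against $1-q_E^a=(1-q_E)(1+q_E+\cdots+q_E^{a-1})$ yields \eqref{eq:motivic-valency-one-factorization}.

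For $d=2$, the bracket is $\Lef-1+\Phi_{D_1}+\Phi_{D_2}$. Factoring out $\Lef-1$ and putting everything over $(1-q_{D_1})(1-q_{D_2})$, the numerator is
\[
(1-q_{D_1})(1-q_{D_2})+q_{D_1}(1-q_{D_2})+q_{D_2}(1-q_{D_1})=1-q_{D_1}q_{D_2}.
\]
By \eqref{eq:motivic-neighbor-product}, $q_{D_1}q_{D_2}=q_E^a$. The same cancellation of $1-q_E$ then gives \eqref{eq:motivic-valency-two-factorization}.

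These are identities between elements of $\mathscr R_r$, which embeds into $\Mmot[\![\mathbf T]\!]$ by \Cref{lem:motivic-formal-embedding}. Each can be checked by clearing the denominators $d_{a,b}$, which reduces it to a polynomial identity in $\Z[\Lef,q_E,q_{D_i}]$. Consequently no domain property of $\Mmot$ is needed.

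I expect no real obstacle in the algebra. The point requiring care is the bookkeeping of which strata enter $Z_E^\star$. Each crossing adjacent to $E$ appears exactly once, inside the bracket. It must not be counted again in the star of a neighbour when these contributions are later summed. I also need to check that $m_{ED}=1$ for every neighbour, including strict-transform neighbours. This holds because the intersection point lies on $E\subset\pi^{-1}(0)$.
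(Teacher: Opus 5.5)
Your proposal is correct and follows essentially the same route as the paper. You derive \eqref{eq:motivic-neighbor-product} from \Cref{lem:exceptional-neighbour-identities} and substitute it into \eqref{eq:motivic-complete-star}. Putting the bracket over a common denominator gives $\Lef(1-q_E^a)/(1-q_D)$ for $d=1$ and $(\Lef-1)(1-q_{D_1}q_{D_2})/\bigl((1-q_{D_1})(1-q_{D_2})\bigr)$ for $d=2$. Both cases then end with the geometric-sum cancellation of $1-q_E$, justified as a polynomial identity after clearing denominators, exactly as in the paper.
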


\begin{proof}
For $d=1$, substitute $[E^\circ]=\Lef$ and $q_D=\Lef q_E^a$ into \eqref{eq:motivic-complete-star}:
\begin{equation*}
\begin{aligned}
 Z_E^\star&=\Lef^{-2}\frac{(\Lef-1)q_E}{1-q_E}\left(\Lef+\frac{(\Lef-1)q_D}{1-q_D}\right)\\
 &=\Lef^{-1}(\Lef-1)q_E\frac{1-q_E^a}{(1-q_E)(1-q_D)}.
 \end{aligned}
\end{equation*}
The polynomial identity $1-q_E^a=(1-q_E)(1+q_E+\cdots+q_E^{a-1})$ gives \eqref{eq:motivic-valency-one-factorization}.

For $d=2$, one has $[E^\circ]=\Lef-1$ and $q_{D_1}q_{D_2}=q_E^a$. A common-denominator calculation gives
\begin{equation*}
\begin{aligned}
 Z_E^\star&=\Lef^{-2}\frac{(\Lef-1)q_E}{1-q_E}\left(\Lef-1+\sum_{i=1}^2\frac{(\Lef-1)q_{D_i}}{1-q_{D_i}}\right)\\
 &=\Lef^{-2}(\Lef-1)^2q_E\frac{1-q_{D_1}q_{D_2}}{(1-q_E)(1-q_{D_1})(1-q_{D_2})},
 \end{aligned}
\end{equation*}
and the same polynomial factorization proves \eqref{eq:motivic-valency-two-factorization}. Both factorizations are polynomial identities before passing to $\Mmot$, and hence remain valid in $\Mmot$.
\end{proof}

These identities are the motivic counterpart of the cancellations of valency-one and valency-two exceptional components that underlie the scalar plane-curve pole analysis; compare \cite{Veys1995,Veys1997}.

Repeated branches and common factors do not create repeated strict-transform components. In the notation of \Cref{prop:multiplicity-transport}, the strict transform of $g_j$ occurs once and carries the arbitrary vector $(a_{1j},\ldots,a_{rj})$, which is already allowed in \eqref{eq:motivic-neighbor-product} and \Cref{thm:motivic-nonrupture-cancellation}.

\subsection{Components defining the same affine hyperplane}

Because $\nu_A>0$, two components define the same affine hyperplane precisely when $N_A/\nu_A=N_B/\nu_B$, equivalently when their affine forms agree after normalization.

\begin{lemma}\label{lem:motivic-hyperplane-propagation}
Let $E$ be exceptional. If $d_E=1$, its unique neighbour does not define $V(L_E)$. If $d_E=2$ and one neighbour defines $V(L_E)$, then the other neighbour does as well. Consequently a nontrivial chain of valency-two exceptional curves defining one affine hyperplane terminates at a strict transform or an exceptional component of valency at least three, and never at a valency-one exceptional curve.
\end{lemma}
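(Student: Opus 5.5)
The plan is to restrict the affine identity \eqref{eq:affine-neighbour-identity} of \Cref{lem:exceptional-neighbour-identities} to $H=V(L_E)$, exactly as in the proof of \Cref{thm:selected-component-classification}. Restricted to $H$ it reads
\begin{equation*}
\sum_{D\neq E,\,D\cap E\neq\emptyset}L_D|_H=d_E-2.
\end{equation*}
By \Cref{lem:positive-wall-normalization}, a neighbour $D$ defines $V(L_E)$ exactly when $L_D=\nu_DL_H$, that is, exactly when $L_D|_H\equiv0$ as an affine function on $H$.

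For $d_E=1$, the unique neighbour satisfies $L_D|_H\equiv-1$. This is a nonzero constant, so $D$ cannot define $H$.

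For $d_E=2$ with neighbours $D_1,D_2$, the identity gives $L_{D_1}|_H+L_{D_2}|_H\equiv0$. If $L_{D_1}|_H\equiv0$, then $L_{D_2}|_H\equiv0$ as well, so $D_2$ also defines $H$. This is a statement about affine polynomials, not about values at points, which is why the restriction to $H$ is used rather than evaluation.

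For the chain statement, start at a valency-two exceptional curve $E_0$ defining $H$ with one neighbour $E_1$ also defining $H$. The propagation just proved forces the other neighbour to define $H$ too. Walking along the chain in either direction, every valency-two exceptional curve reached passes the property to its next neighbour. Since the dual graph of a plane resolution is a tree and is finite, the walk terminates at a component that is not a valency-two exceptional curve. That component still defines $H$, so it is either a strict transform or an exceptional curve of valency $\ge3$. It cannot be a valency-one exceptional curve: its unique neighbour lies in the chain and defines $H$, which contradicts the $d_E=1$ case.

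The main subtlety is making sure that the valencies $d_E$ here count strict-transform neighbours, matching \Cref{lem:exceptional-neighbour-identities}. It also matters that all neighbours are counted, including any that meet $E$ away from the local fibre. For exceptional $E$ every neighbour meets $E$ over $0$, so the identity applies verbatim.
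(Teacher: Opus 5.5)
Your proof is correct and follows the paper's argument essentially verbatim. You restrict \eqref{eq:affine-neighbour-identity} to $V(L_E)$, read off the constant $-1$ for $d_E=1$ and the vanishing sum for $d_E=2$, then propagate along the finite tree until the walk reaches an endpoint, and the valency-one case rules out an exceptional leaf. The only difference is that the paper briefly justifies the tree property (smooth-point blowups attach leaves, crossing blowups subdivide edges), which you take as standard.
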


\begin{proof}
Restrict the affine identity \eqref{eq:affine-neighbour-identity} to $V(L_E)$. For $d_E=1$, the unique neighbour restricts to the constant $-1$ and therefore does not vanish identically. For $d_E=2$, the two restrictions sum to zero; if one vanishes identically, then so does the other. The total-transform graph is a finite tree: a smooth-point blowup attaches a leaf and a crossing blowup subdivides an edge. Iterating the two-sided rule reaches an endpoint, and the first assertion excludes a valency-one exceptional endpoint.
\end{proof}

\begin{proposition}\label{prop:motivic-denominator-deletion}
Fix an affine hyperplane $H$ occurring among the resolution factors which, on the minimal embedded resolution, is defined by no strict transform and no exceptional component of valency at least three. Then
\begin{equation*}
\ord_H^{\mathrm{mot}}(\motzeta)=0.
\end{equation*}
\end{proposition}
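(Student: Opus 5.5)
Write $S_H$ for the set of components of the minimal resolution defining $H$. By hypothesis every $A\in S_H$ is an exceptional component of valency one or two. The plan is to rewrite $\motzeta$ as a finite sum of terms. Each term will be a polynomial in $\Mmot[\mathbf T]$ times a reciprocal product of factors $d_{-\nu_D,N_D}=1-q_D$ with $D\notin S_H$. Every such term lies in $\mathscr P_{H,\le 0}$, so it suffices to produce this decomposition.

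First, the components of $S_H$ fall into the chains of \Cref{lem:motivic-hyperplane-propagation}. A valency-one component of $S_H$ has no neighbour in $S_H$. A valency-two component with one neighbour in $S_H$ has both neighbours in $S_H$, and iterating this reaches an endpoint. That endpoint is a strict transform or a component of valency at least three, and in either case it defines $H$. The hypothesis excludes this. Hence no two components of $S_H$ meet, so every element of $S_H$ is isolated in $S_H$. In particular $m_{AB}=0$ for all $A,B\in S_H$, and no crossing stratum carries two factors $1-q_A$ with $A\in S_H$.

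Second, split \eqref{eq:motivic-resolution} as follows. For each $E\in S_H$ take the star contribution $Z_E^\star$ of \eqref{eq:motivic-complete-star}. It consists of the open stratum $E^\circ\cap\pi^{-1}(0)$ and all crossings $E\cap D$, and all of these lie in the local fibre because $E$ is exceptional. Stars of distinct $E,E'\in S_H$ share no stratum, since $E$ and $E'$ do not meet. The remaining strata involve no component of $S_H$. Their terms are polynomials times products of $(1-q_D)^{-1}$ with $D\notin S_H$, so they already lie in $\mathscr P_{H,\le0}$. For the stars, \Cref{thm:motivic-nonrupture-cancellation} rewrites each $Z_E^\star$ as a polynomial in $\Lef^{\pm1}$ and $q_E$, divided by $(1-q_D)$ or $(1-q_{D_1})(1-q_{D_2})$. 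Its neighbours are not in $S_H$ by the first step, so these denominators do not define $H$. Each $q$ is a monomial $\Lef^{-\nu}\mathbf T^{N}$ and lies in $\Mmot[\mathbf T]$, using that $\Lef$ is invertible in $\Mmot$. Hence $Z_E^\star\in\mathscr P_{H,\le0}$. The global factor $\Lef^{-2}$ is a unit.

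The step needing most care is the bookkeeping in the split. Each stratum must be counted exactly once. The classes $[E^\circ\cap\pi^{-1}(0)]=\Lef+1-d_E$ in \eqref{eq:motivic-complete-star} must match the true local strata, including strict-transform neighbours. The factor $1-q_D$ of a neighbour may coincide numerically with some $d_{a,b}$ defining a different hyperplane, and that is harmless. Equality $H(a,b)=H$ is tested on zero sets, so I also need that no neighbour $D$ has $N_D/\nu_D=\lambda_H$; this is exactly the first step. With these checks, $\motzeta\in\mathscr P_{H,\le0}$, and therefore $\ord_H^{\mathrm{mot}}(\motzeta)=0$.
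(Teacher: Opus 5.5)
Your proposal is correct and follows the paper's proof. You use \Cref{lem:motivic-hyperplane-propagation} to show that no two components of $S_H$ meet and that no neighbour of one of them defines $H$. You then replace the pairwise disjoint star contributions simultaneously by the factorizations of \Cref{thm:motivic-nonrupture-cancellation}, which leaves a presentation in which no denominator defines $H$. Your version mainly adds the bookkeeping that the paper leaves implicit: the disjointness of the stars, the treatment of the remaining strata, and the membership $q_E\in\Mmot[\mathbf T]$.
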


\begin{proof}
Every component defining $H$ is exceptional of valency one or two. No two such components are adjacent, since \Cref{lem:motivic-hyperplane-propagation} would continue their common hyperplane to a strict transform or an exceptional component of valency at least three. The corresponding contributions of these components and their adjacent crossing strata are therefore disjoint. Apply \Cref{thm:motivic-nonrupture-cancellation} simultaneously. Each replacement uses only neighbour denominators, and no neighbour defines $H$, again by propagation. The resulting finite expression has no $H$-denominator and belongs to $\mathscr P_{H,\le0}$.
\end{proof}

\subsection{Finite denominator presentations and positive integral rays}

The arguments in this subsection are formal and do not use the surface assumption. They will also be used in arbitrary dimension in \Cref{sec:higher-motivic-orders}.

For $m=(m_1,\ldots,m_r)\in\Z_{>0}^r$, define
\begin{equation*}
\rho_m:\Mmot[\![\mathbf T]\!]\to \Mmot[\![T]\!],
 \quad T_i\mapsto  T^{m_i}.
\end{equation*}
This is well-defined because a fixed $T$-degree has only finitely many preimages in $\Z_{\geq0}^r$ when every $m_i$ is positive. Put $g_m=\prod_i f_i^{m_i}$.

\begin{lemma}\label{lem:motivic-ray-identities}
One has
\begin{align}
 \motzeta(T^{m_1},\ldots,T^{m_r})&=Z^{\mathrm{mot}}_{g_m,0}(T),\label{eq:motivic-ray-identity}\\
 Z^{\mathrm{top}}_{F,0}(m_1t,\ldots,m_rt)&=Z^{\mathrm{top}}_{g_m,0}(t).\label{eq:topological-ray-identity}
\end{align}
\end{lemma}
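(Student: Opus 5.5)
The plan is to compare the two resolution formulas directly, using one fixed embedded resolution $\pi:Y\to X$ of the reduced support of $h$. By \Cref{prop:multiplicity-transport}, $g_m=\prod_i f_i^{m_i}$ has the same reduced zero divisor as $h$, because every $m_i>0$ and every branch occurs in some $f_i$. Hence $\pi$ is also an embedded log resolution for $g_m$, and it has the same components $J$ and the same strata $D_I^\circ\cap\pi^{-1}(0)$. By \Cref{cor:motivic-resolution-independent} and the resolution independence of the topological zeta function, both one-variable zeta functions of $g_m$ may be computed on this $\pi$.

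The key numerical input is the order of $g_m$ along a component $A$. It is
\[
\ord_A(g_m\circ\pi)=\sum_i m_iN_{A,i}=m\cdot N_A,
\]
and the discrepancy data $\nu_A$ do not depend on the function. For \eqref{eq:motivic-ray-identity}, apply $\rho_m$ termwise to \eqref{eq:motivic-resolution}. The substitution $T_i\mapsto T^{m_i}$ sends $\mathbf T^{N_A}$ to $T^{m\cdot N_A}$, so each factor $\Phi_A$ becomes $(\Lef-1)\Lef^{-\nu_A}T^{m\cdot N_A}/(1-\Lef^{-\nu_A}T^{m\cdot N_A})$. This is exactly the corresponding factor in the $r=1$ resolution formula for $g_m$. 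The strata classes are unchanged, and the normalization $\Lef^{-2}$ is common to both formulas.

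One point needs care. $\rho_m$ is a continuous ring homomorphism of formal power series rings, since each $T$-degree has finitely many preimages. It therefore commutes with expanding each inverse as a geometric series, and the identity holds in $\Mmot[\![T]\!]$.

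For \eqref{eq:topological-ray-identity}, substitute $s=(m_1t,\ldots,m_rt)$ into \eqref{eq:top-zeta}. Then $L_A(s)=(m\cdot N_A)t+\nu_A$, which is the one-variable candidate factor of $g_m$ on the same stratum. The Euler characteristics are unchanged, so the identity holds term by term. Each $L_A$ restricts to a nonzero affine function of $t$ because $m\cdot N_A\ge1$, so the substitution is a well-defined identity of rational functions in $t$.

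I expect no real obstacle beyond two bookkeeping points. The first is that the one-variable formulas, including the Denef--Loeser normalization, agree with the $r=1$ specialization of \eqref{eq:motivic-resolution} and \eqref{eq:top-zeta}. The second is that the strata of the resolution of $h$ coincide with those of $g_m$, which is the reduced-support argument above.
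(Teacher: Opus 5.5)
Your proposal is correct and follows the paper's proof essentially verbatim. Both arguments use the same fixed resolution, justified by the equality of reduced supports, together with the orders $m\cdot N_A$, the unchanged $\nu_A$ and strata, and termwise substitution in both resolution formulas; your extra remarks on the continuity of $\rho_m$ and on resolution independence are harmless refinements that the paper leaves implicit.
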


\begin{proof}
Since every $m_i$ is positive, $g_m=\prod_{i=1}^r f_i^{m_i}$ and $h$ have the same reduced zero divisor. Thus the fixed resolution $\pi:Y\to X$ also resolves $g_m$. The total-transform components, the strata $D_I^\circ\cap\pi^{-1}(0)$, and the numbers $\nu_A$ are unchanged. For every $A\in J$, the multiplicity of $g_m\circ\pi$ along $A$ is $\ord_A(g_m\circ\pi)=\sum_{i=1}^r m_iN_{A,i}=N_A\cdot m$.

Under the substitution $T_i=T^{m_i}$, each factor in the motivic resolution formula becomes
\[
\rho_m\left(\frac{(\mathbb L-1)\mathbb L^{-\nu_A}\mathbf T^{N_A}}{1-\mathbb L^{-\nu_A}\mathbf T^{N_A}}\right)
=
\frac{(\mathbb L-1)\mathbb L^{-\nu_A}T^{N_A\cdot m}}{1-\mathbb L^{-\nu_A}T^{N_A\cdot m}}.
\]
This is exactly the corresponding factor for $g_m$. The stratum classes and the normalization factor $\mathbb L^{-2}$ are unchanged, so the two resolution expressions agree term by term. This proves \eqref{eq:motivic-ray-identity}.

Similarly, the substitution $s_i=m_it$ gives $L_A(m_1t,\ldots,m_rt)=(N_A\cdot m)t+\nu_A$. Since the Euler characteristics of the strata are unchanged, the topological resolution expressions also agree term by term, proving \eqref{eq:topological-ray-identity}. Neither argument requires $g_m$ to be reduced or the entries of $F$ to be pairwise coprime.
\end{proof}

\begin{lemma}\label{lem:motivic-finite-witness}
If $Z\in\mathscr P_{H,\le q}$, then there is an equality
\begin{equation} \label{eq:motivic-finite}
Z=\sum_{\ell=1}^MP_\ell(\mathbf T)\prod_{(a,b)\in S_\ell}d_{a,b}^{-1}
\end{equation}
in which every finite multiset $S_\ell$ contains at most $q$ pairs defining $H$.
\end{lemma}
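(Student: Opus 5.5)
This is essentially an unwinding of the definition of $\mathscr P_{H,\le q}$ in \eqref{eq:motivic-NX-module}. The plan is to read off a finite expression directly from the fact that $\mathscr P_{H,\le q}$ is a sum of cyclic submodules.

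Since $\mathscr P_{H,\le q}$ is defined as the sum of the submodules $\Mmot[\mathbf T]G$ over $G\in\mathcal G_{H,q}$, it consists of all \emph{finite} sums of elements of these submodules. This is the standard meaning of a sum of submodules, even though $\mathcal G_{H,q}$ is infinite. Thus if $Z\in\mathscr P_{H,\le q}$, there exist finitely many $G_1,\ldots,G_M\in\mathcal G_{H,q}$ and polynomials $P_1,\ldots,P_M\in\Mmot[\mathbf T]$ with
\begin{equation*}
Z=\sum_{\ell=1}^MP_\ell(\mathbf T)G_\ell
\end{equation*}
in $\Mmot[\![\mathbf T]\!]$.

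By definition of $\mathcal G_{H,q}$, each $G_\ell$ equals $\prod_{(a,b)\in S_\ell}d_{a,b}^{-1}$ for some finite multiset $S_\ell$. At most $q$ members of $S_\ell$, counted with multiplicity, satisfy $H(a,b)=H$. Substituting these products gives exactly \eqref{eq:motivic-finite}.

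Two small points need checking. First, each reciprocal $d_{a,b}^{-1}$ is interpreted as its geometric series, which is legitimate by \Cref{lem:motivic-formal-embedding}. Hence the displayed identity holds both in $\mathscr R_r$ and in the formal-series ring. Second, the bound ``at most $q$ pairs defining $H$'' refers to equality of affine zero sets $H(a,b)=H$, not to equality of raw pairs, exactly as in the definition.

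There is no real obstacle here. The only thing to be careful about is that the statement's content is the \emph{finiteness} of the witness and the per-term control on $H$-denominators. The later argument uses this to choose a positive integral ray $m$ that avoids coincidences among the finitely many denominators appearing in \eqref{eq:motivic-finite}.
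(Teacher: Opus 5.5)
Your proposal is correct and follows essentially the same route as the paper. Both arguments use that an element of the algebraic sum $\mathscr P_{H,\le q}$ lies in finitely many cyclic submodules $\Mmot[\mathbf T]G_\ell$, write each summand as $P_\ell(\mathbf T)G_\ell$, and substitute the defining products, each with at most $q$ denominators defining $H$. Your extra remarks on the geometric-series interpretation and on $H(a,b)=H$ meaning equality of zero sets are correct but not needed for the argument.
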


\begin{proof}
The sum in \eqref{eq:motivic-NX-module} is an algebraic sum, so each of its elements belongs to the sum of finitely many of the indicated cyclic submodules. Since $Z\in\mathscr P_{H,\le q}$, we may therefore write $Z=Z_1+\cdots+Z_M$, where each $Z_\ell$ belongs to a cyclic submodule generated by a product
\[
\prod_{(a,b)\in S_\ell}d_{a,b}^{-1}.
\]
By the definition of $\mathscr P_{H,\le q}$, each $S_\ell$ is a finite multiset containing at most $q$ pairs defining $H$. For each $\ell$, membership in this cyclic submodule gives a polynomial $P_\ell(\mathbf T)$ such that $Z_\ell=P_\ell(\mathbf T)\prod_{(a,b)\in S_\ell}d_{a,b}^{-1}$. Substituting these expressions into the finite sum for $Z$ gives \eqref{eq:motivic-finite} with the required bound on every $S_\ell$.
\end{proof}

Represent $H$ as $H(a_H,b_H)$ with $a_H\ne0$. Under the ray $m$, its scalar value is $t_H(m)=a_H/(b_H\cdot m)$. A second pair $(a,b)$ has the same specialized value precisely when
\begin{equation}\label{eq:motivic-collision-equation}
 (a_Hb-ab_H)\cdot m=0.
\end{equation}

\begin{lemma}\label{lem:motivic-positive-ray}
Let $H=H(a_H,b_H)$ with $a_H\ne0$. For finitely many pairs $(a,b)$ satisfying $H(a,b)\ne H$, there exists $m\in\Z_{>0}^r$ satisfying none of the corresponding equations \eqref{eq:motivic-collision-equation}.
\end{lemma}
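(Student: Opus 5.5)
The plan is to reduce the statement to the fact that a nonzero polynomial does not vanish identically on the positive integral orthant. For each of the finitely many pairs $(a,b)$ with $H(a,b)\ne H$, set
\[
w_{a,b}=a_Hb-ab_H\in\Z^r .
\]
The collision equation \eqref{eq:motivic-collision-equation} reads $w_{a,b}\cdot m=0$. So it suffices to show that every $w_{a,b}$ is a nonzero vector, and then to choose $m\in\Z_{>0}^r$ outside the finite union of the hyperplanes $w_{a,b}^\perp$.

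The first step is to show $w_{a,b}\neq0$, which is the key point. Suppose $a_Hb=ab_H$. Since $a_H\ne0$ and $b\ne0$, the left side is nonzero, so $a\ne0$. Then $b=(a/a_H)b_H$, and therefore
\[
b\cdot s-a=\frac{a}{a_H}\,(b_H\cdot s-a_H).
\]
Hence $H(a,b)=H(a_H,b_H)=H$, contradicting the hypothesis. So every $w_{a,b}$ is nonzero.

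The second step is to avoid the hyperplanes. The polynomial
\[
Q(m)=\prod_{(a,b)}\,(w_{a,b}\cdot m)
\]
is a finite product of nonzero linear forms, hence a nonzero polynomial on $\C^r$. Consider the set $\Z_{>0}^r$. It is Zariski dense in $\C^r$: by induction on $r$, a polynomial vanishing on $\Z_{>0}^r$ has, for each fixed choice of the last $r-1$ coordinates, a one-variable specialization with infinitely many roots, so all its coefficients vanish on $\Z_{>0}^{r-1}$. Alternatively, one can argue directly: pick $m=(1,N,N^2,\ldots,N^{r-1})$ with $N$ larger than twice the maximal absolute entry of all the $w_{a,b}$. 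Then $w\cdot m$ is a base-$N$-type expansion with leading nonzero digit dominating the rest, so $w\cdot m\neq0$. Either way there is an $m\in\Z_{>0}^r$ with $Q(m)\ne0$, which is what the statement requires.

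The only subtle point is the first step. Equality of affine zero sets, rather than equality of the raw pairs, is exactly what the hypothesis $H(a,b)\ne H$ excludes, and $a_H\ne0$ is needed to force $a\ne0$. I would also remark that when $a=0$, $w_{a,b}=a_Hb\neq0$ automatically, which is consistent with the general argument.
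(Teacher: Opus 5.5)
Your proof is correct and follows essentially the paper's argument: the same proportionality check shows $a_Hb-ab_H\neq0$ (you additionally verify $a\neq0$, which the paper leaves implicit), and your explicit choice $m=(1,N,\ldots,N^{r-1})$ lies on the same curve $m(t)=(1,t,\ldots,t^{r-1})$ the paper uses. The only difference is that you give an effective bound on $N$, which also covers $r=1$ without a separate case, whereas the paper just avoids the finitely many positive integer roots of the nonzero polynomials $w_{a,b}\cdot m(t)$.
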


\begin{proof}
For each such pair $(a,b)$, the vector $a_Hb-ab_H$ is nonzero. Indeed, if $a_Hb-ab_H=0$, then the affine equations $b_H\cdot s-a_H=0$ and $b\cdot s-a=0$ are proportional and hence define the same hyperplane, contrary to $H(a,b)\ne H$. Thus each equation \eqref{eq:motivic-collision-equation} defines a proper rational hyperplane in the $m$-space.

For $r>1$, set
\[
m(t)=(1,t,t^2,\ldots,t^{r-1}).
\]
For every nonzero vector $c\in\Q^r$, the function $c\cdot m(t)$ is a nonzero polynomial in $t$, and hence vanishes for only finitely many positive integers $t$. Since there are only finitely many pairs $(a,b)$ under consideration, we may choose a positive integer $t$ outside the union of these finite root sets. Then $m(t)\in\Z_{>0}^r$ satisfies none of the equations \eqref{eq:motivic-collision-equation}.

For $r=1$, each equation is a nonzero homogeneous linear equation in $m$, so it has no positive solution.
\end{proof}

\begin{lemma}\label{lem:motivic-witness-specialization}
Let $H=H(a_H,b_H)$ with $a_H\ne0$, and let $Z\in\mathscr P_{H,\le q}$. Fix a finite presentation of $Z$ as in Lemma~\ref{lem:motivic-finite-witness}. Choose $m\in\Z_{>0}^r$ avoiding \eqref{eq:motivic-collision-equation} for every denominator in this presentation that does not define $H$. Then the scalar series $\rho_m(Z)$ has one-variable Nicaise--Xu order at most $q$ at $t_H(m)$.
\end{lemma}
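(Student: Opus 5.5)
The plan is to apply $\rho_m$ term by term to the fixed finite presentation \eqref{eq:motivic-finite} from \Cref{lem:motivic-finite-witness} and read off the one-variable presentation. Since $\rho_m$ is a ring homomorphism $\Mmot[\![\mathbf T]\!]\to\Mmot[\![T]\!]$, it sends each polynomial $P_\ell(\mathbf T)$ to a polynomial $P_\ell(T^{m_1},\ldots,T^{m_r})\in\Mmot[T]$. It sends each inverse $d_{a,b}^{-1}$ to the inverse of
\[
\rho_m(d_{a,b})=1-\Lef^{a}T^{b\cdot m}.
\]
Here $b\cdot m>0$ because $b\ne0$ is nonnegative and $m$ is positive, so this is a genuine one-variable denominator of the form used by Nicaise--Xu. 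We obtain
\[
\rho_m(Z)=\sum_{\ell=1}^M P_\ell(T^{m})\prod_{(a,b)\in S_\ell}\bigl(1-\Lef^aT^{b\cdot m}\bigr)^{-1},
\]
which is an identity in $\Mmot[\![T]\!]$.

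Next I would sort the factors according to their scalar candidate value. The one-variable factor $1-\Lef^aT^{k}$ with $k=b\cdot m$ corresponds to the candidate $a/k=a/(b\cdot m)$. With the same convention, $H$ corresponds to $t_H(m)=a_H/(b_H\cdot m)$.

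\begin{itemize}
\item If $H(a,b)=H$, then $(a,b)$ is a positive rational multiple of $(a_H,b_H)$, so the specialized value is exactly $t_H(m)$.
\item If $H(a,b)\neq H$, then by the choice of $m$ the vector $a_Hb-ab_H$ is not orthogonal to $m$. This gives $a(b_H\cdot m)\ne a_H(b\cdot m)$, i.e.\ $a/(b\cdot m)\ne t_H(m)$.
\end{itemize}

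The case $a=0$ needs a separate check. There the value is $0\neq t_H(m)$ because $a_H\neq0$, and the collision equation reduces to $a_H b\cdot m\ne0$, which holds automatically. Hence in the $\ell$-th term the number of factors whose scalar candidate equals $t_H(m)$ is exactly the number of members of $S_\ell$ defining $H$, which is at most $q$.

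The conclusion then follows directly from the Nicaise--Xu definition \cite[Remark~3.7]{NicaiseXu2016}, which is the case $r=1$ of \Cref{def:motivic-NX-order}. Each term lies in $\mathscr P_{t_H(m),\le q}$, and so does their finite sum. The main point to verify carefully is the identification of the one-variable hyperplane $H(a,k)$ with the point $a/k$, so that scalar collisions correspond exactly to solutions of \eqref{eq:motivic-collision-equation}. Once that dictionary is fixed, the rest is bookkeeping, together with the observation that $\rho_m$ preserves the finite algebraic sum and the membership in cyclic submodules.
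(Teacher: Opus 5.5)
Your proposal is correct and follows the paper's proof essentially step for step. You apply $\rho_m$ to the fixed finite presentation and note that $b\cdot m>0$. You then use the collision-avoiding choice of $m$ to show that a specialized denominator has candidate value $t_H(m)$ exactly when the original pair defines $H$, so each product keeps at most $q$ such factors. Your separate check of the case $a=0$ is harmless, but the general collision argument already covers it.
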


\begin{proof}
Applying $\rho_m$ to the chosen finite presentation gives
\[
\rho_m(Z)=\sum_{\ell=1}^M P_\ell(T^{m_1},\ldots,T^{m_r})\prod_{(a,b)\in S_\ell}(1-\mathbb L^aT^{b\cdot m})^{-1}.
\]
Since $b\cdot m>0$, every specialized denominator is of the allowed one-variable form and has candidate value $a/(b\cdot m)$. If $H(a,b)=H$, this value equals $t_H(m)$. For every other pair in the presentation, equality would imply \eqref{eq:motivic-collision-equation}, which is excluded by the choice of $m$.

Thus a specialized denominator has candidate value $t_H(m)$ precisely when the original pair defines $H$. Each $S_\ell$ contains at most $q$ such pairs, counted with multiplicity, and every specialized numerator is a polynomial in $T$. The displayed equality therefore gives a one-variable denominator presentation with at most $q$ factors having candidate value $t_H(m)$ in each product. The conclusion follows from the definition of the one-variable Nicaise--Xu order.
\end{proof}

The order of choices matters: the finite denominator presentation is fixed before the positive integral ray is selected. Every hyperplane used below comes from a resolution factor, hence has $a_H=-\nu_A\ne0$ in the convention above. Whenever a ray is used below, we also avoid the finitely many specialization-coincidence equations coming from distinct resolution hyperplanes.

\begin{lemma}\label{lem:motivic-rational-restriction}
Suppose that the polar divisor of $R(s)\in\C(s_1,\ldots,s_r)$ is supported on finitely many affine hyperplanes and that $R$ is regular at the generic point of $H$. Choose $m\in\Z_{>0}^r$ such that $t_H(m)m$ lies on no polar hyperplane of $R$ distinct from $H$. Then $R(tm)$ is regular at $t=t_H(m)$.
\end{lemma}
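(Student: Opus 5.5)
We prove Lemma~\ref{lem:motivic-rational-restriction} by writing $R$ as a quotient with a denominator that is a product of affine forms, and then checking that every factor surviving along the ray is nonzero at $t=t_H(m)$. Since the polar divisor of $R$ is supported on finitely many hyperplanes $H_1,\ldots,H_k$, we write
\[
R=\frac{P}{\prod_{j=1}^k \ell_j^{e_j}},
\]
where $\ell_j$ is an affine equation of $H_j$, $e_j\ge0$, and $P$ is a polynomial. This uses that $\C[s]$ is a UFD: the reduced denominator has irreducible factors only among the $\ell_j$, because any other irreducible factor would contribute a polar component outside the given finite set. Because $R$ is regular at the generic point of $H$, no $\ell_j$ is a multiple of $\ell_H$ in the reduced form. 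Hence we may take all $H_j\ne H$.

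Next we restrict to the line $t\mapsto tm$. Along it, $R(tm)=P(tm)/\prod_j\ell_j(tm)^{e_j}$, and each $\ell_j(tm)$ is a polynomial in $t$ of degree at most one. The hypothesis says that the point $t_H(m)m$ lies on no $H_j$, so $\ell_j(t_H(m)m)\ne0$ for every $j$. The denominator is therefore a nonzero number at $t=t_H(m)$, and $R(tm)$ is regular there. This holds even if some $\ell_j(tm)$ happens to be constant in $t$, in which case it is a nonzero constant.

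We should also confirm that $t_H(m)m$ genuinely lies on $H$, so that the hypothesis is the relevant one. With $H=H(a_H,b_H)$ we have $b_H\cdot(t_H(m)m)=a_H$. This fact is not needed for regularity, but it explains why only the other hyperplanes have to be avoided.

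The only delicate point is the first step, that the reduced denominator involves only the listed affine forms and not $\ell_H$. This is exactly the standard correspondence between the polar divisor of a reduced fraction and the irreducible factors of its denominator, which comes from unique factorization in $\C[s_1,\ldots,s_r]$. So we expect no real obstacle.
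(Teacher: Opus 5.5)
Your proposal is correct and follows essentially the same route as the paper: write $R=P/Q$ in lowest terms with $Q$ a nonzero constant times a product of powers of affine equations of the polar hyperplanes, note that regularity at the generic point of $H$ excludes $\ell_H$ from this product, and use the choice of $m$ to get $Q(t_H(m)m)\neq0$, so $R(tm)$ is regular at $t=t_H(m)$.
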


\begin{proof}
Write $R=P/Q$ with coprime polynomials $P,Q\in\C[s_1,\ldots,s_r]$. By the hypothesis on the polar divisor, we may write
\[
Q(s)=c\prod_{j=1}^k\ell_j(s)^{e_j},
\]
where $c\in\C^\times$, each $e_j$ is a positive integer, and the $\ell_j$ are nonconstant affine linear polynomials defining the distinct polar hyperplanes of $R$. The case of constant $Q$ is included by taking $k=0$.

Since $R$ is regular at the generic point of $H$, none of the $\ell_j$ defines $H$. The choice of $m$ therefore gives $\ell_j(t_H(m)m)\ne0$ for every $j$. Hence $Q(t_H(m)m)\ne0$, so $R(tm)=P(tm)/Q(tm)$ is regular at $t=t_H(m)$.
\end{proof}

\subsection{Scalar and multivariable comparison on a surface}

We first establish the one-variable comparison needed for the positive-ray argument. It combines the notion of motivic pole order of Nicaise--Xu \cite[Remark~3.7]{NicaiseXu2016} with Veys's description of the poles of the topological zeta function of a plane curve \cite{Veys1995}.

\begin{proposition}\label{prop:scalar-motivic-topological-order}
Let $g$ be an arbitrary nonzero nonunit holomorphic plane germ, possibly nonreduced. If $s_0$ is a pole of $Z^{\mathrm{top}}_{g,0}$ of order $p$, then $p\in\{1,2\}$ and
\begin{equation*}
\ord_{s_0}^{\mathrm{mot}}Z^{\mathrm{mot}}_{g,0}=p.
\end{equation*}
\end{proposition}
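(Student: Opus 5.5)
Two inequalities need proving: the motivic order is at most $p$, and at least $p$.

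\emph{Upper bound.} The statement $p\in\{1,2\}$ is already the case $r=1$ of \Cref{thm:grouped-laurent}, since on a surface no stratum contains three components. For $\ord^{\mathrm{mot}}_{s_0}\le p$ I would work on the minimal embedded resolution with $r=1$.

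First I apply the cancellations of \Cref{thm:motivic-nonrupture-cancellation} to every exceptional component with candidate $s_0$ and valency one or two that is not adjacent to another component with the same candidate. As in \Cref{prop:motivic-denominator-deletion}, these replacements only introduce neighbour denominators. The components with candidate $s_0$ that remain are strict transforms, rupture components, and chains joined along crossing strata.

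If no two components with candidate $s_0$ meet in the fibre, every term of \eqref{eq:motivic-resolution} then contains at most one factor $1-\Lef^{-\nu_A}T^{N_A}$ with candidate $s_0$. Hence $\ord^{\mathrm{mot}}_{s_0}\le1$. Otherwise the bound is trivially $2$.

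Now suppose $p=1$ but two components with candidate $s_0$ meet. Then $C_{-2}>0$ by \Cref{thm:same-wall-positivity}, so $p=2$, a contradiction. Therefore $\ord^{\mathrm{mot}}_{s_0}\le p$ in all cases.

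\emph{Lower bound.} I would use a specialization that sends motivic denominator presentations to topological ones, here the Hodge--Deligne (E-polynomial) realization. Suppose $Z^{\mathrm{mot}}_{g,0}=\sum_\ell P_\ell(T)\prod(1-\Lef^{a}T^{b})^{-1}$ with at most $q<p$ factors of candidate $s_0$ in each product. Apply the E-polynomial map $\Lef\mapsto uv$, set $T=(uv)^{-s}$, and take the limit $uv\to1$ after dividing by the appropriate power of $uv-1$. This is the standard passage from motivic to topological zeta functions.

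Each factor $1-(uv)^{a-bs}$ becomes $(uv-1)(a-bs)$ to leading order. The global factor $(\Lef-1)^{|I|}$ in the resolution formula supplies the compensating powers. The main obstacle is that the arbitrary numerators $P_\ell$ need not carry enough factors of $\Lef-1$, so the limit may not be a term-by-term specialization.

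My first attempt at this obstacle is to regard the rescaled expression as a rational function in $s$ and $w=uv-1$. I would compute the order in $w$ at $w=0$ and argue by pole orders along $s=s_0$: the contribution of the $\ell$th term to $Z^{\mathrm{top}}$ is the leading $w$-coefficient of a Laurent expansion. The pole order in $s$ at $s_0$ of any such leading coefficient is bounded by the number of factors with candidate $s_0$, because expanding $(1-(1+w)^{a-bs})^{-1}$ in $w$ produces only powers of $(a-bs)^{-1}$. Thus every coefficient of $w^k$ has at most a pole of order $q$ along $s_0$. Since $Z^{\mathrm{top}}_{g,0}$ is the relevant coefficient, it has order at most $q<p$ at $s_0$, a contradiction.
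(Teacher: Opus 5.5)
Your proposal follows the paper's proof. The inequality $\ord_{s_0}Z^{\mathrm{top}}_{g,0}\le\ord^{\mathrm{mot}}_{s_0}Z^{\mathrm{mot}}_{g,0}$ comes from a Hodge--Deligne realization of a finite denominator presentation. The reverse inequality comes from reading \eqref{eq:motivic-resolution} itself as a presentation, using that for $p=1$ no two components with candidate $s_0$ meet in the local fibre. Your upper-bound half is complete. You deduce $p\in\{1,2\}$ from \Cref{thm:grouped-laurent} and the meeting criterion from the positivity of $C_{-2}$ in \Cref{thm:same-wall-positivity}, which is a slightly more self-contained substitute for the paper's appeal to Veys's classification. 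The preliminary cancellation via \Cref{thm:motivic-nonrupture-cancellation} is never used and can be dropped.

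The step you must repair is the justification of the key estimate in the lower bound. As written, your reason is wrong: expanding $(1-(1+w)^{\delta})^{-1}$, with $\delta=a-bs$, does not produce powers of $\delta^{-1}$. If it did, the $w$-coefficients could have poles of arbitrarily high order at $s_0$, and the bound by $q$ would fail. What is true, and what the paper uses, is the factorization
\[
1-(1+w)^{\delta}=-\delta L\,U(\delta,L),\quad L=\log(1+w),\quad U=\frac{e^{\delta L}-1}{\delta L}\in\Q[\delta][\![L]\!]^\times .
\]
It shows that every $w$-coefficient of the inverse lies in $\delta^{-1}\Q[\delta]$. So each $w$-coefficient has exactly one simple pole at $s_0$ when $a/b=s_0$ and is regular there otherwise. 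Combine this with the coefficientwise polynomial dependence on $s$ of the realized numerators $P_\ell$, which you should also state. This gives your claim that every $w^k$-coefficient of the sum has pole order at most $q$ at $s_0$. Note that individual terms may carry negative powers of $w$: what equals $Z^{\mathrm{top}}_{g,0}$ is the $w^0$-coefficient of the whole sum, not the leading coefficient of each term.

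Three smaller points need fixing:
\begin{itemize}
\item The realized expression is a Laurent series in $w$ with coefficients rational in $s$, not a rational function of $(s,w)$.
\item With the two-variable $E$-polynomial, the realized numerator coefficients are not functions of $uv$ alone. It is cleaner to use the diagonal realization $u=v$, as the paper does.
\item The presentation is an identity in $\Mmot[\![T]\!]$, where the substitution $T=(uv)^{-s}$ makes no sense. You first need \Cref{lem:motivic-formal-embedding} to read it as an identity in the localization, to which the realization extends.
\end{itemize}
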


\begin{proof}
We first prove
\begin{equation}\label{eq:scalar-topological-motivic-inequality}
 \ord_{s_0}Z^{\mathrm{top}}_{g,0}\leq\ord_{s_0}^{\mathrm{mot}}Z^{\mathrm{mot}}_{g,0}.
\end{equation}
Suppose that $Z^{\mathrm{mot}}_{g,0}$ has a finite presentation in which each reciprocal product contains at most $q$ denominators with candidate value $s_0$. By \Cref{lem:motivic-formal-embedding}, its equality with the fixed resolution expression may be read in the common rational localization generated by all denominators occurring in either finite expression. Apply the diagonal Hodge--Deligne realization \cite[\S4.2]{DenefLoeser1998}
\begin{equation*}
E_\Delta:\Mmot\to \Z[u,u^{-1}],
 \quad [V]\mapsto  E_c(V;u,u),
 \quad E_\Delta(\Lef)=u^2,
\end{equation*}
extend it to the corresponding target localization by inverting the images of these finitely many denominators, and only then put $u=1+h$, $\ell=\log(1+h)$, and $T=u^{-2s}$. The resulting equality lies in $\C(s)((h))$.

For $\delta=a-bs$, the exact factorization
\begin{equation*}
1-u^{2\delta}=-2\delta\ell\,U(\delta,\ell),
 \quad U(\delta,\ell)=\frac{e^{2\delta\ell}-1}{2\delta\ell},
 \quad U(0,0)=1,
\end{equation*}
has $U\in\C[\delta][\![\ell]\!]^\times$, and $\ell/h$ is a unit in $\C[\![h]\!]$. Therefore every coefficient of the inverse of a denominator whose candidate value is different from $s_0$ is regular at $s_0$, whereas a denominator with candidate value $s_0$ contributes at most one factor $(s-s_0)^{-1}$. The specialized polynomial numerators are also coefficientwise regular at $s_0$. Since the presentation is finite and each reciprocal product contains at most $q$ denominators with candidate value $s_0$, every fixed $h$-coefficient has pole order at most $q$. 

On the resolution side,
\begin{equation*}
\lim_{u\to1}(u^2-1)\frac{u^{-2(N_As+\nu_A)}}{1-u^{-2(N_As+\nu_A)}}=\frac1{N_As+\nu_A},
\end{equation*}
and $E_c(D_I^\circ\cap\pi^{-1}(0);1,1)=\chi_c(D_I^\circ\cap\pi^{-1}(0))$ for every stratum. The plane-resolution strata at hand are finite sets of points or punctured smooth projective curves; for these strata $\chi_c=\chi$, so this is the convention used in \eqref{eq:top-zeta}. Thus the constant $h$-coefficient is exactly $Z^{\mathrm{top}}_{g,0}(s)$; the global factor $\Lef^{-2}$ specializes to one. This proves \eqref{eq:scalar-topological-motivic-inequality}.

For the reverse inequality, use the minimal embedded resolution. Veys's plane-curve classification \cite[Theorems~4.2--4.3]{Veys1995} gives $p\in\{1,2\}$ and states that $p=2$ precisely when two total-transform components carrying the value $s_0$ meet in the local fibre. If $p=1$, no two components carrying $s_0$ meet in the local fibre, so every contributing summand of \eqref{eq:motivic-resolution} contains at most one target denominator. If $p=2$, every nonempty SNC stratum on a surface contains at most two components. In either case the displayed resolution expression gives a denominator presentation of order at most $p$. Combining this with \eqref{eq:scalar-topological-motivic-inequality} proves equality.
\end{proof}

We use the following topological consequence of the results already proved in \Cref{sec:setup}. On the minimal resolution, a rational affine hyperplane is an actual topological polar hyperplane precisely when it is defined by a strict transform or a rupture component. Indeed, one direction follows from the grouped dichotomy, \Cref{thm:selected-component-classification}, and \Cref{lem:motivic-hyperplane-propagation}. Conversely, choose a positive integral ray separating all distinct resolution hyperplanes. Veys's scalar theorem makes the corresponding strict-transform or rupture value a scalar pole, and \Cref{lem:motivic-ray-identities,lem:motivic-rational-restriction} show that the original multivariable function could not have been regular along the hyperplane. Its generic order is two exactly when two components defining it meet in the local fibre; the positive coefficient in \Cref{thm:same-wall-positivity} proves the forward assertion, and otherwise every contributing summand has at most one target denominator.

\begin{theorem}\label{thm:plane-motivic-comparison}
For every rational affine hyperplane $H\subset\C^r$,
\begin{equation}\label{eq:plane-motivic-order-equality}
 \ord_H^{\mathrm{mot}}(\motzeta)=\ord_H(Z^{\mathrm{top}}_{F,0}).
\end{equation}
Both sides are at most two. Consequently
\begin{equation*}
\PL_{\mathrm{mot}}(\motzeta)=\PL(Z^{\mathrm{top}}_{F,0}).
\end{equation*}
\end{theorem}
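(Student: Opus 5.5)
We prove the two inequalities $\ord_H(Z^{\mathrm{top}}_{F,0})\le\ord_H^{\mathrm{mot}}(\motzeta)$ and $\ord_H^{\mathrm{mot}}(\motzeta)\le\ord_H(Z^{\mathrm{top}}_{F,0})$ separately. Throughout we work on the minimal embedded resolution, which is legitimate by \Cref{cor:motivic-resolution-independent}. The bound by two follows from the resolution expression \eqref{eq:motivic-resolution}: every nonempty SNC stratum on a surface contains at most two components, so each summand carries at most two denominators. Once \eqref{eq:plane-motivic-order-equality} is known, the equality of polar loci is immediate from \Cref{def:motivic-NX-order}.

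\emph{Upper bound on the motivic order.} If $H$ is not of the form $V(L_A)$, no resolution denominator defines $H$, so the motivic order is $0$. The topological order is also $0$, since every topological polar hyperplane is a resolution hyperplane. Suppose now that $H=V(L_A)$ for some resolution component $A$. If no strict transform and no rupture component defines $H$, then \Cref{prop:motivic-denominator-deletion} gives motivic order $0$. The topological order is also $0$, because the discussion before the theorem shows that a topological polar hyperplane is always defined by such a component. Otherwise $H$ is an actual topological pole, and the discussion preceding the theorem shows its order is $2$ exactly when two components of $S_H$ meet in the local fibre. If they do, the motivic order is at most $2$ trivially. If no two components of $S_H$ meet in the local fibre, every summand of \eqref{eq:motivic-resolution} contains at most one $H$-denominator, so the resolution expression itself lies in $\mathscr P_{H,\le1}$. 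In all cases $\ord_H^{\mathrm{mot}}\le\ord_H^{\mathrm{top}}$.

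\emph{Lower bound on the motivic order, by specialization to a ray.} Assume $\motzeta\in\mathscr P_{H,\le q}$ and fix a finite presentation as in \Cref{lem:motivic-finite-witness}. Then choose $m\in\Z_{>0}^r$ using \Cref{lem:motivic-positive-ray} so that it avoids the collision equations \eqref{eq:motivic-collision-equation} for three finite families of pairs:
\begin{itemize}
\item the non-$H$ denominators in this presentation;
\item all resolution pairs not defining $H$;
\item the polar hyperplanes of $Z^{\mathrm{top}}_{F,0}$ other than $H$.
\end{itemize}
By \Cref{lem:motivic-witness-specialization} and \Cref{lem:motivic-ray-identities}, $Z^{\mathrm{mot}}_{g_m,0}$ has one-variable Nicaise--Xu order at most $q$ at $t_H(m)$. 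By \Cref{prop:scalar-motivic-topological-order} this equals the order of $Z^{\mathrm{top}}_{g_m,0}(t)=Z^{\mathrm{top}}_{F,0}(tm)$ at $t_H(m)$.

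It remains to see that the restriction to the ray does not lower the pole order, so that the order of $Z^{\mathrm{top}}_{F,0}(tm)$ at $t_H(m)$ equals $p:=\ord_H Z^{\mathrm{top}}_{F,0}$. Write $Z^{\mathrm{top}}_{F,0}=C/L_H^{p}+R'$, where $R'$ has pole order less than $p$ along $H$ and $C$ is regular and nonzero at the generic point of $H$. We iterate \Cref{lem:motivic-rational-restriction} on the Laurent pieces: $L_H(tm)$ vanishes simply at $t_H(m)$ because $b_H\cdot m\ne0$, and the leading coefficient $C$ restricts to a nonzero value at $t_H(m)m$.

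This last point is the main obstacle. The genericity of $m$ must guarantee $C(t_H(m)m)\neq0$, but $C$ may vanish along a proper subvariety of $H$, which need not be a hyperplane. For $p=2$ there is no difficulty, since $C_{-2}(H)$ is a positive constant by \Cref{thm:same-wall-positivity}. For $p=1$, $C_{-1}(H)=\sum_{A\in S_H}\nu_A^{-1}R_{A,H}$ is a rational function on $H$ which is nonzero by \Cref{thm:grouped-laurent}. I would handle this by additionally requiring $m$ to avoid the zero locus of $C_{-1}(H)$ pulled back along the map $m\mapsto t_H(m)m$. That map is homogeneous of degree $0$ and dominant from the positive orthant onto an open subset of $H$, so the excluded set is a proper algebraic subset. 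A curve such as $m(t)=(1,t,\dots,t^{r-1})$ meets it only finitely often unless the curve lies entirely inside it, and in that case I would replace it by a generic polynomial curve. With $m$ chosen this way, $p\le q$, which gives the reverse inequality.
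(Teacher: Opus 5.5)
Your proposal is correct. The upper bound $\ord_H^{\mathrm{mot}}(\motzeta)\le\ord_H(Z^{\mathrm{top}}_{F,0})$ is essentially the paper's case analysis. Like the paper, you rely there on the converse half of the incidence characterization: a hyperplane defined by a strict transform or rupture component is an actual pole, which the paper derives from Veys's classification.

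The difference is in the lower bound.
\begin{itemize}
\item The paper chooses the ray $m$ to avoid only finitely many hyperplanes: the non-$H$ denominators of the fixed presentation and the distinct resolution hyperplanes. Since $g_m$ has the same minimal resolution and exactly the components of $S_H$ carry the value $t_H(m)$, the incidence characterization and Veys's scalar theorem force the scalar topological order at $t_H(m)$ to be exactly $p$. This contradicts \Cref{prop:scalar-motivic-topological-order}.
\item You instead control the specialized order directly from the grouped Laurent expansion. $C_{-2}(H)$ is a constant, and for $p=1$ you additionally make $t_H(m)m$ avoid the zero locus of $C_{-1}(H)$.
\end{itemize}

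Your extra condition is legitimate. The map $m\mapsto t_H(m)m=-m/(\lambda_H\cdot m)$ sends $\R_{>0}^r$ onto $-W_H^+$, which is Zariski dense in $H$. Clearing a power of $\lambda_H\cdot m$ from the numerator of a lift of $C_{-1}(H)$ shows the excluded set is the zero set of a nonzero homogeneous polynomial in $m$. A nonzero polynomial cannot vanish on all of $\Z_{>0}^r$, so your detour through curves $m(t)$ is unnecessary.

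What each route buys:
\begin{itemize}
\item Yours needs only the Hodge--Deligne half (topological order $\le$ motivic order) of \Cref{prop:scalar-motivic-topological-order} and no graph-theoretic input, at the cost of a nonlinear genericity condition.
\item The paper's keeps the genericity condition linear but invokes the exact plane-curve classification in this step.
\end{itemize}

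Finally, this inequality needs no specialization at all. The realization argument in the proof of \Cref{thm:higher-motivic-order-bounds}, with $T_i=e^{-2ts_i}$, is purely formal. It applies verbatim to the plane resolution expression and gives $\ord_HZ^{\mathrm{top}}_{F,0}\le\ord_H^{\mathrm{mot}}(\motzeta)$ directly.
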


\begin{proof}
Let us work on the minimal embedded resolution and write $p=\ord_H(Z^{\mathrm{top}}_{F,0})$.

Suppose first that $p\in\{1,2\}$ and that $\ord_H^{\mathrm{mot}}(\motzeta)\leq p-1$. Fix a finite presentation by \Cref{lem:motivic-finite-witness}. After fixing it, choose a positive integral ray for which no denominator not defining $H$ in the presentation and no distinct resolution hyperplane specializes to the target scalar value. By \Cref{lem:motivic-ray-identities,lem:motivic-witness-specialization}, $Z^{\mathrm{mot}}_{g_m,0}$ has scalar Nicaise--Xu order at most $p-1$ at the target value. The scalarization has the same reduced support and minimal resolution, scalar multiplicities $N_A\cdot m$, and unchanged discrepancies. The incidence characterization in the preceding paragraph and the avoidance of these coincidences make its scalar topological order exactly $p$. This contradicts \Cref{prop:scalar-motivic-topological-order}. Hence the motivic order is at least $p$.

For the reverse inequality, consider three cases. Suppose first that $p=0$. If no resolution component defines $H$, the motivic resolution formula contains no denominator defining $H$. Otherwise, the topological classification shows that no strict transform or rupture component defines $H$, and \Cref{prop:motivic-denominator-deletion} gives an expression with no denominator defining $H$. Thus the motivic order is zero. If $p=1$, no two components defining $H$ meet in the local fibre; hence every contributing SNC stratum contains at most one target component, even if several pairwise disjoint components define the same affine hyperplane. Formula \eqref{eq:motivic-resolution} gives a denominator presentation of order at most one. If $p=2$, every nonempty surface SNC stratum contains at most two components, so the same expression gives a denominator presentation of order at most two. These upper bounds and the preceding lower bound prove \eqref{eq:plane-motivic-order-equality}. Both orders are at most two. Taking the union of all rational affine hyperplanes with positive order gives the equality of the polar loci.
\end{proof}

\begin{corollary}\label{cor:plane-motivic-smc}
For every tuple of arbitrary nonzero nonunit holomorphic plane germs,
\begin{equation*}
\PL_{\mathrm{mot}}(\motzeta)=\PL(Z^{\mathrm{top}}_{F,0})\subseteq Z(B_{F,0}).
\end{equation*}
\end{corollary}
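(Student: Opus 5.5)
The corollary is the conjunction of two results already established, and the plan is simply to chain them.

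First, I will take the equality of polar loci from \Cref{thm:plane-motivic-comparison}. That theorem gives, for every rational affine hyperplane $H\subset\C^r$,
\[
\ord_H^{\mathrm{mot}}(\motzeta)=\ord_H(Z^{\mathrm{top}}_{F,0}).
\]
The motivic polar locus is by definition the union of the hyperplanes with positive motivic order. One point needs checking: every component of $\PL(Z^{\mathrm{top}}_{F,0})$ is a rational affine hyperplane, since each is of the form $V(L_A)$ with $N_A\in\Z^r$ and $\nu_A\in\Z$. Likewise, only finitely many hyperplanes can have positive motivic order, because a hyperplane not of the form $V(L_A)$ has no denominator in the resolution expression \eqref{eq:motivic-resolution}. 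So the two unions are indexed by the same set of hyperplanes, and they coincide. This equality holds with arbitrary multiplicities and common factors, because the resolution formula and its ray specializations (\Cref{lem:motivic-ray-identities}) only use the full order vectors $N_A$.

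Second, I will obtain the inclusion $\PL(Z^{\mathrm{top}}_{F,0})\subseteq Z(B_{F,0})$ directly from \Cref{thm:main-thm-top}. By \Cref{prop:multiplicity-transport}, that theorem applies to arbitrary tuples of nonzero nonunit holomorphic plane germs.

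Combining the two gives the corollary. There is no real obstacle here. The only point worth stating explicitly is that both theorems are set up in the same generality: holomorphic germs, with \eqref{eq:motivic-resolution} serving as the definition and its resolution independence supplied by \Cref{cor:motivic-resolution-independent}. So no algebraicity hypothesis is needed to chain them.
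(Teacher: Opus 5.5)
Your proposal is correct and is the same argument as the paper, which proves the corollary by combining \Cref{thm:plane-motivic-comparison} (equality of the generic orders, hence of the two polar loci) with \Cref{thm:main-thm-top}. Your additional remarks are accurate but not needed, since those two theorems already state the required conclusions: every polar hyperplane is a rational $V(L_A)$, and the formula \eqref{eq:motivic-resolution} serves as the definition for holomorphic germs.
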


\begin{proof}
Combine \Cref{thm:plane-motivic-comparison,thm:main-thm-top}.
\end{proof}

\begin{remark}
The motivic Strong Monodromy statement above concerns the series \eqref{eq:motivic-resolution}, which carries no $\widehat\mu$-action, with poles measured by the multivariable Nicaise--Xu order of \Cref{def:motivic-NX-order}. We do not treat the $\widehat\mu$-equivariant or monodromic motivic zeta function, nor do we compare motivic pole order with scheme-theoretic multiplicity in $B_{F,0}$.
\end{remark}


\section{Higher dimensional obstructions}
\label{sec:higher-obstructions}

In this section, $(X,0)$ is a smooth complex germ of arbitrary dimension $n$. The entries of $F=(f_1,\ldots,f_r)$ are nonzero nonunit holomorphic germs and need not be reduced. We continue to use the notation $h=\prod_i f_i$, $u_\alpha=F^{-\alpha}$, and $B_{F,0}$ for the multivariable Bernstein--Sato ideal defined in \Cref{sec:setup}. Let $\pi:Y\to X$ be a log resolution of the reduced support of $(h=0)$, and write the reduced total transform as $D=\bigcup_{a\in J}D_a$. Put
\begin{equation*}
N_a=(\ord_{D_a}(f_i\circ\pi))_i,
 \quad n_a=N_a\cdot\one,
 \quad \nu_a=1+\ord_{D_a}(K_{Y/X}),
\end{equation*}
with $\nu_a=1$ for a strict transform. Thus $n_a\geq1$.

\subsection{Ordered coefficient residues on SNC strata}

Let $I=(a_1,\ldots,a_k)$ be an ordered tuple of distinct elements of $J$, and define
\[
D_I^\circ=
\left(\bigcap_{j=1}^k D_{a_j}\right)
\setminus
\bigcup_{b\in J\setminus\{a_1,\ldots,a_k\}}D_b.
\]
Fix a connected component of $D_I^\circ$ that meets the local fibre $\pi^{-1}(0)$. Fix $\alpha\in\Q_{>0}^r$ and suppose 
\begin{equation}\label{eq:iterated-zero-index}
 N_{a_j}\cdot\alpha=\nu_{a_j}\quad(1\leq j\leq k).
\end{equation}
Let $\Lcal_{I,\alpha}^{\mathrm{act}}$ denote the tangential local system obtained by descending the actual branches of $u_\alpha$. Their normal monodromy around $D_{a_j}$ is $e^{-2\pi iN_{a_j}\cdot\alpha}=1$ under \eqref{eq:iterated-zero-index}, so this descent is defined. We use
\[
 \Lcal_{I,\alpha}:=\bigl(\Lcal_{I,\alpha}^{\mathrm{act}}\bigr)^\vee
\]
as the coefficient local system for ordered residues. Scalar branches of $\pi^*(u_\alpha\omega)$ are the local representatives of $\Lcal_{I,\alpha}$-valued forms, and integration cycles carry coefficients in $\Lcal_{I,\alpha}^\vee=\Lcal_{I,\alpha}^{\mathrm{act}}$.

Choose SNC coordinates $(x_1,\ldots,x_k,y_1,\ldots,y_{n-k})$ such that $D_{a_j}=(x_j=0)$, with the order of the normal coordinates compatible with the ordering of $I$. In a local flat frame of the coefficient system, a logarithmic top form has the expression
\begin{equation}\label{eq:ordered-log-expansion}
\Theta=
\frac{dx_1}{x_1}\wedge\cdots\wedge\frac{dx_k}{x_k}\wedge\beta(x,y),
\end{equation}
where $\beta(x,y)$ is a holomorphic tangential $(n-k)$-form. Define
\[
\Res_I(\Theta)=\beta(0,y).
\]
This is the ordered, coefficient-valued form of the classical logarithmic residue construction \cite{Deligne1970}.

\begin{lemma}\label{lem:ordered-residue-conventions}
The residue $\Res_I(\Theta)$ is a well-defined $\Lcal_{I,\alpha}$-valued $(n-k)$-form on $D_I^\circ$, independent of the choice of local defining equations and tangential coordinates. For $\sigma\in S_k$, let $I_\sigma=(a_{\sigma(1)},\ldots,a_{\sigma(k)})$. Then $\Res_{I_\sigma}(\Theta)=\sgn(\sigma)\Res_I(\Theta)$. The orientations of the normal torus associated with $I_\sigma$ and $I$ differ by the same sign.
\end{lemma}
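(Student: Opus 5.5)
The plan is a local computation followed by a gluing argument. First I check that the definition makes sense near a single point of $D_I^\circ$. There the coefficient system $\Lcal_{I,\alpha}$ has trivial normal monodromy around each $D_{a_j}$ by \eqref{eq:iterated-zero-index}. A local flat frame therefore exists on a punctured polydisc neighbourhood and descends to $D_I^\circ$. In this frame $\Theta$ is an ordinary meromorphic logarithmic form, so $\beta(0,y)$ is a scalar $(n-k)$-form. Changing the flat frame multiplies $\Theta$ and $\beta$ by the same constant, which is exactly the transition rule for $\Lcal_{I,\alpha}$-valued forms.

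Independence of the defining equations follows the argument in \Cref{lem:twisted-tube-formula}. Replace each $x_j$ by $x_j'=a_jx_j$ with $a_j$ a unit; since the divisors are fixed, this is the only freedom. Then $dx_j'/x_j'=dx_j/x_j+da_j/a_j$. Expand the wedge product $\bigwedge_j(dx_j/x_j+da_j/a_j)\wedge\beta'$. Every term in which some $dx_j/x_j$ is replaced by the holomorphic form $da_j/a_j$ has at most $k-1$ logarithmic factors. Rewrite such a term in the normal form \eqref{eq:ordered-log-expansion}. Its component along $dx_1/x_1\wedge\cdots\wedge dx_k/x_k$ then carries a factor that is either divisible by the missing $x_j$ or contains $dx_j$ wedged against $dx_j/x_j$, hence vanishes. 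So that component restricts to zero on $x=0$. For the same reason, the tangential form $\beta$ is determined modulo the ideal $(x_1,\ldots,x_k)$ and modulo forms containing some $dx_j$. Consequently $\beta(0,y)$, viewed as a form on $D_I^\circ$, is independent of the tangential coordinates $y$ and of how $\beta$ is extended off the stratum. The cleanest route is to characterize $\Res_I$ intrinsically as the iterated single Poincar\'e residue $\Res_{x_k=0}\circ\cdots\circ\Res_{x_1=0}$ in the convention \eqref{eq:residue-convention}. Each single residue is already coordinate independent by the one-variable computation above. Restricting the logarithmic form to $x_1=0$ gives a logarithmic form along the remaining components, so induction applies. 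The local definitions agree on overlaps, so they glue to a global section over $D_I^\circ$.

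For the permutation rule, reorder the $1$-forms $dx_{\sigma(1)}/x_{\sigma(1)}\wedge\cdots\wedge dx_{\sigma(k)}/x_{\sigma(k)}$. This equals $\sgn(\sigma)$ times the original ordered product, so the same $\beta$ appears with the factor $\sgn(\sigma)$. For the torus statement, the normal torus for $I$ is oriented as the product of the positively oriented meridians in the order of $I$. Permuting the factors of a product of oriented circles changes the orientation by the sign of the permutation, since each transposition of two $1$-dimensional factors contributes $-1$.

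The main obstacle is the coordinate independence of $\beta(0,y)$: one has to verify that the correction terms $da_j/a_j$ do not leak into the top logarithmic component. I would handle this by using the iterated single-residue description, which reduces everything to the one-divisor case already done in \Cref{lem:twisted-tube-formula}.
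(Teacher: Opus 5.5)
Your proposal is correct and follows the paper's proof. Trivial normal monodromy gives a local flat frame. The substitution $dx'_j/x'_j=dx_j/x_j+dv_j/v_j$ contributes to the full logarithmic wedge only terms divisible by some normal coordinate. The local residues glue through the descended coefficient system. Adjacent transpositions give the same sign for the ordered residue and for the torus orientation.

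Your alternative description of $\Res_I$ as $\Res_{x_k=0}\circ\cdots\circ\Res_{x_1=0}$ is also valid. However, the one-divisor invariance it needs is not literally the surface computation of \Cref{lem:twisted-tube-formula}. It must be redone in dimension $n$ along $D_{a_1}$, where the other components still carry logarithmic poles; the calculation itself is the same.
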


\begin{proof}
Choose a local flat frame of the coefficient system. Two local defining equations for the same component have the form $x'_j=v_jx_j$, where $v_j$ is a holomorphic unit. Hence $dx'_j/x'_j=dx_j/x_j+dv_j/v_j$. The form $dv_j/v_j$ is regular. Since $dx_i=x_i\,dx_i/x_i$, any contribution of this regular term to the coefficient of the full logarithmic normal wedge is divisible by at least one normal coordinate $x_i$. Such contributions vanish on $D_I^\circ$. Thus the restriction $\beta(0,y)$ in \eqref{eq:ordered-log-expansion} is independent of the local defining equations.

A change of tangential coordinates induces the usual transformation of $\beta(0,y)$ as an $(n-k)$-form on $D_I^\circ$. Moreover, the actual branch system descends because its normal characters are trivial, and its dual coefficient system descends with it to $\Lcal_{I,\alpha}$. The local residues are compatible with its transition functions and therefore glue to an $\Lcal_{I,\alpha}$-valued form on $D_I^\circ$.

Finally, an interchange of two adjacent components changes the sign of the wedge of their logarithmic differentials. It also reverses the product orientation of the corresponding two circle factors. Since every permutation is a product of adjacent transpositions, both the residue and the torus orientation change by $\sgn(\sigma)$.
\end{proof}

Throughout this section, $H_\bullet$ denotes ordinary singular homology with local coefficients. After shrinking the local representative, $D_I^\circ$ has finite CW type: remove compatible open tubes around the boundary divisor from the compact piece over a small closed ball and retract radially onto the resulting compact manifold with corners. The twisted de Rham theorem and cellular duality over $\C$ then give a perfect pairing
\begin{equation}\label{eq:iterated-perfect-pairing}
 H^q_{\mathrm{dR}}(D_I^\circ,\Lcal_{I,\alpha})\times
 H_q(D_I^\circ,\Lcal_{I,\alpha}^\vee)\to \C.
\end{equation}
Indeed, on a finite CW model the cochain complex with coefficients in $\Lcal_{I,\alpha}$ is the vector-space dual of the chain complex with the dual system. Every singular cycle has compact image even though the open stratum need not be compact. See \cite{Deligne1970} for the de Rham description of regular flat connections and \cite[Chapter~VI]{Bredon1993} for homology with local coefficients.

\subsection{Pullbacks}

We use the Green formula \Cref{prop:green-formula}, which was proved in arbitrary dimension. We next record the local form of the relevant pullbacks along an SNC stratum satisfying \eqref{eq:iterated-zero-index}.

\begin{lemma}
\label{lem:iterated-simultaneous-orders}
Let $\omega$ be a nowhere vanishing holomorphic top form near $0$. At every point of $D_I^\circ$, there are SNC coordinates in which
\begin{align}
 \pi^*\omega&=J(x,y)\prod_{j=1}^kx_j^{\nu_{a_j}-1}
 dx_1\wedge\cdots\wedge dx_k\wedge
 dy_1\wedge\cdots\wedge dy_{n-k},\label{eq:iterated-jacobian}\\
 f_i\circ\pi&=u_i(x,y)\prod_{j=1}^kx_j^{N_{a_j,i}},
 \label{eq:iterated-functions}
\end{align}
where $J$ and the $u_i$ are holomorphic units. Under
\eqref{eq:iterated-zero-index},
\begin{align}
 \pi^*(u_\alpha\omega)&=
 U(x,y)\bigwedge_{j=1}^k\frac{dx_j}{x_j}
 \wedge dy_1\wedge\cdots\wedge dy_{n-k},
 \label{eq:iterated-leading-form}\\
 \pi^*((hu_\alpha)\theta)&=
 \left(\prod_{j=1}^kx_j^{n_{a_j}-1}\right)
 (\text{a holomorphic top form})
 \label{eq:iterated-shifted-order}
\end{align}
where $U$ is a holomorphic unit, and the second identity holds for every holomorphic top form $\theta$ near $0$.
\end{lemma}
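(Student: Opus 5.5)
The plan is to argue exactly as in the surface case, \eqref{eq:simultaneous-jacobian}--\eqref{eq:coefficient-form-crossing}, but now along a stratum of arbitrary codimension $k$.

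First fix a point $p\in D_I^\circ$. Since $D$ is SNC and $p$ lies on no component outside $I$, there are local coordinates $(x_1,\ldots,x_k,y_1,\ldots,y_{n-k})$ centred at $p$ with $D_{a_j}=(x_j=0)$, and $D$ coincides with $\bigcup_j(x_j=0)$ near $p$. Then:
\begin{itemize}
\item Each $f_i\circ\pi$ vanishes near $p$ only along $D$. So it equals $\prod_j x_j^{N_{a_j,i}}$ times a function with no zeros, hence a unit $u_i$. This is \eqref{eq:iterated-functions}.
\item The divisor of $\pi^*\omega$ is $K_{Y/X}=\sum_a(\nu_a-1)D_a$. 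The relative canonical divisor is supported on the exceptional locus, which lies in $D$, and strict transforms have coefficient $\nu_a-1=0$. Hence near $p$ the Jacobian of $\pi^*\omega$ is $\prod_j x_j^{\nu_{a_j}-1}$ times a unit $J$. This is \eqref{eq:iterated-jacobian}.
\end{itemize}

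Next, choose compatible rational-power branches $u_i^{-\alpha_i}$ of the units on a small polydisc (simply connected, so these exist). A branch of $\pi^*u_\alpha$ is then $\prod_i u_i^{-\alpha_i}\prod_j x_j^{-N_{a_j}\cdot\alpha}$. Multiplying by \eqref{eq:iterated-jacobian} gives the exponents $\nu_{a_j}-1-N_{a_j}\cdot\alpha=-1$, by \eqref{eq:iterated-zero-index}. Setting $U=J\prod_i u_i^{-\alpha_i}$ and rewriting $x_j^{-1}dx_j=dx_j/x_j$ yields \eqref{eq:iterated-leading-form}; $U$ is a unit because it is a product of units. Strictly, this identity holds in a local flat frame of $\Lcal_{I,\alpha}$, with the branch choice absorbed into the frame as in \Cref{sec:cyclic-quotient}.

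For \eqref{eq:iterated-shifted-order}, write $\theta=a\omega$ with $a$ holomorphic. Then $\pi^*\theta=(a\circ\pi)\pi^*\omega$, and $\pi^*(hu_\alpha)=\pi^*h\cdot\pi^*u_\alpha$. Here $\pi^*h$ contributes $\prod_j x_j^{n_{a_j}}$ times a unit, and $\pi^*u_\alpha$ contributes $\prod_j x_j^{-N_{a_j}\cdot\alpha}$ times a unit. Combining these with the Jacobian factor, the exponent of $x_j$ is $n_{a_j}-N_{a_j}\cdot\alpha+\nu_{a_j}-1=n_{a_j}-1\ge0$. The remaining factor is holomorphic, namely $a\circ\pi$ times units times $dx\wedge dy$. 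Again this holds in a flat frame.

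The only delicate point is justifying that no other component of $K_{Y/X}$ or of $\operatorname{div}(f_i\circ\pi)$ passes through $p$. This follows from $p\in D_I^\circ$, together with the fact that the supports of both divisors are contained in $D$ because $\pi$ is an isomorphism off $\pi^{-1}(h=0)$. Everything else is bookkeeping of exponents.
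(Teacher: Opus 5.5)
Your proposal is correct and follows the paper's proof essentially step for step. You take SNC coordinates at $p\in D_I^\circ$ avoiding all other components, read off \eqref{eq:iterated-functions} and \eqref{eq:iterated-jacobian} from $\operatorname{div}(f_i\circ\pi)$ and $\operatorname{div}(\pi^*\omega)=K_{Y/X}$, set $U=J\prod_i u_i^{-\alpha_i}$, and track the exponents $-1$ and $n_{a_j}-1$; your explicit remark that both divisors are supported in $D$ (because $\pi$ is an isomorphism off $\pi^{-1}(h=0)$) spells out a point the paper uses tacitly.
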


\begin{proof}
Fix $p\in D_I^\circ$ and choose SNC coordinates $(x_1,\ldots,x_k,y_1,\ldots,y_{n-k})$ near $p$ such that $D_{a_j}=(x_j=0)$ and no other component of $D$ meets this neighbourhood. The vanishing orders of $f_i\circ\pi$ along these components give \eqref{eq:iterated-functions}, with each $u_i$ a holomorphic unit. Since $\omega$ is nowhere vanishing, the divisor of $\pi^*\omega$ is $K_{Y/X}$. Its multiplicity along $D_{a_j}$ is $\nu_{a_j}-1$, which gives \eqref{eq:iterated-jacobian} with $J$ a holomorphic unit.

After shrinking the coordinate neighbourhood, choose holomorphic branches of $u_i^{-\alpha_i}$ and set $U=J\prod_i u_i^{-\alpha_i}$. This is a holomorphic unit. Combining \eqref{eq:iterated-jacobian} and \eqref{eq:iterated-functions}, the exponent of $x_j$ in the coefficient of $\pi^*(u_\alpha\omega)$ is
\[
\nu_{a_j}-1-\sum_i\alpha_iN_{a_j,i}
=\nu_{a_j}-1-N_{a_j}\cdot\alpha
=-1,
\]
where the last equality follows from \eqref{eq:iterated-zero-index}. This proves \eqref{eq:iterated-leading-form}.

Finally, let $\theta$ be any holomorphic top form near $0$. Since $\omega$ is nowhere vanishing, $\theta=c\omega$ for a holomorphic function $c$. Moreover,
\[
h\circ\pi=\left(\prod_i u_i\right)\prod_{j=1}^k x_j^{n_{a_j}}.
\]
Thus $\pi^*((hu_\alpha)\theta)=(c\circ\pi)(h\circ\pi)\pi^*(u_\alpha\omega)$. The resulting expression contains the factor $\prod_{j=1}^k x_j^{n_{a_j}-1}$, and all remaining factors are holomorphic. So \eqref{eq:iterated-shifted-order} follows.
\end{proof}

\subsection{The iterated Gysin map and tube formula}

Fix a connected component $Z$ of $D_I^\circ$ that meets $\pi^{-1}(0)$. Put $L_j=N_{D_{a_j}/Y}|_Z$. The SNC normal bundle is the ordered sum $\bigoplus_j L_j$. Choose Hermitian metrics on the $L_j$, and let $S(L_j)$ denote the corresponding unit circle bundle. Set
\[
T_j=S(L_1)\times_Z\cdots\times_Z S(L_j),
\quad T_0=Z,
\]
and let $p_j:T_j\to T_{j-1}$ be the projection forgetting the last circle factor. The order $(a_1,\ldots,a_k)$ orients the fibers of $q_I:T_k\to Z$. For any compact singular cycle in $Z$, sufficiently small positive radius functions identify the restriction of $T_k$ over its image with an embedded normal torus in the complement of $D$. Such radii exist by compactness and may be joined through positive choices; all integrations below use this embedded representative.

\begin{proposition}
\label{prop:ordered-torus-umkehr}
For every local system $\mathcal V$ on $Z$ there is a natural map
\begin{equation*}
q_I^!:H_m(Z,\mathcal V)\to 
 H_{m+k}(T_k,q_I^*\mathcal V).
\end{equation*}
On a trivializing open set it sends a class represented by $c$ to the ordered product
\begin{equation*}
[S^1_{a_1}]\times\cdots\times[S^1_{a_k}]\times c.
\end{equation*}
It is independent of the metrics and radii. If the ordering of $I$ is changed by $\sigma\in S_k$, then under the induced identification of the torus bundles, the map $q_I^!$ changes by the factor $\sgn(\sigma)$.
\end{proposition}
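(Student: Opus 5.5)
I plan to construct $q_I^!$ by iterating the degree-one Gysin map of \Cref{prop:degree-one-thom-gysin}, one circle bundle at a time. For each $j$, the pullback $p_1^*\cdots$ of $L_j$ to $T_{j-1}$ is an oriented real rank-two bundle; call it $\widetilde L_j$. Its circle bundle is exactly $T_j\to T_{j-1}$, since $T_j=T_{j-1}\times_Z S(L_j)$. The complex structure orients $\widetilde L_j$. The homology Thom isomorphism followed by the boundary of the disk/circle pair then gives
\[
p_j^!:=\partial\circ\operatorname{Th}:H_{m}(T_{j-1},\mathcal W)\to H_{m+1}(T_j,p_j^*\mathcal W)
\]
for any local system $\mathcal W$ on $T_{j-1}$. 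This is the same construction as in \Cref{prop:degree-one-thom-gysin}, with the base $E^\circ$ replaced by $T_{j-1}$; that proof never used that the base was a curve. I then define
\[
q_I^!:=p_k^!\circ\cdots\circ p_1^!,
\]
applied with $\mathcal W$ the pullback of $\mathcal V$ at each stage. Naturality of the Thom isomorphism and of connecting maps, under maps of bundle pairs and coefficient systems, gives naturality in $\mathcal V$.

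For the local description, I restrict to an open set $U\subset Z$ over which all $L_j$ and $\mathcal V$ are trivial. There $T_j|_U\cong (S^1)^j\times U$ with the ordered product orientation. By the degree-one case, each $p_j^!$ sends a class $c'$ to $[S^1_{a_j}]\times c'$. Composing, and placing the circle factors in the order $a_1,\ldots,a_k$ (the convention fixing the orientation of the fibers of $q_I$), gives the ordered product $[S^1_{a_1}]\times\cdots\times[S^1_{a_k}]\times c$. Independence of metrics and radii follows stagewise as in \Cref{prop:degree-one-thom-gysin}: homotopies of Hermitian metrics and of positive radius functions give isotopies of the disk/circle pairs that preserve the Thom classes. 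Any two positive choices are joined through positive choices, as noted before the statement.

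The part needing care is the sign under reordering. For $\sigma\in S_k$, the torus bundle for $I_\sigma$ is canonically identified with $T_k$ by permuting the fibre-product factors, since $S(L_{\sigma(1)})\times_Z\cdots$ is the same space. I would verify the claim locally, on a trivializing $U$. There the identification permutes the circle factors, so its effect on the cross product is the Koszul sign for permuting degree-one classes, namely $\sgn(\sigma)$. Two global maps that agree locally up to a fixed sign agree globally, by the naturality already established (equivalently, by a Mayer--Vietoris/cellular argument on compact supports of cycles). To make this rigorous rather than merely local, I would reduce to adjacent transpositions. For $\sigma=(j\;j+1)$, the composite $p_{j+1}^!p_j^!$ equals the Gysin map of the rank-four bundle $\widetilde L_j\oplus\widetilde L_{j+1}$ to its torus bundle, since the Thom class of a sum is the product of the Thom classes. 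Swapping the two summands then multiplies the product of the two Thom classes by $(-1)^{1\cdot1}$ after taking boundaries, because each boundary map lowers degree from two to one. This step, identifying the iterated boundary-of-Thom map with an external product of circle fundamental classes and tracking the sign, is where I expect the main bookkeeping difficulty. It is consistent with the residue sign in \Cref{lem:ordered-residue-conventions}.
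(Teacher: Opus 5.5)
Your approach is the paper's: iterate the Thom-plus-boundary construction of \Cref{prop:degree-one-thom-gysin} one circle factor at a time over $T_{j-1}$, obtain naturality and independence of metrics and radii stagewise, and obtain the permutation sign from adjacent transpositions (the degree-two Thom classes commute, the degree-one boundary maps anticommute). However, the local description fails as you have set it up. You take $p_j^!:=\partial\circ\operatorname{Th}$ with the degree-one convention $[\gamma]\mapsto[S^1]\times[\gamma]$. Then every stage puts the new circle \emph{in front} of the factors already present, so on a trivializing set
\[
p_k^!\circ\cdots\circ p_1^!(c)=[S^1_{a_k}]\times\cdots\times[S^1_{a_1}]\times c .
\]
The order of $I$ orients the fibres of $q_I$ via the ordered product $S^1_{a_1}\times\cdots\times S^1_{a_k}\times U$. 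Identifying your class with a class there costs the Koszul sign for reversing $k$ degree-one classes, namely $(-1)^{k(k-1)/2}$. This is the same sign you invoke in your reordering paragraph. So for $k\equiv 2,3\pmod 4$, already for $k=2$, your $q_I^!$ is minus the map in the statement. ``Placing the circle factors in the order $a_1,\ldots,a_k$'' is not a free choice of convention here. The statement fixes the ordered product, and the tube formula \Cref{thm:iterated-tube-formula} uses exactly this orientation together with the residue convention \eqref{eq:ordered-log-expansion}.

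The repair is the normalization used in the paper: set $p_j^!:=(-1)^{j-1}\partial_j\circ\operatorname{Th}_j$. At stage $j$ the new circle must be moved past the $j-1$ circles already present, which costs $(-1)^{j-1}$, and the prefactor cancels this. Induction then gives $[S^1_{a_1}]\times\cdots\times[S^1_{a_k}]\times c$. Your remaining arguments are unaffected. The global factor $(-1)^{k(k-1)/2}$ is the same for $I$ and $I_\sigma$, and it commutes with naturality and with changes of metrics and radii. For the adjacent-transposition step, phrase the computation as the paper does: the iterated boundary of the product of the two Thom classes in the fibrewise product of the two disk bundles. Avoid calling it a Gysin map of the rank-four bundle $\widetilde L_j\oplus\widetilde L_{j+1}$, since the torus bundle is not the sphere bundle of that sum and so this is not a standard Thom--Gysin map.
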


\begin{proof}
Let $r_j:T_j\to Z$ be the natural projection, with $r_0=\mathrm{id}_Z$, and put $\mathcal V_j=r_j^*\mathcal V$. The pullback $r_{j-1}^*L_j$ is a complex line bundle over $T_{j-1}$. Let $B_j$ be its closed unit disk bundle, with projection $b_j:B_j\to T_{j-1}$. Its unit circle bundle is naturally identified with $T_j$, and the restriction of $b_j$ to $T_j$ is $p_j$.

The complex orientation gives the homology Thom isomorphism
\[
\operatorname{Th}_j:
H_d(T_{j-1},\mathcal V_{j-1})
\xrightarrow{\sim}
H_{d+2}(B_j,T_j;b_j^*\mathcal V_{j-1});
\]
see \cite[Chapter~VI, Section~11]{Bredon1993} for the Thom construction. On a trivializing open set, we use the convention $\operatorname{Th}_j(\xi)=[D^2,S^1]\times\xi$, where $[D^2,S^1]$ is the relative fundamental class of the positively oriented disk. Let $\partial_j$ be the boundary map of the pair $(B_j,T_j)$. Since $b_j^*\mathcal V_{j-1}|_{T_j}=\mathcal V_j$, its target is $H_{d+1}(T_j,\mathcal V_j)$. Define
\[
p_j^!=(-1)^{j-1}\partial_j\circ\operatorname{Th}_j,
\quad
q_I^!=p_k^!\circ\cdots\circ p_1^!.
\]
Each $p_j^!$ raises the homological degree by one, so $q_I^!$ has the required source and target.

We next check the local formula. With the chosen Thom convention, $\partial_j\circ\operatorname{Th}_j$ places the new positive circle before all existing factors. After the first $j-1$ steps, moving this circle past the preceding $j-1$ circle factors contributes the sign $(-1)^{j-1}$. The sign in the definition of $p_j^!$ therefore puts the new circle after these factors and before the original cycle. Induction gives the ordered product in the statement.

Both the Thom isomorphism and the boundary map are natural in the coefficient system, so $q_I^!$ is natural in $\mathcal V$. Different metrics and positive radii give disk and circle bundles identified by fibrewise radial rescaling. These identifications preserve the orientations and commute with the Thom and boundary maps. Hence the resulting map is independent of these choices.

Finally, it is enough to consider an interchange of two adjacent components. In the fibrewise product of their disk bundles, the two Thom maps commute because the disk fibres have even dimension, whereas the two boundary maps anticommute. Thus an adjacent interchange changes $q_I^!$ by $-1$. The formula for an arbitrary permutation follows.
\end{proof}

The following is called the iterated tube formula. 

\begin{theorem}
\label{thm:iterated-tube-formula}
Let $\Theta$ be a closed coefficient-valued top form on a punctured tubular neighbourhood, logarithmic in the $I$-normal directions. For $\gamma\in H_{n-k}(Z,\Lcal_{I,\alpha}^\vee)$,
\begin{equation}\label{eq:iterated-tube-formula}
 \int_{q_I^!\gamma}\Theta=(2\pi i)^k
 \int_\gamma\Res_I(\Theta).
\end{equation}
\end{theorem}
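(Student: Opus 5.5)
The proof will generalize the one-component computation in \Cref{lem:twisted-tube-formula}: localize, integrate over the torus fibres, and glue. Both sides of \eqref{eq:iterated-tube-formula} are linear in $\gamma$ and natural under subdivision. By \Cref{prop:ordered-torus-umkehr}, $q_I^!$ is built from local ordered products. So I would first represent $\gamma$ by a finite twisted singular chain $\sum_\ell c_\ell\otimes e_\ell^\vee$. Each simplex $c_\ell$ should lie in an open set $V_\ell\subset Z$ carrying SNC coordinates $(x,y)$ as in \eqref{eq:ordered-log-expansion}, a trivialization of every $L_j$, and a flat frame $e_\ell$ of $\Lcal_{I,\alpha}$ dual to $e_\ell^\vee$. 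Over such a $V_\ell$, \Cref{prop:ordered-torus-umkehr} identifies $q_I^!$ with the chain $[S^1_{a_1}]\times\cdots\times[S^1_{a_k}]\times c_\ell$, realized at small radii $|x_j|=\varepsilon_j(y)$. The twisted Stokes pairing \eqref{eq:twisted-stokes} shows that the integral of the closed form $\Theta$ over the cycle $q_I^!\gamma$ depends only on its homology class. Hence I may compute it as the sum of these local integrals, using any admissible radii.

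On each piece, write $\Theta$ in the frame $e_\ell$ as $\frac{dx_1}{x_1}\wedge\cdots\wedge\frac{dx_k}{x_k}\wedge\beta(x,y)$, with $\beta$ holomorphic. Integrating along the torus fibre is Fubini over the circles in the given order. The fibre $(n-k)$-form is
\[
\int_{|x_1|=\varepsilon_1}\cdots\int_{|x_k|=\varepsilon_k}\frac{dx_1}{x_1}\wedge\cdots\wedge\frac{dx_k}{x_k}\wedge\beta .
\]
Expanding $\beta$ in powers of $x$, only the constant term survives by Cauchy's theorem, and it contributes $(2\pi i)^k\beta(0,y)$. Terms of $\beta$ that themselves contain some $dx_j$ restrict to zero on the torus fibre together with the full normal wedge, for degree reasons; this is the same argument as in \Cref{lem:twisted-tube-formula}. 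The dependence of $\varepsilon_j$ on $y$ causes no trouble, because the vanishing of the nonconstant Laurent terms holds for each fixed $y$, or alternatively one can deform to constant radii. Thus each local integral equals $(2\pi i)^k\int_{c_\ell}\Res_I(\Theta)$, paired with $e_\ell^\vee$.

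The main obstacle is the bookkeeping of signs and gluing. First, the orientation of $q_I^!$, with circles in the order $a_1,\ldots,a_k$ and placed before $c$, has to match the order of the logarithmic differentials in \eqref{eq:ordered-log-expansion} so that the sign is $+$. The convention in \Cref{prop:ordered-torus-umkehr}, with the torus factor first, is exactly the one for which Fubini gives $\int_{T\times c}\omega_T\wedge\beta=\int_T\omega_T\cdot\int_c\beta$. Second, on overlaps of the $V_\ell$, changes of defining equations and of flat frames leave $\Res_I(\Theta)$ unchanged by \Cref{lem:ordered-residue-conventions}. They also change the local torus chains only by homotopies inside the punctured neighbourhood, and the twisted Stokes formula shows these do not change the integral. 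Adding the local pieces then gives $(2\pi i)^k\int_\gamma\Res_I(\Theta)$, which is \eqref{eq:iterated-tube-formula}.
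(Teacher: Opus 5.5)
Your local computation is correct, and it is even exact: on chart-vertical tori $\{|x_j|=\varepsilon_j(y)\}$ inside a slice $y=\mathrm{const}$, one has $dx_j/x_j=d\log\varepsilon_j+i\,d\varphi_j$. The $d\log\varepsilon_j$ terms carry no angular differential, and the mean-value property in each $x_j$ gives $(2\pi i)^k\beta(0,y)$.

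The gap is in the gluing. The pieces $[S^1_{a_1}]\times\cdots\times[S^1_{a_k}]\times c_\ell$ are chains with boundary. Over a face shared by $c_\ell$ and $c_{\ell'}$, the two charts produce \emph{different} tori in $Y\setminus D$. The defining equations differ by units, $x_j'=v_jx_j$ with $|v_j|$ nonconstant, and the slice $y'=\mathrm{const}$ is not the slice $y=\mathrm{const}$ (for instance when $y'=y+x_1$). Hence $\sum_\ell$ of your pieces is not a cycle and does not represent $q_I^!\gamma$; \Cref{prop:ordered-torus-umkehr} only describes $q_I^!$ on classes supported in one trivializing set. Twisted Stokes gives invariance of $\int\Theta$ under homologies of \emph{cycles}, or of chains with fixed boundary. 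Replacing one local tube by another over $c_\ell$ moves its boundary, and this changes the integral by $\int\Theta$ over the $n$-chain swept out over $\partial c_\ell$. Closing up the cycle requires such correction chains over every interior face of the subdivision, and further ones over lower-dimensional faces. Your sentence that these homotopies ``do not change the integral'' is exactly the unproved step.

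The paper avoids this by fixing one smooth tubular map over a neighbourhood of the support of $c$. Then the torus bundle over $c$ is an honest cycle, and fibre integration gives a global form $\eta_\varepsilon$ with $\int_{q_I^!\gamma}\Theta=\int_c\eta_\varepsilon$. The price is that these tori are not vertical in holomorphic charts, so the Cauchy computation holds only up to $O(\varepsilon)$, uniformly on compacts. The proof closes by combining this with the $\varepsilon$-independence of the tube integral and passing to the limit.

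Your scheme can be repaired with the same ingredient. Choose the correction homotopies at scale $\varepsilon$. A correction chain over a face of real dimension $n-k-m$ is parametrized by $(S^1)^k\times\Delta^m\times\tau$. On it, $dy_1\wedge\cdots\wedge dy_{n-k}$ has at least $m\geq1$ factors of size $O(\varepsilon)$, while the factors $dx_j/x_j$ and the holomorphic coefficient of $\Theta$ stay bounded. So the total correction is $O(\varepsilon)$. The integral over the genuine tube cycle and your exact local sum are both independent of $\varepsilon$, so the correction is also independent of $\varepsilon$, and hence it is zero. Some such limiting (or explicit cancellation) argument must be added for the proof to be complete.
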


\begin{proof}
Let $c$ be a smooth singular cycle representing $\gamma$, and let $K\subset Z$ be its compact support. Choose a smooth tubular map over a neighbourhood $V$ of $K$ that induces the identity on the normal bundle. For sufficiently small $\varepsilon>0$, let $\iota_\varepsilon:T_k|_V\to Y\setminus D$ be the embedding obtained by multiplying all chosen normal radii by $\varepsilon$. The coefficient systems are identified along this radial homotopy. The resulting tube cycles are homologous in the punctured neighbourhood, so closedness of $\Theta$ implies that $\int_{q_I^!\gamma}\iota_\varepsilon^*\Theta$ is independent of $\varepsilon$.

The normal monodromies are trivial, so $\iota_\varepsilon^*\Theta$ has coefficients in $q_I^*\Lcal_{I,\alpha}$. Let $(q_I)_*$ denote integration along the torus fibres, oriented in the order $(a_1,\ldots,a_k)$, and put
\[
\eta_\varepsilon=(q_I)_*(\iota_\varepsilon^*\Theta).
\]
This is an $\Lcal_{I,\alpha}$-valued $(n-k)$-form on $V$. Integration along the fibres is adjoint to the Thom and boundary construction in \Cref{prop:ordered-torus-umkehr}. With the normal circle factors before the base, this gives
\[
\int_{q_I^!\gamma}\iota_\varepsilon^*\Theta
=\int_c\eta_\varepsilon.
\]

Choose local SNC coordinates $(x,y)$ and a local flat frame of the coefficient system. Let $\varphi_j$ be the positive angular coordinate on the $j$th normal circle. Since the tubular map induces the identity on the normal bundle, its shrinking satisfies
\[
\iota_\varepsilon^*\left(\frac{dx_j}{x_j}\right)
=
i\,d\varphi_j+q_I^*(d\log\lambda_j)+O(\varepsilon),
\qquad
\iota_\varepsilon^*\beta
=
q_I^*\beta(0,y)+O(\varepsilon),
\]
where $\lambda_j$ is the positive smooth radius function in this trivialization. The estimates are uniform on compact subsets. By \eqref{eq:ordered-log-expansion}, integration over the $k$-torus fibres selects the term containing all $k$ angular differentials. Hence
\[
\lim_{\varepsilon\to0}\eta_\varepsilon
=
\left(
\int_{(S^1)^k}
i\,d\varphi_1\wedge\cdots\wedge i\,d\varphi_k
\right)
\beta(0,y)
=
(2\pi i)^k\Res_I(\Theta).
\]

These local computations agree on overlaps by \Cref{lem:ordered-residue-conventions}. The transition functions of the coefficient system and its dual cancel in the integration pairing. Since $K$ is compact, finitely many such charts suffice, and the uniform convergence allows the limit to pass through integration over $c$. The tube integral is independent of $\varepsilon$, so
\[
\int_{q_I^!\gamma}\Theta
=
\lim_{\varepsilon\to0}\int_c\eta_\varepsilon
=
(2\pi i)^k\int_c\Res_I(\Theta).
\]
The residue of a closed logarithmic form is closed, so the last integral depends only on $\gamma$. This proves \eqref{eq:iterated-tube-formula}.
\end{proof}

\begin{theorem}\label{thm:iterated-tube-exactness}
If $\Theta=\nabla\eta$ on the punctured tubular neighbourhood and $\Theta$ is logarithmic in the $I$-normal directions, then
\begin{equation*}
[\Res_I(\Theta)]=0
 \quad\text{in}\quad
 H^{n-k}_{\mathrm{dR}}(Z,\Lcal_{I,\alpha}).
\end{equation*}
\end{theorem}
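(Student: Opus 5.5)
The plan is to copy the surface argument of \Cref{thm:tube-residue-exactness}, replacing the single tube by the iterated tube of \Cref{thm:iterated-tube-formula}. By the perfect pairing \eqref{eq:iterated-perfect-pairing}, it suffices to show that
\[
\int_\gamma\Res_I(\Theta)=0
\quad\text{for every }\gamma\in H_{n-k}(Z,\Lcal_{I,\alpha}^\vee).
\]
First, $\Theta=\nabla\eta$ is closed, since $\nabla^2=0$ for the flat connection. By the remark after \Cref{thm:iterated-tube-formula}, $\Res_I(\Theta)$ is then closed, so its de Rham class is defined and the pairing makes sense.

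Fix $\gamma$ and represent it by a smooth singular cycle $c$ with compact support $K\subset Z$. Choose radii small enough that the embedded torus-bundle representative of $q_I^!\gamma$ lies in the punctured tubular neighbourhood where $\eta$ is defined. This representative also avoids all other components of $D$, because $K$ is compact and lies in $D_I^\circ$.

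Next I would use the construction in \Cref{prop:ordered-torus-umkehr}. Each $p_j^!$ is, up to sign, the composite $\partial_j\circ\operatorname{Th}_j$, and a boundary map sends cycles to cycles. Hence $q_I^!\gamma$ is represented by a closed twisted cycle with coefficients in $q_I^*\Lcal_{I,\alpha}^\vee$; locally it is the product of the normal circles with $c$. Twisted Stokes \eqref{eq:twisted-stokes}, applied in local flat frames, then gives
\[
\int_{q_I^!\gamma}\nabla\eta=\int_{\partial(q_I^!\gamma)}\eta=0 .
\]
Combining this with \eqref{eq:iterated-tube-formula} yields $(2\pi i)^k\int_\gamma\Res_I(\Theta)=0$. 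Since $\gamma$ was arbitrary, $[\Res_I(\Theta)]=0$.

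The main point needing care is that twisted Stokes applies on the chosen tube. This needs three things.
\begin{itemize}
\item The form $\eta$ must be defined as a single-valued $\Lcal$-valued form near the tube. This holds because it lives on the punctured neighbourhood; $\eta$ is only assumed meromorphic with finite poles along $D$, as in the surface case, and no logarithmic hypothesis on $\eta$ is needed.
\item The cycle must carry dual coefficients compatible with the monodromy of $\Lcal$. This holds because the normal monodromies along the $D_{a_j}$ are trivial by \eqref{eq:iterated-zero-index}, so $\Lcal|_{T_k}\simeq q_I^*\Lcal_{I,\alpha}$.
\item The cycle must be closed. This is the boundary-map observation above.
\end{itemize}
With these checks, the rest is formal.
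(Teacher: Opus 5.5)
Your proposal is correct and follows essentially the same route as the paper's proof. You pair with an arbitrary $\gamma$, represent $q_I^!\gamma$ by a tube cycle at small positive radii where $\eta$ is defined, and kill $\int_{q_I^!\gamma}\nabla\eta$ by twisted Stokes. You then convert this via the iterated tube formula \eqref{eq:iterated-tube-formula} and conclude with the perfect pairing \eqref{eq:iterated-perfect-pairing}. Your additional checks (that $\Theta$ and its residue are closed, that the coefficients are $q_I^*\Lcal_{I,\alpha}$, and that the tube is a genuine cycle avoiding the other components) are the points the paper's short proof leaves implicit.
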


\begin{proof}
Let $[\gamma]\in H_{n-k}(Z,\Lcal_{I,\alpha}^\vee)$. The class $q_I^![\gamma]$ is represented at positive normal radii, where $\eta$ is defined, and it is a cycle. Twisted Stokes therefore gives
\begin{equation*}
\int_{q_I^![\gamma]}\nabla\eta=0.
\end{equation*}
Applying \Cref{thm:iterated-tube-formula} with $\Theta=\nabla\eta$ yields
\begin{equation*}
0=(2\pi i)^k\left\langle[\Res_I(\Theta)],[\gamma]\right\rangle.
\end{equation*}
This holds for every $[\gamma]$, so the perfect pairing \eqref{eq:iterated-perfect-pairing} gives $[\Res_I(\Theta)]=0$.
\end{proof}

\subsection{The iterated residue obstruction}

\begin{theorem}
\label{thm:iterated-residue-obstruction}
Let $I=(a_1,\ldots,a_k)$ be a nonempty ordered SNC stratum satisfying \eqref{eq:iterated-zero-index}, and let $\omega$ be a local nowhere vanishing holomorphic top form. If
\begin{equation*}
[R_{I,\alpha}]
 :=[\Res_I\pi^*(u_\alpha\omega)]\ne0
 \quad\text{in}\quad
 H^{n-k}_{\mathrm{dR}}(D_I^\circ,\Lcal_{I,\alpha}),
\end{equation*}
then
\begin{equation*}
u_\alpha\notin\D_{X,0}(hu_\alpha),
 \quad \text{and}\quad 
 -\alpha\in Z(B_{F,0}).
\end{equation*}
\end{theorem}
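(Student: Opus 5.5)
The plan is to argue by contradiction, following the same template as \Cref{thm:direct-residue-obstruction} but with the iterated tube machinery. Suppose $u_\alpha=P(hu_\alpha)$ for some $P\in\D_{X,0}$. By \Cref{prop:green-formula}, applied with the nowhere vanishing top form $\omega$, we get
\[
u_\alpha\omega=(hu_\alpha)P^\dagger\omega+\nabla_{\Lcal_\alpha}\eta_P(hu_\alpha,\omega),
\]
where $\eta_P(hu_\alpha,\omega)$ is meromorphic with finite pole order along $D$. We then pull this identity back to $Y$, using that $\pi^*$ commutes with the flat connections. We restrict it to a punctured tubular neighbourhood of a connected component $Z$ of $D_I^\circ$ on which the class $[R_{I,\alpha}]$ is nonzero. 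Such a $Z$ exists because the cohomology of $D_I^\circ$ splits over its connected components; shrinking the germ representative, we may take $Z$ to meet $\pi^{-1}(0)$.

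Next we compute residues term by term. The left side is $\pi^*(u_\alpha\omega)$. By \eqref{eq:iterated-leading-form} it is logarithmic in the $I$-normal directions, and its ordered residue is $R_{I,\alpha}$. The first term on the right is $\pi^*((hu_\alpha)\theta)$ with $\theta=P^\dagger\omega$ holomorphic. By \eqref{eq:iterated-shifted-order} it is $\prod_j x_j^{n_{a_j}-1}$ times a holomorphic top form. Since $n_{a_j}\ge1$, it is holomorphic, hence logarithmic with coefficient divisible by $x_1\cdots x_k$, so its ordered residue vanishes identically. The exact term $\Theta:=\nabla\pi^*\eta_P$ is the difference of two logarithmic forms, so it is itself logarithmic in the $I$-normal directions. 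It is closed because $\nabla^2=0$.

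Now apply \Cref{thm:iterated-tube-exactness} to $\Theta$ on the punctured tubular neighbourhood of $Z$. The normal monodromies are trivial by \eqref{eq:iterated-zero-index}, so the coefficient system descends to $\Lcal_{I,\alpha}$ and the hypotheses are met. This gives $[\Res_I\Theta]=0$ in $H^{n-k}_{\mathrm{dR}}(Z,\Lcal_{I,\alpha})$. Since $\Res_I$ is additive, we conclude $[R_{I,\alpha}|_Z]=0$, which contradicts the choice of $Z$. Hence $u_\alpha\notin\D_{X,0}(hu_\alpha)$, and \Cref{prop:affine-point-lifting} then yields $-\alpha\in Z(B_{F,0})$.

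The main obstacle is justifying that $\Theta$ qualifies as a logarithmic form even though $\eta_P$ itself may have high-order poles; this is the point where \Cref{thm:iterated-tube-exactness} only requires $\Theta$, not $\eta$, to be controlled. The remaining bookkeeping is minor. We need the tube representative to lie inside the neighbourhood where the Green identity holds, which follows from compactness of cycle supports. We also need the flat frames on different charts to match, which is handled by \Cref{lem:ordered-residue-conventions}.
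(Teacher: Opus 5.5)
Your proposal is correct and follows essentially the same route as the paper: pull back the Green identity, observe that the shifted term has zero ordered residue by \eqref{eq:iterated-shifted-order}, deduce that $\nabla\pi^*\eta_P$ is logarithmic as a difference of logarithmic forms, apply \Cref{thm:iterated-tube-exactness}, and finish with \Cref{prop:affine-point-lifting}. One minor remark: you do not need, and shrinking cannot always arrange, that the chosen component $Z$ meets $\pi^{-1}(0)$ (for instance, on a surface with at least one blow-up, the stratum $C^\circ$ of a strict transform misses the fibre); the tube-exactness argument never uses that condition, so it applies on whichever component carries the nonzero class.
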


\begin{proof}
Suppose, to the contrary, that $u_\alpha=P(hu_\alpha)$ for some $P\in\D_{X,0}$. By \Cref{prop:green-formula},
\begin{equation*}
u_\alpha\omega=(hu_\alpha)P^\dagger\omega+\nabla\eta_P(hu_\alpha,\omega).
\end{equation*}
Pulling this identity back to $Y$ gives
\begin{equation*}
\pi^*(u_\alpha\omega)=\pi^*((hu_\alpha)P^\dagger\omega)+\nabla\pi^*\eta_P(hu_\alpha,\omega).
\end{equation*}
The form $P^\dagger\omega$ is holomorphic because $P$ has holomorphic coefficients. Hence \eqref{eq:iterated-shifted-order} shows that the first term on the right has zero ordered $k$-fold residue. By \eqref{eq:iterated-leading-form}, the left-hand side is logarithmic in all $k$ normal directions. It follows that the differential of the pulled-back concomitant is also logarithmic in those directions, even if the concomitant itself has higher finite poles. The residue class of this exact term is zero by \Cref{thm:iterated-tube-exactness}. Taking ordered residue classes in the pulled-back Green formula therefore gives $[R_{I,\alpha}]=0$, contradicting the hypothesis. The second assertion follows from \Cref{prop:affine-point-lifting}; its proof is independent of the dimension of $X$ and applies verbatim here.
\end{proof}

For $k=1$ this recovers the mechanism underlying \Cref{thm:direct-residue-obstruction}; for $n=k=2$ it recovers the local double-residue obstruction. In the coordinate normal-crossing model $F=(x_1,\ldots,x_k)$ with $\alpha=(1,\ldots,1)$,
\[
\Res_I\left(u_\alpha\,dx_1\wedge\cdots\wedge dx_n\right)
=
dx_{k+1}\wedge\cdots\wedge dx_n.
\]
In particular, when $k=n$, the residue is the nonzero scalar $1$. Generalized nearby cycles and Bernstein supports along logarithmic strata are studied from a different $\D$-module perspective in \cite{Wu2026}; the criterion above instead detects a specified affine parameter directly from a nonzero coefficient residue.

\subsection{Motivic and topological orders} \label{sec:higher-motivic-orders}

For this subsection, assume that $(X,0)$ is a smooth complex algebraic germ of dimension $n$ and that the $f_i$ are regular germs. We use the notation
\[
\Mmot=K_0(\operatorname{Var}_{\C})[\Lef^{-1}]
\]
from \Cref{sec:motivic-comparison}, and write $\mathbf T=(T_1,\ldots,T_r)$ and $\mathbf T^N=\prod_iT_i^{N_i}$. We also use the localization
\[
\mathscr R_r=S_r^{-1}\Mmot[\mathbf T]\subset\Mmot[\![\mathbf T]\!]
\]
introduced there.

The non-equivariant multivariable motivic zeta function has the resolution formula
\begin{equation}\label{eq:higher-motivic-resolution}
Z^{\mathrm{mot}}_{F,0}(\mathbf T)
=
\Lef^{-n}
\sum_{\varnothing\ne I\subseteq J}
[D_I^\circ\cap\pi^{-1}(0)]
\prod_{a\in I}
\frac{(\Lef-1)\Lef^{-\nu_a}\mathbf T^{N_a}}
{1-\Lef^{-\nu_a}\mathbf T^{N_a}}.
\end{equation}
In particular,
\[
Z^{\mathrm{mot}}_{F,0}(\mathbf T)\in\mathscr R_r\subset\Mmot[\![\mathbf T]\!].
\]
Its independence of the resolution follows from the motivic construction and the change-of-variables formula \cite[Theorem~2.2.1]{DenefLoeser1998}, together with the multivariable version \cite[D\'efinition~4.1.1 and Th\'eor\`eme~4.1.2]{Guibert2002}. The factor $\Lef^{-n}$ is an invertible normalization and does not affect the motivic order.

We use the multivariable Nicaise--Xu order of \Cref{def:motivic-NX-order}. The formal results \Cref{lem:motivic-formal-embedding,lem:motivic-finite-witness} are independent of the dimension and apply here without change. We restrict the intrinsic higher-dimensional motivic statements to algebraic germs; the residue results above remain valid in the analytic setting.

For a rational affine hyperplane $H$, let
\begin{equation*}
S_H=\{a\in J:V(N_a\cdot s+\nu_a)=H\}
\end{equation*}
and define the incidence depth
\begin{equation}\label{eq:motivic-incidence-depth}
 c_H(\pi)=\max_{\substack{\varnothing\ne I\subseteq J\\D_I^\circ\cap\pi^{-1}(0)\ne\varnothing}}|I\cap S_H|,
\end{equation}
with value zero when no component defines $H$. Since $D$ is SNC, $c_H(\pi)\leq n$.

\begin{theorem}\label{thm:higher-motivic-order-bounds}
For every rational affine hyperplane $H\subset\C^r$,
\begin{equation}\label{eq:higher-motivic-order-bounds}
 0\leq\ord_HZ^{\mathrm{top}}_{F,0}\leq\ord_H^{\mathrm{mot}}Z^{\mathrm{mot}}_{F,0}\leq c_H(\pi)\leq n.
\end{equation}
The two orders in the middle are intrinsic. The quantity $c_H(\pi)$ may depend on the chosen resolution and gives an upper bound.
\end{theorem}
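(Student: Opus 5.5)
The plan is to prove the three nontrivial inequalities in \eqref{eq:higher-motivic-order-bounds} separately. The outer bounds $0\le\ord_H$ and $c_H(\pi)\le n$ are immediate, the first from the definition and the second from the SNC property. Intrinsicness of the middle orders follows because $Z^{\mathrm{top}}_{F,0}$ and $Z^{\mathrm{mot}}_{F,0}$ are independent of the resolution, by \cite{DenefLoeser1998,Guibert2002}, and because $\ord_H^{\mathrm{mot}}$ is defined by membership in the fixed submodule $\mathscr P_{H,\le q}$ of $\Mmot[\![\mathbf T]\!]$.

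For $\ord_H^{\mathrm{mot}}\le c_H(\pi)$, I read the resolution formula \eqref{eq:higher-motivic-resolution} directly as a presentation. Only strata with $D_I^\circ\cap\pi^{-1}(0)\neq\varnothing$ contribute. The denominators of such a term are the factors $1-\Lef^{-\nu_a}\mathbf T^{N_a}$, that is, $d_{-\nu_a,N_a}$, and such a factor defines $H$ exactly when $a\in S_H$. Hence every reciprocal product contains at most $|I\cap S_H|\le c_H(\pi)$ denominators defining $H$. The numerators $[D_I^\circ\cap\pi^{-1}(0)](\Lef-1)^{|I|}\Lef^{-n-\sum\nu_a}\mathbf T^{\sum N_a}$ are polynomials over $\Mmot$, so $Z^{\mathrm{mot}}_{F,0}\in\mathscr P_{H,\le c_H(\pi)}$.

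The main step is $\ord_HZ^{\mathrm{top}}_{F,0}\le\ord_H^{\mathrm{mot}}Z^{\mathrm{mot}}_{F,0}$, which I would prove by adapting the Hodge--Deligne argument in the first half of \Cref{prop:scalar-motivic-topological-order} to several variables. Let $q=\ord_H^{\mathrm{mot}}$ and fix a finite presentation \eqref{eq:motivic-finite} using \Cref{lem:motivic-finite-witness}. Combined with the resolution presentation, this gives an equality in the common localization $S^{-1}\Mmot[\mathbf T]$ over the finitely many denominators involved; the equality is legitimate by \Cref{lem:motivic-formal-embedding}. Next apply $E_\Delta$, with $\Lef\mapsto u^2$, to get an equality in a localization of $\Z[u^{\pm1}][\mathbf T]$. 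Then substitute $u=1+h$ and $T_i=u^{-2s_i}$, and expand in $h$ with coefficients in $\C(s_1,\dots,s_r)$.

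For a pair $(a,b)$ write $\delta=a-b\cdot s$. Then $1-u^{2\delta}=-2\delta\ell\,U(\delta,\ell)$ with $U$ a unit. So each denominator contributes a factor $\ell^{-1}$ times $\delta^{-1}$ times a power series in $\ell$ whose coefficients are polynomial in $\delta$. Consequently every $h$-coefficient of a reciprocal product has pole order along $H$ at most the number of its denominators defining $H$, which is at most $q$; the remaining $\delta$ are units at the generic point of $H$. On the resolution side, the numerator factors $(\Lef-1)^{|I|}$ cancel the $\ell^{-|I|}$ exactly. The $h^0$-coefficient is then $\sum_I E_c(D_I^\circ\cap\pi^{-1}(0);1,1)\prod_{a\in I}L_a(s)^{-1}$, using the limit computed in \Cref{prop:scalar-motivic-topological-order}.

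The obstacle is the presentation side. There the factors $(\Lef-1)$ need not balance the $\ell^{-1}$'s, so negative powers of $h$ appear. I would handle this by multiplying both sides by $h^{M}$ for $M$ the maximal denominator count and comparing the coefficient of $h^0$ of the original identity. This should be coefficientwise legitimate, because both sides are Laurent series in $h$ with coefficients in $\C(s)$, and each Laurent coefficient of the presentation side is a finite sum of products of coefficients with pole order at most $q$ along $H$. I must also check that $\chi_c=\chi$ on the strata $D_I^\circ\cap\pi^{-1}(0)$, matching the convention in \eqref{eq:top-zeta}. For this I would either use that these are complex algebraic varieties, for which $\chi_c=\chi$ holds in general, or else replace $\chi$ by $\chi_c$ throughout. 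Granting this, the $h^0$-coefficient is $Z^{\mathrm{top}}_{F,0}$ and has pole order at most $q$ along $H$, which gives the inequality.
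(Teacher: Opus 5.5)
Your proposal is correct and follows essentially the same route as the paper. The upper bound is read directly from the resolution formula. The lower bound uses a finite witness presentation, the formal-embedding lemma, the diagonal Hodge--Deligne realization with the factorization $1-u^{2\delta}=-2\delta\ell\,U$, and recovers $Z^{\mathrm{top}}_{F,0}$ as the constant term. The only difference is cosmetic: the paper substitutes $u=e^t$ (so $\ell=t$) and counts poles by showing the realized series lies in $\ell_H^{-q}R_H((t))$ over the discrete valuation ring $R_H$, which makes your extra multiplication by $h^M$ unnecessary.
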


\begin{proof}
For every nonempty stratum $D_I^\circ\cap\pi^{-1}(0)$, the corresponding term in \eqref{eq:higher-motivic-resolution} contains exactly $|I\cap S_H|$ denominator factors defining $H$. The definition of the motivic order therefore gives $\ord_H^{\mathrm{mot}}Z^{\mathrm{mot}}_{F,0}\leq c_H(\pi)$. Since $D$ has simple normal crossings, at most $n$ components meet at any point, so $c_H(\pi)\leq n$. The nonnegativity of $\ord_HZ^{\mathrm{top}}_{F,0}$ follows from its definition.

It remains to prove
\[
\ord_HZ^{\mathrm{top}}_{F,0}\leq\ord_H^{\mathrm{mot}}Z^{\mathrm{mot}}_{F,0}.
\]
Put $q=\ord_H^{\mathrm{mot}}Z^{\mathrm{mot}}_{F,0}$. By \Cref{lem:motivic-finite-witness}, there is an equality
\[
Z^{\mathrm{mot}}_{F,0}
=
\sum_{\ell=1}^M P_\ell(\mathbf T)
\prod_{(a,b)\in S_\ell}(1-\Lef^a\mathbf T^b)^{-1},
\]
where $P_\ell(\mathbf T)\in\Mmot[\mathbf T]$ and each $S_\ell$ is a finite multiset containing at most $q$ pairs with $H(a,b)=H$, counted with multiplicity. By \Cref{lem:motivic-formal-embedding}, this sum and \eqref{eq:higher-motivic-resolution} are equal in the localization of $\Mmot[\mathbf T]$ obtained by inverting the denominator factors in the two formulas.

Choose an affine linear polynomial $\ell_H$ defining $H$, and put $R_H=\C[s_1,\ldots,s_r]_{(\ell_H)}$. This is a discrete valuation ring with uniformizer $\ell_H$. Apply the diagonal Hodge--Deligne realization $E_\Delta([V])=E_c(V;u,u)$ to the coefficients, followed by the substitutions $u=e^t$ and $T_i=e^{-2ts_i}$. A denominator $1-\Lef^a\mathbf T^b$ then becomes $1-e^{2t\delta}$, where $\delta=a-b\cdot s$. We have
\begin{equation}\label{eq:higher-motivic-exponential-factor}
1-e^{2t\delta}
=
-2t\delta\,U_\delta(t),
\quad
U_\delta(t)=\frac{e^{2t\delta}-1}{2t\delta}
\in R_H[\![t]\!]^\times.
\end{equation}
Indeed, $U_\delta(t)$ has constant term one. Since $\delta$ is a nonzero polynomial, every denominator image is invertible in $\C(s_1,\ldots,s_r)((t))$. Thus the realization and substitutions define a homomorphism $\Phi$ from the localization above to this field.

If $H(a,b)=H$, then $\delta$ is a nonzero constant multiple of $\ell_H$, and \eqref{eq:higher-motivic-exponential-factor} gives
\[
\Phi\bigl((1-\Lef^a\mathbf T^b)^{-1}\bigr)
\in \ell_H^{-1}R_H((t)).
\]
If $H(a,b)\ne H$, then $\delta$ is a unit in $R_H$, so the same inverse belongs to $R_H((t))$. The series $\Phi(P_\ell)$ has coefficients in $\C[s_1,\ldots,s_r]$: each monomial $u^d\mathbf T^\beta$ in the realized numerator becomes $e^{t(d-2\beta\cdot s)}$. Multiplication by the numerator therefore introduces no additional pole along $H$.

Each product contains at most $q$ factors whose pairs define $H$. Since all the products and the sum are finite, we obtain
\[
\Phi\bigl(Z^{\mathrm{mot}}_{F,0}\bigr)
\in \ell_H^{-q}R_H((t)).
\]

We now compute the constant term using \eqref{eq:higher-motivic-resolution}. For each resolution component,
\[
(e^{2t}-1)
\frac{e^{-2t(N_a\cdot s+\nu_a)}}{1-e^{-2t(N_a\cdot s+\nu_a)}}
=
\frac{1}{N_a\cdot s+\nu_a}+O(t).
\]
Every factor in the realized resolution formula has no negative powers of $t$. The constant term of $E_c(D_I^\circ\cap\pi^{-1}(0);e^t,e^t)$ is $\chi(D_I^\circ\cap\pi^{-1}(0))$, and $\Phi(\Lef^{-n})=e^{-2nt}$ has constant term one. Hence the constant term of $\Phi(Z^{\mathrm{mot}}_{F,0})$ is $Z^{\mathrm{top}}_{F,0}$.

It follows that $Z^{\mathrm{top}}_{F,0}\in\ell_H^{-q}R_H$, and therefore
\[
\ord_HZ^{\mathrm{top}}_{F,0}
\leq q
=
\ord_H^{\mathrm{mot}}Z^{\mathrm{mot}}_{F,0}.
\]
This completes the proof.
\end{proof}

\subsection{Maximal-order polar hyperplanes}

Poles of maximal order in the scalar setting are related to multiplicities of roots of the Bernstein--Sato polynomial and maximal monodromy Jordan blocks in \cite{MelleTorrelliVeys2009}. For motivic zeta functions, Nicaise--Xu prove that the negative log canonical threshold is the only possible pole of maximal order \cite{NicaiseXu2016}. The argument below uses only the positivity of the deepest coefficient to show that the corresponding affine hyperplane is contained in the Bernstein--Sato zero locus.

For an affine hyperplane $H$ defined by resolution components, keep $S_H$ from above and choose $L_H$ with constant term one. Then $N_a\cdot s+\nu_a=\nu_aL_H$ for $a\in S_H$. Pole order below means generic order along $H$.

\begin{lemma}
\label{lem:deepest-coefficient}
The coefficient of $L_H^{-n}$ in the resolution expression for $Z^{\mathrm{top}}_{F,0}$ is
\begin{equation}\label{eq:deepest-coefficient}
 C_{-n}(H) = \sum_{I\subseteq S_H,|I|=n}\frac{\#(D_I^\circ\cap\pi^{-1}(0))}{\prod_{a\in I}\nu_a}.
\end{equation}
Every summand is nonnegative.
\end{lemma}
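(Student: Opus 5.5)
The plan is to read off the coefficient of $L_H^{-n}$ term by term from the resolution expression, in the same way as \Cref{thm:grouped-laurent}. The higher-dimensional topological zeta function is
\[
Z^{\mathrm{top}}_{F,0}=\sum_{I\subseteq J}\chi\bigl(D_I^\circ\cap\pi^{-1}(0)\bigr)\prod_{a\in I}L_a^{-1},
\]
with $L_a=N_a\cdot s+\nu_a$. We work in the discrete valuation ring $\C[s]_{(L_H)}$. For $a\in S_H$ one has $L_a=\nu_aL_H$ as an exact identity of affine forms; this is the analogue of \Cref{lem:positive-wall-normalization}, with the same proof. For $a\notin S_H$, $L_a$ is a unit. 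Hence the term indexed by $I$ equals
\[
\chi\bigl(D_I^\circ\cap\pi^{-1}(0)\bigr)\,L_H^{-|I\cap S_H|}\prod_{a\in I\cap S_H}\nu_a^{-1}\prod_{a\in I\setminus S_H}L_a^{-1},
\]
which has pole order at most $|I\cap S_H|$ along $H$.

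Next I use the SNC bound. A nonempty stratum has $|I|\le n$, so $|I\cap S_H|\le n$, and equality forces $|I|=n$ with $I\subseteq S_H$. Every term with $|I\cap S_H|<n$ therefore contributes nothing at order $L_H^{-n}$. The only contributions come from $I\subseteq S_H$ with $|I|=n$. For these, the product $\prod_{a\in I}L_a^{-1}=L_H^{-n}\prod_{a\in I}\nu_a^{-1}$ holds exactly, with no lower-order tail, so each such term contributes exactly its coefficient.

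Finally I identify that coefficient. When $|I|=n$, the stratum $D_I^\circ$ is a discrete set of points, so $\chi(D_I^\circ\cap\pi^{-1}(0))=\#(D_I^\circ\cap\pi^{-1}(0))$. This count is finite because $\pi^{-1}(0)$ is compact, after shrinking to a representative. Summing over these $I$ gives \eqref{eq:deepest-coefficient}. Each summand is nonnegative because the counts are nonnegative integers and every $\nu_a\ge1$.

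The only real care needed is bookkeeping. First, the exact affine identity $L_a=\nu_aL_H$ ensures that products of $S_H$-denominators produce no hidden lower-order terms. Second, strata with $|I|=n$ that contain non-$S_H$ components have $|I\cap S_H|<n$ and so cannot contribute at order $n$. I do not expect a genuine obstacle beyond these two points.
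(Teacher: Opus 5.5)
Your proposal is correct and follows essentially the same route as the paper. Both arguments bound the pole order of each summand by $|I\cap S_H|$ and use the SNC bound $|I|\le n$ to isolate the terms with $I\subseteq S_H$ and $|I|=n$. They then identify $\chi$ with a point count on these zero-dimensional strata and apply the exact identity $L_a=\nu_aL_H$. The paper also observes that $D_I^\circ=\bigcap_{a\in I}D_a$ for such $I$, but your argument does not need this.
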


\begin{proof}
Since $D$ has simple normal crossings, every nonempty stratum $D_I^\circ$ satisfies $|I|\leq n$. At the generic point of $H$, the factor $N_a\cdot s+\nu_a$ vanishes precisely when $a\in S_H$. Thus the summand indexed by $I$ has pole order at most $|I\cap S_H|$ along $H$. Only the terms with $I\subseteq S_H$ and $|I|=n$ can therefore contribute to the coefficient of $L_H^{-n}$.

For such an index set $I$, the intersection $\bigcap_{a\in I}D_a$ is zero-dimensional or empty. No other component of $D$ meets this intersection, since otherwise more than $n$ components would pass through the same point. Hence $D_I^\circ=\bigcap_{a\in I}D_a$, and $D_I^\circ\cap\pi^{-1}(0)$ is a finite set of points. Its Euler characteristic is therefore its cardinality.

For every $a\in I$, the identity $N_a\cdot s+\nu_a=\nu_aL_H$ gives
\[
\chi(D_I^\circ\cap\pi^{-1}(0))
\prod_{a\in I}\frac{1}{N_a\cdot s+\nu_a}
=
\frac{\#(D_I^\circ\cap\pi^{-1}(0))}{\prod_{a\in I}\nu_a}L_H^{-n}.
\]
Summing over all $I\subseteq S_H$ with $|I|=n$ proves \eqref{eq:deepest-coefficient}. Every numerator is a nonnegative integer and every $\nu_a$ is positive, so all summands are nonnegative.
\end{proof}

\begin{theorem}
\label{thm:maximal-order-hyperplanes}
Let $\dim X=n$, and let $H$ be an irreducible component of the polar locus of $Z^{\mathrm{top}}_{F,0}$. If the generic pole order along $H$ is $n$, then
\begin{equation*}
H\subseteq Z(B_{F,0}).
\end{equation*}
\end{theorem}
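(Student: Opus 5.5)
The plan is to mimic the crossing case of \Cref{thm:main-thm-top}, with the double residue replaced by the $n$-fold local torus residue of \Cref{thm:iterated-residue-obstruction} at a zero-dimensional stratum. Normalize $H=V(L_H)$ with $L_H=1+\lambda_H\cdot s$ as in \Cref{lem:positive-wall-normalization}; the same argument works in dimension $n$ because $\nu_a>0$. Every term of the resolution formula has pole order at most $n$ along $H$. So if the generic order is $n$, the coefficient $C_{-n}(H)$ of \Cref{lem:deepest-coefficient} must be nonzero.

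All summands of \eqref{eq:deepest-coefficient} are nonnegative, so some $I=\{a_1,\ldots,a_n\}\subseteq S_H$ has $D_I^\circ\cap\pi^{-1}(0)\neq\varnothing$. Fix a point $p$ in this intersection. The components $D_{a_j}$ then meet transversally at $p$, no other component passes through $p$, and the connected component $Z=\{p\}$ of $D_I^\circ$ meets the local fibre.

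Next fix a rational point $\alpha\in W_H^+=\{\alpha\in\R_{>0}^r:\lambda_H\cdot\alpha=1\}$. Exactly as in \Cref{thm:same-wall-positivity}, the identity $N_a=\nu_a\lambda_H$ for $a\in S_H$ gives $N_{a_j}\cdot\alpha=\nu_{a_j}$ for all $j$. This is \eqref{eq:iterated-zero-index}. Nonemptiness of $W_H^+$ requires $\lambda_H\in\Q_{\geq0}^r\setminus\{0\}$, which holds since $N_a\in\Z_{\ge0}^r$ is nonzero and $\nu_a>0$.

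By \Cref{lem:iterated-simultaneous-orders} with $k=n$, we have
\[
\pi^*(u_\alpha\omega)=U\,\frac{dx_1}{x_1}\wedge\cdots\wedge\frac{dx_n}{x_n}
\]
near $p$, with $U(p)\neq0$. The ordered residue $\Res_I\pi^*(u_\alpha\omega)$ on the point $Z$ is therefore the scalar $U(p)$, viewed in the fibre of $\Lcal_{I,\alpha}$ at $p$. It is nonzero in $H^0_{\mathrm{dR}}(\{p\},\Lcal_{I,\alpha})\cong\C$. Here $H^0$ of a point is just the fibre, so no exactness issue arises.

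The class must be taken on the component $Z$ rather than on all of $D_I^\circ$. The proof of \Cref{thm:iterated-residue-obstruction} works componentwise: the tube exactness argument, \Cref{thm:iterated-tube-exactness}, is stated for a fixed component $Z$. So nonvanishing on $Z$ is sufficient, and that theorem gives $u_\alpha\notin\D_{X,0}(hu_\alpha)$ and $-\alpha\in Z(B_{F,0})$.

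The noncontainment therefore holds at every rational point of $W_H^+$, and these points are Zariski dense in $W_H$ by \Cref{lem:bounded-polytope-density}, which applies with no functions removed; the case $r=1$ is trivial. \Cref{thm:dense-wall-lifting} then yields $H\subseteq Z(B_{F,0})$. The analytic nature of the germ is irrelevant here, because only the residue argument is used.

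The main point to check is the first step: that generic order exactly $n$ forces $C_{-n}(H)\neq0$, rather than the order-$n$ contributions cancelling while some other term attains order $n$. This holds because each resolution term has order $|I\cap S_H|\le n$, with equality only for the terms summed in \eqref{eq:deepest-coefficient}. The coefficient of $L_H^{-n}$ in the Laurent expansion at the generic point of $H$ is therefore exactly $C_{-n}(H)$, a constant, and order $n$ means this constant is nonzero. Positivity then converts it into an actual crossing point $p$ over $0$. A secondary check is that the descended local system $\Lcal_{I,\alpha}$ on the point $p$ is nothing more than a one-dimensional fibre, which the trivial normal monodromies guarantee.
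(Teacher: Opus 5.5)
Your proof is correct and follows the paper's argument. Nonnegativity of the summands of $C_{-n}(H)$ gives a point $p\in\pi^{-1}(0)$ where $n$ components of $S_H$ meet; the $n$-fold torus residue of $\pi^*(u_\alpha\omega)$ at $p$ is the unit value $U(p)\neq0$ for every rational $\alpha\in W_H^+$; and rational density finishes. The only difference is cosmetic: the paper does the $k=n$ case directly, applying Cauchy's formula and Stokes on a small torus $T_\varepsilon$ around $p$ to the pulled-back Green formula, rather than citing \Cref{thm:iterated-residue-obstruction}, which is the same mechanism.
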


\begin{proof}
Since the pole order along $H$ is $n$, the coefficient $C_{-n}(H)$ is nonzero. Every summand in \eqref{eq:deepest-coefficient} is nonnegative. Hence there is a point $p\in\pi^{-1}(0)$ at which $n$ components $D_{a_1},\ldots,D_{a_n}$ defining $H$ meet.

Let $\alpha\in\Q_{>0}^r$ satisfy $-\alpha\in H$. Then $N_{a_j}\cdot\alpha=\nu_{a_j}$ for every $j$. Choose a nowhere vanishing holomorphic top form $\omega$ near $0\in X$ and SNC coordinates $(x_1,\ldots,x_n)$ centred at $p$, with $D_{a_j}=(x_j=0)$. The actual-branch monodromy around each component is $\exp(-2\pi iN_{a_j}\cdot\alpha)=1$, and hence the dual coefficient system is also trivial on a sufficiently small punctured polydisk around $p$. After choosing a flat frame, \eqref{eq:iterated-leading-form} gives
\[
\pi^*(u_\alpha\omega) = U(x)\frac{dx_1}{x_1}\wedge\cdots\wedge\frac{dx_n}{x_n},
\]
where $U$ is a holomorphic unit.

For sufficiently small positive radii $\varepsilon_j$, let $T_\varepsilon=\{|x_j|=\varepsilon_j,\ 1\leq j\leq n\}$, oriented by the positive circles in the order $(x_1,\ldots,x_n)$. Repeated application of Cauchy's integral formula gives
\[
\frac{1}{(2\pi i)^n}\int_{T_\varepsilon}\pi^*(u_\alpha\omega) = U(0)\ne0.
\]

Suppose that $u_\alpha=P(hu_\alpha)$ for some $P\in\D_{X,0}$. By \Cref{prop:green-formula}, pulling back the Green formula gives
\[
\pi^*(u_\alpha\omega) = \pi^*((hu_\alpha)P^\dagger\omega) + \nabla\pi^*\eta_P(hu_\alpha,\omega).
\]
The form $P^\dagger\omega$ is holomorphic. Thus \eqref{eq:iterated-shifted-order} shows that the first term on the right is holomorphic near $p$, and its integral over $T_\varepsilon$ is zero. The form $\pi^*\eta_P(hu_\alpha,\omega)$ is defined near $T_\varepsilon$. Since the torus has no boundary, the integral of the second term is also zero by Stokes' theorem. This contradicts the nonzero integral above. Therefore $u_\alpha\notin\D_{X,0}(hu_\alpha)$. \Cref{prop:affine-point-lifting} then gives $-\alpha\in Z(B_{F,0})$.

Finally, $H$ is defined by $N_{a_1}\cdot s+\nu_{a_1}=0$, where $N_{a_1}\in\Z_{\geq0}^r\setminus\{0\}$ and $\nu_{a_1}>0$. Hence the set of positive real solutions of $N_{a_1}\cdot\alpha=\nu_{a_1}$ is nonempty and relatively open in the corresponding real affine hyperplane. Its rational points are dense in this open set and therefore Zariski dense in the complex affine hyperplane. Thus the points $-\alpha$ considered above are Zariski dense in $H$. For $r=1$, $H$ consists of the single rational point already obtained. Since $Z(B_{F,0})$ is Zariski closed, we conclude that $H\subseteq Z(B_{F,0})$.
\end{proof}

\begin{remark}
In dimension three, a codimension-two SNC stratum satisfying the corresponding equalities is a curve and its iterated residue is a twisted one-form. Any nonzero period on that curve gives a point of $Z(B_{F,0})$ by \Cref{thm:iterated-residue-obstruction}. Establishing such nonvanishing uniformly is a separate geometric problem.
\end{remark}

\begin{theorem}\label{thm:maximal-motivic-equivalence}
In the algebraic setting of \Cref{thm:higher-motivic-order-bounds}, the following are equivalent for a rational affine hyperplane $H$:
\begin{enumerate}[label=\textup{(\roman*)}]
\item $\ord_H^{\mathrm{mot}}Z^{\mathrm{mot}}_{F,0}=n$;
\item $\ord_HZ^{\mathrm{top}}_{F,0}=n$;
\item some $n$ components defining $H$ have an SNC stratum meeting the local fibre.
\end{enumerate}
\end{theorem}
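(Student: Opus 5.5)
The plan is to prove the cycle of implications (ii)$\Rightarrow$(i)$\Rightarrow$(iii)$\Rightarrow$(ii), using \Cref{thm:higher-motivic-order-bounds} and \Cref{lem:deepest-coefficient}. One extra point has to be settled first. The quantity $c_H(\pi)$ depends on the resolution, so (iii) must be read for the fixed log resolution $\pi$ used to define $c_H(\pi)$. The argument below shows that (iii) is equivalent to the intrinsic conditions (i) and (ii), so this reading causes no ambiguity.

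For (ii)$\Rightarrow$(i), apply the chain \eqref{eq:higher-motivic-order-bounds}:
\[
n=\ord_HZ^{\mathrm{top}}_{F,0}\leq\ord_H^{\mathrm{mot}}Z^{\mathrm{mot}}_{F,0}\leq c_H(\pi)\leq n .
\]
All inequalities are therefore equalities, and in particular the motivic order equals $n$.

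For (i)$\Rightarrow$(iii), use the same chain again. From $\ord_H^{\mathrm{mot}}=n$ we get $c_H(\pi)\geq n$. By the definition \eqref{eq:motivic-incidence-depth}, this gives a nonempty stratum $D_I^\circ$ meeting $\pi^{-1}(0)$ with $|I\cap S_H|\geq n$. Since $D$ is SNC, $|I|\leq n$. Hence $I\subseteq S_H$ and $|I|=n$, so $n$ components defining $H$ meet at a point of the local fibre, which is (iii).

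For (iii)$\Rightarrow$(ii), invoke \Cref{lem:deepest-coefficient}. The stratum $I$ given by (iii) contributes the summand
\[
\frac{\#(D_I^\circ\cap\pi^{-1}(0))}{\prod_{a\in I}\nu_a}>0
\]
to $C_{-n}(H)$, and all other summands are nonnegative. Thus $C_{-n}(H)>0$. Every term of the resolution expression has pole order at most $|I\cap S_H|\leq n$ along $H$, so the Laurent expansion at the generic point of $H$ begins at $L_H^{-n}$. Its leading coefficient is exactly the constant $C_{-n}(H)$: terms with $I\subseteq S_H$ and $|I|=n$ are exact constant multiples of $L_H^{-n}$ by the normalization $N_a\cdot s+\nu_a=\nu_aL_H$, and all other terms have strictly smaller order. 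This argument parallels \Cref{thm:grouped-laurent}. Since this coefficient is a nonzero constant, it cannot cancel in $K(H)$, so $\ord_HZ^{\mathrm{top}}_{F,0}=n$.

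The only step that needs care is this last one, namely confirming that $C_{-n}(H)$ really is the full coefficient of $L_H^{-n}$ with no hidden contributions. The check is that any term with $|I\cap S_H|<n$ has pole order strictly less than $n$ at the generic point of $H$, because the factors $L_D$ with $D\notin S_H$ are units in the DVR $\C[s]_{(L_H)}$. Positivity then finishes the argument, just as in the surface case of \Cref{thm:same-wall-positivity}.
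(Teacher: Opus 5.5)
Your proposal is correct and follows essentially the same route as the paper. The paper proves the same cycle: \Cref{thm:higher-motivic-order-bounds} gives (ii)$\Rightarrow$(i), and also gives (i)$\Rightarrow$(iii) through $c_H(\pi)=n$. Positivity of $C_{-n}(H)$ from \Cref{lem:deepest-coefficient} then gives (iii)$\Rightarrow$(ii).
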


\begin{proof}
By \Cref{thm:higher-motivic-order-bounds}, (i) implies $c_H(\pi)=n$, which is equivalent to (iii). If (iii) holds, at least one summand in \eqref{eq:deepest-coefficient} is positive, so $C_{-n}(H)>0$ and (ii) follows. Finally, (ii) implies (i) by \Cref{thm:higher-motivic-order-bounds}.
\end{proof}

\begin{corollary}\label{cor:maximal-motivic-smc}
In the same algebraic setting,
\begin{equation*}
\ord_H^{\mathrm{mot}}Z^{\mathrm{mot}}_{F,0}=n
 \quad\Longrightarrow \quad
 H\subseteq Z(B_{F,0}).
\end{equation*}
\end{corollary}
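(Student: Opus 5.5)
This corollary should follow by chaining two results already established in this section. I will not redo any residue computation; the argument is bookkeeping that converts maximal motivic order into maximal topological order.

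\textbf{Step 1.} Assume $\ord_H^{\mathrm{mot}}Z^{\mathrm{mot}}_{F,0}=n$. By \Cref{thm:maximal-motivic-equivalence}, condition (i) implies (ii), so
\[
\ord_HZ^{\mathrm{top}}_{F,0}=n.
\]
This step is where the realization argument behind \Cref{thm:higher-motivic-order-bounds} enters. The route through (iii) is essential here. The inequality $\ord_HZ^{\mathrm{top}}_{F,0}\leq\ord_H^{\mathrm{mot}}Z^{\mathrm{mot}}_{F,0}$ points the wrong way. The equivalence instead goes from (i) to $c_H(\pi)=n$, then to a point of $\pi^{-1}(0)$ where $n$ components of $S_H$ meet, and then, by the positivity in \Cref{lem:deepest-coefficient}, to $C_{-n}(H)>0$.

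\textbf{Step 2.} Since $\ord_HZ^{\mathrm{top}}_{F,0}=n>0$, the hyperplane $H$ is an irreducible component of $\PL(Z^{\mathrm{top}}_{F,0})$ with generic pole order $n$. We then invoke \Cref{thm:maximal-order-hyperplanes}, which gives $H\subseteq Z(B_{F,0})$.

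\textbf{Checks.} Two points need verification. First, \Cref{thm:maximal-order-hyperplanes} is stated for analytic germs, and an algebraic germ is in particular analytic; the Bernstein--Sato ideal $B_{F,0}$ is taken with respect to $\D_{X,0}$ as before. Second, $H$ is rational and defined by resolution components, so the normalization $L_H$ with constant term one exists.

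I expect no real obstacle. The only subtlety is the direction of implication discussed in Step 1, and it is resolved because \Cref{thm:maximal-motivic-equivalence} supplies (i)$\Rightarrow$(ii).
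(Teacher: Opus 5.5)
Your proof is correct and is exactly the paper's argument: \Cref{thm:maximal-motivic-equivalence} upgrades maximal motivic order to maximal topological order, and \Cref{thm:maximal-order-hyperplanes} then gives $H\subseteq Z(B_{F,0})$. One small correction to your commentary: the implication (i)$\Rightarrow$(ii) does not use the Hodge--Deligne realization argument. It uses only the elementary bound $\ord_H^{\mathrm{mot}}Z^{\mathrm{mot}}_{F,0}\leq c_H(\pi)$, read off from the resolution formula, together with the positivity in \Cref{lem:deepest-coefficient}; the realization argument is what gives the converse (ii)$\Rightarrow$(i).
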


\begin{proof}
By \Cref{thm:maximal-motivic-equivalence}, $H$ is a topological polar hyperplane of generic order $n$. Apply \Cref{thm:maximal-order-hyperplanes}.
\end{proof}

\subsection{Comparison below maximal order}

There is a useful sufficient condition below maximal order. Put $c=c_H(\pi)>0$ and let $L_H$ be the normalized affine equation of $H$ with constant term one. Then $N_a\cdot s+\nu_a=\nu_aL_H$ for every $a\in S_H$. Define
\begin{equation}\label{eq:motivic-leading-incidence-coefficient}
 R^{\mathrm{top}}_{H,c}=\sum_{\substack{D_I^\circ\cap\pi^{-1}(0)\ne\varnothing\\|I\cap S_H|=c}}\frac{\chi(D_I^\circ\cap\pi^{-1}(0))}{\prod_{a\in I\cap S_H}\nu_a}\prod_{b\in I\setminus S_H}\frac1{(N_b\cdot s+\nu_b)|_H}\in K(H).
\end{equation}
Here $K(H)$ is the function field of $H$, and every denominator in the presentation that does not define $H$ is restricted at the generic point of $H$; it may vanish on a proper subvariety without affecting this generic coefficient.

\begin{proposition}\label{prop:motivic-leading-incidence}
If $R^{\mathrm{top}}_{H,c}\ne0$, then
\begin{equation*}
\ord_HZ^{\mathrm{top}}_{F,0}=\ord_H^{\mathrm{mot}}Z^{\mathrm{mot}}_{F,0}=c_H(\pi).
\end{equation*}
\end{proposition}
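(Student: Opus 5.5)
By \Cref{thm:higher-motivic-order-bounds} we already have
\[
\ord_HZ^{\mathrm{top}}_{F,0}\leq\ord_H^{\mathrm{mot}}Z^{\mathrm{mot}}_{F,0}\leq c_H(\pi),
\]
so the plan is to prove the single remaining inequality $\ord_HZ^{\mathrm{top}}_{F,0}\geq c$, where $c=c_H(\pi)$. It suffices to show that the coefficient of $L_H^{-c}$ in the Laurent expansion of $Z^{\mathrm{top}}_{F,0}$ at the generic point of $H$ equals $R^{\mathrm{top}}_{H,c}$, and that no term of the expansion has order exceeding $c$. This generalizes \Cref{thm:grouped-laurent} and \Cref{lem:deepest-coefficient} from a deepest stratum to arbitrary incidence depth $c$.

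We work in the discrete valuation ring $R_H=\C[s_1,\ldots,s_r]_{(L_H)}$ with residue field $K(H)$. Consider the summand of \eqref{eq:top-zeta}, or of its $n$-dimensional analogue, indexed by a stratum $I$ with $D_I^\circ\cap\pi^{-1}(0)\neq\varnothing$. For $a\in I\cap S_H$ the factor $N_a\cdot s+\nu_a$ equals $\nu_aL_H$ exactly, as an identity of affine forms. For $b\in I\setminus S_H$ the factor $N_b\cdot s+\nu_b$ is a unit in $R_H$. Hence the summand equals
\[
\frac{\chi(D_I^\circ\cap\pi^{-1}(0))}{\prod_{a\in I\cap S_H}\nu_a}\,L_H^{-|I\cap S_H|}\,u_I,
\qquad u_I\in R_H^\times,
\]
and $u_I$ reduces in $K(H)$ to $\prod_{b\in I\setminus S_H}\bigl((N_b\cdot s+\nu_b)|_H\bigr)^{-1}$. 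By the definition \eqref{eq:motivic-incidence-depth}, every such summand has $|I\cap S_H|\leq c$. Summands with $|I\cap S_H|<c$ therefore lie in $L_H^{-(c-1)}R_H$.

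The $L_H^{-c}$-coefficient is then the sum over strata with $|I\cap S_H|=c$ of the reductions above, which is exactly \eqref{eq:motivic-leading-incidence-coefficient}. Consequently $Z^{\mathrm{top}}_{F,0}\in L_H^{-c}R_H$, and the class of $L_H^cZ^{\mathrm{top}}_{F,0}$ in $K(H)$ is $R^{\mathrm{top}}_{H,c}$. If $R^{\mathrm{top}}_{H,c}\neq0$, then $v_H(Z^{\mathrm{top}}_{F,0})=-c$, so $\ord_HZ^{\mathrm{top}}_{F,0}=c$. Combined with the chain of inequalities, this forces both orders to equal $c_H(\pi)$.

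The only delicate point is bookkeeping rather than a genuine obstacle. First, the leading coefficient must be read at the generic point of $H$, so the restrictions $(N_b\cdot s+\nu_b)|_H$ are nonzero elements of $K(H)$; this holds precisely because $b\notin S_H$. Second, the exactness of $N_a\cdot s+\nu_a=\nu_aL_H$ from the normalization ensures that there are no hidden lower-order corrections. The argument uses no positivity, in contrast to \Cref{lem:deepest-coefficient}, which is why the nonvanishing of $R^{\mathrm{top}}_{H,c}$ must appear as a hypothesis.
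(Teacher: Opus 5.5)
Your proposal is correct and matches the paper's proof. Both identify the coefficient of $L_H^{-c}$ at the generic point of $H$ with $R^{\mathrm{top}}_{H,c}$, deduce $\ord_HZ^{\mathrm{top}}_{F,0}\geq c$, and close the chain of inequalities with \Cref{thm:higher-motivic-order-bounds}; your bookkeeping in $R_H$ (summands with $|I\cap S_H|<c$ contribute only at lower order, and no summand exceeds order $c$) just spells out what the paper reads off directly from \eqref{eq:motivic-leading-incidence-coefficient}.
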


\begin{proof}
By \eqref{eq:motivic-leading-incidence-coefficient}, the coefficient of $L_H^{-c}$ in the topological resolution formula is $R^{\mathrm{top}}_{H,c}$. Since this coefficient is nonzero, $\ord_HZ^{\mathrm{top}}_{F,0}\geq c$. Together with \Cref{thm:higher-motivic-order-bounds}, this gives
\[
c\leq\ord_HZ^{\mathrm{top}}_{F,0}
\leq\ord_H^{\mathrm{mot}}Z^{\mathrm{mot}}_{F,0}
\leq c.
\]
Thus both orders equal $c=c_H(\pi)$.
\end{proof}

The equality of motivic and topological orders can fail in dimension three.

\begin{example}\label{ex:motivic-cubic-cone-gap}
Let
\[
f=x_1^3+x_2^3+x_3^3
\quad\text{on }(\mathbb C^3,0).
\]
Blowing up the origin gives an exceptional divisor $E\simeq\mathbb P^2$ and a strict transform $S$ meeting transversely along a smooth plane cubic $C$. Their numerical data are
\[
(N_E,\nu_E)=(3,3),
\quad
(N_S,\nu_S)=(1,1),
\]
so both components determine the candidate value $s=-1$. Set
\[
Q_E=\frac{(\Lef-1)\Lef^{-3}T^3}{1-\Lef^{-3}T^3},
\quad
Q_S=\frac{(\Lef-1)\Lef^{-1}T}{1-\Lef^{-1}T}.
\]
The motivic resolution formula is
\begin{equation}\label{eq:motivic-cubic-expression}
Z^{\mathrm{mot}}_{f,0}
=
\Lef^{-3}\bigl([\mathbb P^2\setminus C]Q_E+[C]Q_EQ_S\bigr).
\end{equation}
In particular, the motivic order at $s=-1$ is at most two.

Apply the Hodge--Deligne realization and set $q=uv$ and $T=qw$. Denote the resulting rational function by $Z(w)\in\Q(u,v)(w)$. A denominator $1-\Lef^aT^b$ becomes $1-q^{a+b}w^b$. It has a simple zero at $w=1$ when $a/b=-1$ and is nonzero there otherwise. Thus the pole order of $Z$ at $w=1$ is a lower bound for the motivic order at $s=-1$. Since $E_c(C;u,v)=1-u-v+uv$, formula \eqref{eq:motivic-cubic-expression} gives
\[
\lim_{w\to1}(1-w)^2 Z(w) = \frac{q^{-3}(1-u-v+uv)(q-1)^2}{3}\ne0.
\]
Hence $\ord_{-1}^{\mathrm{mot}}Z^{\mathrm{mot}}_{f,0}=2$.

On the other hand, $\chi(C)=0$ and $\chi(\mathbb P^2\setminus C)=3$, so
\[
Z^{\mathrm{top}}_{f,0}(s)=\frac{3}{3s+3}=\frac{1}{s+1}.
\]
Therefore
\[
    \ord_{-1}Z^{\mathrm{top}}_{f,0} = 1 < 2 = \ord_{-1}^{\mathrm{mot}}Z^{\mathrm{mot}}_{f,0}.
\]
\end{example}

The polar loci can also differ.

\begin{example}\label{ex:motivic-fivefold-support-gap}
Let
\[
g=x_1^3+x_2^3+x_3^3+x_4^3+x_5^6
\quad\text{on }(\mathbb C^5,0).
\]
Blow up the origin and then the unique singular point of the first strict transform. Denote the final exceptional divisors by $A,B$ in this order, and the strict transform of $(g=0)$ by $C$. Their numerical data are
\[
(N_A,\nu_A)=(3,5),
\quad
(N_B,\nu_B)=(6,9),
\quad
(N_C,\nu_C)=(1,1).
\]
Inside $B\simeq\mathbb P^4$, the intersections $P=A\cap B$ and $V=B\cap C$ are the hyperplane $T=0$ and the smooth cubic threefold
\[
Y_1^3+Y_2^3+Y_3^3+Y_4^3+T^3=0.
\]
Their intersection $S=P\cap V$ is a smooth cubic surface. These intersections are transverse, and the reduced total transform has simple normal crossings.

Only $B$ has candidate value $s=-3/2$. The Euler characteristics of $B^\circ$, $P\setminus S$, $V\setminus S$, and $S$ are $16,-5,-15,9$, respectively. Thus the sum of the topological terms containing $B$ is
\[
\frac{1}{6s+9}
\left(
16-\frac{5}{3s+5}-\frac{15}{s+1}
+\frac{9}{(3s+5)(s+1)}
\right).
\]
The expression in parentheses vanishes at $s=-3/2$. All other terms are regular there, so $-3/2$ is not a topological pole.

Apply the diagonal Hodge--Deligne realization $E_\Delta([W])=E_c(W;z,z)$ and set $T=z^3w$. Let $Z(w)\in\Q(z)(w)$ be the resulting rational function. A denominator $1-\Lef^aT^b$ becomes $1-z^{2a+3b}w^b$, which has a simple zero at $w=1$ exactly when $a/b=-3/2$. As in the preceding example, the pole order at $w=1$ is a lower bound for the motivic order at $s=-3/2$.

The realized factors for $A$ and $C$ have values $z+1$ and $-z(z+1)$ at $w=1$, respectively. Using
\[
\begin{aligned}
E_\Delta(\mathbb P^4)&=1+z^2+z^4+z^6+z^8,\\
E_\Delta(P)&=1+z^2+z^4+z^6,\\
E_\Delta(V)&=1+z^2-10z^3+z^4+z^6,\\
E_\Delta(S)&=1+7z^2+z^4,
\end{aligned}
\]
the four terms containing $B$ give
\[
\lim_{w\to1}(1-w)Z(w)
=
-\frac{z^{-9}(z^2-1)(z-1)^2(z^2+z+1)(z^2+3z+1)}{6}
\ne0.
\]
Thus the motivic order at $s=-3/2$ is at least one. Since only $B$ has this candidate value, every term in the resolution formula contains at most one corresponding denominator. Consequently,
\[
\ord_{-3/2}^{\mathrm{mot}}Z^{\mathrm{mot}}_{g,0}=1,
\quad
\ord_{-3/2}Z^{\mathrm{top}}_{g,0}=0.
\]
\end{example}

\begin{remark}
Viewing the first example as a germ on $\mathbb C^{3+d}$ by adjoining $d\geq0$ variables on which $f$ does not depend gives the same order gap in every dimension at least three. Similarly, the second example gives a difference of polar supports in every dimension at least five. In both cases, the product resolution has the same local-fibre strata and numerical data, while the motivic zeta function changes only by the invertible factor $\Lef^{-d}$.
\end{remark}

\section{Summary and further questions} \label{sec:further-questions}

For tuples of arbitrary nonzero nonunit plane curve germs, we have proved the Topological Multivariable Strong Monodromy Conjecture and the equality of the non-equivariant motivic and topological generic pole orders. Their common polar locus is therefore contained in $Z(B_{F,0})$. For algebraic germs of dimension $n$, we have
\[
\ord_H Z^{\mathrm{top}}_{F,0}\leq\ord_H^{\mathrm{mot}}Z^{\mathrm{mot}}_{F,0}\leq n.
\]
One order equals $n$ if and only if the other does, and in this case $H\subseteq Z(B_{F,0})$. However, the two orders and even the two polar loci can differ in higher dimension, as shown by \Cref{ex:motivic-cubic-cone-gap,ex:motivic-fivefold-support-gap}.

For a fixed finite expression, one can still choose a positive integral specialization avoiding the finitely many denominator coincidences. This does not extend the plane comparison, since the required equality already fails for one-variable functions in higher dimension. For plane curves, an exceptional component $E$ satisfies $E\simeq\mathbb P^1$ and $[E^\circ]=\Lef+1-d_E$. Together with the principal-divisor relation and adjunction, these identities give the factorizations in \Cref{thm:motivic-nonrupture-cancellation} when $d_E=1$ or $2$. The propagation along chains of valency-two components also uses the tree structure of the resolution graph. In higher dimension, the stratum classes are not determined by a single valency, and the dual complex does not give the same propagation argument.

The iterated residue criterion uses a specific cohomology class rather than only the polar locus. Under the hypotheses of \Cref{thm:iterated-residue-obstruction}, in arbitrary dimension,
\[
    [\Res_I\pi^*(F^{-\alpha}\omega)] \ne0 \quad \Longrightarrow \quad F^{-\alpha}\notin\D_{X,0}(hF^{-\alpha}) \quad \Longrightarrow \quad -\alpha\in Z(B_{F,0}).
\]
When the pole order along $H$ is $n$, the positivity in \eqref{eq:deepest-coefficient} gives a zero-dimensional stratum where the torus residue is nonzero for every $\alpha\in\Q_{>0}^r$ with $-\alpha\in H$.

Below maximal order, positive-dimensional strata may contribute, and their contributions may cancel after applying Euler characteristic even when the corresponding Grothendieck or Hodge classes are nonzero. A nonzero iterated residue form may also represent the zero cohomology class, even when the corresponding hyperplane belongs to one or both polar loci. One would therefore like geometric conditions ensuring that such a residue class is nonzero, or that a lower-order motivic pole remains a pole after Euler-characteristic specialization. Uniform criteria for one-dimensional SNC strata on threefolds, toric resolutions, and Newton-nondegenerate tuples would require further geometric analysis.

Further questions concern the extension to the $\widehat\mu$-equivariant motivic zeta function, the scheme-theoretic multiplicities of the Bernstein--Sato locus, and a resolution formula for the entire locus $Z(B_{F,0})$. The full Strong Monodromy Conjecture in higher dimension is a separate problem. The failure of the motivic--topological comparison does not exclude strong monodromy results, which are known for arbitrary factorizations of hyperplane arrangements \cite{DavisYang2026} and for a class of homogeneous polynomials in three variables \cite{BathVeys2026}.

In one variable, the Bernstein--Sato polynomial can vary in equisingular families \cite[Example~1]{Oaku2022}. Thus a resolution graph, even with its numerical data, does not determine the entire Bernstein--Sato ideal. The relation between topological polar hyperplanes and components of the Bernstein--Sato zero locus that vary with the analytic structure remains to be understood.


\bibliographystyle{amsplain}
\bibliography{references}

\end{document}